\DeclareRobustCommand{\lyxsout}[1]{\ifx\\#1\else\sout{#1}\fi}
\theoremstyle{plain}
\newtheorem{thm}{\protect\theoremname}
\theoremstyle{plain}
\newtheorem{prop}[thm]{\protect\propositionname}
\theoremstyle{plain}
\newtheorem{cor}[thm]{\protect\corollaryname}
\theoremstyle{plain}
\newtheorem{lem}[thm]{\protect\lemmaname}
\providecommand{\corollaryname}{Corollary}
\providecommand{\lemmaname}{Lemma}
\providecommand{\propositionname}{Proposition}
\providecommand{\theoremname}{Theorem}
\begin{document}
\global\long\def\E{\mathbb{E}}%
\global\long\def\R{\mathbb{R}}%
\global\long\def\cov{\text{Cov}}%

\title{A deterministic and computable Bernstein-von Mises theorem}
\author{Guillaume Dehaene\\
Ecole Polytechnique Fédérale de Lausanne\\
\texttt{guillaume.dehaene@gmail.com}}
\maketitle
\begin{abstract}
Bernstein-von Mises results (BvM) establish that the Laplace approximation
is asymptotically correct in the large-data limit. However, these
results are inappropriate for computational purposes since they only
hold over most, and not all, datasets and involve hard-to-estimate
constants. In this article, I present a new BvM theorem which bounds
the Kullback-Leibler (KL) divergence between a fixed log-concave density
$f\left(\boldsymbol{\theta}\right)$ and its Laplace approximation.
The bound goes to $0$ as the higher-derivatives of $f\left(\boldsymbol{\theta}\right)$
tend to $0$ and $f\left(\boldsymbol{\theta}\right)$ becomes increasingly
Gaussian. The classical BvM theorem in the IID large-data asymptote
is recovered as a corollary.

Critically, this theorem further suggests a number of computable approximations
of the KL divergence with the most promising being:
\[
KL\left(g_{LAP},f\right)\approx\frac{1}{2}\text{Var}_{\boldsymbol{\theta}\sim g\left(\boldsymbol{\theta}\right)}\left(\log\left[f\left(\boldsymbol{\theta}\right)\right]-\log\left[g_{LAP}\left(\boldsymbol{\theta}\right)\right]\right)
\]
An empirical investigation of these bounds in the logistic classification
model reveals that these approximations are great surrogates for the
KL divergence. This result, and future results of a similar nature,
could provide a path towards rigorously controlling the error due
to the Laplace approximation and more modern approximation methods.
\end{abstract}

\section*{Introduction}

Bayesian inference is intrinsically plagued by computational problems
due to the fact that its key object, the posterior distribution $f\left(\boldsymbol{\theta}|\text{Data}\right)$,
is computationally hard to approximate. Solutions to this challenge
can be roughly decomposed into two classes. Sampling methods, dating
back to the famed Metropolis-Hastings algorithm (\citet{metropolis1953equation,hastings1970monte}),
provide a first possible solution: approximate the posterior using
a large number of samples whose marginal distribution is the posterior
distribution (\citet{brooks2011handbook}). Any expected value under
the posterior can then be approximated using the corresponding empirical
mean in the samples. A second solution is provided by \emph{Variational}
methods which aim to find the member of a parametric family $g\left(\boldsymbol{\theta};\lambda\right)$
which is the closest (in some sense) to the posterior $f\left(\boldsymbol{\theta}|\text{Data}\right)$
(\citet{blei2017variational}). For example, the historical Laplace
approximation (\citet{laplace1820theorie}) proposes to approximate
the posterior by a Gaussian centered at the Maximum A Posterior value
while the more modern Gaussian Variational Approximation finds a Gaussian
which minimizes the reverse Kullback-Leibler divergence (KL) (\citet{opper2009variational}).
Computationally, the choice between the two corresponds to a trade-off
between accuracy (the error of sampling methods typically converges
at speed $s^{-1/2}$ where $s$ is the number of samples, while Variational
methods will always have some residual error) and speed (Variational
methods tend to quickly and cheaply find the best approximation; e.g.
\citet{nickisch2008approximations}). Currently, Variational methods
are furthermore held back by the fact that they are perceived to be
unrigorous approximations due to the absence of results guaranteeing
their precision.

One limit for which Variational methods are particularly interesting
is for large datasets. Indeed, as the number of datapoints grows,
sampling methods struggle computationally since the calculation of
one additional sample requires a pass through the whole dataset (but
see \citet{bardenet2014towards,chen2014stochastic,maclaurin2015firefly,bardenet2017markov}
for modern attempts at tackling this issue). In contrast, Variational
methods are still able to tackle these large datasets since they can
leverage the computational prowess of optimization. For example, it
is straightforward to solve an optimization problem while accessing
only subsets (or batches) of the data at any one time, thus minimizing
the memory cost of the method while evaluation of the Metropolis-Hastings
ratio on subsets of the data is much trickier.

Furthermore, in the large-data limit, it is known that the posterior
distribution becomes simple: it tends to be almost equal to its Laplace
approximation (with total variation error typically scaling as $\mathcal{O}_{P}\left(n^{-1/2}\right)$
if the dimensionality $p$ is fixed), a result known as the Bernstein-von
Mises theorem (BvM; \citet{van2000asymptotic,kleijn2012bernstein}).
Intuitively, we should thus expect that Variational methods would
typically have smaller error as $n$ grows larger.

However, existing variants of the BvM theorem fail to be useful in
characterizing whether, for a given posterior $f\left(\boldsymbol{\theta}|\text{Data}\right)$
and a given approximation $g\left(\boldsymbol{\theta}\right)$, this
approximation is good enough or not. This might be due to historical
reasons since the BvM theorems might have aimed instead at proving
that Bayesian methods are valid in the frequentist paradigm (under
correct model specification; \citet{kleijn2012bernstein}).

In this article which expands upon the preliminary work in \citet{dehaene2017computing},
I propose to extend the scope of BvM theorems by proving (Th.\ref{thm:A-deterministic-BvM theorem}
and Cor.\ref{cor:Computable-approximations-of KL(g,f)}) that the
distance (measured using the KL divergence) between a \uline{log-concave}
density $f\left(\boldsymbol{\theta}\right)$ on $\R^{p}$ and its
Laplace approximation $g_{LAP}\left(\boldsymbol{\theta}\right)$ can
be (roughly and with caveats) approximated using the ``Kullback-Leibler
variance'':
\begin{equation}
KL\left(g_{LAP},f\right)\approx\frac{1}{2}\text{Var}_{\boldsymbol{\theta}\sim g\left(\boldsymbol{\theta}\right)}\left(\log\left[f\left(\boldsymbol{\theta}\right)\right]-\log\left[g_{LAP}\left(\boldsymbol{\theta}\right)\right]\right)
\end{equation}
thus yielding a computable quantity assessment of whether, in a given
problem, the Laplace approximation is good enough or not. Furthermore,
the KL divergence scales at most as:
\begin{equation}
KL\left(g_{LAP},f\right)=\mathcal{O}\left(\left(\Delta_{3}\right)^{2}p^{3}\right)
\end{equation}
where the scalar $\Delta_{3}$ (defined in eq.\ref{eq: controling the third derivative}
below) measures the relative strength of the third-derivative compared
to the second-derivative of $\log\left[f\left(\boldsymbol{\theta}\right)\right]$.
In the conventional large-data limit, $\Delta_{3}$ scales as $\mathcal{O}_{P}\left(n^{-1}\right)$
(Cor.\ref{cor:Bernstein-von-Mises:-IID case}) and this result thus
recovers existing BvM theorems since the total variational distance
scales as the square-root of the KL divergence due to Pinsker's inequality.

This article is organized in the following way. Section \ref{sec:Notations-and-background}
introduces key notations and gives a short review of existing BvM
results. Section \ref{sec:Three-limited-deterministic BvM propositions}
then introduces three preliminary propositions which give deterministic
approximations of $KL\left(g_{LAP},f\right)$. These approximations
are limited, but provide a key stepping stone towards Th.\ref{thm:A-deterministic-BvM theorem}.
Section \ref{sec:Gaussian-approximations-of log-concave distributions}
then presents the main result, Th.\ref{thm:A-deterministic-BvM theorem},
and shows that the classical BvM theorem in the IID large-data limit
is recovered as Cor.\ref{cor:Bernstein-von-Mises:-IID case}. Next,
I show that Th.\ref{cor:Bernstein-von-Mises:-IID case} yields computable
approximations of $KL\left(g_{LAP},f\right)$ and investigate them
empirically in the linear logistic classification model. Finally,
Section \ref{sec:Discussion} discusses the significance of these
findings and how they can be used to derive rigorous Bayesian inferences
from Laplace approximations of the posterior distribution.

Note that, due to the length of the proofs, I only give sketches of
the proofs in the main text. Fully rigorous proofs are given in the
appendix.

\section{Notations and background\label{sec:Notations-and-background}}

\subsection{Notations}

Throughout this article, let $\boldsymbol{\theta}\in\R^{p}$ denote
a $p$-dimensional random variable. I investigate the problem of approximating
a probability density:
\begin{equation}
f\left(\boldsymbol{\theta}\right)=\exp\left(-\phi_{f}\left(\boldsymbol{\theta}\right)-\log\left(Z_{f}\right)\right)
\end{equation}
using its Laplace approximation:
\begin{align}
g_{LAP}\left(\boldsymbol{\theta}\right) & =\exp\left(-\phi_{g}\left(\boldsymbol{\theta}\right)-\log\left(Z_{g}\right)\right)\\
 & =\exp\left(-\frac{1}{2}\left(\boldsymbol{\theta}-\boldsymbol{\mu}\right)^{T}\Sigma^{-1}\left(\boldsymbol{\theta}-\boldsymbol{\mu}\right)-\log\left(Z_{g}\right)\right)
\end{align}
I will distinguish whether variables have distribution $f$ or $g$
through the use of indices (e.g. $\boldsymbol{\theta}_{f},\boldsymbol{\theta}_{g}$)
or through more explicit notation. Recall that the mean and covariance
of $g_{LAP}$ are as follows: $\boldsymbol{\mu}$ is the maximum of
$f\left(\boldsymbol{\theta}\right)$ and $\Sigma$ is the inverse
of the negative log-Hessian of $f\left(\boldsymbol{\theta}\right)$:
\begin{align}
\boldsymbol{\mu} & =\text{argmin}_{\boldsymbol{\theta}}\left[\phi_{f}\left(\boldsymbol{\theta}\right)\right]\\
\Sigma^{-1} & =H\phi_{f}\left(\boldsymbol{\mu}\right)
\end{align}
The Laplace approximation thus corresponds to a quadratic Taylor approximation
of $\phi_{f}\left(\boldsymbol{\theta}\right)$ around the maximum
$\boldsymbol{\mu}$ of $f\left(\boldsymbol{\theta}\right)$.

I will furthermore assume that $\phi_{f}$ is a strictly convex function
that can be differentiated at least three times. This ensures that
$\boldsymbol{\mu}$ is unique, straightforward to compute through
gradient descent and $f\left(\boldsymbol{\theta}\right)$ is a log-concave
density function, a family with many interesting properties (see \citet{saumard2014log}
for a thorough review). Note that, in a Bayesian context, it is straightforward
to guarantee that the posterior is log-concave since it follows from
the prior and all likelihoods being log-concave. These assumptions
could be weakened but at the cost of a sharp loss of clarity.

I will measure the distance between $f$ and $g$ using the Kullback-Leibler
divergence:
\begin{align}
KL\left(g,f\right) & =\E_{\boldsymbol{\theta}\sim g\left(\boldsymbol{\theta}\right)}\left(\log\left[\frac{g\left(\boldsymbol{\theta}\right)}{f\left(\boldsymbol{\theta}\right)}\right]\right)\\
KL\left(f,g\right) & =\E_{\boldsymbol{\theta}\sim f\left(\boldsymbol{\theta}\right)}\left(\log\left[\frac{f\left(\boldsymbol{\theta}\right)}{g\left(\boldsymbol{\theta}\right)}\right]\right)
\end{align}
I will refer to these as the forward (from truth to approximation)
KL divergence $KL\left(f,g\right)$ and the reverse (approximation
to truth) KL divergence $KL\left(g,f\right)$ in order to distinguish
them.

The KL divergence is a positive quantity which upper-bounds the total-variation
distance through the Pinsker inequality:
\begin{equation}
\left[d_{TV}\left(g,f\right)\right]^{2}\leq\frac{1}{2}KL\left(g,f\right)
\end{equation}
Contrary to the total-variation distance, the KL divergence is not
symmetric nor does it generally respect a triangle inequality.

In order to state convergence results, I will use the probabilistic
big-O and small-o notations. Recall that $X_{n}=\mathcal{O}_{P}\left(a_{n}\right)$
is equivalent to the sequence $X_{n}/a_{n}$ being bounded in probability:
for any $\epsilon>0$, there exists a threshold $\delta$ such that:
\begin{equation}
\forall n\ \mathbb{P}\left(\left|\frac{X_{n}}{a_{n}}\right|>\delta\right)\leq\epsilon
\end{equation}
while $X_{n}=o_{P}\left(a_{n}\right)$ is equivalent to the sequence
$X_{n}/a_{n}$ converging weakly to $0$, i.e. for any threshold $\delta$:
\begin{equation}
\mathbb{P}\left(\left|\frac{X_{n}}{a_{n}}\right|>\delta\right)\rightarrow0
\end{equation}

Finally, my analysis strongly relies on three key changes of variables.
First, I will denote with $\tilde{\boldsymbol{\theta}}$ the affine
transformation of $\boldsymbol{\theta}$ such that $g$ is the standard
Gaussian distribution:
\begin{equation}
\tilde{\boldsymbol{\theta}}=\Sigma_{g}^{-1/2}\left(\boldsymbol{\theta}-\boldsymbol{\mu}\right)
\end{equation}
Densities and log-densities on the $\tilde{\boldsymbol{\theta}}$
space will be denoted as $\tilde{f},\tilde{g}_{LAP},\tilde{\phi}_{f},\tilde{\phi}_{g}$.\\
Second, I will further consider a switch to spherical coordinates
in the $\tilde{\boldsymbol{\theta}}$ referential:\begin{subequations}
\begin{align}
\tilde{\boldsymbol{\theta}} & =r\boldsymbol{e} & r\in\R^{+},\ \boldsymbol{e}\in S^{p-1}\\
r & =\left\Vert \tilde{\boldsymbol{\theta}}\right\Vert _{2}\\
\boldsymbol{e} & =\frac{\tilde{\boldsymbol{\theta}}}{\left\Vert \tilde{\boldsymbol{\theta}}\right\Vert _{2}}
\end{align}
\end{subequations} where $S^{p-1}$ is the $p$-dimensional unit
sphere. Recall that, under the Gaussian distribution $g_{LAP}$, $r,\boldsymbol{e}$
are independent, $r$ is a $\chi_{p}$ random variable and $\boldsymbol{e}$
is a uniform random variable over $S^{p-1}$.\\
Finally, for technical reasons, I do not work with the radius $r$
but with its cubic root $c$:
\begin{equation}
r=c^{3}
\end{equation}
This change of variable plays a key role in the derivation of Theorem
\ref{thm:A-deterministic-BvM theorem} as detailed in section \ref{sec:Gaussian-approximations-of log-concave distributions}.

\subsection{The Bernstein-von Mises theorem\label{subsec: intro to The-Bernstein-von-Mises}}

The Bernstein-von Mises theorem asserts that the Laplace approximation
is asymptotically correct. While it was already intuited by Laplace
(\citet{laplace1820theorie}), the first explicit formulations date
back to separate foundational contributions by \citet{bernsteinTheory}
and \citet{mises1931wahrscheinlichkeitsrechnung}. The first rigorous
proof was given by \citet{doob1949application}.

We focus here on the IID setting, which is the easiest to understand
but the result can be derived in the very general Locally Asymptotically
Normal setup (LAN, \citet{van2000asymptotic,kleijn2012bernstein}).

Consider the problem of analyzing $n$ IID datapoints $X_{i}$, with
common density $d\left(x\right)$, according to some Bayesian model,
composed of a prior $f_{0}\left(\boldsymbol{\theta}\right)=\exp\left(-\phi_{0}\left(\boldsymbol{\theta}\right)\right)$
and a conditional model describing the conditional IID distribution
of $X|\boldsymbol{\theta}$ with density $d\left(x|\boldsymbol{\theta}\right)$.
The posterior is then:
\begin{align}
f_{n}\left(\boldsymbol{\theta}\right) & =f\left(\boldsymbol{\theta}|x_{1}\dots x_{n}\right)\\
 & \propto f_{0}\left(\boldsymbol{\theta}\right)\prod_{i=1}^{n}d\left(X_{i}=x_{i}|\boldsymbol{\theta}\right)
\end{align}

Noting $NLL_{i}=-\log\left[\left(X_{i}=x_{i}|\boldsymbol{\theta}\right)\right]$
the negative log-density due to the $i^{th}$ datapoint, and $\phi_{n}\left(\boldsymbol{\theta}\right)$
the negative log-density of the joint distribution $f_{n}\left(\boldsymbol{\theta},x_{1}\dots x_{n}\right)$,
we have:
\begin{align}
\phi_{n}\left(\boldsymbol{\theta}\right) & =\phi_{0}\left(\boldsymbol{\theta}\right)+\sum_{i=1}^{n}NLL_{i}\left(\boldsymbol{\theta}\right)\\
 & =n\left[\frac{1}{n}\phi_{0}\left(\boldsymbol{\theta}\right)+\frac{1}{n}\sum_{i=1}^{n}NLL_{i}\left(\boldsymbol{\theta}\right)\right]\label{eq: the log-density is like a mean}
\end{align}
The trivial rewriting in eq.\eqref{eq: the log-density is like a mean}
makes explicit that $\phi_{n}\left(\boldsymbol{\theta}\right)$ is
somewhat akin to a biased empirical mean of the $NLL_{i}\left(\boldsymbol{\theta}\right)$
which are IID function-valued random variables, a structure they inherit
from the $X_{i}$ being IID.

Critically, the BvM theorem does not require correct model specification,
i.e. for the data-generating density $d\left(x\right)$ to correspond
to one of the inference-model densities $d\left(x|\boldsymbol{\theta}\right)$.
Note that this requires us to reconsider what is the goal of the analysis
since it means that there is no true value $\boldsymbol{\theta}_{0}$
such that $d\left(x|\boldsymbol{\theta}_{0}\right)=d\left(x\right)$.
As an alternative, we might try to recover the value of $\boldsymbol{\theta}$
such that the conditional density is the closest to the truth. Both
Maximum Likelihood Estimation (MLE) and Bayesian inference behave
in this manner and aim at recovering the parameter $\boldsymbol{\theta}_{0}$
which minimizes the KL divergence:
\begin{equation}
\boldsymbol{\theta}_{0}=\text{argmin}_{\boldsymbol{\theta}}\left[KL\left(d\left(x\right),d\left(x|\boldsymbol{\theta}\right)\right)\right]
\end{equation}
which is equivalent to minimizing the theoretical average of the $NLL_{i}$:
\begin{equation}
\boldsymbol{\theta}_{0}=\text{argmin}_{\boldsymbol{\theta}}\left[\E_{X}\left(NLL\left(\boldsymbol{\theta}\right)\right)\right]
\end{equation}
The connection is particularly immediate for the MLE since it aims
at recovering the minimum of the theoretical average by using the
minimum of the empirical average $\frac{1}{n}\sum_{i=1}^{n}NLL_{i}\left(\boldsymbol{\theta}\right)$.

Three technical conditions need to hold in order for the BvM theorem
to apply in the IID setting:
\begin{enumerate}
\item The prior needs to put mass on all neighborhoods of $\boldsymbol{\theta}_{0}$.\\
If not, then the prior either rules out $\boldsymbol{\theta}_{0}$
or $\boldsymbol{\theta}_{0}$ is on the edge of the prior. Both possibilities
modify the limit behavior heavily.
\item The posterior needs to concentrate around $\boldsymbol{\theta}_{0}$
as $n\rightarrow0$.\\
More precisely, for any $\epsilon>0$, the posterior probability of
the event $\left\Vert \boldsymbol{\theta}-\boldsymbol{\theta}_{0}\right\Vert _{2}\leq\epsilon$
needs to converge to $1$. Recall that the posterior is random since
it inherits its randomness from the sequence $X_{i}$, this is thus
a convergence in probability:
\begin{equation}
\mathbb{P}_{\boldsymbol{\theta}\sim f_{n}\left(\boldsymbol{\theta}\right)}\left[\left\Vert \boldsymbol{\theta}-\boldsymbol{\theta}_{0}\right\Vert _{2}\leq\epsilon\right]=1+o_{P}\left(1\right)
\end{equation}
This is a technical condition that ensures that the posterior is \emph{consistent
}in recovering $\boldsymbol{\theta}_{0}$ as $n$ grows. This is very
comparable to the necessity of assuming that the MLE is consistent
in order for it to exhibit Gaussian limit behavior (\citet{van2000asymptotic}).
\item The function-valued random variables $NLL$ needs to have a regular
distribution. More precisely:
\begin{enumerate}
\item The theoretical average function: $\boldsymbol{\theta}\rightarrow\E\left(NLL\left(\boldsymbol{\theta}\right)\right)$
needs to have a strictly positive Hessian at $\boldsymbol{\theta}_{0}$.
\item The gradient of $NLL$ at $\boldsymbol{\theta}_{0}$ must have finite
variance.
\item In a local neighborhood around $\boldsymbol{\theta}_{0}$, $NLL$
is $m$-Lipschitz where $m$ is a random variable such that $\E\left(m^{2}\right)$
is finite.
\end{enumerate}
\end{enumerate}
Under such assumptions, then the posterior distribution becomes asymptotically
Gaussian, in that the total-variation distance between the posterior
$f_{n}\left(\boldsymbol{\theta}\right)$ and its Laplace approximation
$g_{LAP,n}\left(\boldsymbol{\theta}\right)$ converges to $0$ as:
\begin{equation}
d_{TV}\left(f_{n},g_{LAP,n}\right)=\mathcal{O}_{P}\left(\frac{1}{\sqrt{n}}\right)
\end{equation}
Indeed, both $f_{n}$ and $g_{LAP,n}$ inherit their randomness from
the random sequence $X_{i}$ and the distance between the two is thus
random. The BvM theorem establishes furthermore that Bayesian inference
is a valid procedure for frequentist inference in that it is equivalent
in the large $n$ limit to Maximum Likelihood Estimation. Furthermore,
\uline{under correct model specification}, Bayesian credible intervals
are also confidence intervals (\citet{kleijn2012bernstein}).

However, BvM theorems of this nature are limited in several ways.
One trivial but worrying flaw is that they ignore the prior. If we
consider for example a ``strong'' Gaussian prior with small variance,
then when $n$ is small, the likelihood will have negligible influence
on the posterior, and we intuitively expect that the posterior will
be almost equal to its Laplace approximation. Existing theorems fail
to establish this. Furthermore, such BvM theorems are of a different
probabilistic nature than normal Bayesian inference. Indeed, in BvM
theorems, the probability statement is made a-priori from the data
$X_{i}$. The result thus applies to the typical posterior and not
to any specific one. This complicates the application of such results
in a Bayesian setting where the focus is instead in making probabilistic
statements conditional on the data. Finally, these theorems are also
of limited applicability due to their reliance on theoretical quantities
that are inaccessible directly and hard to estimate, like the random
Lipschitz constant $m$ and its second moment $\E\left(m^{2}\right)$.

In this article, I will prove a theorem which addresses these limits
and gives a sharp approximation of the KL divergence between the Laplace
approximation $g_{LAP}\left(\boldsymbol{\theta}\right)$ and a fixed
log-concave target $f\left(\boldsymbol{\theta}\right)$. This bound
recovers the classical BvM theorem in the classical large-data limit
setup that we have presented here (see Section \ref{subsec:Recovering-the-classical IID theorem}).
Finally, this bound identifies both the small $n$ and large $n$
regimes in which the posterior is approximately Gaussian (see Section
\ref{subsec:Empirical-results-in}).

\section{Three limited deterministic BvM propositions\label{sec:Three-limited-deterministic BvM propositions}}

In this section, I present three simple propositions that are almost-interesting
deterministic BvM results. However, all three fall short due to either
relying on assumptions that are too restrictive or referring to quantities
that are unwieldy. Even then, they still are useful both in building
intuition on BvM results and as a gentle introduction into important
tools for the proof of Theorem \ref{thm:A-deterministic-BvM theorem}.

Note that, while they are restricted in other ways, the following
propositions hold under more general assumptions than only for log-concave
$f\left(\boldsymbol{\theta}\right)$ and for its Laplace approximation
$g_{LAP}\left(\boldsymbol{\theta}\right)$, as emphasized in every
section and in the statement of the propositions.

\subsection{The ``Kullback-Leibler variance''}

Throughout this subsection, let $f\left(\boldsymbol{\theta}\right)$
and $g\left(\boldsymbol{\theta}\right)$ be any probability densities.

The KL divergence between $g$ and $f$ corresponds to the first moment
of the random variable $\phi_{f}\left(\boldsymbol{\theta}\right)-\phi_{g}\left(\boldsymbol{\theta}\right)+\log\left(Z_{f}\right)-\log\left(Z_{g}\right)$
under the density $g\left(\boldsymbol{\theta}\right)$:
\begin{align}
KL\left(g,f\right) & =\E_{\boldsymbol{\theta}\sim g\left(\boldsymbol{\theta}\right)}\left[\log\left[\frac{g\left(\boldsymbol{\theta}\right)}{f\left(\boldsymbol{\theta}\right)}\right]\right]\\
 & =\E_{\boldsymbol{\theta}\sim g\left(\boldsymbol{\theta}\right)}\left[\phi_{f}\left(\boldsymbol{\theta}\right)-\phi_{g}\left(\boldsymbol{\theta}\right)+\log\left(Z_{f}\right)-\log\left(Z_{g}\right)\right]
\end{align}
It is very hard to say anything about this quantity since it requires
knowledge of both normalization constants $Z_{f}$ and $Z_{g}$. Outside
of the handful of probability distributions for which the integral
is known, normalization constants prove to be a major obstacle towards
theoretical or computational results of any kind.

The following proposition offers a possible solution: the variance
of $\phi_{f}\left(\boldsymbol{\theta}\right)-\phi_{g}\left(\boldsymbol{\theta}\right)+\log\left(Z_{f}\right)-\log\left(Z_{g}\right)$
can be used to approximate the KL divergence. I will refer to this
quantity as the \emph{KL variance} defined as:
\begin{equation}
KL_{var}\left(g,f\right)=\text{Var}_{\boldsymbol{\theta}\sim g\left(\boldsymbol{\theta}\right)}\left[\phi_{f}\left(\boldsymbol{\theta}\right)-\phi_{g}\left(\boldsymbol{\theta}\right)\right]
\end{equation}
Critically, $KL_{var}$ does not require knowledge of the normalization
constants because they do not modify the variance.
\begin{prop}
Restrictive approximation of $KL\left(g,f\right)$.\label{prop: KL approx KL_var}

For any densities $f\left(\boldsymbol{\theta}\right)$ and $g\left(\boldsymbol{\theta}\right)$,
define the exponential family:
\begin{equation}
h\left(\boldsymbol{\theta};\lambda\right)=g\left(\boldsymbol{\theta}\right)\left[\frac{f\left(\boldsymbol{\theta}\right)}{g\left(\boldsymbol{\theta}\right)}\right]^{\lambda}\exp\left[-C\left(\lambda\right)\right]\ \ \lambda\in\left[0,1\right]
\end{equation}
then the reverse KL divergence can be approximated:
\begin{equation}
KL\left(g,f\right)=\frac{1}{2}KL_{var}\left(g,f\right)+\left[\int_{0}^{1}\frac{\lambda^{2}}{2}k_{3}\left(\lambda\right)d\lambda\right]
\end{equation}
where $k_{3}\left(\lambda\right)$ is the third cumulant of $\phi_{f}\left(\boldsymbol{\theta}\right)-\phi_{g}\left(\boldsymbol{\theta}\right)$
under $h\left(\boldsymbol{\theta};\lambda\right)$:
\begin{align}
k_{1}\left(\lambda\right) & =\E_{\boldsymbol{\theta}\sim h\left(\boldsymbol{\theta};\lambda\right)}\left[\phi_{f}\left(\boldsymbol{\theta}\right)-\phi_{g}\left(\boldsymbol{\theta}\right)\right]\\
k_{3}\left(\lambda\right) & =\E_{\boldsymbol{\theta}\sim h\left(\boldsymbol{\theta};\lambda\right)}\left[\left\{ \phi_{f}\left(\boldsymbol{\theta}\right)-\phi_{g}\left(\boldsymbol{\theta}\right)-k_{1}\left(\lambda\right)\right\} ^{3}\right]
\end{align}

If the difference between the log-densities is bounded, then the difference
between $KL$ and $KL_{var}$ is bounded too:
\begin{align}
\left|\phi_{f}\left(\boldsymbol{\theta}\right)-\phi_{g}\left(\boldsymbol{\theta}\right)\right| & \leq M\\
\left|KL\left(g,f\right)-\frac{1}{2}KL_{var}\left(g,f\right)\right| & \leq\frac{M^{3}}{6}
\end{align}
\end{prop}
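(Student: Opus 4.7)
The plan is to realize $h(\boldsymbol\theta;\lambda)$ as the exponential-family bridge between $g$ (at $\lambda=0$) and $f$ (at $\lambda=1$), with sufficient statistic $T(\boldsymbol\theta) := \log[f(\boldsymbol\theta)/g(\boldsymbol\theta)] = -(\phi_f - \phi_g) + (\log Z_g - \log Z_f)$ and log-normalizer $C(\lambda) = \log \mathbb{E}_g[\exp(\lambda T)]$. The whole argument hinges on two elementary observations: (i) $C(0)=0$ trivially and $C(1) = \log\int f\, d\boldsymbol\theta = 0$ by normalization of $f$, so $C$ vanishes at both endpoints of $[0,1]$; and (ii) $C$ is a cumulant generating function, so its successive derivatives are the cumulants of $T$ under $h(\cdot;\lambda)$.

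First I would read off the derivatives from standard exponential-family identities: $C'(\lambda) = \mathbb{E}_{h(\lambda)}[T]$, $C''(\lambda) = \mathrm{Var}_{h(\lambda)}[T]$, $C'''(\lambda) = \kappa_3\bigl(T;\, h(\lambda)\bigr)$. Because $T = -(\phi_f - \phi_g) + \text{const}$, the variance is unchanged and the third cumulant flips sign, so $C''(\lambda) = \mathrm{Var}_{h(\lambda)}[\phi_f - \phi_g]$ and $C'''(\lambda) = -k_3(\lambda)$. Evaluating at $\lambda = 0$ recovers $C'(0) = -KL(g,f)$ and $C''(0) = KL_{var}(g,f)$.

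The main integral identity then falls out of Taylor's theorem with second-order integral remainder applied to $C(1) - C(0) = 0$. Expanding around $\lambda = 0$,
\[
0 \;=\; C'(0) + \tfrac{1}{2}C''(0) + \int_0^1 \tfrac{(1-u)^2}{2}\, C'''(u)\, du,
\]
and substituting the identifications above rearranges into an exact formula of the form $KL(g,f) = \tfrac{1}{2}KL_{var}(g,f) + (\text{remainder in } k_3)$; a change of variable $\lambda\mapsto 1-\lambda$ in the integral (together with the sign $C''' = -k_3$) brings the weight into the $\lambda^2/2$ form stated in the proposition.

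For the quantitative $M^3/6$ bound I would switch from the integral to the Lagrange form of the remainder: there exists $\xi \in (0,1)$ with $KL(g,f) - \tfrac{1}{2}KL_{var}(g,f) = \pm\tfrac{1}{6}\, k_3(\xi)$. The main obstacle will be bounding $|k_3(\xi)|$ tightly enough. Under $|\phi_f(\boldsymbol\theta) - \phi_g(\boldsymbol\theta)| \le M$, the centered variable $X = (\phi_f - \phi_g) - k_1(\xi)$ has mean zero under $h(\xi)$ and is supported in an interval of width at most $2M$; a short extremal argument (the feasible set of distributions satisfies two linear constraints, so the extrema of $\mathbb{E}X^3$ are realized by two-point distributions, and optimizing the resulting one-dimensional algebraic expression) yields $|k_3(\xi)| = |\mathbb{E}X^3| \le M^3$. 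Combined with the $1/6$ Lagrange coefficient this delivers $|KL(g,f) - \tfrac{1}{2}KL_{var}(g,f)| \le M^3/6$. I expect this extremal third-moment bound to be the only step that is not a mechanical cumulant computation.
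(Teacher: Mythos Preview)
Your approach is correct and essentially the paper's: both Taylor-expand the cumulant generating function of the exponential bridge $h(\cdot;\lambda)$ to third order and bound the third cumulant by $M^3$ via the same extremal two-point-distribution argument; the paper merely repackages this by working with $K(\lambda):=KL(g,h(\lambda))$ (which differs from your $C$ only by a linear term in $\lambda$, so second and higher derivatives coincide) and using $K(0)=K'(0)=0$ in place of your $C(0)=C(1)=0$. One small caveat: your change of variable produces a remainder $-\int_0^1(\lambda^2/2)\,k_3(1-\lambda)\,d\lambda$, not $+\int_0^1(\lambda^2/2)\,k_3(\lambda)\,d\lambda$ as written in the proposition---the paper's own appendix derivation in fact exhibits the same mismatch with the stated integrand, so the precise form of the remainder in the proposition appears to be recorded loosely, and in any case this is immaterial for the $M^3/6$ bound, which only needs $\sup_\lambda|k_3(\lambda)|\le M^3$.
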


This proposition offers the $KL_{var}$ as a great alternative to
the KL divergence that is both computable and tractable. However,
it falls short due to the very important limitation that $\phi_{f}\left(\boldsymbol{\theta}\right)-\phi_{g}\left(\boldsymbol{\theta}\right)$
must be bounded in order to achieve precise control of the error of
the approximation. This means that Prop.\ref{prop: KL approx KL_var}
will not be able to provide a rigorous general BvM theorem. Indeed,
the Laplace approximation is a purely local approximation of $f\left(\boldsymbol{\theta}\right)$
and the statement $\phi_{g}\left(\boldsymbol{\theta}\right)\approx\phi_{f}\left(\boldsymbol{\theta}\right)$
is only valid in a small neighborhood around the MAP value $\boldsymbol{\mu}$.
As $\left\Vert \boldsymbol{\theta}-\boldsymbol{\mu}\right\Vert _{2}$
becomes large, the error typically tends to infinity.

\subsection{The log-Sobolev inequality}

Throughout this subsection, $g\left(\boldsymbol{\theta}\right)$ is
not restricted to being the Laplace approximation of $f\left(\boldsymbol{\theta}\right)$
and is not necessarily restricted to being Gaussian.

Another possible path towards avoiding the normalization constants
in the KL divergence is given by the Log-Sobolev Inequality (LSI;
\citet{bakry1985diffusions,otto2000generalization}). The LSI relates
the KL divergence, which requires knowledge of the normalization constants,
to the \emph{relative Fisher information} defined as:
\begin{equation}
RI_{F}\left(g,f\right)=\E_{\boldsymbol{\theta}\sim g\left(\boldsymbol{\theta}\right)}\left[\left\Vert \nabla\phi_{f}\left(\boldsymbol{\theta}\right)-\nabla\phi_{g}\left(\boldsymbol{\theta}\right)\right\Vert _{2}^{2}\right]
\end{equation}
which does not require the normalization constants due to the derivative.

A key theorem by \citet{bakry1985diffusions} shows that the relationship
between $KL$ and $RI_{F}$ can be controlled by the minimal curvature
of $\phi_{f}$, i.e. the smallest eigenvalue of the matrices $H\phi_{f}\left(\boldsymbol{\theta}\right)$.
In order for the LSI to hold, this minimal curvature needs to be strictly
positive, i.e. $\phi_{f}\left(\boldsymbol{\theta}\right)$ needs to
be strongly log-concave (\citet{saumard2014log}). The LSI is usually
stated in the following form.
\begin{thm}
LSI (\citet{bakry1985diffusions}; \citet{otto2000generalization}
Th.2).

If $f\left(\boldsymbol{\theta}\right)$ is a $\lambda$-strongly log-concave
density, i.e.
\begin{equation}
\forall\boldsymbol{\theta}\ H\phi_{f}\left(\boldsymbol{\theta}\right)\geq\lambda I_{p}
\end{equation}
 then, for any $g\left(\boldsymbol{\theta}\right)$, the reverse KL
divergence is bounded:
\begin{equation}
KL\left(g,f\right)\leq\frac{1}{2\lambda}RI_{F}\left(g,f\right)
\end{equation}
\end{thm}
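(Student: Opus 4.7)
The plan is to prove the LSI by the Bakry-Émery semigroup argument, which is the canonical path from a curvature bound to a log-Sobolev inequality. The core idea is to deform $g$ into $f$ along a dissipative dynamics whose invariant law is $f$, and to track both the KL divergence and the relative Fisher information along the flow.

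First, I would introduce the overdamped Langevin diffusion targeting $f$, namely $dX_t = -\nabla\phi_f(X_t)\,dt + \sqrt{2}\,dW_t$, and let $g_t$ denote the density of $X_t$ when $X_0 \sim g$. Then $g_t$ solves the Fokker-Planck equation $\partial_t g_t = \nabla\cdot(g_t \nabla\phi_f) + \Delta g_t$, has $f$ as its unique stationary density, and under strong log-concavity converges to $f$ as $t\to\infty$. Second, I would establish the entropy dissipation identity (the de Bruijn-type formula): differentiating $KL(g_t,f)$ in $t$ and integrating by parts against the Fokker-Planck equation yields
\begin{equation}
\frac{d}{dt} KL(g_t, f) = -RI_F(g_t, f).
\end{equation}

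Third comes the heart of the argument, the Bakry-Émery $\Gamma_2$ estimate. Differentiating $RI_F(g_t,f)$ along the flow and integrating by parts produces a boundary-free expression in which the Hessian $H\phi_f$ appears sandwiched between the gradient of the log-ratio. Using the curvature assumption $H\phi_f \geq \lambda I_p$ pointwise, this yields
\begin{equation}
\frac{d}{dt} RI_F(g_t, f) \leq -2\lambda\, RI_F(g_t, f),
\end{equation}
and hence $RI_F(g_t, f) \leq e^{-2\lambda t}\, RI_F(g, f)$ by Gronwall. Fourth, integrating the entropy dissipation identity from $0$ to $\infty$ and using $KL(g_\infty,f)=0$ gives
\begin{equation}
KL(g,f) = \int_0^\infty RI_F(g_t,f)\, dt \;\leq\; RI_F(g,f) \int_0^\infty e^{-2\lambda t}\, dt \;=\; \frac{1}{2\lambda}\, RI_F(g,f),
\end{equation}
which is the desired inequality.

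The main obstacle will be the rigorous justification of step three: the differentiation of $RI_F(g_t,f)$ under the integral sign and the integration-by-parts manipulation that exposes the Hessian of $\phi_f$. One must control boundary terms at infinity and justify that $g_t$ is smooth and decays fast enough, which is typically handled by first proving the inequality for $g$ smooth with compact support (or bounded above and below by Gaussians) and then extending by approximation and lower-semicontinuity of both $KL$ and $RI_F$. An alternative, more analytic route would go through the Otto-Villani HWI inequality and displacement convexity of the relative entropy along Wasserstein geodesics under the curvature bound $\lambda$, but the semigroup approach is the more direct and self-contained one given the assumptions already in place.
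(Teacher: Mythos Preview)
Your proposal is a correct sketch of the classical Bakry--\'Emery semigroup argument for the log-Sobolev inequality. However, the paper does not give its own proof of this theorem: it is stated as a known result and attributed to \citet{bakry1985diffusions} and \citet{otto2000generalization}, then used as a black box to derive Propositions~\ref{prop:LSI-bound-on KL(g,f)} and~\ref{prop:LSI-bound-on KL(f,g)}. So there is nothing to compare against in the paper itself; your outline is essentially the standard proof from the cited references, and the concerns you raise about regularity and approximation in step three are the genuine technical points one must address when writing out that argument in full.
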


There are two possible ways to transform the LSI into a BvM result:
either we can assume that $f\left(\boldsymbol{\theta}\right)$ is
strongly log-concave or we can use the fact that a Gaussian distribution
is strongly log-concave. The following two propositions correspond
to these two direct applications of the LSI.
\begin{prop}
Loose/restrictive LSI bound on $KL\left(g,f\right)$. \label{prop:LSI-bound-on KL(g,f)}

If $f\left(\boldsymbol{\theta}\right)$ is strongly log-concave so
that there exists a strictly positive matrix $H_{min}$ such that:
\begin{equation}
\forall\boldsymbol{\theta}\ H\phi_{f}\left(\boldsymbol{\theta}\right)\geq H_{min}
\end{equation}
then, for any approximation $g\left(\boldsymbol{\theta}\right)$,
the reverse KL divergence is bounded:
\begin{equation}
KL\left(g,f\right)\leq\frac{1}{2}\E_{\boldsymbol{\theta}\sim g\left(\boldsymbol{\theta}\right)}\left[\left\{ \nabla\phi_{f}\left(\boldsymbol{\theta}\right)-\nabla\phi_{g}\left(\boldsymbol{\theta}\right)\right\} ^{T}\left(H_{min}\right)^{-1}\left\{ \nabla\phi_{f}\left(\boldsymbol{\theta}\right)-\nabla\phi_{g}\left(\boldsymbol{\theta}\right)\right\} \right]
\end{equation}
\end{prop}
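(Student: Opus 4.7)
The plan is to reduce the matrix-strong-log-concavity hypothesis to the scalar-strong-log-concavity hypothesis of the standard LSI via an affine change of variables, so that the Bakry-Emery inequality can be applied directly. Concretely, I would introduce the new variable $\boldsymbol{\psi}=H_{min}^{1/2}\boldsymbol{\theta}$, and denote by $\tilde{f}$ and $\tilde{g}$ the pushforward densities of $f$ and $g$ under this map. The negative log-densities transform as $\tilde{\phi}_f(\boldsymbol{\psi})=\phi_f(H_{min}^{-1/2}\boldsymbol{\psi})+\text{const}$, so by the chain rule the Hessian becomes
\begin{equation}
H\tilde{\phi}_f(\boldsymbol{\psi})=H_{min}^{-1/2}\,H\phi_f(\boldsymbol{\theta})\,H_{min}^{-1/2}\geq H_{min}^{-1/2}\,H_{min}\,H_{min}^{-1/2}=I_p,
\end{equation}
so that $\tilde{f}$ is $1$-strongly log-concave on $\R^p$.

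Next, I would invoke the invariance of the KL divergence under diffeomorphisms, giving $KL(g,f)=KL(\tilde{g},\tilde{f})$, and apply the LSI of Bakry-Emery with $\lambda=1$ to get
\begin{equation}
KL(g,f)=KL(\tilde{g},\tilde{f})\leq\tfrac{1}{2}\,RI_F(\tilde{g},\tilde{f})=\tfrac{1}{2}\,\E_{\boldsymbol{\psi}\sim\tilde{g}}\left[\left\Vert \nabla_{\boldsymbol{\psi}}\tilde{\phi}_f(\boldsymbol{\psi})-\nabla_{\boldsymbol{\psi}}\tilde{\phi}_g(\boldsymbol{\psi})\right\Vert _2^2\right].
\end{equation}
Then I would rewrite this expectation back in the original variable. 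The gradient transforms as $\nabla_{\boldsymbol{\psi}}\tilde{\phi}_f(\boldsymbol{\psi})=H_{min}^{-1/2}\nabla_{\boldsymbol{\theta}}\phi_f(\boldsymbol{\theta})$, and similarly for $g$, so the squared Euclidean norm on $\boldsymbol{\psi}$-space becomes the weighted quadratic form
\begin{equation}
\left\Vert H_{min}^{-1/2}\{\nabla\phi_f-\nabla\phi_g\}\right\Vert _2^2=\{\nabla\phi_f-\nabla\phi_g\}^T(H_{min})^{-1}\{\nabla\phi_f-\nabla\phi_g\},
\end{equation}
and the change of variables sends the expectation under $\tilde{g}$ back to the expectation under $g$, yielding the stated bound.

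There is no real obstacle here; the proposition is essentially a coordinate-covariant restatement of the scalar LSI. The only point requiring mild care is checking that the affine change of variables is well-defined, which uses that $H_{min}$ is strictly positive and hence invertible with a well-defined symmetric square root, and that the Hessian inequality $H\phi_f(\boldsymbol{\theta})\geq H_{min}$ is correctly conjugated through $H_{min}^{-1/2}$ on both sides. Everything else is routine manipulation of Jacobians and of the LSI in its standard form.
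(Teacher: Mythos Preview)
Your proposal is correct and essentially identical to the paper's proof: both perform an affine change of variables (the paper uses $\mathring{\boldsymbol{\theta}}=A\boldsymbol{\theta}$ with $AA^{T}=H_{min}^{-1}$, you use $\boldsymbol{\psi}=H_{min}^{1/2}\boldsymbol{\theta}$) to reduce the matrix lower bound to $I_p$, apply the standard Bakry--\'Emery LSI with $\lambda=1$, and then transform the relative Fisher information back to the original coordinates using the KL invariance and the chain rule for gradients. The two changes of variables are equivalent (your $H_{min}^{1/2}$ plays the role of the paper's $A^{-1}$ up to orthogonal ambiguity in the square root), so there is no substantive difference.
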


\begin{prop}
Unwieldy LSI bound on $KL\left(f,g\right)$. \label{prop:LSI-bound-on KL(f,g)}

If $g\left(\boldsymbol{\theta}\right)$ is a Gaussian with covariance
$\Sigma$, then for any $f\left(\boldsymbol{\theta}\right)$ the forward
KL divergence is bounded:
\begin{equation}
KL\left(f,g\right)\leq\frac{1}{2}\E_{\boldsymbol{\theta}\sim f\left(\boldsymbol{\theta}\right)}\left[\left\{ \nabla\phi_{f}\left(\boldsymbol{\theta}\right)-\nabla\phi_{g}\left(\boldsymbol{\theta}\right)\right\} ^{T}\Sigma\left\{ \nabla\phi_{f}\left(\boldsymbol{\theta}\right)-\nabla\phi_{g}\left(\boldsymbol{\theta}\right)\right\} \right]
\end{equation}
\end{prop}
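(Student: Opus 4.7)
The plan is to reduce the claim to the standard-Gaussian log-Sobolev inequality by an affine change of variables that turns $g$ into the standard Gaussian, exploiting the fact that the sharp LSI constant for the standard Gaussian is exactly $\lambda=1$.

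First I would introduce the whitening transformation $\tilde{\boldsymbol{\theta}} = \Sigma^{-1/2}(\boldsymbol{\theta}-\boldsymbol{\mu})$ already used elsewhere in the paper. Under this bijective affine change, $g$ becomes $\tilde{g}$, the standard $p$-dimensional Gaussian with potential $\tilde{\phi}_g(\tilde{\boldsymbol{\theta}})=\tfrac{1}{2}\|\tilde{\boldsymbol{\theta}}\|_2^2$, while $f$ becomes its pushforward $\tilde{f}$. Since the KL divergence is invariant under smooth bijections, $KL(f,g)=KL(\tilde{f},\tilde{g})$.

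Next, I would invoke the stated LSI theorem with $\lambda=1$, which applies because $H\tilde{\phi}_g=I_p\succeq I_p$. This directly gives
\begin{equation}
KL(\tilde{f},\tilde{g})\leq \frac{1}{2}\,RI_F(\tilde{f},\tilde{g})=\frac{1}{2}\,\E_{\tilde{\boldsymbol{\theta}}\sim\tilde{f}}\!\left[\left\Vert \nabla_{\tilde{\boldsymbol{\theta}}}\tilde{\phi}_f-\nabla_{\tilde{\boldsymbol{\theta}}}\tilde{\phi}_g\right\Vert _{2}^{2}\right].
\end{equation}

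Finally, I would translate back to the original coordinates using the chain rule $\nabla_{\tilde{\boldsymbol{\theta}}}\tilde{\phi}(\tilde{\boldsymbol{\theta}})=\Sigma^{1/2}\nabla_{\boldsymbol{\theta}}\phi(\boldsymbol{\theta})$, so that
\begin{equation}
\left\Vert \nabla_{\tilde{\boldsymbol{\theta}}}\tilde{\phi}_f-\nabla_{\tilde{\boldsymbol{\theta}}}\tilde{\phi}_g\right\Vert _{2}^{2}=\left\{ \nabla\phi_{f}-\nabla\phi_{g}\right\} ^{T}\Sigma\left\{ \nabla\phi_{f}-\nabla\phi_{g}\right\},
\end{equation}
and the expectation under $\tilde{f}$ is the same quantity as the expectation under $f$ after the change of variables. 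Substituting yields exactly the claimed bound. There is no serious obstacle here: the entire argument is a one-line application of LSI for the standard Gaussian after whitening, and the only bookkeeping is correctly carrying the factor $\Sigma^{1/2}$ through the gradient transformation (which is precisely why the matrix $\Sigma$, rather than a scalar curvature bound, appears in the weighted quadratic form).
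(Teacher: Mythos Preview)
Your proposal is correct and is essentially the same argument as the paper's: both reduce to the standard-Gaussian LSI with $\lambda=1$ via the whitening transformation $\tilde{\boldsymbol{\theta}}=\Sigma^{-1/2}(\boldsymbol{\theta}-\boldsymbol{\mu})$ and then carry the $\Sigma^{1/2}$ factor through the gradient. The only cosmetic difference is that the paper first packages the change-of-variables step as a general affine-equivariant LSI (Prop.~\ref{prop:LSI-bound-on KL(g,f)}) and then obtains Prop.~\ref{prop:LSI-bound-on KL(f,g)} by swapping the roles of $f$ and $g$, whereas you apply the whitening directly to the pair $(f,g)$; the underlying computation is identical.
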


Both of these propositions offer an interesting alternative to the
KL divergence which can be useful to build heuristic understanding
of the BvM result but they fail to be useful in practice.

Prop.\ref{prop:LSI-bound-on KL(g,f)} falls short due to the severity
of assuming that the target density $f\left(\boldsymbol{\theta}\right)$,
i.e. the posterior in a Bayesian context, is strongly log-concave.
Indeed, in order for the bound of Prop.\ref{prop:LSI-bound-on KL(g,f)}
to be tight, the minimal curvature $H_{min}$ needs to be comparable
to the curvature at the MAP value: $H\phi_{f}\left(\boldsymbol{\theta}_{MAP}\right)$,
or more generally to be representative of the typical curvature. For
a typical posterior distribution, these quantities widely differ with
the mode being much more peaked than any other region of the posterior
distribution, or at least much more peaked than the region with minimum
curvature.

Prop.\ref{prop:LSI-bound-on KL(f,g)} falls short for very different
reasons. Indeed, it comes with absolutely no restrictions on the target
density $f\left(\boldsymbol{\theta}\right)$. However, it bounds the
KL divergence with an expected value of a complicated quantity under
the target $f\left(\boldsymbol{\theta}\right)$. Dealing with an expected
value under $f\left(\boldsymbol{\theta}\right)$ is as complicated
as tackling the normalization constant $Z_{f}$ for theoretical or
computational purposes which makes Prop.\ref{prop:LSI-bound-on KL(f,g)}
inapplicable.

\section{Gaussian approximations of simply log-concave distributions\label{sec:Gaussian-approximations-of log-concave distributions}}

This section first details the assumptions required and then presents
the main result of this article, Theorem \ref{thm:A-deterministic-BvM theorem},
establishing an approximation of $KL\left(g,f\right)$ that is well-suited
to computational and theoretical investigations of the quality of
the Laplace approximation. Finally, I show how to recover the classical
IID BvM Theorem as a corollary.

\subsection{Assumptions\label{subsec:Assumptions}}

In this first subsection, I present the assumptions required for Theorem
\ref{thm:A-deterministic-BvM theorem} to hold and discuss the reasons
why they are necessary. I differ the discussion of their applicability
in a Bayesian context to Section \ref{sec:Discussion}.

In order for the Laplace approximation $g_{LAP}\left(\boldsymbol{\theta}\right)$
to be close to $f\left(\boldsymbol{\theta}\right)$, we need assumptions
that control $f\left(\boldsymbol{\theta}\right)$ at two qualitatively
different levels.

First, we need global control over the shape of $f\left(\boldsymbol{\theta}\right)$
so that we can avoid trivial counter-examples such as the following
mixture of two Gaussians where the second Gaussian has a very high-variance:
\begin{equation}
f\left(\boldsymbol{\theta}\right)=\frac{1}{2}\Phi\left(\boldsymbol{\theta};0,I_{p}\right)+\frac{1}{2}\Phi\left(\boldsymbol{\theta};0,10^{100}I_{p}\right)
\end{equation}
where $\Phi$ denotes the density of the Gaussian with a given mean
and covariance. Around the mode $\boldsymbol{\mu}=0$, the density
$f\left(\boldsymbol{\theta}\right)$ is completely dominated by the
component with the small variance and the Laplace approximation would
completely ignore the second component. The Laplace approximation
would then miss half of the mass of $f\left(\boldsymbol{\theta}\right)$
and would thus provide a very poor approximation. Less artificial
examples are straightforward to construct with fat tails instead of
a mixture component.

In the present article, I propose to achieve global control by assuming
that $f\left(\boldsymbol{\theta}\right)$ is log-concave. Log-concave
distributions are not only unimodal but they have tails that decay
at least exponentially. As such, it is thus impossible for a lot of
mass of $f\left(\boldsymbol{\theta}\right)$ to be hidden in the tails
and the Laplace approximation thus necessarily captures the global
shape of $f\left(\boldsymbol{\theta}\right)$. Please see \citet{saumard2014log}
for a thorough review of their properties.

However, global control is not enough. Since the Laplace approximation
is based on a Taylor expansion of $\phi_{f}\left(\boldsymbol{\theta}\right)$
to second order, we need local control on the regularity of $\phi_{f}\left(\boldsymbol{\theta}\right)$.
We will do so by assuming that $\phi_{f}$ is differentiable three
times and by controlling the third derivative of $\phi_{f}\left(\boldsymbol{\theta}\right)$
which is an order-3 tensor: a linear operator which takes as input
three vectors $v_{1},v_{2},v_{3}$ and returns a scalar:
\begin{equation}
\phi_{f}^{\left(3\right)}\left(\boldsymbol{\theta}\right)\left[v_{1},v_{2},v_{3}\right]=\frac{\partial^{3}}{\partial\alpha_{1}\partial\alpha_{2}\partial\alpha_{3}}\phi_{f}\left(\boldsymbol{\theta}+\alpha_{1}v_{1}+\alpha_{2}v_{2}+\alpha_{3}v_{3}\right)
\end{equation}

The size of the third derivative can be measured by using the max
norm which measures the maximum relative size of the inputted vectors
and outputted scalar:
\begin{equation}
\left\Vert \phi_{f}^{\left(3\right)}\left(\boldsymbol{\theta}\right)\right\Vert _{\text{max}}=\max_{v_{1},v_{2},v_{3}}\frac{\left|\phi_{f}^{\left(3\right)}\left(\boldsymbol{\theta}\right)\left[v_{1},v_{2},v_{3}\right]\right|}{\left\Vert v_{1}\right\Vert _{2}\left\Vert v_{2}\right\Vert _{2}\left\Vert v_{3}\right\Vert _{2}}
\end{equation}

The smaller the third-derivative, the more accurate the Taylor expansion
of $\phi_{f}\left(\boldsymbol{\theta}\right)$ to second order. However,
a key technical point is that the absolute size of the third-derivative
is not what matters. What matters instead is the relative size of
the third-derivative to the second, and more precisely to the Hessian
matrix at the mode: $\Sigma^{-1}=H\phi_{f}\left(\boldsymbol{\mu}\right)$.
One possibility to measure this relative size is to compute the third-derivative
of the log-density on the standardized space $\tilde{\boldsymbol{\theta}}$:
$\tilde{\phi}_{f}\left(\tilde{\boldsymbol{\theta}}\right)$. Theorem
\ref{thm:A-deterministic-BvM theorem} establishes that the maximum
of the max-norms as we vary $\boldsymbol{\theta}$ is the key component
that controls the distance between $f\left(\boldsymbol{\theta}\right)$
and $g_{LAP}\left(\boldsymbol{\theta}\right)$. We will denote this
key quantity as $\Delta_{3}$:
\begin{align}
\Delta_{3} & =\max_{\boldsymbol{\theta}}\left\Vert \tilde{\phi}_{f}^{\left(3\right)}\left(\boldsymbol{\theta}\right)\right\Vert _{\text{max}}\label{eq: controling the third derivative}\\
 & =\max_{\boldsymbol{\theta},v_{1},v_{2},v_{3}}\frac{\left|\phi_{f}^{\left(3\right)}\left(\boldsymbol{\theta}\right)\left[\Sigma^{-1/2}v_{1},\Sigma^{-1/2}v_{2},\Sigma^{-1/2}v_{3}\right]\right|}{\left\Vert v_{1}\right\Vert _{2}\left\Vert v_{2}\right\Vert _{2}\left\Vert v_{3}\right\Vert _{2}}
\end{align}

Note that it is already NP-hard to compute exactly the max norm of
an order 3 tensor (\citet{hillar2013most}). $\Delta_{3}$ is thus
potentially computationally tricky to compute. However, this value
is only required in order to control the higher-order error terms
and, for computational concerns, we will be able to focus instead
on the dominating term. The precise value of $\Delta_{3}$ can then
be bounded very roughly or ignored.

\subsection{A general deterministic bound}

\subsubsection{Structure of the proof}

Given the two assumptions that the target density is log-concave and
regular, as measured by the scalar $\Delta_{3}$, it is possible to
precisely bound the KL divergence $KL\left(g_{LAP},f\right)$. The
bound is actually derived from a direct application of Prop.\ref{prop: KL approx KL_var}
and Prop.\ref{prop:LSI-bound-on KL(g,f)}, despite their flaws. This
subsection gives an overview of the structure of the proof. Full details
are presented in the appendix.

The key step is the use of a tricky change of variables from $\boldsymbol{\theta}$
to the standardized variable $\tilde{\boldsymbol{\theta}}$ and finally
to spherical coordinates in $\tilde{\boldsymbol{\theta}}$:
\begin{align}
\boldsymbol{\theta} & =\boldsymbol{\mu}+\Sigma^{1/2}\tilde{\boldsymbol{\theta}}\\
\tilde{\boldsymbol{\theta}} & =r\boldsymbol{e} & r\in\R^{+},\ \boldsymbol{e}\in S^{p-1}
\end{align}
where $\boldsymbol{e}$ takes values over the $p$-dimensional unit
sphere $S^{p-1}$.

Under the distribution $g_{LAP}$, the pair of random variables $r_{g},\boldsymbol{e}_{g}$
is simple. Indeed, it is a basic result of statistics that they are
independent and that $\boldsymbol{e}$ is a uniform random variable
over $S^{p-1}$ while $r_{g}$ is a $\chi_{p}$ random variable (Appendix
Lemma \ref{lem: APPENDIX Distribution-of-r_g,e_g}). Expected values
under $g_{LAP}$ are thus still simple to compute in the $\left(r,\boldsymbol{e}\right)$
parameterization. Furthermore, the KL divergence has the nice feature
that it is invariant to bijective changes of variables. This change
of variable thus decomposes the KL divergence of interest: $KL\left(g_{LAP},f\right)$
into the KL divergence due to $\boldsymbol{e}$ and that due to $r$
(Appendix Lemma \ref{lem: APPENDIX Decomposition-of-KL(g,f)}). More
precisely, we have:
\begin{equation}
KL\left(g_{LAP},f\right)=KL\left(\boldsymbol{e}_{g},\boldsymbol{e}_{f}\right)+\E_{\boldsymbol{e}\sim g_{LAP}\left(\boldsymbol{e}\right)}\left[KL\left(r_{g},r_{f}|\boldsymbol{e}\right)\right]
\end{equation}
where $KL\left(\boldsymbol{e}_{g},\boldsymbol{e}_{f}\right)$ is the
KL divergence between the marginal distributions of $\boldsymbol{e}$
under $g_{LAP}$ and $f$ and $KL\left(r_{g},r_{f}|\boldsymbol{e}\right)$
is the KL divergence between the conditional of distribution of $r$
under $g_{LAP}$ and $f$.

This decomposition already resurrects Proposition \ref{prop: KL approx KL_var}.
Indeed, the random direction $\boldsymbol{e}$ takes values over a
compact region of space. There is thus going to be a maximum for the
difference between the log-densities $\log\left[f\left(\boldsymbol{e}\right)\right]-\log\left[g_{LAP}\left(\boldsymbol{e}\right)\right]$
which means that we will be able to approximate $KL\left(\boldsymbol{e}_{g},\boldsymbol{e}_{f}\right)$
using the KL variance $KL_{var}\left(\boldsymbol{e}_{g},\boldsymbol{e}_{f}\right)$
and precisely bound the error.

Computation of the $KL_{var}$ is further simplified because $g\left(\boldsymbol{e}\right)$
is constant. Thus, the variance only comes from $f\left(\boldsymbol{e}\right)$.
However, computing $\log\left[f\left(\boldsymbol{e}\right)\right]$
would be slightly tricky here since that would involve a slightly
unwieldy integral against $r$. An additional useful trick thus consists
in replacing $\log\left[f\left(\boldsymbol{e}\right)\right]$ with
its Evidence Lower Bound (ELBO) computed under the approximation distribution
$g_{LAP}\left(r\right)$:
\begin{align}
\log\left[f\left(\boldsymbol{e}\right)\right] & \approx ELBO\left(\boldsymbol{e}\right)\\
ELBO\left(\boldsymbol{e}\right) & =-\E_{r\sim g_{LAP}\left(r\right)}\left[\tilde{\phi}_{f}\left(r\boldsymbol{e}\right)-\frac{1}{2}r^{2}\right]+C
\end{align}
where $C$ is a constant that does not depend on $\boldsymbol{e}$
and thus does not affect the KL variance. This error of this approximation
of $\log\left[f\left(\boldsymbol{e}\right)\right]$ is precisely equal
to $KL\left(r_{g},r_{f}|\boldsymbol{e}\right)$ which we now turn
to bounding.

The KL divergence between $r_{g}$ and $r_{f}|\boldsymbol{e}$ is
still challenging. While $r_{g}$ is strongly log-concave, $r_{f}|\boldsymbol{e}$
cannot be guaranteed to be more than simply log-concave without additional
highly-damaging assumptions. This is due to the tail of $r_{f}|\boldsymbol{e}$
where the curvature could tend to $0$. An additional trick comes
into play here: changing the variable from the radius $r$ into its
cubic-root $c$:
\begin{equation}
r=c^{3}
\end{equation}
Once more, this does not modify the KL divergence, but it does modify
the properties of the problem in interesting ways. Indeed, we can
now prove that $f\left(c|\boldsymbol{e}\right)$ is necessarily strongly
log-concave and that its minimum log-curvature tends to the minimum
log-curvature of $g_{LAP}\left(c\right)$. This unlocks applying the
LSI in the interesting direction of Proposition \ref{prop:LSI-bound-on KL(g,f)}
thus yielding a bound on the KL divergence:
\begin{align}
KL\left(r_{g},r_{f}|\boldsymbol{e}\right) & =KL\left(c_{g},c_{f}|\boldsymbol{e}\right)\\
 & \leq\frac{1}{2\lambda}RI_{F}\left[g\left(c\right),f\left(c|\boldsymbol{e}\right)\right]
\end{align}

\subsubsection{Statement of the Theorem}

By combining the approximations of both halves of $KL\left(g_{LAP},f\right)$,
we obtain the following Theorem.
\begin{thm}
A deterministic BvM theorem.\label{thm:A-deterministic-BvM theorem}

For any log-concave distribution $f\left(\boldsymbol{\theta}\right)$
supported on $\R^{p}$ and its Laplace approximation $g_{LAP}\left(\boldsymbol{\theta}\right)$,
in the asymptote $\Delta_{3}p^{3/2}\rightarrow0$ the reverse KL divergence
scales as:
\begin{align}
KL\left(g_{LAP},f\right) & =KL\left(\boldsymbol{e}_{g},\boldsymbol{e}_{f}\right)+\E_{\boldsymbol{e}\sim g_{LAP}\left(\boldsymbol{e}\right)}\left[KL\left(r_{g},r_{f}|\boldsymbol{e}\right)\right]\\
KL\left(r_{g},r_{f}|\boldsymbol{e}\right) & \leq\mathcal{O}\left(\left(\Delta_{3}\right)^{2}p^{2}\right)\\
 & \leq\frac{1}{2p^{2/3}}\E_{r\sim g\left(r\right)}\left[r^{4/3}\left(\boldsymbol{e}^{T}\nabla\tilde{\phi}_{f}\left(r\boldsymbol{e}\right)-r\right)^{2}\right]+\mathcal{O}\left(\left(\Delta_{3}\right)^{3}p^{5/2}\right)\\
KL\left(\boldsymbol{e}_{g},\boldsymbol{e}_{f}\right) & =\mathcal{O}\left(\left(\Delta_{3}\right)^{2}p^{3}\right)\\
 & =\frac{1}{2}\text{Var}_{\boldsymbol{e}\sim g_{LAP}\left(\boldsymbol{e}\right)}\left[\E_{r\sim g_{LAP}\left(r\right)}\left[\tilde{\phi}_{f}\left(r\boldsymbol{e}\right)-\frac{1}{2}r^{2}\right]\right]+\mathcal{O}\left(\left(\Delta_{3}\right)^{3}p^{9/2}\right)\\
 & \leq\frac{1}{2}KL_{var}\left(g_{LAP},f\right)+\mathcal{O}\left(\left(\Delta_{3}\right)^{3}p^{9/2}\right)
\end{align}
The reverse KL thus goes to $0$ as:
\begin{equation}
KL\left(g_{LAP},f\right)=\mathcal{O}\left(\left(\Delta_{3}\right)^{2}p^{3}\right)
\end{equation}
\end{thm}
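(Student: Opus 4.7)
The plan is to follow the decomposition outlined in the preceding subsection. First, passing from $\boldsymbol{\theta}$ to the standardized variable $\tilde{\boldsymbol{\theta}} = \Sigma^{-1/2}(\boldsymbol{\theta}-\boldsymbol{\mu})$ and then to spherical coordinates $(r,\boldsymbol{e})$ is a bijective reparameterization, so the KL divergence is preserved. Under $g_{LAP}$ the pair $(r,\boldsymbol{e})$ is independent with $r\sim\chi_{p}$ and $\boldsymbol{e}$ uniform on $S^{p-1}$, so the chain rule for KL reads
\begin{equation*}
KL(g_{LAP},f) = KL(\boldsymbol{e}_{g},\boldsymbol{e}_{f}) + \E_{\boldsymbol{e}\sim g_{LAP}(\boldsymbol{e})}\!\left[KL(r_{g},r_{f}\mid\boldsymbol{e})\right].
\end{equation*}
I then bound the two pieces using the two flawed propositions of Section~\ref{sec:Three-limited-deterministic BvM propositions}, each of which is rescued by this change of coordinates: Proposition~\ref{prop: KL approx KL_var} will apply to the angular piece since $\boldsymbol{e}\in S^{p-1}$ is compact, and Proposition~\ref{prop:LSI-bound-on KL(g,f)} will apply to the radial piece after one further substitution.

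For the radial conditional $KL(r_{g},r_{f}\mid\boldsymbol{e})$, the obstruction to a direct use of the LSI is that $r_{f}\mid\boldsymbol{e}$ is only log-concave, its curvature potentially vanishing in the tails. The cubic-root substitution $r=c^{3}$ fixes this: the Jacobian $3c^{2}$ contributes a $-(3p-1)\log c$ term to the negative log-density whose second derivative $(3p-1)/c^{2}$, combined with the $c^{6}/2$ tail, gives $g_{LAP}(c)$ a minimum curvature of order $p^{2/3}$, explaining the $1/(2p^{2/3})$ prefactor in the claimed bound. A Taylor expansion of $\tilde{\phi}_{f}$ controlled by $\Delta_{3}$ then shows that $f(c\mid\boldsymbol{e})$ inherits the same strong log-concavity up to an $\mathcal{O}(\Delta_{3})$ correction, so Proposition~\ref{prop:LSI-bound-on KL(g,f)} yields
\begin{equation*}
KL(r_{g},r_{f}\mid\boldsymbol{e}) \leq \tfrac{1}{2p^{2/3}}\,\E_{r\sim g(r)}\!\left[r^{4/3}\bigl(\boldsymbol{e}^{T}\nabla\tilde{\phi}_{f}(r\boldsymbol{e})-r\bigr)^{2}\right] + \mathcal{O}((\Delta_{3})^{3}p^{5/2}).
\end{equation*}
Writing $\boldsymbol{e}^{T}\nabla\tilde{\phi}_{f}(r\boldsymbol{e})-r = \boldsymbol{e}^{T}[\nabla\tilde{\phi}_{f}(r\boldsymbol{e})-\nabla\tilde{\phi}_{g}(r\boldsymbol{e})]$, and noting that this difference vanishes to first order at the origin (since $\tilde{\phi}_{g}$ is the second-order Taylor expansion of $\tilde{\phi}_{f}$ at $0$), a third-order expansion bounds it by $\mathcal{O}(\Delta_{3}r^{2})$. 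Integrating the resulting $r^{16/3}$ moment against $\chi_{p}$ and dividing by $p^{2/3}$ then gives the advertised $\mathcal{O}((\Delta_{3})^{2}p^{2})$ scaling.

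For the angular term $KL(\boldsymbol{e}_{g},\boldsymbol{e}_{f})$, since $\boldsymbol{e}\in S^{p-1}$ is compact and the difference of log-marginals is uniformly bounded, Proposition~\ref{prop: KL approx KL_var} approximates it by $\tfrac{1}{2}KL_{var}(\boldsymbol{e}_{g},\boldsymbol{e}_{f})$ with cubic error. Because $g_{LAP}(\boldsymbol{e})$ is uniform, only $\log f(\boldsymbol{e})$ contributes to the variance. To sidestep the intractable marginal $\log f(\boldsymbol{e})$, I replace it by its ELBO $-\E_{r\sim g(r)}[\tilde{\phi}_{f}(r\boldsymbol{e})-r^{2}/2] + C$, whose replacement error is exactly $\E_{\boldsymbol{e}}[KL(r_{g},r_{f}\mid\boldsymbol{e})]$ and thus already absorbed into the radial bound. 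Expanding $\tilde{\phi}_{f}$ to third order around $0$, the constant, linear and quadratic pieces contribute zero variance in $\boldsymbol{e}$ by spherical symmetry (the quadratic piece being $r^{2}/2$ by construction of the Laplace approximation), so only the $\Delta_{3}$-controlled cubic remainder matters; its variance combined with the $\sim p^{3}$ moment of $(\E_{r}[r^{3}])^{2}$ produces the $\mathcal{O}((\Delta_{3})^{2}p^{3})$ scaling. The law of total variance upgrades $KL_{var}(\boldsymbol{e}_{g},\boldsymbol{e}_{f})$ to $KL_{var}(g_{LAP},f)$, giving the final upper bound stated in the theorem. Summing the two pieces yields $KL(g_{LAP},f) = \mathcal{O}((\Delta_{3})^{2}p^{3})$, dominated by the angular contribution. \textbf{Main obstacle}: I expect the hardest step to be establishing uniform-in-$\boldsymbol{e}$ strong log-concavity of $f(c\mid\boldsymbol{e})$ with an explicit lower bound on the $c$-Hessian, because the $\Delta_{3}$ control on $\tilde{\phi}_{f}^{(3)}$ has to be balanced against the $(3p-1)/c^{2}$ term across both small and large $c$ regions; the rest of the proof reduces to careful third-order Taylor bookkeeping and $\chi_{p}$ moment calculations.
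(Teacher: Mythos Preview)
Your proposal is correct and follows essentially the same approach as the paper: the same spherical decomposition of the KL, the cubic-root substitution $r=c^{3}$ to make $f(c\mid\boldsymbol{e})$ strongly log-concave so that Proposition~\ref{prop:LSI-bound-on KL(g,f)} applies to the radial piece, the ELBO replacement of $\log f(\boldsymbol{e})$ combined with Proposition~\ref{prop: KL approx KL_var} on the compact sphere for the angular piece, and the law-of-total-variance link to $KL_{var}(g_{LAP},f)$. You have also correctly identified the main technical obstacle---the paper indeed devotes the bulk of its appendix (Lemmas~\ref{lem:Basic-properties-of varphi_e(r)}--\ref{lem: APPENDIX approximation of psi_min^sec}) to obtaining the uniform-in-$\boldsymbol{e}$ lower bound on $\psi_{f}''(c\mid\boldsymbol{e})$, splitting into the regimes $c\lessgtr(\Delta_{3})^{-1/3}$ and tracking the asymptotics carefully.
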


This theorem establishes an upper-bound on the rate at which the KL
divergence goes to $0$ in the large data limit. Critically, this
convergence is slower for higher-dimensional probability distributions.
It is unclear whether this rate is optimal or pessimistic. We discuss
this point further in section \ref{sec:Discussion} where we give
an example which scales at this rate but is much less regular than
assumed by the theorem since it is continuous in $r$ but not in $\boldsymbol{e}$.
However, a careful investigation of the proof reveals that the the
proof of the theorem does not make use of the $\boldsymbol{e}$-continuity
of $f$ and this example is thus still very interesting in establishing
one worst case of the theorem.

Another important aspect of Theorem \ref{thm:A-deterministic-BvM theorem}
to note is that the first order term of the error is only expressed
as expected values under $g_{LAP}$. It is thus straightforward to
approximate this term by sampling from $g_{LAP}$. I take advantage
of this property in section \ref{sec:Computable-approximations-of KL}
in order to give computable asymptotically-correct approximations
of the KL divergence.

\subsection{Recovering the classical IID theorem\label{subsec:Recovering-the-classical IID theorem}}

Theorem \ref{thm:A-deterministic-BvM theorem} can be used to recover
the classical BvM Theorem that we have presented in Section \ref{subsec: intro to The-Bernstein-von-Mises}.
I only briefly sketch the analysis here while I provide a fully rigorous
proof in Appendix section \ref{sec: APPENDIX General-deterministic-Bound}.

First, we need to extend the setup of Section \ref{subsec: intro to The-Bernstein-von-Mises}
so that the assumptions of Theorem \ref{thm:A-deterministic-BvM theorem}
hold. This only requires two additional assumptions. First, we need
the negative log-prior and all negative log-likelihoods $NLL_{i}$
to be strictly convex so that the posterior is guaranteed to be log-concave.
Second, we need to control the size of the third log-derivative of
the prior and the typical size of the third derivative of the negative
log-likelihoods $NLL_{i}$.

More precisely, let $\boldsymbol{\theta}_{0}$ denote the pseudo-true
parameter value, and let $J$ be the Hessian-based version of the
Fisher information:
\begin{equation}
J=\E_{NLL}\left[H\ NLL\left(\boldsymbol{\theta}_{0}\right)\right]
\end{equation}
Then, let $\Delta^{\left(i\right)}$ denote the (random) maximum of
the third derivative of $NLL_{i}$, normalized by $J^{1/2}$:
\begin{equation}
\Delta^{\left(i\right)}=\max_{\boldsymbol{\theta},v_{1},v_{2},v_{3}}\frac{\left|NLL_{i}^{\left(3\right)}\left(\boldsymbol{\theta}\right)\left[J^{1/2}v_{1},J^{1/2}v_{2},J^{1/2}v_{3}\right]\right|}{\left\Vert v_{1}\right\Vert _{2}\left\Vert v_{2}\right\Vert _{2}\left\Vert v_{3}\right\Vert _{2}}
\end{equation}
Note that this normalization by $J$ is comparable to the normalization
by $\Sigma^{-1/2}$ in the definition of $\Delta_{3}$ (eq.\ref{eq: controling the third derivative})
since asymptotically we will have $\Sigma^{-1}\approx nJ$. We will
assume that $\E\left(\Delta^{\left(i\right)}\right)$ is finite. We
will similarly assume that the equivalent value computed on the prior
term $\phi_{0}$ is finite:
\begin{equation}
\Delta^{\left(0\right)}=\max_{\boldsymbol{\theta},v_{1},v_{2},v_{3}}\frac{\left|\phi_{0}^{\left(3\right)}\left(\boldsymbol{\theta}\right)\left[J^{1/2}v_{1},J^{1/2}v_{2},J^{1/2}v_{3}\right]\right|}{\left\Vert v_{1}\right\Vert _{2}\left\Vert v_{2}\right\Vert _{2}\left\Vert v_{3}\right\Vert _{2}}
\end{equation}

Under these assumptions, the MAP estimator constructed from the first
$n$ likelihoods, $\hat{\boldsymbol{\theta}}_{n}$ is a consistent
estimator of $\boldsymbol{\theta}_{0}$ and asymptotically Gaussian
with variance decaying at rate $n^{-1}$ (Appendix Lemma \ref{lem: APPENDIX MAP is consistent},
Appendix Prop.\ref{prop: APPENDIX MAP is gaussian}). As a consequence,
the variance of the $n^{th}$ Laplace approximation $g_{LAP,n}$ scales
as (using a law-of-large-numbers approximation of the sum):
\begin{align}
\left(\Sigma_{n}\right)^{-1} & =H\phi_{0}\left(\hat{\boldsymbol{\theta}}_{n}\right)+\sum_{i=1}^{n}H\ NLL_{i}\left(\hat{\boldsymbol{\theta}}_{n}\right)\\
 & =H\phi_{0}\left(\boldsymbol{\theta}_{0}\right)+\sum_{i=1}^{n}H\ NLL_{i}\left(\boldsymbol{\theta}_{0}\right)+\mathcal{O}_{P}\left(1\right)\\
 & =H\phi_{0}\left(\boldsymbol{\theta}_{0}\right)+nJ+\mathcal{O}_{P}\left(1\right)\\
 & =nJ+\mathcal{O}_{P}\left(1\right)
\end{align}
The scaled third log-derivative of the log-posterior then scales approximately
as (using the sub-additivity of maxima):
\begin{align}
\Delta_{3,n} & =\max_{\boldsymbol{\theta},v_{1},v_{2},v_{3}}\frac{\left|\phi_{n}^{\left(3\right)}\left(\boldsymbol{\theta}\right)\left[\Sigma_{n}^{-1/2}v_{1},\Sigma_{n}^{-1/2}v_{2},\Sigma_{n}^{-1/2}v_{3}\right]\right|}{\left\Vert v_{1}\right\Vert _{2}\left\Vert v_{2}\right\Vert _{2}\left\Vert v_{3}\right\Vert _{2}}\\
 & \approx n^{-3/2}\max_{\boldsymbol{\theta},v_{1},v_{2},v_{3}}\frac{\left|\phi_{n}^{\left(3\right)}\left(\boldsymbol{\theta}\right)\left[J^{1/2}v_{1},J^{1/2}v_{2},J^{1/2}v_{3}\right]\right|}{\left\Vert v_{1}\right\Vert _{2}\left\Vert v_{2}\right\Vert _{2}\left\Vert v_{3}\right\Vert _{2}}\\
 & \lesssim n^{-3/2}\left(\Delta^{\left(0\right)}+\sum_{i=1}^{n}\Delta^{\left(i\right)}\right)
\end{align}

From a law-of-large-numbers argument, the sum $\sum_{i=1}^{n}\Delta^{\left(i\right)}$
scales approximately as $n\E\left(\Delta^{\left(i\right)}\right)$
and $\Delta_{3,n}$ thus scales at most as $n^{-1/2}$. It is furthermore
straightforward to establish through a law-of-large-numbers argument
that this rate cannot generally be improved if any $\boldsymbol{\theta}$
is such that $\E\left(NLL_{i}^{\left(3\right)}\right)$ is non-zero.

The following corollary of Theorem \ref{thm:A-deterministic-BvM theorem}
summarizes this chain of reasoning.
\begin{cor}
Bernstein-von Mises: IID case.\label{cor:Bernstein-von-Mises:-IID case}

Let $f_{n}\left(\boldsymbol{\theta}\right)$ be the posterior distribution:
\begin{equation}
f_{n}\left(\boldsymbol{\theta}\right)\propto\exp\left(-\phi_{0}\left(\boldsymbol{\theta}\right)-\sum_{i=1}^{n}NLL_{i}\left(\boldsymbol{\theta}\right)\right)
\end{equation}
where the $NLL_{i}$ are IID function-valued random variables. Let
$g_{LAP,n}\left(\boldsymbol{\theta}\right)$ be the Laplace approximation
of $f_{n}\left(\boldsymbol{\theta}\right)$ and $J$ be the Hessian-based
Fisher information:
\begin{equation}
J=\E_{NLL}\left[H\ NLL\left(\boldsymbol{\theta}_{0}\right)\right]
\end{equation}

If $J>0$ and $\phi_{0}$ and all $NLL_{i}$ are log-concave and have
controlled third-derivatives:
\begin{align}
\Delta^{\left(0\right)} & =\max_{\boldsymbol{\theta},v_{1},v_{2},v_{3}}\frac{\left|\phi_{0}^{\left(3\right)}\left(\boldsymbol{\theta}\right)\left[J^{1/2}v_{1},J^{1/2}v_{2},J^{1/2}v_{3}\right]\right|}{\left\Vert v_{1}\right\Vert _{2}\left\Vert v_{2}\right\Vert _{2}\left\Vert v_{3}\right\Vert _{2}}\label{eq: control on the third derivatives of the prior}\\
\Delta^{\left(i\right)} & =\max_{\boldsymbol{\theta},v_{1},v_{2},v_{3}}\frac{\left|NLL_{i}^{\left(3\right)}\left(\boldsymbol{\theta}\right)\left[J^{1/2}v_{1},J^{1/2}v_{2},J^{1/2}v_{3}\right]\right|}{\left\Vert v_{1}\right\Vert _{2}\left\Vert v_{2}\right\Vert _{2}\left\Vert v_{3}\right\Vert _{2}}
\end{align}
with $\Delta^{\left(0\right)}<\infty$ and $\E\left(\Delta^{\left(i\right)}\right)<\infty$.

Then, as $n\rightarrow\infty$, $\Delta_{3}p^{3/2}=\mathcal{O}_{P}\left(1/\sqrt{n}\right)$
and $f_{n}\left(\boldsymbol{\theta}\right)$ and $g_{LAP,n}\left(\boldsymbol{\theta}\right)$
converge to one-another as:
\begin{equation}
KL\left(g_{LAP,n},f_{n}\right)=\mathcal{O}_{P}\left(\frac{1}{n}\right)
\end{equation}
\end{cor}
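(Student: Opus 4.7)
The plan is to derive the corollary as a direct consequence of Theorem \ref{thm:A-deterministic-BvM theorem}: since $p$ is fixed in this asymptotic regime, it suffices to establish that $\Delta_{3,n} = \mathcal{O}_P(n^{-1/2})$, whereupon Theorem \ref{thm:A-deterministic-BvM theorem} immediately yields $KL(g_{LAP,n}, f_n) = \mathcal{O}((\Delta_{3,n})^2 p^3) = \mathcal{O}_P(n^{-1})$. My work thus reduces entirely to quantifying the asymptotic behavior of the rescaled third log-derivative.

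First, I would invoke the consistency of the MAP estimator, i.e.\ $\hat{\boldsymbol{\theta}}_n \to \boldsymbol{\theta}_0$ in probability (Appendix Lemma \ref{lem: APPENDIX MAP is consistent}), together with its $\sqrt{n}$-asymptotic Gaussianity $\hat{\boldsymbol{\theta}}_n - \boldsymbol{\theta}_0 = \mathcal{O}_P(n^{-1/2})$ (Appendix Prop.\ref{prop: APPENDIX MAP is gaussian}); both follow from $J > 0$ and the log-concavity of $\phi_0$ and the $NLL_i$. This allows me to compute the asymptotics of $\Sigma_n^{-1} = H\phi_n(\hat{\boldsymbol{\theta}}_n)$: the prior contribution $H\phi_0(\hat{\boldsymbol{\theta}}_n)$ is $\mathcal{O}_P(1)$, while the likelihood contribution $\sum_i H\ NLL_i(\hat{\boldsymbol{\theta}}_n)$ equals $nJ + \mathcal{O}_P(\sqrt{n})$: the LLN at $\boldsymbol{\theta}_0$ supplies the leading $nJ$ term, and a Taylor expansion around $\boldsymbol{\theta}_0$ with third-order remainder dominated by $\bigl(\sum_i \Delta^{(i)}\bigr)\|\hat{\boldsymbol{\theta}}_n - \boldsymbol{\theta}_0\|_2$ controls the deviation. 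Combining, $\Sigma_n^{-1} = nJ\bigl(I + \mathcal{O}_P(n^{-1/2})\bigr)$, hence $\Sigma_n^{-1/2} = n^{1/2}\,J^{1/2}\bigl(I + \mathcal{O}_P(n^{-1/2})\bigr)$ as an operator identity.

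The core estimate then comes from sub-additivity of the max-norm applied to $\phi_n^{(3)} = \phi_0^{(3)} + \sum_i NLL_i^{(3)}$. Rescaling each of the three slots of this order-3 tensor by $\Sigma_n^{-1/2}$ produces an overall $n^{-3/2}$ factor, up to a multiplicative perturbation $(1 + \mathcal{O}_P(n^{-1/2}))$ that can be pulled outside the max since it is independent of $\boldsymbol{\theta}$; consequently
\begin{equation}
\Delta_{3,n} \leq n^{-3/2}\bigl(1 + \mathcal{O}_P(n^{-1/2})\bigr)\left(\Delta^{(0)} + \sum_{i=1}^n \Delta^{(i)}\right).
\end{equation}
Applying the classical LLN to the IID, integrable sequence $\Delta^{(i)}$ gives $\sum_{i=1}^n \Delta^{(i)} = n\,\E(\Delta^{(i)}) + o_P(n)$, which together with $\Delta^{(0)} < \infty$ yields $\Delta_{3,n} = \mathcal{O}_P(n^{-1/2})$, as required.

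I expect the main technical obstacle to be the uniform-in-$\boldsymbol{\theta}$ replacement of $\Sigma_n^{-1/2}$ by its deterministic equivalent $n^{-1/2} J^{-1/2}$ inside the definition of $\Delta_{3,n}$. Pointwise this is immediate, but taking the max over $\boldsymbol{\theta}$ requires treating the perturbation $(I + \mathcal{O}_P(n^{-1/2}))$ as a trilinear operator on the three slots and bounding its operator norm independently of the evaluation point, so that it can be extracted as a multiplicative constant before the max-norm is applied. A secondary subtlety is that the $\sqrt{n}$-consistency of the MAP (not merely qualitative consistency) is essential: without it, the error in $\Sigma_n^{-1}$ would dominate $nJ$ at a slower rate and the scaling of $\Delta_{3,n}$ would correspondingly degrade, breaking the $\mathcal{O}_P(n^{-1})$ conclusion.
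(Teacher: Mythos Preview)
Your proposal is correct and follows essentially the same route as the paper: establish $\Delta_{3,n}=\mathcal{O}_P(n^{-1/2})$ by combining MAP consistency/$\sqrt{n}$-normality, the asymptotic $\Sigma_n^{-1}\approx nJ$, sub-additivity of the max-norm for $\phi_n^{(3)}$, and the LLN on $\sum_i\Delta^{(i)}$, then invoke Theorem~\ref{thm:A-deterministic-BvM theorem}. The technical obstacle you flag---pulling the operator perturbation $(I+\mathcal{O}_P(n^{-1/2}))$ outside the max over $\boldsymbol{\theta}$---is exactly what the paper handles in Appendix Lemma~\ref{lem:Properties-of- standardized phi_n} via a change of variables $w_i=J^{-1/2}\Sigma_n^{-1/2}v_i$ and a bound on the smallest eigenvalue of $J^{1/2}\Sigma_n^{1/2}$.
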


Note that this line of reasoning is straightforward to extend to other
models for which the $NLL_{i}$ might have a more complicated dependence
structure. For example, consider linear models with a fixed design.
Under such models, the $NLL_{i}$ are independent but not identically
distributed. However, it is still straightforward to establish, as
long as the design is balanced so that no one observation retains
influence asymptotically, that $\left(\Sigma_{n}\right)^{-1}$ scales
linearly asymptotically and that $\Delta_{3,n}$ would scale as $n^{-1/2}$.
Similarly, the result could be extended to a situation in which the
$NLL_{i}$ have a time-series dependency. As long as there is a high-enough
degree of independence inside the chain, an Ergodic-theorem type argument
would yield that $\left(\Sigma_{n}\right)^{-1}$ scales linearly and
$\Delta_{3,n}$ as $n^{-1/2}$. More generally, I conjecture that
conditions yielding a Locally Asymptotically Normal likelihood (LAN;
\citet{van2000asymptotic}) and a log-concave posterior would result
in Theorem \ref{thm:A-deterministic-BvM theorem} being applicable.
I will address this question is future work.

\section{Computable approximations of $KL\left(g_{LAP},f\right)$\label{sec:Computable-approximations-of KL}}

In this section, I detail how to approximate the KL divergence $KL\left(g_{LAP},f\right)$
by using Theorem \ref{thm:A-deterministic-BvM theorem}. I further
demonstrate the bounds in a simple example: logistic regression.

\subsection{Proposed approximations}

At face value, Theorem \ref{thm:A-deterministic-BvM theorem} offers
two main approximations:
\begin{align}
KL\left(r_{g},r_{f}|\boldsymbol{e}\right) & \apprle LSI\left(r_{g},r_{f}|\boldsymbol{e}\right)=\frac{1}{2p^{2/3}}\E_{r\sim g\left(r\right)}\left[r^{4/3}\left(\boldsymbol{e}^{T}\nabla\tilde{\phi}_{f}\left(r\boldsymbol{e}\right)-r\right)^{2}\right]\\
KL\left(\boldsymbol{e}_{g},\boldsymbol{e}_{f}\right) & \approx\frac{1}{2}\text{Var}_{\boldsymbol{e}\sim g_{LAP}\left(\boldsymbol{e}\right)}\left[\E_{r\sim g_{LAP}\left(r\right)}\left[\tilde{\phi}_{f}\left(r\boldsymbol{e}\right)-\frac{1}{2}r^{2}\right]\right]
\end{align}
which can be combined to yield an approximation of $KL\left(g_{LAP},f\right)$
which we will denote as ``LSI+VarELBO'' (recall that $ELBO\left(\boldsymbol{e}\right)=\E_{r\sim g_{LAP}\left(r\right)}\left[\tilde{\phi}_{f}\left(r\boldsymbol{e}\right)-\frac{1}{2}r^{2}\right]+C$).

We can similarly use the $KL_{var}$-based bound of $KL\left(\boldsymbol{e}_{g},\boldsymbol{e}_{f}\right)$:
\begin{equation}
KL\left(\boldsymbol{e}_{g},\boldsymbol{e}_{f}\right)\apprle\frac{1}{2}KL_{var}\left(g_{LAP},f\right)
\end{equation}
to obtain another approximation of $KL\left(g_{LAP},f\right)$ which
we will denote as ``LSI+KLvar''.

However, observe that the KL variance is related to the VarELBO term
$\text{Var}_{\boldsymbol{e}\sim g_{LAP}\left(\boldsymbol{e}\right)}\left[\E_{r\sim g_{LAP}\left(r\right)}\left[\tilde{\phi}_{f}\left(r\boldsymbol{e}\right)-\frac{1}{2}r^{2}\right]\right]$
as:
\begin{align}
KL_{var}\left(g_{LAP},f\right) & =\text{Var}_{\boldsymbol{e}\sim g_{LAP}\left(\boldsymbol{e}\right)}\left[\E_{r\sim g_{LAP}\left(r\right)}\left[\tilde{\phi}_{f}\left(r\boldsymbol{e}\right)-\frac{1}{2}r^{2}\right]\right]\nonumber \\
 & \ \ \ \ +\E_{\boldsymbol{e}\sim g_{LAP}\left(\boldsymbol{e}\right)}\left[\text{Var}_{r\sim g_{LAP}\left(r\right)}\left[\tilde{\phi}_{f}\left(r\boldsymbol{e}\right)-\frac{1}{2}r^{2}\right]\right]
\end{align}
where the second term corresponds to the KL divergence due to the
$r$-variables (Appendix Lemma \ref{lem:Relationship-between-Var(ELBO) and KL-var}).
The ``LSI+KLvar'' approximation thus counts twice the contribution
of the $r$-variables to $KL\left(g_{LAP},f\right)$ which seems silly.
Theorem \ref{thm:A-deterministic-BvM theorem} thus offer heuristic\textbf{
}support for the idea of using directly the KL variance as an approximation
of $KL\left(g_{LAP},f\right)$ as suggested by Proposition \ref{prop:LSI-bound-on KL(g,f)}
. I must emphasize again that neither of those results establish rigorously
that this is correct but this turns out to be a very accurate of $KL\left(g_{LAP},f\right)$
in my experiments.

Computing these bounds requires the evaluation of expected values
of $\phi_{f}\left(\boldsymbol{\theta}_{g}\right)$ which can be performed
through sampling from $g_{LAP}\left(\boldsymbol{\theta}\right)$.
Further simplification can be achieved by replacing $\tilde{\phi}_{f}\left(\tilde{\boldsymbol{\theta}}\right)$
by its Taylor approximation to third or fourth order. For the KL variance
approximation, this yields immediately an explicit expected value.
For the LSI and the VarELBO approximations, we need to further perform
the rough approximation $r_{g}\approx p^{1/2}$ (accurate in the large
$p$ limit; Appendix Lemma \ref{lem: APPENDIX Moments-of-r_g}) in
order to simplify the corresponding expected values.

These various approximations are summarized in the following corollary
of Theorem \ref{thm:A-deterministic-BvM theorem}.
\begin{cor}
Computable approximations of $KL\left(g_{LAP},f\right)$. \label{cor:Computable-approximations-of KL(g,f)}

In an asymptote where $\Delta_{3}p^{3/2}\rightarrow0$, the KL divergence
$KL\left(g_{LAP},f\right)$ can be approximated as (in order from
most interesting to least):
\begin{align}
KL\left(g_{LAP},f\right) & \approx\frac{1}{2}KL_{var}\left(g_{LAP},f\right)\\
 & \approx LSI+\frac{1}{2}KL_{var}\left(g_{LAP},f\right)\\
 & \approx LSI+\frac{1}{2}VarELBO
\end{align}
where:
\begin{align}
KL_{var}\left(g_{LAP},f\right) & =\text{Var}_{\boldsymbol{\theta}\sim g_{LAP}\left(\boldsymbol{\theta}\right)}\left[\phi_{f}\left(\boldsymbol{\theta}\right)-\phi_{g}\left(\boldsymbol{\theta}\right)\right]\\
LSI & =\frac{1}{2p^{2/3}}\E_{r\sim g\left(r\right)}\left[r^{4/3}\left(\boldsymbol{e}^{T}\nabla\tilde{\phi}_{f}\left(r\boldsymbol{e}\right)-r\right)^{2}\right]\\
VarELBO & =\text{Var}_{\boldsymbol{e}\sim g_{LAP}\left(\boldsymbol{e}\right)}\left[\E_{r\sim g_{LAP}\left(r\right)}\left[\tilde{\phi}_{f}\left(r\boldsymbol{e}\right)-\frac{1}{2}r^{2}\right]\right]
\end{align}
All expected values and variances need to be further approximated
by sampling from $g_{LAP}\left(\boldsymbol{\theta}\right)$.

Alternatively, a Taylor approximation of $\tilde{\phi}_{f}\left(\boldsymbol{\theta}\right)$
around $0$ and the rough approximation $r_{g}\approx p^{1/2}$ yields:
\begin{align}
KL_{var}\left(g_{LAP},f\right) & \approx VarELBO\\
 & \approx\frac{1}{6}\sum_{i,j,k}\left\{ \left[\tilde{\phi}_{f}^{\left(3\right)}\left(0\right)\right]_{i,j,k}\right\} ^{2}+\frac{1}{4}\sum_{i,j,k}\left[\tilde{\phi}_{f}^{\left(3\right)}\left(0\right)\right]_{i,j,j}\left[\tilde{\phi}_{f}^{\left(3\right)}\left(0\right)\right]_{i,k,k}\nonumber \\
 & \ \ \ \ +\frac{1}{24}\sum_{i,j,k,l}\left\{ \left[\tilde{\phi}_{f}^{\left(4\right)}\left(0\right)\right]_{i,j,k,l}\right\} ^{2}+\frac{1}{8}\sum_{i,j,k,l}\left\{ \left[\tilde{\phi}_{f}^{\left(4\right)}\left(0\right)\right]_{i,j,k,k}\right\} \left\{ \left[\tilde{\phi}_{f}^{\left(4\right)}\left(0\right)\right]_{i,j,l,l}\right\} \\
LSI & \approx\frac{1}{p}\Bigg[\frac{3}{4}\sum_{i,j,k}\left\{ \left[\tilde{\phi}_{f}^{\left(3\right)}\left(0\right)\right]_{i,j,k}\right\} ^{2}+\frac{9}{8}\sum_{i,j,k}\left[\tilde{\phi}_{f}^{\left(3\right)}\left(0\right)\right]_{i,j,j}\left[\tilde{\phi}_{f}^{\left(3\right)}\left(0\right)\right]_{i,k,k}\nonumber \\
 & \ \ \ \ +\frac{1}{3}\sum_{i,j,k,l}\left\{ \left[\tilde{\phi}_{f}^{\left(4\right)}\left(0\right)\right]_{i,j,k,l}\right\} ^{2}+\sum_{i,j,k,l}\left\{ \left[\tilde{\phi}_{f}^{\left(4\right)}\left(0\right)\right]_{i,j,k,k}\right\} \left\{ \left[\tilde{\phi}_{f}^{\left(4\right)}\left(0\right)\right]_{i,j,l,l}\right\} \nonumber \\
 & \ \ \ \ +\frac{1}{8}\sum_{i,j,k,l}\left\{ \left[\tilde{\phi}_{f}^{\left(4\right)}\left(0\right)\right]_{i,i,j,j}\right\} \left\{ \left[\tilde{\phi}_{f}^{\left(4\right)}\left(0\right)\right]_{k,k,l,l}\right\} \Bigg]
\end{align}
\end{cor}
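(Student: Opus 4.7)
My plan is to prove the corollary in two stages: first derive the three approximations of $KL(g_{LAP}, f)$ as consequences of Theorem \ref{thm:A-deterministic-BvM theorem}, then compute the explicit Taylor formulas through standard Gaussian-moment calculations. For the first stage, I would read off $LSI + \tfrac{1}{2}\mathrm{VarELBO}$ directly from Theorem \ref{thm:A-deterministic-BvM theorem} by combining the pointwise radial bound $KL(r_g, r_f|\boldsymbol{e}) \leq LSI(\boldsymbol{e})$ with the angular expansion $KL(\boldsymbol{e}_g,\boldsymbol{e}_f) = \tfrac{1}{2}\mathrm{VarELBO} + \mathcal{O}((\Delta_3)^3 p^{9/2})$. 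The variant $LSI + \tfrac{1}{2} KL_{var}$ uses Appendix Lemma \ref{lem:Relationship-between-Var(ELBO) and KL-var}, $KL_{var} = \mathrm{VarELBO} + \E_{\boldsymbol{e}}[\mathrm{Var}_r[\tilde{\phi}_f(r\boldsymbol{e}) - r^2/2]]$, together with the non-negativity of the inner variance to replace $\mathrm{VarELBO}$ by the looser but computable $KL_{var}$. The most economical $\tfrac{1}{2} KL_{var}$ is purely heuristic, motivated by Proposition \ref{prop: KL approx KL_var} and the observation that the inner-variance summand of $KL_{var}$ quantifies the same radial discrepancy that $LSI$ bounds; merging both into a single $\tfrac{1}{2} KL_{var}$ avoids double-counting, and while this step is not a rigorous consequence of Theorem \ref{thm:A-deterministic-BvM theorem}, the main text indicates it tracks the true KL well empirically.

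For the Taylor stage, I expand $\tilde{\phi}_f(\tilde{\boldsymbol{\theta}})$ around $0$; since $\nabla\tilde{\phi}_f(0)=0$ and $H\tilde{\phi}_f(0) = I_p$, the truncated expansion is $\tilde{\phi}_f(\tilde{\boldsymbol{\theta}}) - \tfrac{1}{2}\|\tilde{\boldsymbol{\theta}}\|^2 \approx \mathrm{const} + \tfrac{1}{6} T_3 + \tfrac{1}{24} T_4$, where $T_k = \sum [\tilde{\phi}_f^{(k)}(0)]_{i_1\cdots i_k} \tilde{\theta}_{i_1}\cdots\tilde{\theta}_{i_k}$ is the homogeneous form of degree $k$ contracted against the relevant derivative tensor. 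Substituting into $KL_{var}$/VarELBO gives the variance of $\tfrac{1}{6} T_3 + \tfrac{1}{24} T_4$ under $\mathcal{N}(0, I_p)$; the cross-covariance vanishes by parity (odd 7th moment), so only $\E[T_3^2]$ and $\mathrm{Var}[T_4]$ remain, both evaluable via Isserlis' theorem. Enumerating index pairings between the two tensor copies: the 15 pairings for $\E[T_3^2]$ split as $6 + 9$ by number of cross-pairings, producing the Frobenius-norm and partial-contraction terms with coefficients $\tfrac{1}{6}$ and $\tfrac{1}{4}$; the 105 pairings for $\E[T_4^2]$ split as $24 + 72 + 9$, and the $9$-pairing zero-cross class cancels exactly against $\E[T_4]^2 = 9(\sum [\tilde{\phi}_f^{(4)}(0)]_{iijj})^2$ in $\mathrm{Var}[T_4]$, eliminating the $\sum[\tilde{\phi}_f^{(4)}(0)]_{iijj}[\tilde{\phi}_f^{(4)}(0)]_{kkll}$ term from $KL_{var}$.

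For the $LSI$ formula, I change variables via $r\boldsymbol{e} = \tilde{\boldsymbol{\theta}}$, rewriting
\begin{align*}
LSI = \frac{1}{2 p^{2/3}} \E[r^{-2/3}(\tilde{\boldsymbol{\theta}}^T \nabla\tilde{\phi}_f(\tilde{\boldsymbol{\theta}}) - \|\tilde{\boldsymbol{\theta}}\|^2)^2],
\end{align*}
then apply the rough substitution $r\approx p^{1/2}$ (Appendix bounds on $\chi_p$ moments) to pull out a global $\tfrac{1}{p}$. Taylor-expanding the gradient gives $\tilde{\boldsymbol{\theta}}^T \nabla\tilde{\phi}_f(\tilde{\boldsymbol{\theta}}) - \|\tilde{\boldsymbol{\theta}}\|^2 \approx \tfrac{1}{2} T_3 + \tfrac{1}{6} T_4$, and the same pairing enumeration yields the $LSI$ coefficients; because $LSI$ uses $\E[(\cdot)^2]$ rather than a variance, the zero-cross class is not cancelled, producing the extra $\tfrac{1}{8p}\sum[\tilde{\phi}_f^{(4)}(0)]_{iijj}[\tilde{\phi}_f^{(4)}(0)]_{kkll}$ term that distinguishes $LSI$ from $KL_{var}$. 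The main obstacle is the $r\approx p^{1/2}$ substitution: it is a mean-for-random-variable replacement rather than an asymptotic identity, and only really makes sense as $p \to \infty$; a rigorous version would require $\chi_p$ concentration together with uniform growth control on the higher-order Taylor remainder of $\nabla\tilde{\phi}_f$. The remaining 105-pairing enumeration for $\E[T_4^2]$ is tedious but entirely mechanical, with no new analytic input beyond Theorem \ref{thm:A-deterministic-BvM theorem} and Isserlis' theorem.
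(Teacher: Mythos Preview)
Your proposal is correct and follows essentially the same approach as the paper: the three sampling-based approximations are read off from Theorem \ref{thm:A-deterministic-BvM theorem} together with Lemma \ref{lem:Relationship-between-Var(ELBO) and KL-var} (with the pure $\tfrac{1}{2}KL_{var}$ flagged as heuristic via the double-counting argument), and the Taylor formulas are obtained by the same $r\approx p^{1/2}$ substitution followed by Isserlis' theorem with the $6+9$ and $24+72+9$ pairing decompositions. Your observation that the extra $\tfrac{1}{8p}\sum[\tilde{\phi}_f^{(4)}(0)]_{iijj}[\tilde{\phi}_f^{(4)}(0)]_{kkll}$ term in $LSI$ survives precisely because $LSI$ is a second moment rather than a variance is exactly the mechanism the paper uses.
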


Critically, note that these approximations are only appropriate asymptotically
as $\Delta_{3}p^{3/2}\rightarrow0$ so that the additional error terms
vanish. This is critical to keep in mind when using these approximations,
as we discuss further in Section \ref{sec:Discussion}.

Further note that this result establishes that the LSI term is negligible
when $p$ is large since it scales as $p^{-1}$. This is unsurprising
since it corresponds to the contribution of the one-dimensional variable
$r$ to the overall KL divergence due to all coordinates of $\boldsymbol{\theta}\in\R^{p}$.

\subsection{Empirical results in the logistic classification model\label{subsec:Empirical-results-in}}

In order to validate the theoretical analysis, I checked that the
approximations of Cor.\ref{cor:Computable-approximations-of KL(g,f)}
would be appropriate in a logistic linear classification model under
a wide range of circumstances.

More precisely, I constructed artificial datasets with values of $p$
ranging from $10$ to $1000$ and $n$ ranging from $10$ to $5600$.
These values were chosen due to the memory limitations of the computer
on which the simulations were ran. In these datasets, all predictors
were IID with a Gaussian distribution (mean $0$, standard deviation
$1.5$) and class labels $Y\in\left\{ -1,1\right\} $ were generated
from the logistic model with:
\begin{equation}
\mathbb{P}\left(Y=1|\boldsymbol{X}\right)=\frac{1}{1+\exp\left(-\frac{1}{\sqrt{p}}\sum_{i=1}^{p}X_{i}\right)}
\end{equation}
This corresponds to a regression coefficient with constant coefficients
$\boldsymbol{\theta}_{0}=\left(1/\sqrt{p}\dots1/\sqrt{p}\right)$.
The scaling with $p$ (and the standard deviation of $X$) was chosen
so that the marginal distribution of $\mathbb{P}\left(Y|X\right)$
would be approximately spread over the range $0.3-0.7$, i.e. most
values of $X$ would be ambiguously classified even with perfect knowledge
of $\boldsymbol{\theta}_{0}$. This ensures that Fisher information
is somewhat high and that the asymptotic regime is reached quickly.

Once the data was generated, it was analyzed with the standard logistic
linear classification conditional model:
\begin{equation}
\mathbb{P}\left(Y=\pm1|\boldsymbol{X},\boldsymbol{\theta}\right)=\frac{1}{1+\exp\left(-Y\boldsymbol{\theta}^{T}\boldsymbol{X}\right)}
\end{equation}
Note that this likelihood function is log-concave.

Under the prior distribution, the coefficients $\theta_{i}$ were
modeled to be IID with standard deviation $1/\sqrt{p}$, encoding
the assumption that the norm of the coefficient vector should be of
order $1$, so that the marginal (in $X$) distribution of $\mathbb{P}\left(Y|X,\boldsymbol{\theta}\right)$
would be spread over the range $0.3-0.7$: 
\begin{align}
\left\Vert \boldsymbol{\theta}\right\Vert _{2} & =\sqrt{\sum_{i=1}^{p}\left(\theta_{i}\right)^{2}}\\
 & \sim\frac{1}{\sqrt{p}}\chi_{p}\\
 & \approx1
\end{align}
Note that this prior is more informative for a given coefficient in
high dimensions since the prior standard deviation tends to $0$ as
$p\rightarrow\infty$.

Given this model, I then computed the Laplace approximation using
a Newton-conjugate gradient (\citet{nocedal2006numerical}) implemented
in the Scipy Python library (\citet{scipy}). The optimization was
initialized at $\boldsymbol{\theta}_{0}$. The approximations of Cor.\ref{cor:Computable-approximations-of KL(g,f)}
were computed in a straightforward fashion by sampling from the Laplace
approximation $g_{LAP}\left(\boldsymbol{\theta}\right)$ and computing
the corresponding empirical means and variances.

Computing the true KL divergence was challenging. I first sampled
from $f\left(\boldsymbol{\theta}\right)$ using the NUTS algorithm
(\citet{hoffman2014no}) using an implementation from \href{https://github.com/mfouesneau/NUTS}{M. Fouesnau on github}.
The normalization constant was then approximated as:
\begin{equation}
\left[\int\exp\left(-\phi_{f}\left(\boldsymbol{\theta}\right)\right)\right]^{-1}=\frac{1}{s}\sum_{i=1}^{s}\frac{g\left(\boldsymbol{\theta}_{i}\right)}{\tilde{f}\left(\boldsymbol{\theta}_{i}\right)}
\end{equation}
 Once the normalization constant was approximated, the KL divergence
was computed by sampling from $g\left(\boldsymbol{\theta}\right)$.

Inspection of the KL divergence reveals the following interesting
features (Fig.\ref{fig: evolution of KL with n}.A). First, we observe
that, as the size of the dataset $n$ grows, the KL divergence initially
rises from a low value then reaches a plateau then decreases at speed
$n^{-1}$. The behavior for large $n$ is thus in accordance with
the behavior predicted by the asymptotic analysis (Cor.\ref{cor:Bernstein-von-Mises:-IID case}).
However, the KL divergence is much lower than predicted for small
$n$. Indeed, Note that Cor.\ref{cor:Bernstein-von-Mises:-IID case}
and conventional BvM results fail to identify that the KL divergence
is small not only in the asymptote $n\rightarrow\infty$ but also
when $n$ is close to $0$. In contrast, approximating the KL divergence
with $KL_{var}$ recovers the full complexity of the evolution of
the KL divergence with $n$ (Fig.\ref{fig: evolution of KL with n}.B).

\begin{figure}
\begin{centering}
\includegraphics[width=13cm]{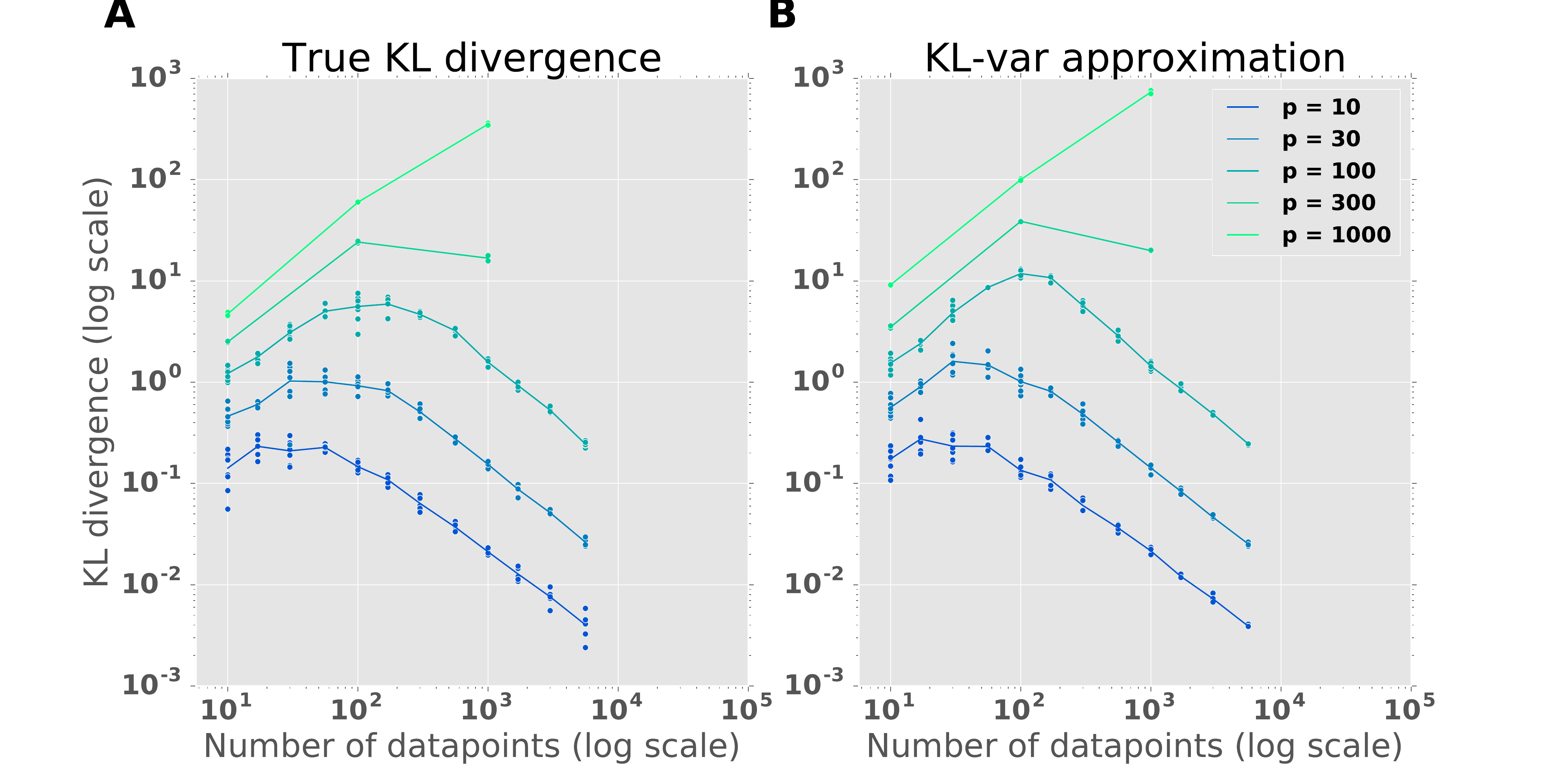}
\par\end{centering}
\caption{\label{fig: evolution of KL with n}Evolution of the KL divergence
with $n$ in a logistic regression model. A: evolution of the true
KL divergence $KL\left(g_{LAP},f\right)$ with the number of datapoints
$n$. Color indicates the dimension $p$ of the model. For every value
of $\left(n,p\right)$, multiple simulations were performed (dots).
The line corresponds to the average over these simulations. The KL
divergence asymptotes as $n^{-1}$ for large values of $n$, but is
also small for small values of $n$ when the posterior is almost equal
to the Gaussian prior. Usual asymptotic arguments only identify the
large $n$ asymptotic behavior. B: evolution of the $KL_{var}$ approximation
of $KL\left(g_{LAP},f\right)$ with $n$. The approximation successfully
recovers the full range of the dynamics of the KL divergence. Note
that the asymptote hasn't been reached yet for the larger values of
$p$.}
\end{figure}
Throughout the range of values I explored, the KL variance proved
to be a great approximation of the KL divergence (Fig.\ref{fig: quality of KL var and KL var + LSI}.A).
Indeed, the ratio of the KL-variance-based approximation of $KL\left(g_{LAP},f\right)$
to the truth is mostly close to $1$ and only has a maximum of $5$.

The ratio of the ``LSI+KLvar'' approximation to the true KL divergence
is similarly close to $1$(Fig.\ref{fig: quality of KL var and KL var + LSI}.B).
However, the ratio is always above $1$, indicating that this approximation
is always strictly bigger than the KL divergence, thus confirming
the approximate upper-bound status of this approximation.

\begin{figure}
\begin{centering}
\includegraphics[width=13cm]{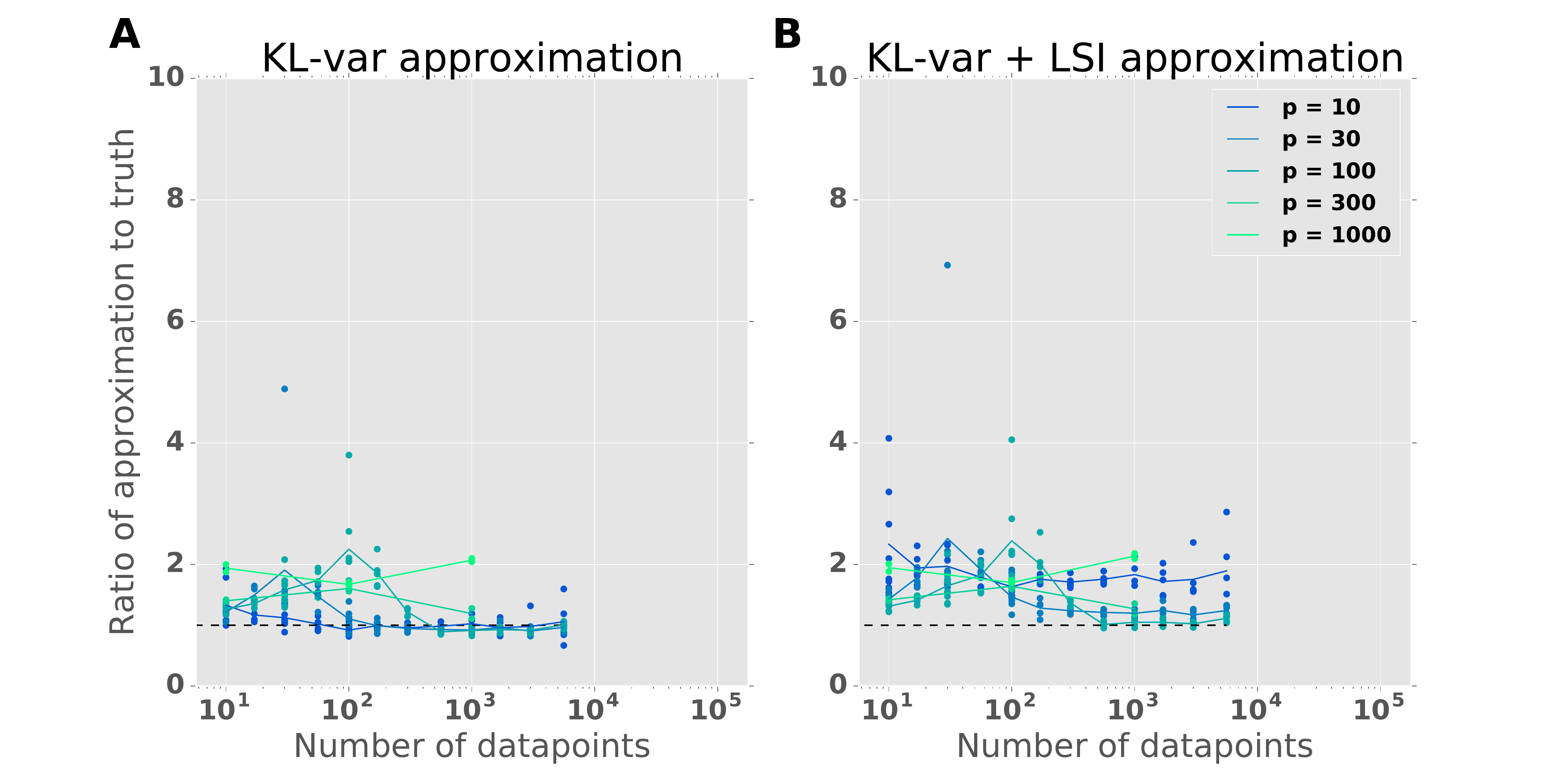}
\par\end{centering}
\caption{\label{fig: quality of KL var and KL var + LSI}Quality of the KL-var
and the \textquotedbl KL-var + LSI\textquotedbl{} approximations.
A: evolution of the ratio $\frac{1}{2}KL_{var}/KL$ as a function
of $n$. A ratio above $1$ indicates overestimation of the true KL
divergence. Throughout the range of parameters explored, $KL_{var}$
yields a tight approximation of the KL divergence. B: same as A for
the \textquotedbl KL-var + LSI\textquotedbl{} approximation of Cor.\ref{cor:Computable-approximations-of KL(g,f)}.
This approximation consistently upper-bounds the KL divergence in
the experiment. Note that the upper-bound behavior is more pronounced
for small values of $p$. This is unsurprising since the LSI term
is $p$-times smaller than the $KL_{var}$ term (Cor.\ref{cor:Computable-approximations-of KL(g,f)}).}
\end{figure}
Finally, the varELBO and Taylor-based approximations proved to be
very disappointing. The varELBO was mostly uncomputable when compared
to the alternatives. This is due to the fact that computing the varELBO
requires computing an empirical mean over the radius $r\sim g\left(r\right)$,
thus increasing the number of samples required by an order of magnitude.
However, for the small values of $n,p$ for which the varELBO remained
computable, it was almost equal to $KL_{var}$ (Appendix Fig.\ref{fig: APPENDIX evolution of all approximations}.C).

Taylor-based approximations are similarly very expensive to compute
since they require the manipulation of the $d^{3}$ coefficients of
$\tilde{\phi}_{f}^{\left(3\right)}\left(0\right)$ and $d^{4}$ coefficients
of $\tilde{\phi}_{f}^{\left(4\right)}\left(0\right)$. This proved
doable for $p\in\left\{ 10,30,100\right\} $. However, in this range,
the Taylor-based approximations were quite disappointing in comparison
with the corresponding approximations based on empirical means. Indeed,
most Taylor-based approximation were five to ten times bigger than
the truth for small values of $n$ and only became good approximations
when the large-data limit behavior dominated. This is probably due
to the fact that Taylor approximations of $\tilde{\phi}_{f}\left(\boldsymbol{\theta}\right)$
are unrepresentative of its shape for small values of $n$ due to
them being too local in this regime (Appendix Fig.\ref{fig: APPENDIX evolution of all approximations}.D-G).

Overall, the KL variance and the ``LSI+KLvar'' approximations appear
to yield great approximations of $KL\left(g_{LAP},f\right)$ in the
examples considered here. Indeed, both are always in the correct order
of magnitude. Furthermore, the KL-variance approximation is close
to being exact in both the small $n$ and large $n$ regimes. In contrast,
the varELBO approximations and the Taylor-based approximations appear
to be dominated. Indeed, they are computationally trickier while yielding
worse approximations.

\section{Discussion\label{sec:Discussion}}

In this article, I have presented a new Bernstein-von Mises theorem
which aims at being more computationally relevant than existing results.
I have shown that, in a limit where a log-concave probability distribution
has higher derivatives that are small compared to the second derivative,
the Laplace approximation becomes exact (Th.\ref{thm:A-deterministic-BvM theorem}).
This result can be used to rederive the classical BvM result that
the posterior is asymptotically Gaussian for IID data (Cor.\ref{cor:Bernstein-von-Mises:-IID case}).
However, the main draw of Th.\ref{thm:A-deterministic-BvM theorem}
is the fact that it yields several computable approximations (Cor.\ref{cor:Computable-approximations-of KL(g,f)})
of the KL divergence $KL\left(g_{LAP},f\right)$ and in particular
an approximation based on the ``KL variance'':
\begin{align}
KL\left(g_{LAP},f\right) & \approx\frac{1}{2}KL_{var}\left(g_{LAP},f\right)\\
 & \approx\frac{1}{2}\text{Var}_{\boldsymbol{\theta}\sim g_{LAP}\left(\boldsymbol{\theta}\right)}\left[\phi_{f}\left(\boldsymbol{\theta}\right)-\phi_{g}\left(\boldsymbol{\theta}\right)\right]
\end{align}
This approximation can thus be used to compute an indicator of the
quality of the Laplace approximation of a given log-concave posterior.

\subsection{Validating the Laplace approximation using $KL\left(g_{LAP},f\right)$}

Knowledge of the KL divergence $KL\left(g_{LAP},f\right)$ can be
used to validate whether, in a given problem, it is a valid approximation
or not. Indeed, knowing or bounding the KL divergence can be used
in two different ways to assess whether $g_{LAP}\left(\boldsymbol{\theta}\right)$
is indeed a good approximation of the target $f\left(\boldsymbol{\theta}\right)$.

First, we can use knowledge of $KL\left(g_{LAP},f\right)$ to derive
whether credible intervals of $g_{LAP}$ have good coverage under
$f$. Indeed, for any region $R$, the probabilities:
\begin{align}
p_{g} & =\mathbb{P}\left(\boldsymbol{\theta}_{g}\in R\right)\\
p_{f} & =\mathbb{P}\left(\boldsymbol{\theta}_{f}\in R\right)\nonumber 
\end{align}
must be such that:
\begin{equation}
p_{g}\log\left(\frac{p_{g}}{p_{f}}\right)+\left(1-p_{g}\right)\log\left(\frac{1-p_{g}}{1-p_{f}}\right)\leq KL\left(g_{LAP},f\right)
\end{equation}
This yields an upper and a lower bound on $p_{f}$ given the KL divergence
and $p_{g}$ and can thus be used to derive regions with guaranteed
coverage under $f$.

Second, knowledge of the KL divergence can be used to approximate
the marginal likelihood of the data under the model in Bayesian inference
which is a quantity required to perform model comparison. Given multiple
Bayesian models of a dataset $\mathcal{D}=d$, a posterior distribution
over the models can be constructed by computing the marginal likelihood
of the data under each model:
\begin{equation}
f_{M_{i}}\left(\mathcal{D}=d\right)=\int f_{M_{i}}\left(\mathcal{D}=d|\boldsymbol{\theta}\right)f_{M_{i}}\left(\boldsymbol{\theta}\right)d\boldsymbol{\theta}
\end{equation}
This quantity is very hard to estimate but it can be approximated
using an approximation of the posterior density $g\left(\boldsymbol{\theta}\right)\approx f_{M_{i}}\left(\boldsymbol{\theta}|\mathcal{D}=d\right)$
through the Evidence Lower Bound (ELBO):
\begin{align}
\log\left(f_{M_{i}}\left(\mathcal{D}=d\right)\right) & \geq ELBO\\
 & \geq\E_{\boldsymbol{\theta}\sim g\left(\boldsymbol{\theta}\right)}\left[\log\frac{f_{M_{i}}\left(\mathcal{D}=d|\boldsymbol{\theta}\right)f_{M_{i}}\left(\boldsymbol{\theta}\right)}{g\left(\boldsymbol{\theta}\right)}\right]
\end{align}
where the error of this approximation is precisely equal to the KL
divergence $KL\left(g\left(\boldsymbol{\theta}\right),f_{M_{i}}\left(\boldsymbol{\theta}|\mathcal{D}=d\right)\right)$.
Control of the KL divergence can thus enable rigorous Bayesian model
comparison based on the ELBO.

Theorem \ref{thm:A-deterministic-BvM theorem} and Cor.\ref{cor:Computable-approximations-of KL(g,f)}
thus provide a path towards rigorous Bayesian inference based on the
Laplace approximation through approximating or bounding $KL\left(g_{LAP},f\right)$
and then assessing precisely the error introduced by extrapolating
inferences drawn from $g_{LAP}$ to the true posterior $f$. My experiments
hint at $KL_{var}$ and the ``KLvar+LSI'' combination being the
best approximations for this purpose since they offered respectively
a tight approximation and tight upper-bound of the true KL divergence,
while being straightforward to compute.

\subsection{Assumptions}

It is instructive to consider the assumptions I propose here in great
detail, and in particular to compare them to the classical assumptions
of the BvM theorem.

In classical approaches to BvM results, global control is achieved
instead by assuming that the posterior concentrates around $\boldsymbol{\theta}_{0}$.
My assumption of the posterior being log-concave is considerably stronger
and implies consistency. This is due to log-concave distributions
being strongly concentrated around their posterior distribution (\citet{pereyra2016approximating}).
Weakening this assumption on the posterior would thus considerably
expand the applicability of Theorem \ref{thm:A-deterministic-BvM theorem}
but this appears quite challenging since the LSI will not be applicable
anymore to bound $KL\left(r_{g},r_{f}|\boldsymbol{e}\right)$.

In contrast, my assumption that the posterior has controlled third
derivatives is much closer to the equivalent assumption that the log-likelihood
is Locally Asymptotically Normal (LAN; \citet{van2000asymptotic}
Ch.7). These conditions state that around $\boldsymbol{\theta}_{0}$,
a quadratic approximation of the log-likelihood is asymptotically
valid which ensures in turn that the Laplace approximation of the
posterior is asymptotically correct. My assumption is stronger in
that I assume that a third derivative exists and that it is globally
bounded while LAN behavior can emerge with less regularity. For example,
the likelihood of a Laplace distribution does not have a second derivative,
but still leads to asymptotically Gaussian posteriors. However, extending
Theorem \ref{thm:A-deterministic-BvM theorem} to these circumstances
appears to be straightforward and I will pursue it in further work.

Finally, note that the assumption that every neighborhood of $\boldsymbol{\theta}_{0}$
has non-zero prior probability is implied by the assumption controlling
the third derivatives of the prior (eq.\eqref{eq: control on the third derivatives of the prior}).
Indeed, if the third derivative is bounded, then the prior must spread
its mass over all of $\R^{p}$ and it is impossible for the prior
not to put mass on all neighborhood of $\boldsymbol{\theta}_{0}$.

It seems to me straightforward to check whether the posterior in a
given model respects the conditions outlined in here. It might be
possible to directly assess whether the posterior is log-concave directly.
Alternatively, it is also possible to guarantee that the posterior
is log-concave by combining a log-concave prior with log-concave likelihoods
(such as those of the linear logistic classification model considered
in this article). Indeed, log-concavity is clearly conserved by products.
Controlling the derivatives of the posterior is similarly straightforward.
The easiest possibility consists in deriving an upper-bound on the
third derivative of the negative log-likelihood. This upper-bound
can be quite pessimistic since the precise value of $\Delta_{3}$
does not come into play in the $KL_{var}$ approximation of $KL\left(g_{LAP},f\right)$.

\subsection{Scaling with dimensionality}

An important aspect of modern asymptotics is the fact the dimensionality
$p$ might be comparable to $n$, thus complexifying the asymptotic
analysis. In this work, I have established in Theorem \ref{thm:A-deterministic-BvM theorem}
that the KL divergence tends to $0$ at least as $\left(\Delta_{3}\right)^{2}p^{3}$
in the limit. However, I am doubtful that the exponent of $p$ is
accurate here since it might instead reflect limits of the proof.
While additional work is required to acquire definite understanding
of the scaling $p$, the following examples are instructive.

First, I present an example that saturates the bound. Consider a target
density $f\left(\boldsymbol{\theta}\right)$ such that:
\begin{equation}
\tilde{\phi}_{f}\left(\tilde{\boldsymbol{\theta}}\right)\approx\frac{1}{2}\left\Vert \tilde{\boldsymbol{\theta}}\right\Vert _{2}^{2}+\frac{1}{6}\Delta_{3}\text{sign}\left(\tilde{\theta}_{1}\right)\left\Vert \tilde{\boldsymbol{\theta}}\right\Vert _{2}^{3}
\end{equation}
Note that this target is not continuous in $\tilde{\boldsymbol{\theta}}$
due to the discontinuity at $\tilde{\theta}_{1}=0$ so it does not
quite fit the assumptions of Theorem \ref{thm:A-deterministic-BvM theorem}
but we can still try to compute the quality of the approximation with
the standard Gaussian $\tilde{g}\left(\tilde{\boldsymbol{\theta}}\right)$.
Careful examination reveals that the proof of Theorem \ref{thm:A-deterministic-BvM theorem}
would still hold for this example since I only actually use continuity
in $r$ of $\tilde{\phi}_{f}\left(r\boldsymbol{e}\right)$. To understand
$KL\left(g,f\right)$, first compute the ELBO approximation of $\log\left[f\left(\boldsymbol{e}\right)\right]$:
\begin{align}
\log\left[f\left(\boldsymbol{e}\right)\right] & \approx\E_{r\sim g_{LAP}\left(r\right)}\left[\tilde{\phi}_{f}\left(r\boldsymbol{e}\right)-\tilde{\phi}_{g}\left(\boldsymbol{e}\right)\right]+C\\
 & \approx\frac{1}{6}\Delta_{3}\text{sign}\left(e_{1}\right)\E\left(r_{g}^{3}\right)+C\\
 & \approx\frac{1}{6}\Delta_{3}\text{sign}\left(e_{1}\right)p^{3/2}+C
\end{align}
The distribution of $\boldsymbol{e}_{f}$ is thus uniform on the two
half spheres $e_{1}>0$ and $e_{1}<0$. The KL divergence then scales
as the variance of $\log\left[f\left(\boldsymbol{e}\right)\right]$
(Prop.\ref{prop: KL approx KL_var}):
\begin{equation}
KL\left(g_{LAP},f\right)=\mathcal{O}\left(\left(\Delta_{3}\right)^{2}p^{3}\right)
\end{equation}
Thus, this example is such that the limit behavior indeed is reached
at the rate $\left(\Delta_{3}\right)^{2}p^{3}$ predicted by Theorem
\ref{thm:A-deterministic-BvM theorem}. However, this example is less
regular than assumed by the Theorem and is not representative of the
typical posterior distribution. More regular targets $\phi_{f}\left(\boldsymbol{\theta}\right)$
might have improved scaling in $p$.

In particular, modifying the assumptions on the local control of $\phi_{f}\left(\boldsymbol{\theta}\right)$
might change the exponent of $p$ in the asymptotic behavior. Indeed,
if the posterior is such that $\tilde{\phi}_{f}\left(\tilde{\boldsymbol{\theta}}\right)-\tilde{\phi}_{g}\left(\tilde{\boldsymbol{\theta}}\right)$
grows at $\left\Vert \tilde{\boldsymbol{\theta}}\right\Vert _{2}^{k}$
instead of $\left\Vert \tilde{\boldsymbol{\theta}}\right\Vert _{2}^{3}$
as assumed here, we should expect at most a growth at rate $p^{k}$
because:
\begin{align}
\text{Var}_{\boldsymbol{\theta}\sim g_{LAP}\left(\boldsymbol{\theta}\right)}\left[\phi_{f}\left(\boldsymbol{\theta}\right)-\phi_{g}\left(\boldsymbol{\theta}\right)\right] & =\text{Var}_{\tilde{\boldsymbol{\theta}}\sim\tilde{g}_{LAP}\left(\tilde{\boldsymbol{\theta}}\right)}\left[\tilde{\phi}_{f}\left(\tilde{\boldsymbol{\theta}}\right)-\tilde{\phi}_{g}\left(\tilde{\boldsymbol{\theta}}\right)\right]\\
 & \leq\E_{\tilde{\boldsymbol{\theta}}\sim\tilde{g}_{LAP}\left(\tilde{\boldsymbol{\theta}}\right)}\left[\tilde{\phi}_{f}\left(\tilde{\boldsymbol{\theta}}\right)-\tilde{\phi}_{g}\left(\tilde{\boldsymbol{\theta}}\right)\right]^{2}\\
 & \leq\E\left(\left\Vert \tilde{\boldsymbol{\theta}}\right\Vert _{2}^{2k}\right)\\
 & \apprle p^{k}
\end{align}

More generally, structural assumptions on the target $\phi_{f}\left(\boldsymbol{\theta}\right)$
can have a heavy impact on the dimensionality scaling. For example,
if the target distribution factorizes over each component: $f\left(\boldsymbol{\theta}\right)=\prod_{i=1}^{p}f\left(\theta_{i}\right)$,
then the Laplace approximation also factorizes, and we have:
\begin{align}
\text{Var}_{\boldsymbol{\theta}\sim g_{LAP}\left(\boldsymbol{\theta}\right)}\left[\phi_{f}\left(\boldsymbol{\theta}\right)-\phi_{g}\left(\boldsymbol{\theta}\right)\right] & =\sum_{i=1}^{p}\text{Var}_{\theta_{i}\sim g_{LAP}\left(\theta_{i}\right)}\left[\phi_{f}\left(\theta_{i}\right)-\phi_{g}\left(\theta_{i}\right)\right]\\
 & \leq p\Delta_{3}\E\left(\theta^{6}\right)
\end{align}
The scaling with dimensionality of the main term of Theorem \ref{thm:A-deterministic-BvM theorem}
scales only as $p$ instead of $p^{3}$. However, note that Theorem
\ref{thm:A-deterministic-BvM theorem} would still require $\Delta_{3}p^{3/2}\rightarrow0$
in order to guarantee that the additional error terms go to $0$.
Once again, I must emphasize that this might reflect a limit of the
proof and that the approximations of Cor.\ref{cor:Computable-approximations-of KL(g,f)}
might be valid even for faster growth of $p$.

\subsection{Limits}

Finally, please beware that quite a bit of care must be taken when
using the $KL_{var}$ approximation of the KL divergence to ensure
mathematical rigor.

Indeed, this approximation requires the asymptote $\Delta_{3}p^{3/2}\rightarrow0$
to be reached. I have established that this asymptote is reached as
$n\rightarrow\infty$ in the classical IID setting (Cor.\ref{cor:Bernstein-von-Mises:-IID case})
and I conjecture that it generally holds whenever the posterior has
a Gaussian limit, but further work will be required to check this
conjecture. Intuitively, the KL variance approximation should furthermore
yield a good approximation of the KL divergence when $n$ is small
and the prior is close to Gaussian but further theoretical work is
needed to establish this rigorously.

It is furthermore critical to keep in mind the key restriction that
$f$ needs to be log-concave in order for Theorem \ref{thm:A-deterministic-BvM theorem}
to apply. Indeed, it is straightforward to design counter-examples,
such as the mixture of a thin and wide Gaussian discussed in Section
\ref{subsec:Assumptions}, for which the KL variance is arbitrarily
close to $0$ while the KL divergence is arbitrarily big. Using the
KL variance approximation might thus be inappropriate in the absence
of an argument to ensure that most of the mass of $f$ is concentrated
around its mode.

Another more trivial limit is the fact that the KL divergence $KL\left(g_{LAP},f\right)$
is infinite if the support of $f$ is not $\R^{p}$. Careful application
of Theorem \ref{thm:A-deterministic-BvM theorem} would reveal this
immediately since the support of $f$ being strictly smaller than
$\R^{p}$ requires $\Delta_{3}=\infty$. The KLvar approximation of
the KL divergence would also be infinite but empirical approximations
of $KL_{var}\left(g_{LAP},f\right)$ might fail to recover this infinite
value. It is thus necessary to check the support of $f$.

Finally, it is critical to notice that the present work only applies
to the Laplace approximation of $f$. Indeed, several important modern
alternatives, such as the Gaussian Variational approximation or mean-field
approximations (i.e. Gaussian approximation with a Diagonal or block-Diagonal
covariance; \citet{blei2017variational}), are not covered by Theorem
\ref{thm:A-deterministic-BvM theorem}. Non Gaussian approximations
are also not covered. While it is possible that the KL variance gives
a possible measure of the quality of such approximations, additional
work is required to determine this rigorously.

\section{Conclusion}

In this article, I have shown that, under assumptions, it is asymptotically
valid to approximate the KL divergence between a posterior distribution
and its Laplace approximation by using one-half of the KL variance
instead. This gives a computable measure of the quality of the Laplace
approximation that can then be used to measure the error induced by
this approximation, and thus assessing whether the it is acceptable,
or needs to be replaced by another more precise approximation of the
posterior. Future work will be required to assess if this approximation
of the KL divergence can be extended to other approximations and weaker
assumptions.

\bibliographystyle{unsrtnat}
\bibliography{ref}

\newpage{}

\appendix

\part*{Appendix}

This appendix provides detailed proofs of every claim I make in the
main text of the article and gives details of how to reproduce the
empirical findings. Proofs are contained in Sections \ref{sec: APPENDIX KL-var approximation}-\ref{sec: APPENDIX Approximations-of-the KL divergence}.
Details of the empirical study of the logistic classification model
are given in Section \ref{sec:APPENDIX Details-of-the simulations}.

Each section deals with the proof of one result in the main text:
\begin{itemize}
\item Section \ref{sec: APPENDIX KL-var approximation} proves Prop.\ref{prop: KL approx KL_var}.
\item Section \ref{sec:APPENDIX LSI based approximations} proves Prop.\ref{prop:LSI-bound-on KL(g,f)}
and \ref{prop:LSI-bound-on KL(f,g)}.
\item Section \ref{sec: APPENDIX General-deterministic-Bound} proves Th.\ref{thm:A-deterministic-BvM theorem}.
\item Section \ref{sec: APPENDIX Recovering-the-classical IID result} proves
Cor.\ref{cor:Bernstein-von-Mises:-IID case}.
\item Section \ref{sec: APPENDIX Approximations-of-the KL divergence} proves
Cor.\ref{cor:Computable-approximations-of KL(g,f)}.
\end{itemize}
Since the sequence of proofs is quite involved, I have tried to maximize
its readability in the following ways. Each section starts with a
detailed description of the structure of the proof of the corresponding
claim. The Lemmas proved in each section are almost only required
inside the corresponding section. Each Lemma and Proposition of this
appendix details its ``parents'' in the dependency structure of
this proof.

\section{The $KL_{var}$ approximation }

\label{sec: APPENDIX KL-var approximation}

This section deals with the proof of Prop.\ref{prop: KL approx KL_var}
which asserts that, under limiting assumptions, the KL variance can
be used as an approximation for the KL divergence.

Recall that throughout this section, $f\left(\boldsymbol{\theta}\right)$
and $g\left(\boldsymbol{\theta}\right)$ can be any probability densities:
\begin{align}
f\left(\boldsymbol{\theta}\right) & =\exp\left(-\phi_{f}\left(\boldsymbol{\theta}\right)-\log\left(Z_{f}\right)\right)\\
g\left(\boldsymbol{\theta}\right) & =\exp\left(-\phi_{g}\left(\boldsymbol{\theta}\right)-\log\left(Z_{g}\right)\right)
\end{align}

\subsection{Proof structure}

This proof is fairly straightforward. The following results are established
in order:
\begin{itemize}
\item First, I establish that, for any densities $f\left(\boldsymbol{\theta}\right)$
and $g\left(\boldsymbol{\theta}\right)$, the KL divergence can be
rewritten in a way that discards the log-normalizing constants (Lemma
\ref{lem:Another-formula-for KL(g,f)}).
\item Then, I prove that the family:
\begin{align}
h\left(\boldsymbol{\theta};\lambda\right) & =g\left(\boldsymbol{\theta}\right)\left[\frac{f\left(\boldsymbol{\theta}\right)}{g\left(\boldsymbol{\theta}\right)}\right]^{\lambda}\exp\left[-C\left(\lambda\right)\right]\ \ \lambda\in\left[0,1\right]\\
C\left(\lambda\right) & =\log\left\{ \int g\left(\boldsymbol{\theta}\right)\left[\frac{f\left(\boldsymbol{\theta}\right)}{g\left(\boldsymbol{\theta}\right)}\right]^{\lambda}d\boldsymbol{\theta}\right\} 
\end{align}
exists and is an exponential family (Lemma \ref{lem:The-h-lambda-family.}).
\item Then, I establish key properties of a function $\tilde{C}\left(\lambda\right)$
that is closely related to the cumulant generating function $C\left(\lambda\right)$
(Lemma \ref{lem:Properties-of- C-tilde}).
\item Then, I give a bound on the cumulants of a bounded random variable
(Lemma \ref{lem: approximation of KL(g,h)}).
\item Finally, I define the function:
\begin{equation}
K\left(\lambda\right)=KL\left(g\left(\boldsymbol{\theta}\right),h\left(\boldsymbol{\theta};\lambda\right)\right)
\end{equation}
and show how to approximate it using the KL variance (Lemma \ref{lem:Bound-on-the cumulants.}).
\end{itemize}
These results then yield Prop.\ref{prop: KL approx KL_var}.

\subsection{Proofs}
\begin{lem}
Another formula for $KL\left(g,f\right)$.\label{lem:Another-formula-for KL(g,f)}

\textbf{Requires: }NA.

The KL divergence:
\begin{align}
KL\left(g,f\right) & =\E_{\boldsymbol{\theta}\sim g\left(\boldsymbol{\theta}\right)}\left[\log\frac{g\left(\boldsymbol{\theta}\right)}{f\left(\boldsymbol{\theta}\right)}\right]\\
 & =\E_{\boldsymbol{\theta}\sim g\left(\boldsymbol{\theta}\right)}\left[\phi_{f}\left(\boldsymbol{\theta}\right)+\log\left(Z_{f}\right)-\phi_{g}\left(\boldsymbol{\theta}\right)-\log\left(Z_{g}\right)\right]
\end{align}
can be rewritten with no reference to the normalizing constants:
\begin{equation}
KL\left(g,f\right)=\E_{\boldsymbol{\theta}\sim g\left(\boldsymbol{\theta}\right)}\left[\phi_{f}\left(\boldsymbol{\theta}\right)-\phi_{g}\left(\boldsymbol{\theta}\right)\right]+\log\left\{ \E_{\boldsymbol{\theta}\sim g\left(\boldsymbol{\theta}\right)}\left[\exp\left(\phi_{g}\left(\boldsymbol{\theta}\right)-\phi_{f}\left(\boldsymbol{\theta}\right)\right)\right]\right\} 
\end{equation}
\end{lem}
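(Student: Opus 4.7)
The plan is to observe that the normalizing constants can be absorbed into an expectation under $g$. The starting point is the trivial rewriting
\[
KL(g,f) = \E_{\boldsymbol{\theta}\sim g}\!\left[\phi_f(\boldsymbol{\theta})-\phi_g(\boldsymbol{\theta})\right] + \log(Z_f) - \log(Z_g),
\]
so the whole task reduces to re-expressing $\log(Z_f/Z_g)$ as a logarithm of an expectation that is free of normalizing constants.

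The key identity I would establish is $Z_f/Z_g = \E_{\boldsymbol{\theta}\sim g}\!\left[\exp(\phi_g(\boldsymbol{\theta})-\phi_f(\boldsymbol{\theta}))\right]$. This follows by writing out the expectation and cancelling the exponentials:
\[
\E_{\boldsymbol{\theta}\sim g}\!\left[e^{\phi_g(\boldsymbol{\theta})-\phi_f(\boldsymbol{\theta})}\right] = \int e^{-\phi_g(\boldsymbol{\theta})-\log Z_g}\, e^{\phi_g(\boldsymbol{\theta})-\phi_f(\boldsymbol{\theta})}\, d\boldsymbol{\theta} = \frac{1}{Z_g}\int e^{-\phi_f(\boldsymbol{\theta})}\,d\boldsymbol{\theta} = \frac{Z_f}{Z_g}.
\]
Taking logarithms and substituting back into the decomposition of $KL(g,f)$ yields the claim.

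There is no real obstacle here; the only thing to be careful about is that the integral defining $Z_f$ exists (which is granted since $f$ is a probability density) and that the cancellation is legitimate pointwise, which it is since $g>0$ wherever $f>0$ whenever $KL(g,f)$ is finite. So the proof is essentially a one-line algebraic manipulation justified by Fubini/absolute convergence for the non-negative integrand.
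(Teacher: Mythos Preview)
Your proposal is correct and follows essentially the same approach as the paper: both isolate $\log(Z_f/Z_g)$ and then verify the identity $Z_f/Z_g = \E_{\boldsymbol{\theta}\sim g}\bigl[\exp(\phi_g(\boldsymbol{\theta})-\phi_f(\boldsymbol{\theta}))\bigr]$ by a direct one-line computation. The only cosmetic difference is that the paper starts from $Z_f$ and multiplies by $g/g$ to reach the expectation, whereas you start from the expectation and cancel to reach $Z_f/Z_g$; these are the same manipulation read in opposite directions.
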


\begin{proof}
This proof results from a straightforward manipulation of the expression
of $KL\left(g,f\right)$.

Starting from the usual formulation for $KL\left(g,f\right)$, we
first remove the normalizing constants from the expected value:
\begin{align}
KL\left(g,f\right) & =\E_{\boldsymbol{\theta}\sim g\left(\boldsymbol{\theta}\right)}\left[\phi_{f}\left(\boldsymbol{\theta}\right)+\log\left(Z_{f}\right)-\phi_{g}\left(\boldsymbol{\theta}\right)-\log\left(Z_{g}\right)\right]\\
 & =\E_{\boldsymbol{\theta}\sim g\left(\boldsymbol{\theta}\right)}\left[\phi_{f}\left(\boldsymbol{\theta}\right)-\phi_{g}\left(\boldsymbol{\theta}\right)\right]+\log\left(Z_{f}\right)-\log\left(Z_{g}\right)\\
 & =\E_{\boldsymbol{\theta}\sim g\left(\boldsymbol{\theta}\right)}\left[\phi_{f}\left(\boldsymbol{\theta}\right)-\phi_{g}\left(\boldsymbol{\theta}\right)\right]+\log\left(\frac{Z_{f}}{Z_{g}}\right)\label{eq: intermediate equation for KL rephrasing}
\end{align}

Let us make explicit $Z_{f}$:
\begin{equation}
Z_{f}=\int\exp\left(-\phi_{f}\left(\boldsymbol{\theta}\right)\right)d\boldsymbol{\theta}
\end{equation}
We can rework this expression in order to make $g\left(\boldsymbol{\theta}\right)$
appear:
\begin{align}
Z_{f} & =\int\frac{g\left(\boldsymbol{\theta}\right)}{g\left(\boldsymbol{\theta}\right)}\exp\left(-\phi_{f}\left(\boldsymbol{\theta}\right)\right)d\boldsymbol{\theta}\\
 & =\int g\left(\boldsymbol{\theta}\right)\exp\left(\phi_{g}\left(\boldsymbol{\theta}\right)+\log\left(Z_{g}\right)-\phi_{f}\left(\boldsymbol{\theta}\right)\right)d\boldsymbol{\theta}\\
 & =Z_{g}\int g\left(\boldsymbol{\theta}\right)\exp\left(\phi_{g}\left(\boldsymbol{\theta}\right)-\phi_{f}\left(\boldsymbol{\theta}\right)\right)d\boldsymbol{\theta}\\
\frac{Z_{f}}{Z_{g}} & =\E_{\boldsymbol{\theta}\sim g\left(\boldsymbol{\theta}\right)}\left[\exp\left(\phi_{g}\left(\boldsymbol{\theta}\right)-\phi_{f}\left(\boldsymbol{\theta}\right)\right)\right]
\end{align}

Returning to eq.\eqref{eq: intermediate equation for KL rephrasing}
with this final expression, we obtain the claimed result:
\begin{align}
KL\left(g,f\right) & =\E_{\boldsymbol{\theta}\sim g\left(\boldsymbol{\theta}\right)}\left[\phi_{f}\left(\boldsymbol{\theta}\right)-\phi_{g}\left(\boldsymbol{\theta}\right)\right]+\log\left(\frac{Z_{f}}{Z_{g}}\right)\\
 & =\E_{\boldsymbol{\theta}\sim g\left(\boldsymbol{\theta}\right)}\left[\phi_{f}\left(\boldsymbol{\theta}\right)-\phi_{g}\left(\boldsymbol{\theta}\right)\right]+\log\left\{ \E_{\boldsymbol{\theta}\sim g\left(\boldsymbol{\theta}\right)}\left[\exp\left(\phi_{g}\left(\boldsymbol{\theta}\right)-\phi_{f}\left(\boldsymbol{\theta}\right)\right)\right]\right\} 
\end{align}
which concludes this proof.
\end{proof}
\begin{lem}
The $h\left(\boldsymbol{\theta};\lambda\right)$ family.\label{lem:The-h-lambda-family.}

\textbf{Requires: }NA.

The family of distributions:
\begin{align}
h\left(\boldsymbol{\theta};\lambda\right) & =g\left(\boldsymbol{\theta}\right)\left[\frac{f\left(\boldsymbol{\theta}\right)}{g\left(\boldsymbol{\theta}\right)}\right]^{\lambda}\exp\left[-C\left(\lambda\right)\right]\ \ \lambda\in\left[0,1\right]\\
 & =g\left(\boldsymbol{\theta}\right)\exp\left(\lambda\left[\phi_{g}\left(\boldsymbol{\theta}\right)-\phi_{f}\left(\boldsymbol{\theta}\right)\right]-\tilde{C}\left(\lambda\right)\right)\\
C\left(\lambda\right) & =\log\left\{ \int g\left(\boldsymbol{\theta}\right)\left[\frac{f\left(\boldsymbol{\theta}\right)}{g\left(\boldsymbol{\theta}\right)}\right]^{\lambda}d\boldsymbol{\theta}\right\} \\
\tilde{C}\left(\lambda\right) & =\log\left\{ \E_{\boldsymbol{\theta}\sim g\left(\boldsymbol{\theta}\right)}\left[\exp\left(\lambda\left[\phi_{g}\left(\boldsymbol{\theta}\right)-\phi_{f}\left(\boldsymbol{\theta}\right)\right]\right)\right]\right\} 
\end{align}
exists and is an exponential family.
\end{lem}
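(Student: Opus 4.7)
The plan is to establish three facts: (i) the normalizing integral defining $C(\lambda)$ is finite for every $\lambda \in [0,1]$ so $h(\boldsymbol{\theta};\lambda)$ is a bona fide probability density, (ii) the two displayed expressions for $h(\boldsymbol{\theta};\lambda)$ agree (equivalently, $C(\lambda)$ and $\tilde{C}(\lambda)$ differ only by a linear function of $\lambda$), and (iii) reading off the second expression gives the canonical exponential-family form.

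For (i), I would invoke H\"older's inequality with conjugate exponents $p = 1/(1-\lambda)$ and $q = 1/\lambda$ (taking the endpoints $\lambda\in\{0,1\}$ by direct inspection, since $h(\boldsymbol{\theta};0)=g$ and $h(\boldsymbol{\theta};1)=f$). Writing the normalizing integral as
\begin{equation}
\int g(\boldsymbol{\theta})^{1-\lambda} f(\boldsymbol{\theta})^{\lambda}\,d\boldsymbol{\theta}
\;\leq\; \left(\int g(\boldsymbol{\theta})\,d\boldsymbol{\theta}\right)^{1-\lambda}\!\left(\int f(\boldsymbol{\theta})\,d\boldsymbol{\theta}\right)^{\lambda}=1,
\end{equation}
we conclude $C(\lambda)\le 0$ on $[0,1]$, so the family is well-defined and in fact its normalizer is bounded.

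For (ii), substituting $f = \exp(-\phi_f - \log Z_f)$ and $g = \exp(-\phi_g - \log Z_g)$ into $[f/g]^{\lambda}$ gives $\exp\bigl(\lambda[\phi_g - \phi_f] + \lambda[\log Z_g - \log Z_f]\bigr)$. Pulling the $\lambda[\log Z_g - \log Z_f]$ factor out of the integral defining $\exp(C(\lambda))$ identifies $C(\lambda) = \tilde{C}(\lambda) + \lambda[\log Z_g - \log Z_f]$, so
\begin{equation}
h(\boldsymbol{\theta};\lambda) = g(\boldsymbol{\theta})\exp\bigl(\lambda[\phi_g(\boldsymbol{\theta})-\phi_f(\boldsymbol{\theta})] - \tilde{C}(\lambda)\bigr),
\end{equation}
which is the second formulation. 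The argument also yields $\tilde{C}(\lambda)=\log\mathbb{E}_{\boldsymbol{\theta}\sim g}\bigl[\exp(\lambda[\phi_g-\phi_f])\bigr]$ as claimed.

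For (iii), the second formulation displays $h(\boldsymbol{\theta};\lambda)$ in the canonical exponential-family shape: base measure $g(\boldsymbol{\theta})$, sufficient statistic $T(\boldsymbol{\theta}) = \phi_g(\boldsymbol{\theta})-\phi_f(\boldsymbol{\theta})$, natural parameter $\lambda$, and log-partition function $\tilde{C}(\lambda)$. This closes the proof. There is no real obstacle here: the only conceptual subtlety is ensuring integrability uniformly in $\lambda$ without extra assumptions on $f,g$, which H\"older handles cleanly; all remaining steps are bookkeeping, and the exponential-family claim is immediate once the second representation is in hand.
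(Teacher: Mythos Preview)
Your proposal is correct and follows essentially the same approach as the paper: H\"older's inequality with exponents $1/(1-\lambda)$ and $1/\lambda$ for existence, direct substitution of $f=\exp(-\phi_f-\log Z_f)$ and $g=\exp(-\phi_g-\log Z_g)$ to relate $C(\lambda)$ and $\tilde C(\lambda)$, and recognition of the exponential-family form from the second expression. Your treatment is in fact slightly more careful at the endpoints $\lambda\in\{0,1\}$ than the paper's, which only addresses $\lambda\in\,]0,1[$ explicitly.
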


\begin{proof}
This Lemma contains multiple statements that we will check in order.

First, let us prove that the family does exist. This requires proving
that the function $C\left(\lambda\right)$ takes finite values over
the range $\lambda\in]0,1[$. This follows from the Holder inequality
with $p=\left(1-\lambda\right)^{-1}$ and $q=\lambda^{-1}$ such that
$p^{-1}+q^{-1}=1$:
\begin{align}
\int g\left(\boldsymbol{\theta}\right)\left[\frac{f\left(\boldsymbol{\theta}\right)}{g\left(\boldsymbol{\theta}\right)}\right]^{\lambda}d\boldsymbol{\theta} & =\int\left[g\left(\boldsymbol{\theta}\right)\right]^{1-\lambda}\left[f\left(\boldsymbol{\theta}\right)\right]^{\lambda}d\boldsymbol{\theta}\\
 & \leq\left\{ \int\left[g\left(\boldsymbol{\theta}\right)\right]^{p\left(1-\lambda\right)}d\boldsymbol{\theta}\right\} ^{p^{-1}}\left\{ \int\left[f\left(\boldsymbol{\theta}\right)\right]^{q\lambda}d\boldsymbol{\theta}\right\} ^{q^{-1}}\\
 & \leq\left\{ \int g\left(\boldsymbol{\theta}\right)d\boldsymbol{\theta}\right\} ^{1-\lambda}\left\{ \int f\left(\boldsymbol{\theta}\right)d\boldsymbol{\theta}\right\} ^{\lambda}\\
 & \leq1
\end{align}
Then, we have that, for all $\lambda\in\left[0,1\right]$, $h\left(\boldsymbol{\theta};\lambda\right)$
is a positive function which integrates to $1$. It is thus indeed
a density. The family thus exists.

In order to see that it is indeed an exponential family, just rewrite
the density:
\begin{align}
h\left(\boldsymbol{\theta};\lambda\right) & =g\left(\boldsymbol{\theta}\right)\left[\frac{f\left(\boldsymbol{\theta}\right)}{g\left(\boldsymbol{\theta}\right)}\right]^{\lambda}\exp\left[-C\left(\lambda\right)\right]\\
 & =g\left(\boldsymbol{\theta}\right)\exp\left(\lambda\left[\phi_{g}\left(\boldsymbol{\theta}\right)-\phi_{f}\left(\boldsymbol{\theta}\right)+\log\left(\frac{Z_{g}}{Z_{f}}\right)\right]-C\left(\lambda\right)\right)
\end{align}
This is indeed an exponential family. It has base density $g\left(\boldsymbol{\theta}\right)$,
a single natural parameter $\lambda$, associated sufficient statistic
$\left[\phi_{g}\left(\boldsymbol{\theta}\right)-\phi_{f}\left(\boldsymbol{\theta}\right)+\log\left(\frac{Z_{g}}{Z_{f}}\right)\right]$
and normalizing constant (or cumulant generating function) $C\left(\lambda\right)$.

We can also rewrite the exponential family as:
\begin{align}
h\left(\boldsymbol{\theta};\lambda\right) & =g\left(\boldsymbol{\theta}\right)\exp\left(\lambda\left[\phi_{g}\left(\boldsymbol{\theta}\right)-\phi_{f}\left(\boldsymbol{\theta}\right)+\log\left(\frac{Z_{g}}{Z_{f}}\right)\right]-C\left(\lambda\right)\right)\\
 & =g\left(\boldsymbol{\theta}\right)\exp\left(\lambda\left[\phi_{g}\left(\boldsymbol{\theta}\right)-\phi_{f}\left(\boldsymbol{\theta}\right)\right]-\lambda\left[\log\left(\frac{Z_{g}}{Z_{f}}\right)\right]-C\left(\lambda\right)\right)\\
 & =g\left(\boldsymbol{\theta}\right)\exp\left(\lambda\left[\phi_{g}\left(\boldsymbol{\theta}\right)-\phi_{f}\left(\boldsymbol{\theta}\right)\right]-\tilde{C}\left(\lambda\right)\right)
\end{align}
where:
\begin{align}
\tilde{C}\left(\lambda\right) & =\log\left\{ \int g\left(\boldsymbol{\theta}\right)\exp\left(\lambda\left[\phi_{g}\left(\boldsymbol{\theta}\right)-\phi_{f}\left(\boldsymbol{\theta}\right)\right]\right)d\boldsymbol{\theta}\right\} \\
 & =\log\left\{ \E_{\boldsymbol{\theta}\sim g\left(\boldsymbol{\theta}\right)}\left[\exp\left(\lambda\left[\phi_{g}\left(\boldsymbol{\theta}\right)-\phi_{f}\left(\boldsymbol{\theta}\right)\right]\right)\right]\right\} 
\end{align}

This concludes the proof.
\end{proof}
\begin{lem}
Properties of $\tilde{C}\left(\lambda\right)$.\label{lem:Properties-of- C-tilde}

\textbf{Requires: }Lemma \ref{lem:The-h-lambda-family.}.

The Cumulant Generating Function:
\begin{equation}
\tilde{C}\left(\lambda\right)=\log\left\{ \E_{\boldsymbol{\theta}\sim g\left(\boldsymbol{\theta}\right)}\left[\exp\left(\lambda\left[\phi_{g}\left(\boldsymbol{\theta}\right)-\phi_{f}\left(\boldsymbol{\theta}\right)\right]\right)\right]\right\} 
\end{equation}
is convex.

Its derivatives are the cumulants of the statistic $\phi_{g}\left(\boldsymbol{\theta}\right)-\phi_{f}\left(\boldsymbol{\theta}\right)$
for $\boldsymbol{\theta}\sim h\left(\boldsymbol{\theta};\lambda\right)$.
In particular:
\begin{align}
\tilde{C}^{'}\left(\lambda\right) & =k_{1}\left(\lambda\right)=\E_{\boldsymbol{\theta}\sim h\left(\boldsymbol{\theta};\lambda\right)}\left[\phi_{g}\left(\boldsymbol{\theta}\right)-\phi_{f}\left(\boldsymbol{\theta}\right)\right]\\
\tilde{C}^{''}\left(\lambda\right) & =k_{2}\left(\lambda\right)=\text{Var}_{\boldsymbol{\theta}\sim h\left(\boldsymbol{\theta};\lambda\right)}\left[\phi_{g}\left(\boldsymbol{\theta}\right)-\phi_{f}\left(\boldsymbol{\theta}\right)\right]\\
\tilde{C}^{\left(3\right)}\left(\lambda\right) & =k_{3}\left(\lambda\right)=\E_{\boldsymbol{\theta}\sim h\left(\boldsymbol{\theta};\lambda\right)}\left[\left\{ \phi_{g}\left(\boldsymbol{\theta}\right)-\phi_{f}\left(\boldsymbol{\theta}\right)-k_{1}\left(\lambda\right)\right\} ^{3}\right]
\end{align}
\end{lem}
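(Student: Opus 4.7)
The plan is to recognize $\tilde{C}(\lambda)$ as the cumulant generating function (CGF) of the scalar random variable $X(\boldsymbol{\theta}) = \phi_g(\boldsymbol{\theta}) - \phi_f(\boldsymbol{\theta})$ under the base density $g(\boldsymbol{\theta})$, and then invoke the classical correspondence between CGF derivatives and cumulants under the exponentially tilted density. Writing $M(\lambda) = \E_{\boldsymbol{\theta}\sim g}\left[\exp\left(\lambda X(\boldsymbol{\theta})\right)\right]$, we have $\tilde{C}(\lambda) = \log M(\lambda)$, and Lemma \ref{lem:The-h-lambda-family.} (through the Hölder argument used in its proof) guarantees $M(\lambda)<\infty$ for $\lambda\in[0,1]$. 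The tilted density appearing in Lemma \ref{lem:The-h-lambda-family.} can be rewritten as $h(\boldsymbol{\theta};\lambda) = g(\boldsymbol{\theta})\exp(\lambda X(\boldsymbol{\theta}))/M(\lambda)$, which is precisely the exponential tilt of $g$ by the statistic $X$.

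First I would justify differentiating under the integral on the open interval $(0,1)$. For any $\lambda_0 \in (0,1)$ pick $\epsilon > 0$ with $[\lambda_0 - \epsilon, \lambda_0 + \epsilon] \subset (0,1)$; then each integrand $X^k \exp(\lambda X)\, g$ for $\lambda$ in this neighborhood is dominated, up to a polynomial factor in $|X|$ that can be absorbed by an additional $\epsilon$-thickening, by $\exp((\lambda_0 - \epsilon)X)\,g + \exp((\lambda_0 + \epsilon)X)\,g$, both of which are $g$-integrable by the same Hölder bound as in the previous lemma. Dominated convergence then delivers $M^{(k)}(\lambda) = \E_g\left[X^k \exp(\lambda X)\right]$ on $(0,1)$; one can alternatively invoke the standard fact that the CGF of an exponential family is real-analytic on the interior of its natural parameter space.

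Next I would chain-rule through $\tilde{C} = \log M$ to obtain $\tilde{C}' = M'/M$, $\tilde{C}'' = M''/M - (M'/M)^2$, and $\tilde{C}^{(3)} = M^{(3)}/M - 3(M''/M)(M'/M) + 2(M'/M)^3$. The tilted-measure identity $\E_{h(\cdot;\lambda)}[X^k] = M^{(k)}(\lambda)/M(\lambda)$ lets me read these off as the first three cumulants of $X$ under $h(\cdot;\lambda)$: $\tilde{C}'(\lambda) = \E_h[X] = k_1(\lambda)$, $\tilde{C}''(\lambda) = \E_h[X^2] - k_1(\lambda)^2 = \text{Var}_h[X] = k_2(\lambda)$, and expanding $\E_h[(X - k_1)^3]$ by trinomial expansion gives exactly $M^{(3)}/M - 3(M''/M)(M'/M) + 2(M'/M)^3 = \tilde{C}^{(3)}(\lambda) = k_3(\lambda)$. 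Convexity then drops out for free, since $\tilde{C}''(\lambda) = \text{Var}_{h(\cdot;\lambda)}[X] \geq 0$.

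The only real obstacle is the technical one of justifying differentiation under the integral sign, and in particular doing so cleanly at the endpoints $\lambda \in \{0,1\}$ where Hölder gives only borderline integrability. I would handle this by establishing the derivative identities on the open interior $(0,1)$ via the domination argument above and then extending to $[0,1]$ by monotone/dominated-convergence continuity of $M$ and its derivatives, or, in settings like Prop.~\ref{prop: KL approx KL_var} where $\phi_f - \phi_g$ is bounded, by trivial uniform domination.
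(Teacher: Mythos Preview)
Your proposal is correct and follows essentially the same recognition as the paper: $\tilde{C}$ is the CGF of the statistic $X=\phi_g-\phi_f$ under $g$, and the tilted family $h(\cdot;\lambda)$ is the associated exponential family, so the derivatives of $\tilde{C}$ are the cumulants of $X$ under $h(\cdot;\lambda)$.

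The one genuine difference is in how convexity is obtained. You deduce it a posteriori from $\tilde{C}''(\lambda)=\text{Var}_{h(\cdot;\lambda)}[X]\geq 0$, which requires the differentiation-under-the-integral machinery you set up. The paper instead proves convexity directly via H\"older's inequality on $\int g^{1-\lambda}f^{\lambda}$, an argument that needs no smoothness at all and in particular works uniformly on the closed interval $[0,1]$ without the endpoint subtleties you flag. Your route is more self-contained once the derivative formulas are in hand; the paper's route cleanly separates the convexity claim from any regularity concerns. Either is fine here.
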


\begin{proof}
This Lemma just corresponds to standard properties of exponential
families and their CGF.

$\tilde{C}\left(\lambda\right)$ is defined as:
\begin{equation}
\tilde{C}\left(\lambda\right)=\log\left\{ \E_{\boldsymbol{\theta}\sim g\left(\boldsymbol{\theta}\right)}\left[\exp\left(\lambda\left[\phi_{g}\left(\boldsymbol{\theta}\right)-\phi_{f}\left(\boldsymbol{\theta}\right)\right]\right)\right]\right\} 
\end{equation}
which we can rewrite as:
\begin{align}
\exp\left[\tilde{C}\left(\lambda\right)\right] & =\E_{\boldsymbol{\theta}\sim g\left(\boldsymbol{\theta}\right)}\left[\exp\left(\lambda\left[\phi_{g}\left(\boldsymbol{\theta}\right)-\phi_{f}\left(\boldsymbol{\theta}\right)\right]\right)\right]\\
 & =\int g\left(\boldsymbol{\theta}\right)\exp\left(\lambda\left[\phi_{g}\left(\boldsymbol{\theta}\right)-\phi_{f}\left(\boldsymbol{\theta}\right)\right]\right)d\boldsymbol{\theta}\\
 & =\int\exp\left(-\phi_{g}\left(\boldsymbol{\theta}\right)-\log\left(Z_{g}\right)-\lambda\left[\phi_{g}\left(\boldsymbol{\theta}\right)-\phi_{f}\left(\boldsymbol{\theta}\right)\right]\right)d\boldsymbol{\theta}\\
 & =\frac{1}{Z_{g}}\int\exp\left(-\left(1-\lambda\right)\phi_{g}\left(\boldsymbol{\theta}\right)-\lambda\phi_{f}\left(\boldsymbol{\theta}\right)\right)d\boldsymbol{\theta}
\end{align}

Let us now prove the convexity of $\tilde{C}\left(\lambda\right)$.
Let $\lambda_{1}<\lambda_{2}<\lambda_{3}$. Without loss of generality,
we can assume that $\lambda_{1}=0$ and $\lambda_{3}=1$ and $\lambda=\lambda_{2}$
which simply corresponds to considering new functions $g\left(\boldsymbol{\theta}\right)=h\left(\boldsymbol{\theta};\lambda_{1}\right)$
and $f\left(\boldsymbol{\theta};\lambda_{3}\right)$. Now apply Holder
inequality to $\tilde{C}\left(\lambda\right)$ with $p=\left(1-\lambda\right)$
and $q=\lambda^{-1}$:
\begin{equation}
\int\exp\left(-\left(1-\lambda\right)\phi_{g}\left(\boldsymbol{\theta}\right)-\lambda\phi_{f}\left(\boldsymbol{\theta}\right)\right)d\boldsymbol{\theta}\leq\left[\int\exp\left(-\phi_{g}\left(\boldsymbol{\theta}\right)\right)d\boldsymbol{\theta}\right]^{1-\lambda}\left[\int\exp\left(-\phi_{f}\left(\boldsymbol{\theta}\right)\right)d\boldsymbol{\theta}\right]^{\lambda}
\end{equation}
We thus have:
\begin{align}
\tilde{C}\left(\lambda\right) & =\log\left[\frac{1}{Z_{g}}\int\exp\left(-\left(1-\lambda\right)\phi_{g}\left(\boldsymbol{\theta}\right)-\lambda\phi_{f}\left(\boldsymbol{\theta}\right)\right)d\boldsymbol{\theta}\right]\\
 & \leq-\left(1-\lambda+\lambda\right)\log\left(Z_{g}\right)+\left(1-\lambda\right)\log\left[\int\exp\left(-\phi_{g}\left(\boldsymbol{\theta}\right)\right)d\boldsymbol{\theta}\right]+\lambda\log\left[\int\exp\left(-\phi_{f}\left(\boldsymbol{\theta}\right)\right)d\boldsymbol{\theta}\right]\\
 & \leq\left(1-\lambda\right)\tilde{C}\left(0\right)+\lambda\tilde{C}\left(1\right)
\end{align}
which establishes the convexity of $\tilde{C}\left(\lambda\right)$.

The derivatives of a CGF are (like the name indicates) the cumulants
of the associated sufficient statistic. The first derivative of $\tilde{C}\left(\lambda\right)$
is thus the mean of $\phi_{g}\left(\boldsymbol{\theta}\right)-\phi_{f}\left(\boldsymbol{\theta}\right)$.
Its second derivative is similarly the covariance. The third derivative
is slightly more tricky to state since it is the third cumulant which
is defined, for a variable $X$, as:
\begin{equation}
k_{3}\left(X\right)=\E\left[\left\{ X-\E\left(X\right)\right\} ^{3}\right]
\end{equation}
These cumulants are computed under the density $h\left(\boldsymbol{\theta};\lambda\right)$.

For example, the calculation of the first derivative yields:
\begin{align}
\tilde{C}^{'}\left(\lambda\right) & =\frac{\partial}{\partial\lambda}\log\left\{ \E_{\boldsymbol{\theta}\sim g\left(\boldsymbol{\theta}\right)}\left[\exp\left(\lambda\left[\phi_{g}\left(\boldsymbol{\theta}\right)-\phi_{f}\left(\boldsymbol{\theta}\right)\right]\right)\right]\right\} \\
 & =\frac{\int g\left(\boldsymbol{\theta}\right)\frac{\partial}{\partial\lambda}\exp\left(\lambda\left[\phi_{g}\left(\boldsymbol{\theta}\right)-\phi_{f}\left(\boldsymbol{\theta}\right)\right]\right)d\boldsymbol{\theta}}{\int g\left(\boldsymbol{\theta}\right)\exp\left(\lambda\left[\phi_{g}\left(\boldsymbol{\theta}\right)-\phi_{f}\left(\boldsymbol{\theta}\right)\right]\right)d\boldsymbol{\theta}}\\
 & =\frac{\int g\left(\boldsymbol{\theta}\right)\left[\phi_{g}\left(\boldsymbol{\theta}\right)-\phi_{f}\left(\boldsymbol{\theta}\right)\right]\exp\left(\lambda\left[\phi_{g}\left(\boldsymbol{\theta}\right)-\phi_{f}\left(\boldsymbol{\theta}\right)\right]\right)d\boldsymbol{\theta}}{\int g\left(\boldsymbol{\theta}\right)\exp\left(\lambda\left[\phi_{g}\left(\boldsymbol{\theta}\right)-\phi_{f}\left(\boldsymbol{\theta}\right)\right]\right)d\boldsymbol{\theta}}\\
 & =\int g\left(\boldsymbol{\theta}\right)\left[\phi_{g}\left(\boldsymbol{\theta}\right)-\phi_{f}\left(\boldsymbol{\theta}\right)\right]\exp\left(\lambda\left[\phi_{g}\left(\boldsymbol{\theta}\right)-\phi_{f}\left(\boldsymbol{\theta}\right)\right]-\tilde{C}\left(\lambda\right)\right)d\boldsymbol{\theta}\\
 & =\E_{\boldsymbol{\theta}\sim h\left(\boldsymbol{\theta};\lambda\right)}\left[\phi_{g}\left(\boldsymbol{\theta}\right)-\phi_{f}\left(\boldsymbol{\theta}\right)\right]
\end{align}

This concludes the proof.
\end{proof}
\begin{lem}
Approximation of $KL\left(g\left(\boldsymbol{\theta}\right),h\left(\boldsymbol{\theta};\lambda\right)\right)$.\label{lem: approximation of KL(g,h)}

\textbf{Requires:} Lemmas \ref{lem:Another-formula-for KL(g,f)}-\ref{lem:Properties-of- C-tilde}.

The KL divergence between $g\left(\boldsymbol{\theta}\right)$ and
$h\left(\boldsymbol{\theta};\lambda\right)$:
\begin{align}
KL\left(g\left(\boldsymbol{\theta}\right),h\left(\boldsymbol{\theta};\lambda\right)\right) & =K\left(\lambda\right)\\
 & =\lambda\E_{\boldsymbol{\theta}\sim g\left(\boldsymbol{\theta}\right)}\left[\phi_{f}\left(\boldsymbol{\theta}\right)-\phi_{g}\left(\boldsymbol{\theta}\right)\right]+\log\left\{ \E_{\boldsymbol{\theta}\sim g\left(\boldsymbol{\theta}\right)}\left[\exp\left(\lambda\left[\phi_{g}\left(\boldsymbol{\theta}\right)-\phi_{f}\left(\boldsymbol{\theta}\right)\right]\right)\right]\right\} \\
 & =\lambda\E_{\boldsymbol{\theta}\sim g\left(\boldsymbol{\theta}\right)}\left[\phi_{f}\left(\boldsymbol{\theta}\right)-\phi_{g}\left(\boldsymbol{\theta}\right)\right]+\tilde{C}\left(\lambda\right)
\end{align}
has a Taylor approximation with integral remainder:
\begin{align}
K\left(\lambda\right) & =0+0+\frac{\lambda^{2}}{2}\text{Var}_{\boldsymbol{\theta}\sim g\left(\boldsymbol{\theta}\right)}\left[\phi_{g}\left(\boldsymbol{\theta}\right)-\phi_{f}\left(\boldsymbol{\theta}\right)\right]+\int_{0}^{\lambda}\frac{1}{2}l^{2}k_{3}\left(l\right)dl\\
k_{3}\left(\lambda\right) & =\E_{\boldsymbol{\theta}\sim h\left(\boldsymbol{\theta};\lambda\right)}\left[\left\{ \phi_{g}\left(\boldsymbol{\theta}\right)-\phi_{f}\left(\boldsymbol{\theta}\right)-k_{1}\left(\lambda\right)\right\} ^{3}\right]\\
k_{1}\left(\lambda\right) & =\E_{\boldsymbol{\theta}\sim h\left(\boldsymbol{\theta};\lambda\right)}\left[\phi_{g}\left(\boldsymbol{\theta}\right)-\phi_{f}\left(\boldsymbol{\theta}\right)\right]
\end{align}
\end{lem}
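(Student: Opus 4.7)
The argument is a direct Taylor expansion of $K(\lambda)$ around $\lambda = 0$ with integral remainder, using Lemma \ref{lem:Properties-of- C-tilde} to identify the derivatives of $\tilde{C}$ with cumulants of $\phi_{g} - \phi_{f}$.

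First, I would derive the closed-form expression $K(\lambda) = \lambda \E_{\boldsymbol{\theta} \sim g}[\phi_{f}(\boldsymbol{\theta}) - \phi_{g}(\boldsymbol{\theta})] + \tilde{C}(\lambda)$. This is immediate from substituting the explicit formula for $h(\boldsymbol{\theta}; \lambda)$ given in Lemma \ref{lem:The-h-lambda-family.} into the definition $K(\lambda) = \E_{g}[\log g(\boldsymbol{\theta}) - \log h(\boldsymbol{\theta};\lambda)]$: the quantity $\log g - \log h(\cdot;\lambda)$ equals $-\lambda[\phi_{g} - \phi_{f}] + \tilde{C}(\lambda)$, and $\tilde{C}(\lambda)$ is a pure function of $\lambda$ that factors out of the $g$-expectation. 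Equivalently, one could apply Lemma \ref{lem:Another-formula-for KL(g,f)} with $f$ replaced by $h(\cdot;\lambda)$ and recognize the log-expectation as $\tilde{C}(\lambda)$ by definition.

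Next, I would identify the derivatives of $K$ at $\lambda = 0$. Since $\tilde{C}(0) = \log 1 = 0$, we have $K(0) = 0$. Differentiating gives $K'(\lambda) = \E_{g}[\phi_{f} - \phi_{g}] + \tilde{C}'(\lambda) = \E_{g}[\phi_{f} - \phi_{g}] + k_{1}(\lambda)$; since $h(\cdot; 0) = g$, Lemma \ref{lem:Properties-of- C-tilde} gives $k_{1}(0) = \E_{g}[\phi_{g} - \phi_{f}]$, hence $K'(0) = 0$. For $n \geq 2$, $K^{(n)}(\lambda) = \tilde{C}^{(n)}(\lambda) = k_{n}(\lambda)$ directly by Lemma \ref{lem:Properties-of- C-tilde}, so in particular $K''(0) = k_{2}(0) = \text{Var}_{g}[\phi_{g} - \phi_{f}]$ and $K'''(l) = k_{3}(l)$.

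Finally, I would invoke Taylor's theorem with integral remainder at order two:
\[
K(\lambda) = K(0) + \lambda K'(0) + \tfrac{\lambda^{2}}{2} K''(0) + \int_{0}^{\lambda} \tfrac{1}{2} l^{2} K'''(l) \, dl,
\]
and plug in the values computed above to get the claimed identity. The only real subtlety is the repeated exchange of differentiation with the expectation defining $\tilde{C}$; this is a standard verification for exponential families, legitimate on the open interval $\lambda \in (0,1)$ by dominated convergence together with the Holder bound established in the proof of Lemma \ref{lem:The-h-lambda-family.}, and it extends to the closed interval under the boundedness hypothesis used downstream in Prop.\ref{prop: KL approx KL_var}. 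I do not anticipate any genuine obstacle: once the derivatives $K^{(n)}$ are recognized as cumulants, the Lemma is a one-line application of Taylor's theorem.
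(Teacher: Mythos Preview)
Your proposal is correct and essentially identical to the paper's proof: both derive the closed form $K(\lambda)=\lambda\,\E_{g}[\phi_{f}-\phi_{g}]+\tilde{C}(\lambda)$ (the paper via Lemma~\ref{lem:Another-formula-for KL(g,f)}, you via direct substitution of the density from Lemma~\ref{lem:The-h-lambda-family.}), then read off $K(0)=K'(0)=0$ and $K^{(n)}(\lambda)=k_{n}(\lambda)$ for $n\geq 2$ from Lemma~\ref{lem:Properties-of- C-tilde}, and conclude by the second-order Taylor formula with integral remainder. There is no substantive difference in strategy or in the intermediate computations.
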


\begin{proof}
This proof follows from Lemmas \ref{lem:Another-formula-for KL(g,f)}
and \ref{lem:The-h-lambda-family.}, which yield that $K\left(\lambda\right)$
is indeed a correct formula for $KL\left(g\left(\boldsymbol{\theta}\right),h\left(\boldsymbol{\theta};\lambda\right)\right)$,
and then a straightforward manipulation of $K\left(\lambda\right)$.

First, let us check that the formula for $K\left(\lambda\right)$
is indeed correct. Lemma \ref{lem:The-h-lambda-family.} gives us
the log-density of $h\left(\boldsymbol{\theta};\lambda\right)$:
\begin{align}
h\left(\boldsymbol{\theta};\lambda\right) & =g\left(\boldsymbol{\theta}\right)\left[\frac{f\left(\boldsymbol{\theta}\right)}{g\left(\boldsymbol{\theta}\right)}\right]^{\lambda}\exp\left(-C\left(\boldsymbol{\theta}\right)\right)\\
 & =\exp\left(-\phi_{g}\left(\boldsymbol{\theta}\right)+\lambda\left[\phi_{g}\left(\boldsymbol{\theta}\right)-\phi_{f}\left(\boldsymbol{\theta}\right)\right]-\log\left(Z_{g}\right)-\tilde{C}\left(\boldsymbol{\theta}\right)\right)
\end{align}

Lemma \ref{lem:Another-formula-for KL(g,f)} asserts that we can ignore
the normalizing constant terms $-\log\left(Z_{g}\right)-\tilde{C}\left(\boldsymbol{\theta}\right)$.
One unnormalized negative log-density for $h$ is:
\begin{equation}
\phi_{h}\left(\boldsymbol{\theta}\right)=\phi_{g}\left(\boldsymbol{\theta}\right)-\lambda\left[\phi_{g}\left(\boldsymbol{\theta}\right)-\phi_{f}\left(\boldsymbol{\theta}\right)\right]
\end{equation}
and Lemma \ref{lem:Another-formula-for KL(g,f)} then gives us the
KL divergence as:
\begin{align}
K\left(\lambda\right) & =\E_{\boldsymbol{\theta}\sim g\left(\boldsymbol{\theta}\right)}\left[\phi_{h}\left(\boldsymbol{\theta}\right)-\phi_{g}\left(\boldsymbol{\theta}\right)\right]+\log\left\{ \E_{\boldsymbol{\theta}\sim g\left(\boldsymbol{\theta}\right)}\left[\exp\left(\phi_{g}\left(\boldsymbol{\theta}\right)-\phi_{h}\left(\boldsymbol{\theta}\right)\right)\right]\right\} \\
 & =\E_{\boldsymbol{\theta}\sim g\left(\boldsymbol{\theta}\right)}\left[-\lambda\left[\phi_{g}\left(\boldsymbol{\theta}\right)-\phi_{f}\left(\boldsymbol{\theta}\right)\right]\right]+\log\left\{ \E_{\boldsymbol{\theta}\sim g\left(\boldsymbol{\theta}\right)}\left[\exp\left(\lambda\left[\phi_{g}\left(\boldsymbol{\theta}\right)-\phi_{f}\left(\boldsymbol{\theta}\right)\right]\right)\right]\right\} \\
 & =\lambda\E_{\boldsymbol{\theta}\sim g\left(\boldsymbol{\theta}\right)}\left[\phi_{f}\left(\boldsymbol{\theta}\right)-\phi_{g}\left(\boldsymbol{\theta}\right)\right]+\log\left\{ \E_{\boldsymbol{\theta}\sim g\left(\boldsymbol{\theta}\right)}\left[\exp\left(\lambda\left[\phi_{g}\left(\boldsymbol{\theta}\right)-\phi_{f}\left(\boldsymbol{\theta}\right)\right]\right)\right]\right\} 
\end{align}

In the formula for $K\left(\lambda\right)$, we recognize $\tilde{C}\left(\lambda\right)$
in the second term:
\begin{align}
K\left(\lambda\right) & =\lambda\E_{\boldsymbol{\theta}\sim g\left(\boldsymbol{\theta}\right)}\left[\phi_{f}\left(\boldsymbol{\theta}\right)-\phi_{g}\left(\boldsymbol{\theta}\right)\right]+\log\left\{ \E_{\boldsymbol{\theta}\sim g\left(\boldsymbol{\theta}\right)}\left[\exp\left(\lambda\left[\phi_{g}\left(\boldsymbol{\theta}\right)-\phi_{f}\left(\boldsymbol{\theta}\right)\right]\right)\right]\right\} \\
 & =\lambda\E_{\boldsymbol{\theta}\sim g\left(\boldsymbol{\theta}\right)}\left[\phi_{f}\left(\boldsymbol{\theta}\right)-\phi_{g}\left(\boldsymbol{\theta}\right)\right]+\tilde{C}\left(\lambda\right)
\end{align}
The derivatives of $K\left(\lambda\right)$ at $\lambda=0$ are then
straightforward to compute:
\begin{align}
K\left(0\right) & =0+\tilde{C}\left(0\right)\\
 & =\log\left\{ \E_{\boldsymbol{\theta}\sim g\left(\boldsymbol{\theta}\right)}\left[\exp\left(0\left[\phi_{g}\left(\boldsymbol{\theta}\right)-\phi_{f}\left(\boldsymbol{\theta}\right)\right]\right)\right]\right\} \\
 & =\log\left\{ \E_{\boldsymbol{\theta}\sim g\left(\boldsymbol{\theta}\right)}\left[\exp\left(0\right)\right]\right\} \\
 & =\log\left\{ \E_{\boldsymbol{\theta}\sim g\left(\boldsymbol{\theta}\right)}\left[1\right]\right\} \\
 & =0\\
K^{'}\left(0\right) & =\E_{\boldsymbol{\theta}\sim g\left(\boldsymbol{\theta}\right)}\left[\phi_{f}\left(\boldsymbol{\theta}\right)-\phi_{g}\left(\boldsymbol{\theta}\right)\right]+\E_{\boldsymbol{\theta}\sim g\left(\boldsymbol{\theta}\right)}\left[\phi_{g}\left(\boldsymbol{\theta}\right)-\phi_{f}\left(\boldsymbol{\theta}\right)\right]\\
 & =0\\
K^{''}\left(0\right) & =0+\text{Var}_{\boldsymbol{\theta}\sim g\left(\boldsymbol{\theta}\right)}\left[\phi_{g}\left(\boldsymbol{\theta}\right)-\phi_{f}\left(\boldsymbol{\theta}\right)\right]
\end{align}
and for $d\geq2$, the derivatives of $K\left(\lambda\right)$ coincide
with that of $\tilde{C}\left(\lambda\right)$:
\begin{align}
K^{\left(d\right)}\left(\lambda\right) & =\tilde{C}^{\left(d\right)}\left(\lambda\right)\\
 & =k_{d}\left(\lambda\right)
\end{align}

A Taylor expansion of $K\left(\lambda\right)$ to third order with
integral remainder then yields:
\begin{align}
K\left(\lambda\right) & =K\left(0\right)+\lambda K^{'}\left(0\right)+\frac{\lambda^{2}}{2}K^{''}\left(0\right)+\int_{0}^{\lambda}\frac{1}{2}l^{2}K^{\left(3\right)}\left(\lambda\right)dl\\
 & =0+0+\frac{\lambda^{2}}{2}\text{Var}_{\boldsymbol{\theta}\sim g\left(\boldsymbol{\theta}\right)}\left[\phi_{g}\left(\boldsymbol{\theta}\right)-\phi_{f}\left(\boldsymbol{\theta}\right)\right]+\int_{0}^{\lambda}\frac{1}{2}l^{2}k_{3}\left(l\right)dl
\end{align}
which concludes the proof.
\end{proof}
\begin{lem}
Bound on the cumulants.\label{lem:Bound-on-the cumulants.}

\textbf{Requires: }NA.

If $X$ is a bounded random variable: 
\begin{equation}
\left|X\right|\leq M
\end{equation}
Then its absolute cumulants for $k\in\left\{ 1,2,3\right\} $ are
bounded:
\begin{equation}
\E\left(\left|X-\E\left(X\right)\right|^{k}\right)\leq M^{k}
\end{equation}

\end{lem}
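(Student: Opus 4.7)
The plan is to handle the three cases $k=1,2,3$ in turn, in order of increasing difficulty. Set $Y = X - \E(X)$ and $\mu = \E(X)$, so that $|\mu|\leq M$, $\E(Y)=0$, and $Y$ takes values in the interval $[-M-\mu, M-\mu]$.

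For $k=2$, the bound is immediate: since $|X|\leq M$, we have
\[
\E(Y^2) \;=\; \E(X^2) - \mu^2 \;\leq\; \E(X^2) \;\leq\; M^2.
\]
For $k=1$, I would combine this with Jensen's inequality (equivalently, Cauchy--Schwarz against the constant $1$) to obtain $\E|Y|\leq \sqrt{\E(Y^2)} \leq M$.

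The case $k=3$ is the real content of the lemma, and the naive estimates are not tight enough: for instance, using $|Y|\leq 2M$ together with the $k=2$ bound only gives $\E|Y|^3 \leq 2M\cdot\E(Y^2)\leq 2M^3$, losing a factor of $2$. The plan is to exploit the convexity of $y\mapsto |y|^3$ on the support interval of $Y$ together with the zero-mean constraint. By convexity, $|y|^3$ is bounded above on $[-M-\mu, M-\mu]$ by the affine chord through its two endpoint values $(M+\mu)^3$ and $(M-\mu)^3$. Taking expectations makes the linear part vanish (since $\E(Y)=0$), so $\E|Y|^3$ is at most the chord evaluated at $0$. A short algebraic simplification collapses this to
\[
\E|Y|^3 \;\leq\; \frac{(M^2-\mu^2)(M^2+\mu^2)}{M} \;=\; \frac{M^4-\mu^4}{M} \;\leq\; M^3,
\]
with equality when $\mu=0$ and $X$ is supported on $\{-M,M\}$ with equal weights.

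The main obstacle is thus squarely the $k=3$ inequality: any proof that does not \emph{simultaneously} use the convexity of $|y|^3$ and the constraint $\E(Y)=0$ loses at least a multiplicative constant, whereas the claimed $M^3$ bound is sharp (saturated by the symmetric two-point distribution). Apart from that one step, the other pieces reduce to routine applications of Jensen's inequality and the trivial bound $\E(X^2)\leq M^2$.
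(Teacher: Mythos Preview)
Your proof is correct. The $k=1$ and $k=2$ cases are handled by trivial bounds (variance $\leq M^2$, then Cauchy--Schwarz), and for $k=3$ your chord argument is sharp: bounding the convex function $|y|^3$ on $[-M-\mu,M-\mu]$ by its secant line and using $\E(Y)=0$ gives exactly $(M^4-\mu^4)/M\leq M^3$.

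The paper takes a different, more structural route. Rather than treating the three values of $k$ separately, it argues uniformly: given $X$, it couples $X$ to a random variable $Y$ supported on $\{-M,M\}$ with $\E(Y\mid X)=X$, then applies Jensen to the convex function $t\mapsto|t-\E X|^k$ to show that such a two-point distribution maximizes every absolute central moment. It then explicitly optimizes the Bernoulli parameter $p$ by differentiating $p(1-p)\bigl[(1-p)^{k-1}+p^{k-1}\bigr]$ and checking that the maximum is at $p=1/2$ for $k=1,2,3$, which yields the value $M^k$. Your approach is shorter and avoids the calculus optimization step entirely; the paper's approach has the advantage of making the extremal distribution explicit from the outset and treating all three $k$ at once. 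The two arguments are of course cousins---your chord bound is the analytic shadow of the paper's reduction to endpoint-supported distributions---but the executions are genuinely different.
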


\begin{proof}
The proof proceeds in two steps. First, we prove that the worst case
is a Bernoulli random variable taking values $\pm M$ with probabilities
$p$ and $\left(1-p\right)$. Then, we derive the worst value of $p$
which gives the claimed bound.

The first step of the proof using an elegant ``coupling'' construction:
for any bounded variable $X$, we will construct a correlated variable
$Y$ which has the same mean but which is more extreme. $Y$ then
has higher cumulants than $X$.

We construct $Y$ conditional on $X$. Given $X=x$, we would like
the conditional mean of $Y$ to be equal to $x$. The following probabilities
achieve this goal:
\begin{align}
p & =\frac{\left(x+M\right)}{2M}\\
\mathbb{P}\left(Y=+M|X=x\right) & =p\\
\mathbb{P}\left(Y=-M|X=x\right) & =1-p
\end{align}
If $x=M$, then we always have $Y=x$. As $x$ decreases, $Y$ becomes
more variable and is maximally variable at $x=0$ where $p=0.5$.
As $x$ decreases further, $Y$ becomes less variable until $x=-M$
where we always have $Y=x=-M$.

Note that $\E\left(Y|X\right)=X$ and we thus have that both variables
have the same mean:
\begin{equation}
\E\left(Y\right)=\E\left(X\right)=e
\end{equation}

Now, for $k\geq1$, consider the function:
\begin{equation}
a_{k}\left(t\right)=\left|t-e\right|^{k}
\end{equation}
This function is convex and its expected value is the $k^{th}$ absolute
cumulant.

It is straightforward to compare the expected value of $a_{k}\left(X\right)$
and $a_{k}\left(Y\right)$. Indeed, compute the expected value of
$a_{k}\left(Y\right)$ by conditioning on $X$ as an intermediate
step:
\begin{equation}
\E\left[a_{k}\left(Y\right)\right]=\E\left[\E\left\{ a_{k}\left(Y\right)|X\right\} \right]
\end{equation}
Applying Jensen's inequality to the convex function $a_{k}\left(t\right)$
for $X=x$:
\begin{align}
\E\left\{ a_{k}\left(Y\right)|X=x\right\}  & \geq a_{k}\left(\E\left\{ Y|X=x\right\} \right)\\
 & \geq a_{k}\left(x\right)
\end{align}
We thus finally have:
\begin{align}
\E\left[a_{k}\left(Y\right)\right] & =\E\left[\E\left\{ a_{k}\left(Y\right)|X\right\} \right]\\
 & \geq\E\left[a_{k}\left(X\right)\right]
\end{align}
and we have proved that $Y$ has a higher cumulant than $X$. Since
$Y$ only takes the values $\pm M$, it is thus marginally a Bernoulli
random variable. Bernoulli random variables thus have the biggest
cumulants among bounded random variables.

It is now straightforward to compute the cumulant of a Bernoulli random
variable with probability $p$:
\begin{align}
\E\left(Y\right) & =Mp-\left(1-p\right)M\\
 & =M\left(2p-1\right)\\
\E\left(\left|Y-\E\left(Y\right)\right|^{k}\right) & =p\left|M-M\left(2p-1\right)\right|^{k}+\left(1-p\right)\left|-M-M\left(2p-1\right)\right|^{k}\\
 & =pM^{k}\left|2-2p\right|^{k}+\left(1-p\right)M^{k}\left|2p\right|^{k}\\
 & =2^{k}M^{k}\left[p\left(1-p\right)^{k}+\left(1-p\right)p^{k}\right]\\
 & =2^{k}M^{k}p\left(1-p\right)\left[\left(1-p\right)^{k-1}+p^{k-1}\right]
\end{align}
and we only need to optimize for $p$.

Consider the polynomial in $p$:
\begin{equation}
A\left(p\right)=p\left(1-p\right)\left[\left(1-p\right)^{k-1}+p^{k-1}\right]
\end{equation}

The gradient of $A\left(p\right)$ is:
\begin{align}
A^{'}\left(p\right) & =\left(1-p\right)\left[\left(1-p\right)^{k-1}+p^{k-1}\right]-p\left[\left(1-p\right)^{k-1}+p^{k-1}\right]\nonumber \\
 & \ \ \ \ +p\left(1-p\right)\left[-\left(k-1\right)\left(1-p\right)^{k-2}+\left(k-1\right)p^{k-2}\right]\\
 & =\left(1-2p\right)\left[\left(1-p\right)^{k-1}+p^{k-1}\right]+\left(k-1\right)p\left(1-p\right)\left[-\left(1-p\right)^{k-2}+p^{k-2}\right]
\end{align}
For $k=1$ or $k=2$, we have that $A^{'}\left(p\right)$ is monotone
with a unique zero at $p=0.5$.
\begin{align}
\text{If }k=1\ \ A^{'}\left(p\right) & =\left(1-2p\right)\left(2\right)+0\\
 & =2\left(1-2p\right)\\
\text{If }k=2\ \ A^{'}\left(p\right) & =\left(1-2p\right)\left(1-p+p\right)+p\left(1-p\right)\left(-1+1\right)\\
 & =\left(1-2p\right)
\end{align}
For $k=3$:
\begin{align}
A^{'}\left(p\right) & =\left(1-2p\right)\left[\left(1-p\right)^{2}+p^{2}\right]+2p\left(1-p\right)\left[-\left(1-p\right)+p\right]\\
 & =\left(1-2p\right)\left[\left(1-p\right)^{2}+p^{2}\right]+2p\left(1-p\right)\left[2p-1\right]\\
 & =\left(1-2p\right)\left[\left(1-p\right)^{2}+p^{2}-2p\left(1-p\right)\right]\\
 & =\left(1-2p\right)\left[\left(1-p-p\right)^{2}\right]\\
 & =\left(1-2p\right)^{3}
\end{align}
Once again $A^{\prime}\left(p\right)$ is monotone and has a unique
zero at $p=0.5$.

The maximum of $\E\left(\left|Y-\E\left(Y\right)\right|^{k}\right)$
for $k\in\left\{ 1,2,3\right\} $ is thus:
\begin{align}
\E\left(\left|Y-\E\left(Y\right)\right|^{k}\right) & \leq2^{k}M^{k}\frac{1}{2}\frac{1}{2}\left(\left(\frac{1}{2}\right)^{k-1}+\left(\frac{1}{2}\right)^{k-1}\right)\\
 & \leq2^{k}M^{k}\frac{2}{2^{k+1}}\\
 & \leq M^{k}
\end{align}
which gives the claimed bound.

\end{proof}
At this point, we are now ready to give the proof of Prop.\ref{prop: KL approx KL_var}.
\begin{proof}
The statement of Prop.\ref{prop: KL approx KL_var} combines elements
from the various Lemmas of this Appendix Section.

Lemma \ref{lem:The-h-lambda-family.} asserts that $h\left(\boldsymbol{\theta};\lambda\right)$
is indeed an exponential family.

Then, Lemma \ref{lem: approximation of KL(g,h)} (in the special case
$\lambda=1$) asserts that the KL divergence $KL\left(g,f\right)$
can be approximated using a Taylor expansion.

Finally, we get to the case for which $\left|\phi_{f}\left(\boldsymbol{\theta}\right)-\phi_{g}\left(\boldsymbol{\theta}\right)\right|\leq M$.
Lemma \ref{lem:Bound-on-the cumulants.} gives a bound on the absolute
cumulants of $\phi_{f}\left(\boldsymbol{\theta}\right)-\phi_{g}\left(\boldsymbol{\theta}\right)$:
they must be lower than $M^{3}$.

The Taylor expansion with integral remainder of $KL\left(g,f\right)$
can then be further simplified by bounding the integral remainder:
\begin{align}
\left|\int_{0}^{1}\frac{1}{2}l^{2}k_{3}\left(l\right)dl\right| & \leq\int_{0}^{1}\frac{1}{2}l^{2}\left|k_{3}\left(l\right)\right|dl\\
 & \leq\int_{0}^{1}\frac{1}{2}l^{2}M^{3}dl\\
 & \leq\frac{M^{3}}{6}
\end{align}
which concludes the proof.
\end{proof}

\section{The LSI-based approximations}

\label{sec:APPENDIX LSI based approximations}

This section deals with the proof of Prop.\ref{prop:LSI-bound-on KL(g,f)}
and \ref{prop:LSI-bound-on KL(f,g)} which give upper-bounds on the
forward and backward KL divergences based on the Log-Sobolev Inequality
(LSI).

Recall that throughout this section, $g\left(\boldsymbol{\theta}\right)$
is not restricted to being the Laplace approximation of $f\left(\boldsymbol{\theta}\right)$
and is not necessarily restricted to being Gaussian.

\subsection{Proof structure}

The proof of both propositions is fairly straightforward:
\begin{itemize}
\item First, I establish a variant of the classical LSI result which is
invariant to affine reparameterizations of the space. This is exactly
Prop.\ref{prop:LSI-bound-on KL(g,f)}
\item Finally, I prove Prop.\ref{prop:LSI-bound-on KL(f,g)}, as a straightforward
corollary from Prop.\ref{prop:LSI-bound-on KL(g,f)}.
\end{itemize}

\subsection{Proofs}

First, recall the classical statement of the LSI. Or, more precisely,
of the fact that strongly log-concave distributions obey the LSI.
\begin{thm}
LSI (\citet{bakry1985diffusions,otto2000generalization}).\label{thm: APPENDIX LSI-(BakryEmeryRef).}

If $f\left(\boldsymbol{\theta}\right)$ is a $\lambda$-strongly log-concave
density, i.e.
\begin{equation}
\forall\boldsymbol{\theta}\ H\phi_{f}\left(\boldsymbol{\theta}\right)\geq\lambda I_{p}
\end{equation}
 then, for any $g\left(\boldsymbol{\theta}\right)$, the reverse KL
divergence is bounded:
\begin{equation}
KL\left(g,f\right)\leq\frac{1}{2\lambda}\E_{\boldsymbol{\theta}\sim g\left(\boldsymbol{\theta}\right)}\left[\left\Vert \nabla\phi_{f}\left(\boldsymbol{\theta}\right)-\nabla\phi_{g}\left(\boldsymbol{\theta}\right)\right\Vert _{2}^{2}\right]
\end{equation}
\end{thm}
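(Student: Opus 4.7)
The plan is to prove this classical result via the Bakry--Émery semigroup method: run the overdamped Langevin flow associated with $f$ and track how the relative entropy and Fisher information decay. Let $L = \Delta - \nabla\phi_{f}\cdot\nabla$ be the generator of the semigroup $(P_{t})_{t\geq0}$ reversible in $L^{2}(f)$ and admitting $f$ as its unique invariant density. Setting $u_{0} = g/f$ and $u_{t} = P_{t}u_{0}$, the evolved density $g_{t} = u_{t}f$ converges to $f$ under mild regularity, so $KL(g_{t}, f) \to 0$ as $t\to\infty$.

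The first step is the de Bruijn-type identity
\begin{equation}
\frac{d}{dt}KL(g_{t},f) = -\E_{f}\!\left[\frac{\|\nabla u_{t}\|_{2}^{2}}{u_{t}}\right] = -I_{f}(g_{t},f),
\end{equation}
obtained by differentiating under the integral and using the integration-by-parts formula $\int(Lv)wf = -\int\nabla v\cdot\nabla w\,f$. The second and crucial step is to prove exponential contraction of the Fisher information:
\begin{equation}
\frac{d}{dt}I_{f}(g_{t},f) \leq -2\lambda\, I_{f}(g_{t},f).
\end{equation}
This follows from the Bakry--Émery $\Gamma_{2}$ calculus: with $\Gamma(h) = \|\nabla h\|_{2}^{2}$ and $\Gamma_{2}(h) = \tfrac{1}{2}L\Gamma(h) - \langle\nabla h,\nabla Lh\rangle$, Bochner's identity combined with the curvature assumption $H\phi_{f}\geq\lambda I_{p}$ yields the pointwise bound $\Gamma_{2}(h)\geq\|\mathrm{Hess}\,h\|_{F}^{2} + \lambda\Gamma(h) \geq \lambda\Gamma(h)$, which is exactly the curvature-dimension condition $CD(\lambda,\infty)$ driving the contraction.

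Combining the two identities gives $I_{f}(g_{t},f) \leq I_{f}(g_{0},f)e^{-2\lambda t}$, and integrating the first identity from $0$ to $\infty$ using $KL(g_{\infty},f) = 0$ produces
\begin{equation}
KL(g,f) = \int_{0}^{\infty}I_{f}(g_{t},f)\,dt \leq \frac{I_{f}(g_{0},f)}{2\lambda}.
\end{equation}
Rewriting $I_{f}(g_{0},f) = \E_{\boldsymbol{\theta}\sim g}\!\left[\|\nabla\log(g/f)\|_{2}^{2}\right] = \E_{\boldsymbol{\theta}\sim g}\!\left[\|\nabla\phi_{f}(\boldsymbol{\theta}) - \nabla\phi_{g}(\boldsymbol{\theta})\|_{2}^{2}\right]$ delivers the stated bound.

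The main obstacle is making the $\Gamma_{2}$ manipulations and the differentiations under the integral sign fully rigorous: in general $u_{t}$ may not be smooth enough for the Bochner identity to apply without care, and $I_{f}(g_{0},f)$ can be infinite. The standard workaround is to first establish the bound for $u_{0}$ that is smooth, strictly positive, and bounded (for which all manipulations are straightforward) and then extend by density using the lower semi-continuity of both $KL$ and Fisher information along with the finiteness of the right-hand side to rule out trivialities. An alternative that entirely bypasses the semigroup calculus would be the Otto--Villani optimal-transport argument via the displacement-convexity of entropy along Wasserstein geodesics generated by $\lambda$-log-concave interpolations; this would also yield the inequality but requires its own technical machinery.
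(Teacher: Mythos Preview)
Your sketch of the Bakry--\'Emery semigroup proof is essentially the standard one and is correct in outline; the de Bruijn identity, the $\Gamma_{2}$ computation under $CD(\lambda,\infty)$, and the integration-in-time argument are exactly how the inequality is classically derived, and you are right to flag the regularity/density issues as the main technical point.

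However, the paper does not actually prove this theorem. It is stated as a cited classical result (attributed to Bakry--\'Emery and Otto--Villani) and used as a black box: the paper's own work begins with the \emph{corollaries} of the LSI, namely the affine-equivariant reformulation in Prop.~\ref{prop:LSI-bound-on KL(g,f)} (obtained by a change of variables so that $H_{\min}$ becomes the identity) and the straightforward role-swap in Prop.~\ref{prop:LSI-bound-on KL(f,g)}. So there is nothing to compare against here --- the paper simply assumes the statement you have sketched a proof of.
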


A trivial limit of this Theorem is that it is not invariant to affine
reparameterizations of the parameter space. Indeed, these modify the
gradients and $\lambda$ simultaneously in a way that does not lead
to the terms canceling.

However, this limit is trivial since it can be solved by making sure
that the inequality $\forall\boldsymbol{\theta}\ H\phi_{f}\left(\boldsymbol{\theta}\right)\geq\lambda I_{p}$
is tight in all eigenvalues at once. This yields Prop.\ref{prop:LSI-bound-on KL(g,f)}.
\begin{prop}
Main text Prop.\ref{prop:LSI-bound-on KL(g,f)}: an affine equivariant
LSI.\label{prop: APPENDIX: KL(g,f) bound LSI rephrasing}

\textbf{Requires: }Th.\ref{thm: APPENDIX LSI-(BakryEmeryRef).}.

If $f\left(\boldsymbol{\theta}\right)$ is strongly log-concave so
that there exists a strictly positive matrix $H_{min}$ such that:
\begin{equation}
\forall\boldsymbol{\theta}\ H\phi_{f}\left(\boldsymbol{\theta}\right)\geq H_{min}
\end{equation}
then, for any approximation $g\left(\boldsymbol{\theta}\right)$,
the reverse KL divergence is bounded:
\begin{equation}
KL\left(g,f\right)\leq\frac{1}{2}\E_{\boldsymbol{\theta}\sim g\left(\boldsymbol{\theta}\right)}\left[\left\{ \nabla\phi_{f}\left(\boldsymbol{\theta}\right)-\nabla\phi_{g}\left(\boldsymbol{\theta}\right)\right\} ^{T}\left(H_{min}\right)^{-1}\left\{ \nabla\phi_{f}\left(\boldsymbol{\theta}\right)-\nabla\phi_{g}\left(\boldsymbol{\theta}\right)\right\} \right]
\end{equation}
\end{prop}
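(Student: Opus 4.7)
The plan is to reduce the claim to the classical LSI (Theorem \ref{thm: APPENDIX LSI-(BakryEmeryRef).}) by an affine change of variables that ``whitens'' the minimum-curvature matrix $H_{min}$. Since $H_{min}$ is strictly positive by assumption, $H_{min}^{1/2}$ exists. I would introduce new coordinates $\boldsymbol{\theta}' = H_{min}^{1/2}\boldsymbol{\theta}$ and let $f'$, $g'$ denote the pushforward densities of $f$ and $g$ under this linear map. Writing $\phi_{f'}(\boldsymbol{\theta}') = \phi_f(H_{min}^{-1/2}\boldsymbol{\theta}') + \text{const}$ and similarly for $g'$, the chain rule gives
\[
\nabla^2\phi_{f'}(\boldsymbol{\theta}') = H_{min}^{-1/2}\, (\nabla^2\phi_f)(\boldsymbol{\theta})\, H_{min}^{-1/2} \;\geq\; H_{min}^{-1/2} H_{min} H_{min}^{-1/2} \;=\; I_p,
\]
so that $f'$ is $1$-strongly log-concave and hence satisfies the hypothesis of the classical LSI.

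The second step is to apply Theorem \ref{thm: APPENDIX LSI-(BakryEmeryRef).} with $\lambda = 1$ to the pair $(g', f')$, yielding
\[
KL(g',f') \leq \tfrac{1}{2}\,\E_{\boldsymbol{\theta}'\sim g'(\boldsymbol{\theta}')}\left[\left\Vert \nabla\phi_{f'}(\boldsymbol{\theta}') - \nabla\phi_{g'}(\boldsymbol{\theta}')\right\Vert _{2}^{2}\right].
\]
The third step is to translate this back to the original coordinates. The KL divergence is invariant under bijective reparameterizations, so $KL(g',f') = KL(g,f)$. The chain rule gives $\nabla\phi_{f'}(\boldsymbol{\theta}') = H_{min}^{-1/2}\nabla\phi_f(\boldsymbol{\theta})$ and likewise for $g'$, so the squared Euclidean norm of their difference equals
\[
(\nabla\phi_f - \nabla\phi_g)^T H_{min}^{-1} (\nabla\phi_f - \nabla\phi_g),
\]
and the change-of-variables formula for the expectation under $g'$ reproduces the expectation under $g$ stated in the proposition.

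There is no real obstacle here: the proof is a direct bookkeeping exercise built on the observation that the classical LSI is \emph{not} affine-equivariant in its raw form, but that equivariance can be recovered for free by a linear rescaling of the coordinates. The only subtlety worth mentioning is that the log-Jacobian term introduced by the change of variables enters $\phi_{f'}$ and $\phi_{g'}$ additively with the \emph{same} constant, and therefore cancels identically in the gradient difference $\nabla\phi_{f'} - \nabla\phi_{g'}$; this cancellation is what makes the rescaling argument work cleanly and produces the symmetric quadratic form in $H_{min}^{-1}$.
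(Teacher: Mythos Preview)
Your proposal is correct and matches the paper's own proof essentially line for line: both perform a linear change of variables that whitens $H_{min}$ (the paper uses a general square root $A$ with $AA^T=H_{min}^{-1}$, you use the symmetric square root $H_{min}^{1/2}$), apply the classical LSI with $\lambda=1$, and then pull the inequality back using the invariance of KL and the chain rule for gradients. Your remark about the log-Jacobian cancelling in the gradient difference is a nice explicit observation that the paper leaves implicit.
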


\begin{proof}
Consider a matrix square-root of $\left[H_{min}\right]^{-1}$: $AA^{T}=\left[H_{min}\right]^{-1}$.
Then, consider the change of variable:
\begin{equation}
\mathring{\boldsymbol{\theta}}=A\boldsymbol{\theta}
\end{equation}
Log-gradients and log-Hessians in $\mathring{\boldsymbol{\theta}}$-space
are:
\begin{align}
\nabla_{\mathring{\boldsymbol{\theta}}}\mathring{\phi}_{f}\left(\mathring{\boldsymbol{\theta}}\right) & =A\nabla_{\boldsymbol{\theta}}\phi_{f}\left(A\boldsymbol{\theta}\right)\\
H_{\mathring{\boldsymbol{\theta}}}\mathring{\phi}_{f}\left(\mathring{\boldsymbol{\theta}}\right) & =A\ H\phi_{f}\left(A\boldsymbol{\theta}\right)\ A^{T}
\end{align}

In $\mathring{\boldsymbol{\theta}}$-space, the log-Hessian density
is still lower-bounded:
\begin{align}
\forall\mathring{\boldsymbol{\theta}}\ H_{\mathring{\boldsymbol{\theta}}}\mathring{\phi}_{f}\left(\mathring{\boldsymbol{\theta}}\right) & \geq AH_{min}A^{T}\\
 & \geq I_{p}
\end{align}
We can thus apply Thm.\ref{thm: APPENDIX LSI-(BakryEmeryRef).} with
$\lambda=1$, yielding:
\begin{align}
KL\left(\mathring{\boldsymbol{\theta}}_{g},\mathring{\boldsymbol{\theta}}_{f}\right) & \leq\frac{1}{2}\E_{\mathring{\boldsymbol{\theta}}\sim g\left(\mathring{\boldsymbol{\theta}}\right)}\left[\left\Vert \nabla_{\mathring{\boldsymbol{\theta}}}\mathring{\phi}_{f}\left(\mathring{\boldsymbol{\theta}}\right)-\nabla_{\mathring{\boldsymbol{\theta}}}\mathring{\phi}_{g}\left(\mathring{\boldsymbol{\theta}}\right)\right\Vert _{2}^{2}\right]\\
 & \leq\frac{1}{2}\E_{\boldsymbol{\theta}\sim g\left(\boldsymbol{\theta}\right)}\left[\left\Vert A\nabla_{\boldsymbol{\theta}}\phi_{f}\left(A\boldsymbol{\theta}\right)-A\nabla_{\boldsymbol{\theta}}\phi_{g}\left(A\boldsymbol{\theta}\right)\right\Vert _{2}^{2}\right]\\
 & \leq\frac{1}{2}\E_{\boldsymbol{\theta}\sim g\left(\boldsymbol{\theta}\right)}\left[\left\Vert A\left\{ \nabla_{\boldsymbol{\theta}}\phi_{f}\left(A\boldsymbol{\theta}\right)-\nabla_{\boldsymbol{\theta}}\phi_{g}\left(A\boldsymbol{\theta}\right)\right\} \right\Vert _{2}^{2}\right]\\
 & \leq\frac{1}{2}\E_{\boldsymbol{\theta}\sim g\left(\boldsymbol{\theta}\right)}\left[\left\{ \nabla\phi_{f}\left(\boldsymbol{\theta}\right)-\nabla\phi_{g}\left(\boldsymbol{\theta}\right)\right\} ^{T}AA^{T}\left\{ \nabla\phi_{f}\left(\boldsymbol{\theta}\right)-\nabla\phi_{g}\left(\boldsymbol{\theta}\right)\right\} \right]\\
 & \leq\frac{1}{2}\E_{\boldsymbol{\theta}\sim g\left(\boldsymbol{\theta}\right)}\left[\left\{ \nabla\phi_{f}\left(\boldsymbol{\theta}\right)-\nabla\phi_{g}\left(\boldsymbol{\theta}\right)\right\} ^{T}\left(H_{min}\right)^{-1}\left\{ \nabla\phi_{f}\left(\boldsymbol{\theta}\right)-\nabla\phi_{g}\left(\boldsymbol{\theta}\right)\right\} \right]
\end{align}
Finally, observe that the KL divergence is invariant to bijective
changes of variable:
\begin{align}
KL\left(g,f\right) & =KL\left(\mathring{\boldsymbol{\theta}}_{g},\mathring{\boldsymbol{\theta}}_{f}\right)\\
 & \leq\frac{1}{2}\E_{\boldsymbol{\theta}\sim g\left(\boldsymbol{\theta}\right)}\left[\left\{ \nabla\phi_{f}\left(\boldsymbol{\theta}\right)-\nabla\phi_{g}\left(\boldsymbol{\theta}\right)\right\} ^{T}\left(H_{min}\right)^{-1}\left\{ \nabla\phi_{f}\left(\boldsymbol{\theta}\right)-\nabla\phi_{g}\left(\boldsymbol{\theta}\right)\right\} \right]
\end{align}
which concludes the proof.
\end{proof}
\begin{prop}
Main text Prop.\ref{prop:LSI-bound-on KL(f,g)}: a straightforward
corollary.

\textbf{Requires: }Appendix Prop.\ref{prop: APPENDIX: KL(g,f) bound LSI rephrasing}
(Main text Prop.\ref{prop:LSI-bound-on KL(g,f)}).

If $g\left(\boldsymbol{\theta}\right)$ is a Gaussian with covariance
$\Sigma$, then for any $f\left(\boldsymbol{\theta}\right)$ the forward
KL divergence is bounded:
\begin{equation}
KL\left(f,g\right)\leq\frac{1}{2}\E_{\boldsymbol{\theta}\sim f\left(\boldsymbol{\theta}\right)}\left[\left\{ \nabla\phi_{f}\left(\boldsymbol{\theta}\right)-\nabla\phi_{g}\left(\boldsymbol{\theta}\right)\right\} ^{T}\Sigma\left\{ \nabla\phi_{f}\left(\boldsymbol{\theta}\right)-\nabla\phi_{g}\left(\boldsymbol{\theta}\right)\right\} \right]
\end{equation}
\end{prop}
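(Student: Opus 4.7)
The plan is to apply Proposition \ref{prop:LSI-bound-on KL(g,f)} with the roles of $f$ and $g$ swapped. That proposition gives a log-Sobolev-type upper bound on $KL(q_1, q_2)$ whenever the density $q_2$ is strongly log-concave, in terms of an expectation under $q_1$. Here we want to bound $KL(f,g)$, so we take the ``target'' role to be played by $g$ and the ``approximation'' role by $f$; the hypothesis needed is therefore that $g$ is strongly log-concave, which is immediate since $g$ is Gaussian.

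More explicitly, first I would note that since $g(\boldsymbol{\theta})$ is Gaussian with covariance $\Sigma$, its negative log-density satisfies
\[
H\phi_g(\boldsymbol{\theta}) = \Sigma^{-1}
\]
for every $\boldsymbol{\theta}$. In particular the Hessian is pointwise bounded below by $H_{\min} = \Sigma^{-1}$, and this lower bound is tight. Thus the strong log-concavity assumption of Proposition \ref{prop:LSI-bound-on KL(g,f)} applies to $g$ with $H_{\min} = \Sigma^{-1}$.

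Next I would invoke Proposition \ref{prop:LSI-bound-on KL(g,f)} with $f$ and $g$ interchanged: the proposition then yields
\[
KL(f,g) \leq \frac{1}{2} \E_{\boldsymbol{\theta} \sim f(\boldsymbol{\theta})}\left[\left\{\nabla \phi_g(\boldsymbol{\theta}) - \nabla \phi_f(\boldsymbol{\theta})\right\}^T (H_{\min})^{-1} \left\{\nabla \phi_g(\boldsymbol{\theta}) - \nabla \phi_f(\boldsymbol{\theta})\right\}\right].
\]
Substituting $(H_{\min})^{-1} = \Sigma$ and using the fact that the quadratic form in the bracketed gradient difference is invariant under sign change, this is exactly the claimed inequality. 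There is no real obstacle here: the only thing to check is that Proposition \ref{prop:LSI-bound-on KL(g,f)} is stated with completely generic densities in the ``approximation'' slot, so nothing prevents us from putting a general $f$ there and a Gaussian in the ``target'' slot. The proof thus reduces to a one-line application of the earlier result.
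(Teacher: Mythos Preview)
Your proposal is correct and follows essentially the same approach as the paper's own proof: swap the roles of $f$ and $g$ in Proposition~\ref{prop:LSI-bound-on KL(g,f)}, observe that the Gaussian $g$ has constant Hessian $H\phi_g=\Sigma^{-1}$ so that $H_{\min}=\Sigma^{-1}$, and substitute $(H_{\min})^{-1}=\Sigma$ into the resulting bound.
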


\begin{proof}
This proof is a straightforward corollary of Appendix Prop.\ref{prop: APPENDIX: KL(g,f) bound LSI rephrasing}
(Main text Prop.\ref{prop:LSI-bound-on KL(g,f)}). It is derived by
inverting the role of $g$ and $f$.

Indeed, if $g\left(\boldsymbol{\theta}\right)$ is a Gaussian with
covariance $\Sigma$, then:
\begin{align}
\phi_{g} & =\frac{1}{2}\left(\boldsymbol{\theta}-\boldsymbol{\mu}\right)^{T}\Sigma^{-1}\left(\boldsymbol{\theta}-\boldsymbol{\mu}\right)\\
H\phi_{g} & =\Sigma^{-1}
\end{align}
and we can thus apply Appendix Prop.\ref{prop: APPENDIX: KL(g,f) bound LSI rephrasing}
with $H_{min}=\Sigma^{-1}$ yielding, with no assumption on $f\left(\boldsymbol{\theta}\right)$:
\begin{align}
KL\left(g,f\right) & \leq\frac{1}{2}\E_{\boldsymbol{\theta}\sim f\left(\boldsymbol{\theta}\right)}\left[\left\{ \nabla\phi_{f}\left(\boldsymbol{\theta}\right)-\nabla\phi_{g}\left(\boldsymbol{\theta}\right)\right\} ^{T}\left[\Sigma^{-1}\right]^{-1}\left\{ \nabla\phi_{f}\left(\boldsymbol{\theta}\right)-\nabla\phi_{g}\left(\boldsymbol{\theta}\right)\right\} \right]\\
 & \leq\frac{1}{2}\E_{\boldsymbol{\theta}\sim f\left(\boldsymbol{\theta}\right)}\left[\left\{ \nabla\phi_{f}\left(\boldsymbol{\theta}\right)-\nabla\phi_{g}\left(\boldsymbol{\theta}\right)\right\} ^{T}\Sigma\left\{ \nabla\phi_{f}\left(\boldsymbol{\theta}\right)-\nabla\phi_{g}\left(\boldsymbol{\theta}\right)\right\} \right]
\end{align}
which concludes the proof.
\end{proof}

\section{General deterministic Bound}

\label{sec: APPENDIX General-deterministic-Bound}

This section deals with the proof of Th.\ref{thm:A-deterministic-BvM theorem},
establishing the convergence of $KL\left(g,f\right)$ to $0$ in the
limit $\Delta_{3}p^{3/2}\rightarrow0$, as well as several approximations
of this quantity.

Throughout this section, we will simplify notation by using $g\left(\boldsymbol{\theta}\right)$
for the Laplace approximation of $f\left(\boldsymbol{\theta}\right)$
(dropping out the index from $g_{LAP}\left(\boldsymbol{\theta}\right)$).
We also assume that $f\left(\boldsymbol{\theta}\right)$ is strictly
log-concave. All results stated from this point onward are restricted
to this case.

We recall the notation:

\begin{align}
f\left(\boldsymbol{\theta}\right) & =\exp\left(-\phi_{f}\left(\boldsymbol{\theta}\right)-\log\left(Z_{f}\right)\right)\\
H\phi_{f}\left(\boldsymbol{\theta}\right) & >0_{p}\\
g\left(\boldsymbol{\theta}\right) & =\exp\left(-\phi_{g}\left(\boldsymbol{\theta}\right)-\log\left(Z_{g}\right)\right)\\
 & =\exp\left(-\frac{1}{2}\left(\boldsymbol{\theta}-\boldsymbol{\mu}\right)^{T}\Sigma^{-1}\left(\boldsymbol{\theta}-\boldsymbol{\mu}\right)-\log\left(Z_{g}\right)\right)\\
\boldsymbol{\mu} & =\text{argmin}_{\boldsymbol{\theta}}\left[\phi_{f}\left(\boldsymbol{\theta}\right)\right]\\
\Sigma^{-1} & =H\phi_{f}\left(\boldsymbol{\mu}\right)
\end{align}

\subsection{Proof structure}

The proof of this result is very involved.
\begin{enumerate}
\item First, I tackle aspects of the proof relating to the variables $r,\boldsymbol{e}$
and $c,\boldsymbol{e}$. I show the following results:
\begin{enumerate}
\item I establish that $KL\left(g,f\right)$ can be decomposed into the
contributions of the variables $r$ and $\boldsymbol{e}$ (Lemma \ref{lem: APPENDIX Decomposition-of-KL(g,f)}).
\item I give the distribution of $\boldsymbol{e},r$ under $g$ (Lemma \ref{lem: APPENDIX Distribution-of-r_g,e_g}).
\item I give the distribution of $c=\left(r\right)^{1/3}$ under $g$ (Lemma
\ref{lem: APPENDIX Distribution-of-c_g}).
\item I give the moments of $r$ under $g$ (Lemma \ref{lem: APPENDIX Moments-of-r_g}).
\item I give (Lemma \ref{lem: APPENDIX distributions under f}):
\begin{itemize}
\item The conditional distribution of $r$ under $f$.
\item The conditional distribution of $c$ under $f$.
\item The marginal distribution of $\boldsymbol{e}$ under $f$.
\item An approximation to the marginal distribution of $\boldsymbol{e}$
under $f$.
\end{itemize}
\end{enumerate}
\item Then, I tackle the $KL\left(r_{g},r_{f}|\boldsymbol{e}\right)$ term
and derive the following results:
\begin{enumerate}
\item I prove that $f\left(c\right)$ is strongly log-concave (Lemma \ref{lem: f(c|e) is SLC}).
\item I give a lower-bound of the log-curvature of $f\left(c\right)$ (Lemmas
\ref{lem: APPENDIX detailed lower bound for log-curvature c geq c_0}
and \ref{lem: APPENDIX detailed lower bound for log-curvature c LEQ c_0}).
\item I give the asymptotic scaling of the lower-bound (Lemma \ref{lem: APPENDIX approximation of psi_min^sec}).
\item I give an approximation of $KL\left(r_{g},r_{f}|\boldsymbol{e}\right)$
based on Appendix Prop.\ref{prop: APPENDIX: KL(g,f) bound LSI rephrasing}
(Main text Prop.\ref{prop:LSI-bound-on KL(g,f)}) (Prop.\ref{prop: APPENDIX Bound-on- KL r_g r_f | e}).
\end{enumerate}
\item Then, I tackle the $KL\left(\boldsymbol{e}_{g},\boldsymbol{e}_{f}\right)$
term. I further show the following results:
\begin{enumerate}
\item I give an approximation of $\log\left[f\left(\boldsymbol{e}\right)\right]$
(Lemma \ref{lem: APPENDIX approx quality of ELBO}).
\item I give an approximation of $KL\left(\boldsymbol{e}_{g},\boldsymbol{e}_{f}\right)$
based on Appendix Prop.\ref{prop: APPENDIX: KL(g,f) bound LSI rephrasing}
(Main text Prop.\ref{prop:LSI-bound-on KL(g,f)}) (Prop.\ref{Prop: APPENDIX ELBO based approx of KL(e_g,e_f)}).
\end{enumerate}
\item Finally, I relate the bound on $KL\left(\boldsymbol{e}_{g},\boldsymbol{e}_{f}\right)$
to the KL variance (Lemma \ref{lem:Relationship-between-Var(ELBO) and KL-var}).
\end{enumerate}
Theorem. \ref{thm:A-deterministic-BvM theorem} is finally derived
as a combination of all the results proved in this section.

\subsection{Proofs: properties of the change of variable}
\begin{lem}
Decomposition of $KL\left(g,f\right)$.\label{lem: APPENDIX Decomposition-of-KL(g,f)}

\textbf{Requires:} NA.

The KL divergence can be decomposed as:
\begin{equation}
KL\left(g,f\right)=KL\left(\boldsymbol{e}_{g},\boldsymbol{e}_{f}\right)+\E_{\boldsymbol{e}\sim g\left(\boldsymbol{e}\right)}\left[KL\left(r_{g},r_{f}|\boldsymbol{e}\right)\right]
\end{equation}
\end{lem}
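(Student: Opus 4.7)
The plan is to combine the invariance of the KL divergence under bijective changes of variables with the standard chain-rule decomposition of KL into a marginal plus a conditional term. The key observation is that the transformation $\boldsymbol{\theta}=\boldsymbol{\mu}+\Sigma^{1/2}\tilde{\boldsymbol{\theta}}$ followed by $\tilde{\boldsymbol{\theta}}=r\boldsymbol{e}$ gives a bijection between $\R^{p}\setminus\{0\}$ and $\R^{+}\times S^{p-1}$, and since both $f$ and $g$ are continuous densities the origin carries zero mass under either distribution.

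First, I would invoke the standard fact that the KL divergence is invariant under bijective measurable changes of variable. Since the log-densities are unaffected in the ratio form (the Jacobian cancels between numerator and denominator), we have
\begin{equation}
KL(g,f)=\E_{(r,\boldsymbol{e})\sim g(r,\boldsymbol{e})}\left[\log\frac{g(r,\boldsymbol{e})}{f(r,\boldsymbol{e})}\right],
\end{equation}
where $g(r,\boldsymbol{e})$ and $f(r,\boldsymbol{e})$ denote the joint densities in the spherical parameterization.

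Next, I would factor each joint density into a marginal on $\boldsymbol{e}$ and a conditional on $r$ given $\boldsymbol{e}$: $g(r,\boldsymbol{e})=g(\boldsymbol{e})\,g(r\mid\boldsymbol{e})$ and $f(r,\boldsymbol{e})=f(\boldsymbol{e})\,f(r\mid\boldsymbol{e})$. Substituting and splitting the logarithm yields
\begin{align}
KL(g,f) & =\E_{\boldsymbol{e}\sim g(\boldsymbol{e})}\left[\log\frac{g(\boldsymbol{e})}{f(\boldsymbol{e})}\right]+\E_{\boldsymbol{e}\sim g(\boldsymbol{e})}\E_{r\sim g(r\mid\boldsymbol{e})}\left[\log\frac{g(r\mid\boldsymbol{e})}{f(r\mid\boldsymbol{e})}\right] \\
 & =KL(\boldsymbol{e}_{g},\boldsymbol{e}_{f})+\E_{\boldsymbol{e}\sim g(\boldsymbol{e})}\left[KL(r_{g},r_{f}\mid\boldsymbol{e})\right],
\end{align}
where the second term follows because integrating out $r$ from the marginal term leaves $g(\boldsymbol{e})$ untouched, and the conditional expectation over $r$ under $g(r\mid\boldsymbol{e})$ is precisely the definition of $KL(r_{g},r_{f}\mid\boldsymbol{e})$.

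There is no real obstacle here: the only subtlety is ensuring the change-of-variables is well-defined (which is handled by the origin having measure zero), and that Fubini applies so we can reorder the double expectation; both are automatic for the densities considered. The argument is a direct application of the chain rule for relative entropy to the factorization induced by the spherical parameterization.
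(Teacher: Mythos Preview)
Your proposal is correct and follows essentially the same approach as the paper's proof: both invoke the invariance of KL under the bijective change to spherical coordinates, factor the joint densities as marginal times conditional, split the log, and identify the two resulting terms. The only difference is that you make explicit the measure-zero issue at the origin and the use of Fubini, which the paper leaves implicit.
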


\begin{proof}
This proof consists of a simple manipulation of the normal expression
for the KL divergence.

We simply make appear the variables $r,\boldsymbol{e}$:
\begin{align}
KL\left(g,f\right) & =\E_{\boldsymbol{\theta}\sim g\left(\boldsymbol{\theta}\right)}\left[\log\frac{f\left(\boldsymbol{\theta}\right)}{g\left(\boldsymbol{\theta}\right)}\right]\\
 & =\E_{r,\boldsymbol{e}\sim g\left(r,\boldsymbol{e}\right)}\left[\log\frac{f\left(r,\boldsymbol{e}\right)}{g\left(r,\boldsymbol{e}\right)}\right]
\end{align}
and then use the conditional probability formula:
\begin{align}
f\left(r,\boldsymbol{e}\right) & =f\left(\boldsymbol{e}\right)f\left(r|\boldsymbol{e}\right)\\
g\left(r,\boldsymbol{e}\right) & =g\left(\boldsymbol{e}\right)g\left(r|\boldsymbol{e}\right)
\end{align}
yielding:
\begin{align}
KL\left(g,f\right) & =\E_{r,\boldsymbol{e}\sim g\left(r,\boldsymbol{e}\right)}\left[\log\frac{f\left(\boldsymbol{e}\right)f\left(r|\boldsymbol{e}\right)}{g\left(\boldsymbol{e}\right)g\left(r|\boldsymbol{e}\right)}\right]\\
 & =\E_{r,\boldsymbol{e}\sim g\left(r,\boldsymbol{e}\right)}\left[\log\frac{f\left(\boldsymbol{e}\right)}{g\left(\boldsymbol{e}\right)}+\log\frac{f\left(r|\boldsymbol{e}\right)}{g\left(r|\boldsymbol{e}\right)}\right]\\
 & =\E_{\boldsymbol{e}\sim g\left(\boldsymbol{e}\right)}\left[\log\frac{f\left(\boldsymbol{e}\right)}{g\left(\boldsymbol{e}\right)}\right]+\E_{r,\boldsymbol{e}\sim g\left(r,\boldsymbol{e}\right)}\left[\log\frac{f\left(r|\boldsymbol{e}\right)}{g\left(r|\boldsymbol{e}\right)}\right]\\
 & =KL\left(\boldsymbol{e}_{g},\boldsymbol{e}_{f}\right)+\E_{\boldsymbol{e}\sim g\left(\boldsymbol{e}\right)}\left\{ \E_{r\sim g\left(r|\boldsymbol{e}\right)}\left[\log\frac{f\left(r|\boldsymbol{e}\right)}{g\left(r|\boldsymbol{e}\right)}\right]\right\} \\
 & =KL\left(\boldsymbol{e}_{g},\boldsymbol{e}_{f}\right)+\E_{\boldsymbol{e}\sim g\left(\boldsymbol{e}\right)}\left[KL\left(r_{g},r_{f}|\boldsymbol{e}\right)\right]
\end{align}
which concludes the proof.
\end{proof}
\begin{lem}
Distribution of $r_{g},\boldsymbol{e}_{g}$.\label{lem: APPENDIX Distribution-of-r_g,e_g}

\textbf{Requires:} NA.

The random variables $r_{g},\boldsymbol{e}_{g}$ are independent.
$\boldsymbol{e}_{g}$ has a uniform distribution over the unit sphere
$S^{p-1}$ while $r_{g}$ has a $\chi_{p}$ distribution.
\end{lem}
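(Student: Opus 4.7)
The plan is to exploit the rotational symmetry of the standard Gaussian distribution, following the classical argument for spherical coordinates. Since $g_{LAP}$ is the Gaussian with mean $\boldsymbol{\mu}$ and covariance $\Sigma$, the affine change of variable $\tilde{\boldsymbol{\theta}} = \Sigma^{-1/2}(\boldsymbol{\theta}-\boldsymbol{\mu})$ transforms $g_{LAP}$ into the standard Gaussian on $\mathbb{R}^p$, whose density depends on $\tilde{\boldsymbol{\theta}}$ only through $\|\tilde{\boldsymbol{\theta}}\|_2$. First I would write the standard Gaussian density explicitly as $(2\pi)^{-p/2}\exp(-\|\tilde{\boldsymbol{\theta}}\|_2^2/2)$ to make this rotational symmetry manifest.

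Next, I would perform the change of variables $\tilde{\boldsymbol{\theta}} = r\boldsymbol{e}$ with $r \in \mathbb{R}^+$ and $\boldsymbol{e} \in S^{p-1}$. The Lebesgue measure on $\mathbb{R}^p$ decomposes as $d\tilde{\boldsymbol{\theta}} = r^{p-1}\, dr\, d\sigma(\boldsymbol{e})$, where $d\sigma(\boldsymbol{e})$ is the (unnormalized) surface measure on the unit sphere. Substituting $\|\tilde{\boldsymbol{\theta}}\|_2 = r$, the joint density of $(r_g,\boldsymbol{e}_g)$ reads
\begin{equation}
g(r,\boldsymbol{e}) = \frac{1}{(2\pi)^{p/2}}\, r^{p-1} \exp\!\left(-\frac{r^2}{2}\right) \cdot 1_{\boldsymbol{e}\in S^{p-1}}.
\end{equation}
The crucial observation is that this joint density is a product of a function of $r$ alone and a function of $\boldsymbol{e}$ alone (here a constant on $S^{p-1}$), which is precisely the criterion for independence.

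I would then identify the two marginals. Integrating out $\boldsymbol{e}$ against $d\sigma$ gives the surface area of $S^{p-1}$, namely $2\pi^{p/2}/\Gamma(p/2)$, and the resulting marginal density of $r_g$ is $\bigl(2^{p/2-1}\Gamma(p/2)\bigr)^{-1} r^{p-1}\exp(-r^2/2)$, which is exactly the $\chi_p$ density. Integrating out $r$ leaves the marginal of $\boldsymbol{e}_g$ constant over $S^{p-1}$, hence the uniform distribution on the sphere. I don't foresee any real obstacle: the entire argument reduces to recognizing that the Gaussian density is radial and applying the standard polar-coordinate Jacobian, so the main care goes into correctly bookkeeping the normalization constants so that the two pieces can be identified as bona fide probability densities.
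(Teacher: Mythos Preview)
Your proposal is correct and follows essentially the same approach as the paper: both start from the standard Gaussian density in $\tilde{\boldsymbol{\theta}}$-coordinates, apply the polar change of variables with Jacobian $r^{p-1}$, observe that the resulting joint density factorizes into a function of $r$ alone times a constant in $\boldsymbol{e}$, and read off independence, the $\chi_p$ marginal for $r_g$, and the uniform marginal for $\boldsymbol{e}_g$. Your version is slightly more explicit about the normalization constants, but the argument is identical.
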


\begin{proof}
This proof is a standard of statistics and probability theory.

Start from $\tilde{\boldsymbol{\theta}}_{g}$ which is a standard
Gaussian random variable:
\begin{equation}
g\left(\tilde{\boldsymbol{\theta}}\right)\propto\exp\left(-\frac{1}{2}\left\Vert \tilde{\boldsymbol{\theta}}\right\Vert _{2}^{2}\right)
\end{equation}

The volume of the element $drd\boldsymbol{e}$ scales as $r^{p-1}$
because it lives on the surface of a $p$-dimensional sphere of radius
$r$. The standard change of variable formula then yields:
\begin{align}
g\left(r,\boldsymbol{e}\right) & \propto r^{p-1}\exp\left(-\frac{1}{2}\left\Vert r\boldsymbol{e}\right\Vert _{2}^{2}\right)\\
 & \propto r^{p-1}\exp\left(-\frac{1}{2}r^{2}\right)
\end{align}
which proves the well-known result:
\begin{itemize}
\item $\boldsymbol{e}_{g}$ is a uniform random variable over its support:
the unit sphere $S^{p-1}$.
\item $r_{g}$ is a $\chi$ (``chi'') random variable with $p$ degrees
of freedom.
\item They are independent.
\end{itemize}
and concludes the proof.
\end{proof}
\begin{lem}
Distribution of $c_{g}$.\label{lem: APPENDIX Distribution-of-c_g}

\textbf{Requires:} Lemma \ref{lem: APPENDIX Distribution-of-r_g,e_g}.

Variable $c_{g}=\left(r_{g}\right)^{1/3}$ follows a $\chi_{p}^{1/3}$
(``chi-one-third'') distribution:
\begin{equation}
g\left(c\right)\propto c^{3p-1}\exp\left(-\frac{c^{6}}{2}\right)
\end{equation}
\end{lem}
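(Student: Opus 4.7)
The plan is to derive the density of $c_g$ from the already-established density of $r_g$ (Lemma \ref{lem: APPENDIX Distribution-of-r_g,e_g}) by a one-dimensional change of variables. By that lemma, $r_g$ is $\chi_p$-distributed with density
\begin{equation}
g(r) \propto r^{p-1}\exp\left(-\frac{r^{2}}{2}\right)\quad\text{for } r\geq 0.
\end{equation}
I would define $c = r^{1/3}$, equivalently $r = c^3$, which is a smooth strictly increasing bijection from $\mathbb{R}^{+}$ to $\mathbb{R}^{+}$. The Jacobian is $dr/dc = 3c^{2}$, so the standard change-of-variables formula gives
\begin{equation}
g(c) = g(r(c))\,\left|\frac{dr}{dc}\right| \propto (c^{3})^{p-1}\exp\left(-\frac{c^{6}}{2}\right)\cdot 3c^{2}.
\end{equation}
Collecting the powers of $c$, this simplifies to
\begin{equation}
g(c) \propto c^{3p-3+2}\exp\left(-\frac{c^{6}}{2}\right) = c^{3p-1}\exp\left(-\frac{c^{6}}{2}\right),
\end{equation}
which is the claimed density.

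There is no real obstacle here: the entire argument is a routine univariate change of variables, and the independence of $c_g$ from $\boldsymbol{e}_g$ is inherited from the independence of $r_g$ from $\boldsymbol{e}_g$ (itself a consequence of Lemma \ref{lem: APPENDIX Distribution-of-r_g,e_g}) since $c_g$ is a deterministic function of $r_g$. The only point worth flagging is that the normalizing constant, while not needed for the statement (which is only up to proportionality), can be computed explicitly by another substitution $u = c^{6}/2$ which reduces the integral $\int_{0}^{\infty} c^{3p-1}\exp(-c^{6}/2)\,dc$ to a Gamma function evaluation; this is optional and I would omit it unless it is needed downstream.
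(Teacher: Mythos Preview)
Your proof is correct and matches the paper's approach exactly: both apply the standard one-dimensional change of variables $r = c^3$ to the $\chi_p$ density from Lemma~\ref{lem: APPENDIX Distribution-of-r_g,e_g}, multiply by the Jacobian $dr/dc = 3c^2$, and collect powers of $c$. Your additional remarks about inherited independence from $\boldsymbol{e}_g$ and the optional normalizing constant are fine but not needed for the statement.
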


\begin{proof}
This follows from Lemma \ref{lem: APPENDIX Distribution-of-r_g,e_g}
through another standard change of variable:
\begin{align}
g\left(c\right) & \propto c^{3\left(p-1\right)}\exp\left(-\frac{c^{6}}{2}\right)\frac{dr}{dc}\\
 & \propto c^{3p-3}\exp\left(-\frac{c^{6}}{2}\right)\left(3c^{2}\right)\\
 & \propto c^{3p-1}\exp\left(-\frac{c^{6}}{2}\right)
\end{align}
which concludes the proof.
\end{proof}
\begin{lem}
Moments of $r_{g}$.\label{lem: APPENDIX Moments-of-r_g}

\textbf{Requires:} Lemma \ref{lem: APPENDIX Distribution-of-r_g,e_g}.

The $0$-centered moments of $r_{g}$ are:
\begin{align}
\E\left(r_{g}^{k}\right) & =2^{k/2}\frac{\Gamma\left(\frac{p+k}{2}\right)}{\Gamma\left(\frac{p}{2}\right)}
\end{align}
For fixed $k$, as $p\rightarrow\infty$, the moments of $r_{g}$
scale asymptotically as:
\begin{equation}
\E\left(r_{g}^{k}\right)=p^{k/2}+\mathcal{O}\left(p^{k/2-1}\right)
\end{equation}
i.e. the ratio $r_{g}/p^{1/2}$ converges in $L_{k}$ to the constant
$1$.
\end{lem}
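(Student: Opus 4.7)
The plan is to handle the two claims separately: first compute the moments exactly via a direct integral against the $\chi_p$ density, and then extract the asymptotic scaling via a gamma-ratio estimate.

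For the exact formula, I would start from the density $g(r) \propto r^{p-1}\exp(-r^2/2)$ given in Lemma \ref{lem: APPENDIX Distribution-of-r_g,e_g}. The normalizing constant is $2^{p/2-1}\Gamma(p/2)$ (using the substitution $u = r^2/2$, $du = r\,dr$, so $r^{p-1}dr = (2u)^{(p-2)/2}du$). The $k$-th moment is then
\begin{equation}
\E(r_g^k) = \frac{\int_0^\infty r^{p+k-1}\exp(-r^2/2)\,dr}{\int_0^\infty r^{p-1}\exp(-r^2/2)\,dr}.
\end{equation}
The same substitution $u = r^2/2$ turns each integral into a gamma function, yielding $\int_0^\infty r^{p+k-1}\exp(-r^2/2)\,dr = 2^{(p+k)/2-1}\Gamma((p+k)/2)$, and dividing gives the claimed identity $\E(r_g^k) = 2^{k/2}\Gamma((p+k)/2)/\Gamma(p/2)$.

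For the asymptotic scaling, I would apply Stirling's formula to the ratio of gamma functions. A clean route is to write $\Gamma((p+k)/2)/\Gamma(p/2) = (p/2)^{k/2}\bigl(1 + \mathcal{O}(p^{-1})\bigr)$ as $p \to \infty$ for fixed $k$, which follows directly from the standard expansion $\log\Gamma(x) = (x-1/2)\log x - x + \frac{1}{2}\log(2\pi) + \mathcal{O}(x^{-1})$ applied to both numerator and denominator and subtracting. Multiplying by $2^{k/2}$ yields $\E(r_g^k) = p^{k/2} + \mathcal{O}(p^{k/2-1})$. The $L_k$ convergence $r_g/p^{1/2} \to 1$ then follows since $\E|r_g/p^{1/2} - 1|^k$ expands into a finite sum of lower moments all scaling as $p^{-1}$ or smaller, using $\E(r_g^j)/p^{j/2} \to 1$ for each $j \leq k$.

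The main obstacle is essentially bookkeeping: one must verify that the implied constant in $\mathcal{O}(p^{k/2-1})$ is uniform enough for the $L_k$ claim, which requires that the Stirling correction terms combine correctly when expanding $(r_g/p^{1/2} - 1)^k$. This is routine but slightly tedious; the cleanest presentation is to note that for fixed $k$ the function $x \mapsto \Gamma(x+k/2)/(x^{k/2}\Gamma(x))$ tends to $1$ with an error of order $1/x$, so applied at $x = p/2$ each moment of $(r_g/p^{1/2})$ up to order $k$ converges to $1$, whence $L_k$ convergence to the constant $1$.
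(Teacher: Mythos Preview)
Your proof is correct. The computation of the exact moments matches the paper's argument essentially line for line (same ratio of integrals, same substitution, same normalizing constant).

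For the asymptotic part you take a different route from the paper. You invoke Stirling's expansion $\log\Gamma(x) = (x-1/2)\log x - x + \tfrac{1}{2}\log(2\pi) + \mathcal{O}(x^{-1})$ and subtract to obtain $\Gamma((p+k)/2)/\Gamma(p/2) = (p/2)^{k/2}(1+\mathcal{O}(p^{-1}))$ directly. The paper instead uses a Gautschi-type double inequality $n^{1-s} \le \Gamma(n+1)/\Gamma(n+s) \le (n+1)^{1-s}$ for $s\in[0,1]$, combined with the recurrence $\Gamma(x+1)=x\Gamma(x)$ to peel off the integer part $\lfloor k/2\rfloor$ before applying the inequality to the remaining fractional shift. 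Your Stirling route is shorter and more familiar; the paper's route has the minor advantage of giving explicit two-sided bounds rather than just an asymptotic $\mathcal{O}$, though that extra precision is not actually used downstream. You also spell out the $L_k$ convergence more explicitly than the paper, which simply states it as an ``i.e.'' without further justification.
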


\begin{proof}
The following derivation of the moments of $r_{g}$ is standard.

The normalization constant of the $\chi_{p}$ distribution is:
\begin{equation}
\int r^{p-1}\exp\left(-\frac{r^{2}}{2}\right)=2^{p/2-1}\Gamma\left(\frac{p}{2}\right)
\end{equation}
Thus, we have the expected value of $r^{k}$ as:
\begin{align}
\E\left[\left(r_{g}\right)^{k}\right] & =\frac{\int r^{k}r^{p-1}\exp\left(-\frac{r^{2}}{2}\right)}{\int r^{p-1}\exp\left(-\frac{r^{2}}{2}\right)}\\
 & =\frac{2^{\left(p+k\right)/2-1}\Gamma\left(\frac{p+k}{2}\right)}{2^{p/2-1}\Gamma\left(\frac{p}{2}\right)}\\
 & =2^{k/2}\frac{\Gamma\left(\frac{p+k}{2}\right)}{\Gamma\left(\frac{p}{2}\right)}
\end{align}
\end{proof}
In order to approximate these moments in the limit $p\rightarrow\infty$
while $k$ remains fixed, we can consider the following approximation
of the ratio of the Gamma function (see \citet{mortici2010new} and
additional references therein): for all $s\in\left[0,1\right]$
\begin{equation}
n^{1-s}\leq\frac{\Gamma\left(n+1\right)}{\Gamma\left(n+s\right)}\leq\left(n+1\right)^{1-s}\label{eq: Gautschi gang formula}
\end{equation}
yielding:
\begin{align}
\frac{\Gamma\left(n+1\right)}{\Gamma\left(n+s\right)} & =n^{1-s}+\mathcal{O}\left(n^{1-s-1}\right)\\
 & =n^{1-s}+\mathcal{O}\left(n^{-s}\right)
\end{align}
in the limit $n\rightarrow\infty$.

Noting $\left\lfloor k/2\right\rfloor $ to be the highest integer
smaller than $k/2$, we then further use the recurrence relationship
of the Gamma function:
\begin{align}
\frac{\Gamma\left[\left(p+k\right)/2\right]}{\Gamma\left[p/2\right]} & =\frac{\Gamma\left[p/2+\left\lfloor k/2\right\rfloor \right]}{\Gamma\left[p/2+\left\lfloor k/2\right\rfloor \right]}\frac{\Gamma\left[p/2+k/2\right]}{\Gamma\left[p/2\right]}\\
 & =\frac{\Gamma\left[p/2+k/2\right]}{\Gamma\left[p/2+\left\lfloor k/2\right\rfloor \right]}\frac{\Gamma\left[p/2+\left\lfloor k/2\right\rfloor \right]}{\Gamma\left[p/2\right]}\\
 & =\frac{\Gamma\left[p/2+k/2\right]}{\Gamma\left[p/2+\left\lfloor k/2\right\rfloor \right]}\prod_{i=1}^{\left\lfloor k/2\right\rfloor }\frac{\left(p+i\right)}{2}
\end{align}
where the product scales asymptotically as:
\begin{equation}
\prod_{i=1}^{\left\lfloor k/2\right\rfloor }\frac{\left(p+i\right)}{2}=\frac{1}{2^{\left\lfloor k/2\right\rfloor }}p^{\left\lfloor k/2\right\rfloor }+\mathcal{O}\left(p^{\left\lfloor k/2\right\rfloor -1}\right)
\end{equation}
while the ratio scales as:
\begin{equation}
\frac{\Gamma\left[p/2+k/2\right]}{\Gamma\left[p/2+\left\lfloor k/2\right\rfloor \right]}=\left(\frac{p}{2}\right)^{k/2-\left\lfloor k/2\right\rfloor }+\mathcal{O}\left(p^{k/2-\left\lfloor k/2\right\rfloor -1}\right)
\end{equation}

Combining both asymptotic scalings sums the exponents, yielding:
\begin{equation}
\frac{\Gamma\left[\left(p+k\right)/2\right]}{\Gamma\left[p/2\right]}=\left(\frac{p}{2}\right)^{k/2}+\mathcal{O}\left(p^{k/2-1}\right)
\end{equation}
which yields the asymptotic scaling of the moments:
\begin{align}
\E\left(r_{g}^{k}\right) & =2^{k/2}\frac{\Gamma\left(\frac{p+k}{2}\right)}{\Gamma\left(\frac{p}{2}\right)}\\
 & =p^{k/2}+\mathcal{O}\left(p^{k/2-1}\right)
\end{align}
which concludes the proof.
\begin{lem}
Distribution of $r_{f}|\boldsymbol{e}$, $c_{f}|\boldsymbol{e}$ and
$\boldsymbol{e}_{f}$.\label{lem: APPENDIX distributions under f}

\textbf{Requires:} NA.

The conditional distribution of $r_{f}|\boldsymbol{e}$ is:
\begin{equation}
f\left(r|\boldsymbol{e}\right)\propto r^{p-1}\exp\left(-\tilde{\phi}_{f}\left(r\boldsymbol{e}\right)\right)
\end{equation}

The conditional distribution of $c_{f}|\boldsymbol{e}$ is:
\begin{equation}
f\left(c|\boldsymbol{e}\right)\propto c^{3p-1}\exp\left(-\tilde{\phi}_{f}\left(c^{3}\boldsymbol{e}\right)\right)
\end{equation}

The marginal distribution of $\boldsymbol{e}_{f}$ is:
\begin{equation}
f\left(\boldsymbol{e}\right)\propto\int r^{p-1}\exp\left(-\tilde{\phi}_{f}\left(r\boldsymbol{e}\right)\right)dr
\end{equation}
where the proportionality sign hides a term that does not depend on
$\boldsymbol{e}$. The marginal distribution of $\boldsymbol{e}_{f}$
can be approximated using the ELBO as:
\begin{align}
\left|\log\left[f\left(\boldsymbol{e}\right)\right]-ELBO\left(\boldsymbol{e}\right)\right| & \leq KL\left(r_{g},r_{f}|\boldsymbol{e}\right)\\
ELBO\left(\boldsymbol{e}\right) & =\E_{r\sim g\left(r\right)}\left\{ \log\left[\frac{f\left(r,\boldsymbol{e}\right)}{g\left(r\right)}\right]\right\} \\
 & =-\E_{r\sim g\left(r\right)}\left[\tilde{\phi}_{f}\left(r\boldsymbol{e}\right)-\frac{1}{2}r^{2}\right]+C
\end{align}
where $C$ is a constant that does not depend on $\boldsymbol{e}$.
\end{lem}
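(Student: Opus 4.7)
The plan is to derive each of the four assertions through elementary manipulations: three changes of variable and one application of the standard variational identity for the ELBO.

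For the first two assertions, I would work in the standardized coordinate where $\tilde{f}(\tilde{\boldsymbol{\theta}})\propto\exp(-\tilde{\phi}_{f}(\tilde{\boldsymbol{\theta}}))$ and then apply the same Jacobian computation used for $g$ in Lemmas \ref{lem: APPENDIX Distribution-of-r_g,e_g} and \ref{lem: APPENDIX Distribution-of-c_g}. Converting to spherical coordinates $(r,\boldsymbol{e})$ introduces the volume element $r^{p-1}$, so the joint density is $f(r,\boldsymbol{e})\propto r^{p-1}\exp(-\tilde{\phi}_{f}(r\boldsymbol{e}))$; fixing $\boldsymbol{e}$ and normalizing in $r$ yields the claimed form of $f(r\mid\boldsymbol{e})$. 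The further substitution $r=c^{3}$ has Jacobian $3c^{2}$, which turns $r^{p-1}$ into $3c^{3p-1}$ and, after absorbing the factor of $3$ into the proportionality, gives $f(c\mid\boldsymbol{e})$.

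For the marginal of $\boldsymbol{e}_f$, I would simply integrate the joint density in $r$; the overall normalization is $\boldsymbol{e}$-independent and can be hidden in the $\propto$. For the ELBO bound, I would use the classical identity obtained by writing $\log f(\boldsymbol{e})=\log\int\frac{f(r,\boldsymbol{e})}{g(r)}g(r)\,dr$ and expanding:
\begin{equation}
\log f(\boldsymbol{e})-ELBO(\boldsymbol{e})=\E_{r\sim g(r)}\!\left[\log\frac{g(r)}{f(r\mid\boldsymbol{e})}\right]=KL(r_{g},r_{f}\mid\boldsymbol{e}).
\end{equation}
Since the right-hand side is non-negative, this is in fact an equality, and the lemma's absolute-value bound follows a fortiori; it is stated in the form $\leq$ because that is the form used downstream. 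The explicit expression for $ELBO(\boldsymbol{e})$ in terms of $\tilde{\phi}_{f}$ is then obtained by substituting $g(r)\propto r^{p-1}\exp(-r^{2}/2)$ and $f(r,\boldsymbol{e})\propto r^{p-1}\exp(-\tilde{\phi}_{f}(r\boldsymbol{e}))$ into the ELBO, cancelling the $r^{p-1}$ factors, and collecting all $\boldsymbol{e}$-independent terms into the constant $C$.

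There is no real obstacle here: each step is routine. The only care needed is bookkeeping of which constants depend on $\boldsymbol{e}$ and which do not, so that the $\propto$ signs and the constant $C$ in the ELBO formula capture exactly the $\boldsymbol{e}$-independent pieces. This is immediate because $g(\boldsymbol{e})$ is uniform on $S^{p-1}$ and the marginal normalization $Z_g$ of the Gaussian is a universal constant.
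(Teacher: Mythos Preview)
Your proposal is correct and mirrors the paper's proof essentially step for step: the same Jacobian changes of variable to get $f(r\mid\boldsymbol{e})$, $f(c\mid\boldsymbol{e})$, and $f(\boldsymbol{e})$, followed by the same variational identity showing the ELBO gap equals $KL(r_g,r_f\mid\boldsymbol{e})$. The only cosmetic difference is that the paper first invokes Jensen's inequality to get $\log f(\boldsymbol{e})\geq ELBO(\boldsymbol{e})$ and then verifies the gap is the KL, whereas you go straight to the equality; both routes are equivalent.
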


\begin{proof}
Like Lemma \ref{lem: APPENDIX Distribution-of-r_g,e_g} and \ref{lem: APPENDIX Distribution-of-c_g},
the first two claims follow from a standard change of variable:
\begin{align}
f\left(\tilde{\boldsymbol{\theta}}\right) & \propto\exp\left(-\tilde{\phi}_{f}\left(\tilde{\boldsymbol{\theta}}\right)\right)\\
f\left(r,\boldsymbol{e}\right) & \propto r^{p-1}\exp\left(-\tilde{\phi}_{f}\left(r\boldsymbol{e}\right)\right)\label{eq: APPENDIX intermediate joint distribution r,e}
\end{align}
thus yielding:
\begin{align}
f\left(r|\boldsymbol{e}\right) & =\frac{f\left(r,\boldsymbol{e}\right)}{f\left(\boldsymbol{e}\right)}\\
 & \propto r^{p-1}\exp\left(-\tilde{\phi}_{f}\left(r\boldsymbol{e}\right)\right)
\end{align}

Similarly:
\begin{align}
f\left(c|\boldsymbol{e}\right) & \propto c^{3\left(p-1\right)}\exp\left(-\tilde{\phi}_{f}\left(c^{3}\boldsymbol{e}\right)\right)\frac{dr}{dc}\\
 & \propto c^{3\left(p-1\right)}\exp\left(-\tilde{\phi}_{f}\left(c^{3}\boldsymbol{e}\right)\right)\left(3c^{2}\right)\\
 & \propto c^{3p-1}\exp\left(-\tilde{\phi}_{f}\left(c^{3}\boldsymbol{e}\right)\right)
\end{align}

$f\left(\boldsymbol{e}\right)$ is the integral of $f\left(r,\boldsymbol{e}\right)$:
\begin{align}
f\left(\boldsymbol{e}\right) & =\int f\left(r,\boldsymbol{e}\right)dr
\end{align}
Combining with eq.\eqref{eq: APPENDIX intermediate joint distribution r,e},
we obtain:
\begin{equation}
f\left(\boldsymbol{e}\right)=\frac{\int r^{p-1}\exp\left(-\tilde{\phi}_{f}\left(r\boldsymbol{e}\right)\right)dr}{\int d\boldsymbol{e}\left[\int r^{p-1}\exp\left(-\tilde{\phi}_{f}\left(r\boldsymbol{e}\right)\right)dr\right]}
\end{equation}

Finally, the Evidence Lower Bound (ELBO; \citet{blei2017variational})
is a well-known lower-bound on the log-integral of a density:
\begin{align}
\log\left[f\left(\boldsymbol{e}\right)\right] & =\log\left[\int f\left(r,\boldsymbol{e}\right)dr\right]\\
 & =\log\left[\int g\left(r\right)\frac{f\left(r,\boldsymbol{e}\right)}{g\left(r\right)}dr\right]\\
 & =\log\left[\E_{r\sim g\left(r\right)}\left\{ \frac{f\left(r,\boldsymbol{e}\right)}{g\left(r\right)}\right\} \right]\\
 & \geq\E_{r\sim g\left(r\right)}\left\{ \log\left[\frac{f\left(r,\boldsymbol{e}\right)}{g\left(r\right)}\right]\right\} \\
 & \geq ELBO\left(\boldsymbol{e}\right)
\end{align}
where we have used Jensen's inequality on the concave function $t\rightarrow\log\left(t\right)$.
Strictly speaking, the ELBO of $\log\left[f\left(\boldsymbol{e}\right)\right]$
is this exact expression: $\E_{r\sim g\left(r\right)}\left\{ \log\left[\frac{f\left(r,\boldsymbol{e}\right)}{g\left(r\right)}\right]\right\} $
and the error is precisely equal to $KL\left(r_{g},r_{f}|\boldsymbol{e}\right)$.

We can rework the ELBO slightly to remove terms that are constants
in $\boldsymbol{e}$:
\begin{align}
\log\left[\frac{f\left(r,\boldsymbol{e}\right)}{g\left(r\right)}\right] & =\log\left[\frac{r^{p-1}\exp\left(-\tilde{\phi}_{f}\left(r\boldsymbol{e}\right)\right)}{\int r^{p-1}\exp\left(-\tilde{\phi}_{f}\left(r\boldsymbol{e}\right)\right)drd\boldsymbol{e}}\frac{\int r^{p-1}\exp\left(-\frac{1}{2}r^{2}\right)drd\boldsymbol{e}}{r^{p-1}\exp\left(-\frac{1}{2}r^{2}\right)}\right]\\
 & =\log\left[\frac{r^{p-1}\exp\left(-\tilde{\phi}_{f}\left(r\boldsymbol{e}\right)\right)}{r^{p-1}\exp\left(-\frac{1}{2}r^{2}\right)}\frac{\int r^{p-1}\exp\left(-\frac{1}{2}r^{2}\right)drd\boldsymbol{e}}{\int r^{p-1}\exp\left(-\tilde{\phi}_{f}\left(r\boldsymbol{e}\right)\right)drd\boldsymbol{e}}\right]\\
 & =\log\left[\exp\left(\frac{1}{2}r^{2}-\tilde{\phi}_{f}\left(r\boldsymbol{e}\right)\right)\right]+C\\
 & =\frac{1}{2}r^{2}-\tilde{\phi}_{f}\left(r\boldsymbol{e}\right)+C\\
 & =-\left\{ \tilde{\phi}_{f}\left(r\boldsymbol{e}\right)-\frac{1}{2}r^{2}\right\} +C\\
\E_{r\sim g\left(r\right)}\left\{ \log\left[\frac{f\left(r,\boldsymbol{e}\right)}{g\left(r\right)}\right]\right\}  & =-\E_{r\sim g\left(r\right)}\left\{ \tilde{\phi}_{f}\left(r\boldsymbol{e}\right)-\frac{1}{2}r^{2}\right\} +C
\end{align}
which yields the claimed result. Note that the $\frac{1}{2}r^{2}$
term could be dropped out too, but it corresponds to the Taylor expansion
of $\tilde{\phi}_{f}\left(r\boldsymbol{e}\right)$ to second order.
It thus makes sense to keep this term. 

Let us finally check that the gap is indeed the KL divergence:
\begin{align}
\E_{r\sim g\left(r\right)}\left\{ \log\left[\frac{f\left(r,\boldsymbol{e}\right)}{g\left(r\right)}\right]\right\}  & =\E_{r\sim g\left(r\right)}\left\{ \log\left[\frac{f\left(r|\boldsymbol{e}\right)f\left(\boldsymbol{e}\right)}{g\left(r\right)}\right]\right\} \\
 & =\E_{r\sim g\left(r\right)}\left\{ \log\left[\frac{f\left(r|\boldsymbol{e}\right)}{g\left(r\right)}\right]\right\} +\log\left[f\left(\boldsymbol{e}\right)\right]\\
\log\left[f\left(\boldsymbol{e}\right)\right]-\E_{r\sim g\left(r\right)}\left\{ \log\left[\frac{f\left(r,\boldsymbol{e}\right)}{g\left(r\right)}\right]\right\}  & =-\E_{r\sim g\left(r\right)}\left\{ \log\left[\frac{f\left(r|\boldsymbol{e}\right)}{g\left(r\right)}\right]\right\} \\
 & =\E_{r\sim g\left(r\right)}\left\{ \log\left[\frac{g\left(r\right)}{f\left(r|\boldsymbol{e}\right)}\right]\right\} \\
 & =KL\left(r_{g},r_{f}|\boldsymbol{e}\right)
\end{align}
which concludes the proof.
\end{proof}

\subsection{Proofs: $KL\left(r_{g},r_{f}|\boldsymbol{e}\right)$ term.}

For this section, we introduce two additional notations:
\begin{itemize}
\item Let $\varphi_{\boldsymbol{e}}\left(r\right)$ be the value of $\tilde{\phi}_{f}$
along direction $\boldsymbol{e}$:
\begin{align}
\varphi_{\boldsymbol{e}}\left(r\right) & =\tilde{\phi}_{f}\left(r\boldsymbol{e}\right)\\
 & =\phi_{f}\left(\boldsymbol{\mu}+r\Sigma_{g}^{1/2}\boldsymbol{e}\right)
\end{align}
\item Let $\psi_{f}\left(c|\boldsymbol{e}\right)$ denote the (unnormalized)
negative log-density of $f\left(c|\boldsymbol{e}\right)$:
\begin{align}
\psi_{f}\left(c|\boldsymbol{e}\right) & =-\log\left[f\left(c|\boldsymbol{e}\right)\right]+C\\
 & =-\left(3p-1\right)\log\left(c\right)+\tilde{\phi}_{f}\left(c^{3}\boldsymbol{e}\right)\\
 & =-\left(3p-1\right)\log\left(c\right)+\varphi_{\boldsymbol{e}}\left(c^{3}\right)
\end{align}
Similarly, let $\psi_{g}\left(c\right)$ be the negative log-density
of $g\left(c\right)$:
\begin{equation}
\psi_{g}\left(c\right)=-\left(3p-1\right)\log\left(c\right)+\frac{1}{2}c^{6}
\end{equation}
where we have used Lemmas \ref{lem: APPENDIX Distribution-of-c_g}
and \ref{lem: APPENDIX distributions under f}.
\end{itemize}
\begin{lem}
Basic properties of $\varphi_{\boldsymbol{e}}\left(r\right)$.\label{lem:Basic-properties-of varphi_e(r)}

\textbf{Requires:} Lemma \ref{lem: APPENDIX distributions under f}.

$\varphi_{\boldsymbol{e}}\left(r\right)$ is a convex function. Its
higher derivatives are controlled:
\[
\left|\varphi_{\boldsymbol{e}}^{\left(3\right)}\left(r\right)\right|\leq\Delta_{3}
\]

For $r\leq r_{0}=\left(\Delta_{3}\right)^{-1}$, $\varphi_{\boldsymbol{e}}\left(r\right)$
and its derivatives are bounded through a Taylor expansion:
\begin{align}
\varphi_{\boldsymbol{e}}\left(r\right) & \geq\frac{r^{2}}{2}-\frac{\Delta_{3}}{6}r^{3}\\
\varphi_{\boldsymbol{e}}^{'}\left(r\right) & \geq r-\frac{\Delta_{3}}{2}r^{2}\\
\varphi_{\boldsymbol{e}}^{''}\left(r\right) & \geq1-\Delta_{3}r
\end{align}
For $r\geq r_{0}=\left(\Delta_{3}\right)^{-1}$, $\varphi_{\boldsymbol{e}}\left(r\right)$
and its derivatives are bounded instead through the convexity of $\varphi_{\boldsymbol{e}}\left(r\right)$:
\begin{align}
\varphi_{\boldsymbol{e}}\left(r\right) & \geq\frac{1}{3}\frac{1}{\left(\Delta_{3}\right)^{2}}+\frac{1}{2\Delta_{3}}\left(r-\frac{1}{\Delta_{3}}\right)\\
\varphi_{\boldsymbol{e}}^{'}\left(r\right) & \geq\frac{1}{2\Delta_{3}}\\
\varphi_{\boldsymbol{e}}^{''}\left(r\right) & \geq0
\end{align}

Let $\varphi_{\text{min}}\left(r\right)$ be the function corresponding
to the lower-bounds.
\end{lem}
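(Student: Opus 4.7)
The plan is to exploit two ingredients: the strict convexity of $\tilde{\phi}_f$, which is inherited from $\phi_f$ via an affine change of coordinates, together with the global control $\|\tilde{\phi}_f^{(3)}\|_{\max}\le\Delta_3$ on the third derivative. From these, the small-$r$ bounds will come from Taylor's theorem and the large-$r$ bounds from convexity, with the two regimes meeting continuously at $r_0=1/\Delta_3$.

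First I would establish the basic identities at the origin. Since the change of variable $\tilde{\boldsymbol{\theta}}\mapsto\boldsymbol{\mu}+\Sigma^{1/2}\tilde{\boldsymbol{\theta}}$ is affine, $\tilde{\phi}_f$ is strictly convex and hence so is its restriction to any line; in particular $\varphi_{\boldsymbol{e}}''(r)\ge 0$ for all $r$. By the chain rule, $\varphi_{\boldsymbol{e}}^{(3)}(r)=\tilde{\phi}_f^{(3)}(r\boldsymbol{e})[\boldsymbol{e},\boldsymbol{e},\boldsymbol{e}]$, so the definition of the max norm together with $\|\boldsymbol{e}\|_2=1$ immediately gives $|\varphi_{\boldsymbol{e}}^{(3)}(r)|\le\Delta_3$. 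At $r=0$, the MAP condition yields $\nabla\tilde{\phi}_f(0)=\Sigma^{1/2}\nabla\phi_f(\boldsymbol{\mu})=0$, and the definition of $\Sigma$ gives $H\tilde{\phi}_f(0)=\Sigma^{1/2}\Sigma^{-1}\Sigma^{1/2}=I_p$, so $\varphi_{\boldsymbol{e}}'(0)=0$ and $\varphi_{\boldsymbol{e}}''(0)=\boldsymbol{e}^T I_p\boldsymbol{e}=1$.

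For $r\le r_0$, I would integrate the third-derivative bound starting from these initial values: $\varphi_{\boldsymbol{e}}''(r)=1+\int_0^r\varphi_{\boldsymbol{e}}^{(3)}(s)\,ds\ge 1-\Delta_3 r$, a single further integration gives $\varphi_{\boldsymbol{e}}'(r)\ge r-\tfrac{1}{2}\Delta_3 r^2$, and a second gives $\varphi_{\boldsymbol{e}}(r)\ge\tfrac{1}{2}r^2-\tfrac{1}{6}\Delta_3 r^3$ (taking $\varphi_{\boldsymbol{e}}(0)=0$ as the usual normalization, since any additive constant is absorbed into $Z_f$). For $r\ge r_0$, I would invoke convexity: $\varphi_{\boldsymbol{e}}'$ is nondecreasing, so $\varphi_{\boldsymbol{e}}'(r)\ge\varphi_{\boldsymbol{e}}'(r_0)$, and evaluating the small-$r$ bound at $r_0=1/\Delta_3$ gives $\varphi_{\boldsymbol{e}}'(r_0)\ge r_0-\tfrac{1}{2}\Delta_3 r_0^2=\tfrac{1}{2\Delta_3}$. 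Similarly $\varphi_{\boldsymbol{e}}(r_0)\ge\tfrac{1}{2\Delta_3^2}-\tfrac{1}{6\Delta_3^2}=\tfrac{1}{3\Delta_3^2}$, and the supporting-line inequality $\varphi_{\boldsymbol{e}}(r)\ge\varphi_{\boldsymbol{e}}(r_0)+\varphi_{\boldsymbol{e}}'(r_0)(r-r_0)$ for a convex function then yields the stated large-$r$ bound. The argument is entirely mechanical; the only thing worth checking is that the two regimes match at $r_0$, which they do by design since it is the small-$r$ bound evaluated at $r_0$ that initializes the convexity step.
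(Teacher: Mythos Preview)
Your proposal is correct and follows essentially the same approach as the paper: compute the derivatives via the chain rule, use the MAP/standardization to get $\varphi_{\boldsymbol{e}}'(0)=0$, $\varphi_{\boldsymbol{e}}''(0)=1$, bound $|\varphi_{\boldsymbol{e}}^{(3)}|\le\Delta_3$, and then integrate. The only cosmetic difference is that the paper packages the large-$r$ regime by defining $\varphi_{\min}''(r)=\max(1-\Delta_3 r,0)$ and integrating this piecewise function from $0$, whereas you invoke monotonicity of $\varphi_{\boldsymbol{e}}'$ and the supporting-line inequality directly; these are equivalent.
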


\begin{proof}
This Lemma is proved through several straightforward manipulations
of the expression for $\varphi_{\boldsymbol{e}}\left(r\right)$.

$\varphi_{\boldsymbol{e}}\left(r\right)$ is defined as the restriction
of a convex function, $\tilde{\phi}_{f}$, to an arc:
\begin{equation}
\varphi_{\boldsymbol{e}}\left(r\right)=\phi_{f}\left(r\boldsymbol{e}\right)
\end{equation}
It is thus also convex.

Its derivatives are straightforward to compute:
\begin{align}
\varphi_{\boldsymbol{e}}\left(r\right) & =\tilde{\phi}_{f}\left(r\boldsymbol{e}\right)\\
\varphi_{\boldsymbol{e}}^{'}\left(r\right) & =\boldsymbol{e}^{T}\nabla\tilde{\phi}_{f}\left(r\boldsymbol{e}\right)\\
\varphi_{\boldsymbol{e}}^{''}\left(r\right) & =\boldsymbol{e}^{T}\left[H\tilde{\phi}_{f}\left(r\boldsymbol{e}\right)\right]\boldsymbol{e}\\
\varphi_{\boldsymbol{e}}^{\left(3\right)}\left(r\right) & =\tilde{\phi}_{f}^{\left(3\right)}\left(r\boldsymbol{e}\right)\left[\boldsymbol{e},\boldsymbol{e},\boldsymbol{e}\right]
\end{align}
From which we obtain the value of the derivatives for $r=0$:
\begin{align}
\varphi_{\boldsymbol{e}}^{'}\left(0\right) & =0\\
\varphi_{\boldsymbol{e}}^{''}\left(0\right) & =1
\end{align}
and a bound on the third derivative:
\begin{align}
\left|\varphi_{\boldsymbol{e}}^{\left(3\right)}\left(r\right)\right| & \leq\max_{r\geq0}\left|\tilde{\phi}_{f}^{\left(3\right)}\left(r\boldsymbol{e}\right)\left[\boldsymbol{e},\boldsymbol{e},\boldsymbol{e}\right]\right|\\
 & \leq\max_{\tilde{\boldsymbol{\theta}}}\left\Vert \tilde{\phi}_{f}^{\left(3\right)}\left(\tilde{\boldsymbol{\theta}}\right)\right\Vert _{\text{max}}\\
 & \leq\Delta_{3}
\end{align}

A Taylor expansion centered at $0$ and using the bound on $\left|\varphi_{\boldsymbol{e}}^{\left(3\right)}\left(r\right)\right|$
then gives a lower-bound for $\varphi_{\boldsymbol{e}}^{''}\left(r\right)$:
\begin{equation}
\varphi_{\boldsymbol{e}}^{''}\left(r\right)\geq1-\Delta_{3}r
\end{equation}
This lower-bound is suboptimal for $r\geq r_{0}=\left(\Delta_{3}\right)^{-1}$
for which we already know that $\varphi_{\boldsymbol{e}}^{''}\left(r\right)\geq0$
instead, due to the convexity of $\tilde{\phi}_{f}$ and thus that
of $\varphi_{\boldsymbol{e}}\left(r\right)$. This gives a general
lower-bound for $\varphi_{\boldsymbol{e}}^{''}\left(r\right)$:
\begin{equation}
\varphi_{\boldsymbol{e}}^{''}\left(r\right)\geq\max\left(1-\Delta_{3}r,0\right)
\end{equation}
Let $\varphi_{\text{min}}\left(r\right)$ be the function defined
through the following equations:
\begin{align}
\varphi_{\text{min}}^{''}\left(r\right) & =\max\left(1-\Delta_{3}r,0\right)\\
\varphi_{\text{min}}^{'}\left(0\right) & =0\\
\varphi_{\text{min}}\left(0\right) & =0
\end{align}
$\varphi_{\text{min}}$ coincides at $0$ with $\varphi_{\boldsymbol{e}}$
but has strictly lower second derivative. We can thus find a lower-bound
for $\varphi_{\boldsymbol{e}}^{'}\left(r\right)$ and $\varphi_{\boldsymbol{e}}\left(r\right)$
by integrating $\varphi_{min}\left(r\right)$. For $r\leq r_{0}$:
\begin{align}
\varphi_{\boldsymbol{e}}^{'}\left(r\right) & \geq\varphi_{\text{min}}^{'}\left(r\right)=r-\frac{\Delta_{3}}{2}r^{2}\\
\varphi_{\boldsymbol{e}}\left(r\right) & \geq\varphi_{\text{min}}\left(r\right)=\frac{r^{2}}{2}-\frac{\Delta_{3}}{6}r^{3}
\end{align}
while for $r\geq r_{0}$:
\begin{align}
\varphi_{\boldsymbol{e}}^{'}\left(r\right) & \geq\varphi_{\text{min}}^{'}\left(r\right)=\frac{1}{2\Delta_{3}}\\
\varphi_{\boldsymbol{e}}\left(r\right) & \geq\varphi_{\text{min}}\left(r\right)=\frac{1}{3}\frac{1}{\left(\Delta_{3}\right)^{2}}+\frac{1}{2\Delta_{3}}\left(r-\frac{1}{\Delta_{3}}\right)
\end{align}
which concludes the proof.
\end{proof}
\begin{lem}
$f\left(c|\boldsymbol{e}\right)$ is strongly log-concave.\label{lem: f(c|e) is SLC}

\textbf{Requires:} Lemma \ref{lem:Basic-properties-of varphi_e(r)}.

The derivatives of $\psi_{f}\left(c|\boldsymbol{e}\right)$ are:
\begin{align}
\psi_{f}^{'}\left(c|\boldsymbol{e}\right) & =-\frac{3p-1}{c}+3c^{2}\varphi_{\boldsymbol{e}}^{'}\left(c^{3}\right)\label{eq: psi_f prime(c)}\\
\psi_{f}^{''}\left(c|\boldsymbol{e}\right) & =\frac{3p-1}{c^{2}}+6c\varphi_{\boldsymbol{e}}^{'}\left(c^{3}\right)+9c^{4}\varphi_{\boldsymbol{e}}^{'}\left(c^{3}\right)\label{eq: psi_f sec(c)}
\end{align}

The second-derivative of $\psi_{f}\left(c|\boldsymbol{e}\right)$
is lower-bounded:
\[
\psi_{f}^{''}\left(c|\boldsymbol{e}\right)\geq\begin{cases}
\frac{3p-1}{c^{2}}+\frac{3c}{\Delta_{3}} & \text{ if }c\geq c_{0}=\left(\Delta_{3}\right)^{-1/3}\\
\frac{3p-1}{c^{2}}+15c^{4}-12\Delta_{3}c^{7} & \text{ if }c\leq c_{0}
\end{cases}
\]
Let $\psi_{min}^{''}\left(c\right)$ be the function corresponding
to these bounds.
\end{lem}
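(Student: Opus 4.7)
The plan is to compute the derivatives by direct differentiation and then apply the two-regime lower bounds on $\varphi_{\boldsymbol{e}}$ already established in Lemma \ref{lem:Basic-properties-of varphi_e(r)}. Starting from $\psi_{f}(c|\boldsymbol{e}) = -(3p-1)\log(c) + \varphi_{\boldsymbol{e}}(c^3)$, the chain rule gives
\begin{equation}
\psi_{f}^{'}(c|\boldsymbol{e}) = -\tfrac{3p-1}{c} + 3c^{2}\varphi_{\boldsymbol{e}}^{'}(c^{3}),\qquad \psi_{f}^{''}(c|\boldsymbol{e}) = \tfrac{3p-1}{c^{2}} + 6c\,\varphi_{\boldsymbol{e}}^{'}(c^{3}) + 9c^{4}\varphi_{\boldsymbol{e}}^{''}(c^{3}),
\end{equation}
which establishes eqs.\eqref{eq: psi_f prime(c)}--\eqref{eq: psi_f sec(c)} (with the second occurrence of $\varphi_{\boldsymbol{e}}'$ in the statement interpreted as $\varphi_{\boldsymbol{e}}''$).

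Next, I would split on whether $r = c^{3}$ lies above or below the threshold $r_{0} = 1/\Delta_{3}$, equivalently whether $c \gtrless c_{0} = (\Delta_{3})^{-1/3}$, and feed the corresponding bounds from Lemma \ref{lem:Basic-properties-of varphi_e(r)} into the expression for $\psi_{f}^{''}$. For $c \geq c_{0}$, convexity gives $\varphi_{\boldsymbol{e}}^{'}(c^{3}) \geq (2\Delta_{3})^{-1}$ and $\varphi_{\boldsymbol{e}}^{''}(c^{3}) \geq 0$, so the third term drops and one reads off $\psi_{f}^{''}(c|\boldsymbol{e}) \geq \tfrac{3p-1}{c^{2}} + \tfrac{3c}{\Delta_{3}}$. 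For $c \leq c_{0}$, the Taylor-based bounds $\varphi_{\boldsymbol{e}}^{'}(c^{3}) \geq c^{3} - \tfrac{\Delta_{3}}{2}c^{6}$ and $\varphi_{\boldsymbol{e}}^{''}(c^{3}) \geq 1 - \Delta_{3}c^{3}$ give
\begin{equation}
\psi_{f}^{''}(c|\boldsymbol{e}) \geq \tfrac{3p-1}{c^{2}} + 6c\bigl(c^{3} - \tfrac{\Delta_{3}}{2}c^{6}\bigr) + 9c^{4}\bigl(1 - \Delta_{3}c^{3}\bigr) = \tfrac{3p-1}{c^{2}} + 15c^{4} - 12\Delta_{3}c^{7},
\end{equation}
exactly matching the claimed $\psi_{min}^{''}(c)$.

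To close the argument by confirming strong log-concavity of $f(c|\boldsymbol{e})$, I would verify that $\psi_{min}^{''}(c)$ is strictly positive on $(0,\infty)$. The high-$c$ branch is manifestly positive. For the low-$c$ branch, the condition $c \leq c_{0} = \Delta_{3}^{-1/3}$ yields $\Delta_{3}c^{3} \leq 1$, hence $12\Delta_{3}c^{7} = 12c^{4}(\Delta_{3}c^{3}) \leq 12c^{4}$, so $15c^{4} - 12\Delta_{3}c^{7} \geq 3c^{4} > 0$, and the $\tfrac{3p-1}{c^{2}}$ term only makes things better. I would also briefly check continuity of the bound at $c = c_{0}$ so that $\psi_{min}^{''}$ is a well-defined lower bound over all of $\R^{+}$.

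The computation itself is routine; the only step requiring a little care is the regime $c \leq c_{0}$, where two bounds of opposite sign from Lemma \ref{lem:Basic-properties-of varphi_e(r)} are combined. The main conceptual point, which motivates the whole change of variables $r = c^{3}$, is that the $-(3p-1)\log(c)$ term injected by the Jacobian provides a positive $\Theta(1/c^{2})$ contribution to $\psi_{f}^{''}$ that compensates the potentially vanishing curvature of $\varphi_{\boldsymbol{e}}$ in the tail, which is exactly what lets us invoke the LSI (via Prop.\ref{prop:LSI-bound-on KL(g,f)}) downstream in Prop.\ref{prop: APPENDIX Bound-on- KL r_g r_f | e}.
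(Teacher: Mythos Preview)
Your proof is correct and follows essentially the same approach as the paper: compute $\psi_f''$ by the chain rule and then plug in the two-regime lower bounds on $\varphi_{\boldsymbol{e}}'$ and $\varphi_{\boldsymbol{e}}''$ from Lemma~\ref{lem:Basic-properties-of varphi_e(r)}. You additionally spell out the positivity check $15c^{4}-12\Delta_{3}c^{7}\geq 3c^{4}>0$ on the low-$c$ branch and correctly flag the typo in eq.~\eqref{eq: psi_f sec(c)} (the last $\varphi_{\boldsymbol{e}}'$ should be $\varphi_{\boldsymbol{e}}''$), both of which the paper leaves implicit.
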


\begin{proof}
The derivative of $\psi_{f}\left(c\right)$ is straightforward to
compute:
\begin{align}
\psi_{f}\left(c|\boldsymbol{e}\right) & =-\left(3p-1\right)\log\left(c\right)+\varphi_{\boldsymbol{e}}\left(c^{3}\right)\\
\psi_{f}^{'}\left(c|\boldsymbol{e}\right) & =-\frac{3p-1}{c}+3c^{2}\varphi_{\boldsymbol{e}}^{'}\left(c^{3}\right)\\
\psi_{f}^{''}\left(c|\boldsymbol{e}\right) & =\frac{3p-1}{c^{2}}+6c\varphi_{\boldsymbol{e}}^{'}\left(c^{3}\right)+9c^{4}\varphi_{\boldsymbol{e}}^{''}\left(c^{3}\right)
\end{align}
Combining this with the lower-bounds for $\varphi_{\boldsymbol{e}}^{'}\left(r\right)$
and $\varphi_{\boldsymbol{e}}^{''}\left(r\right)$ from Lemma \ref{lem:Basic-properties-of varphi_e(r)},
we obtain:
\begin{align}
\psi_{f}^{''}\left(c|\boldsymbol{e}\right) & \geq\frac{3p-1}{c^{2}}+6c\left[\varphi_{\text{min}}^{'}\left(c^{3}\right)\right]+9c^{4}\left[\varphi_{\text{min}}^{''}\left(c^{3}\right)\right]\\
 & \geq\begin{cases}
\frac{3p-1}{c^{2}}+6c\left[\frac{1}{2}\frac{1}{\Delta_{3}}\right]+9c^{4}\left[0\right] & \text{ if }c\geq c_{0}=\left(\Delta_{3}\right)^{-1/3}\\
\frac{3p-1}{c^{2}}+6c\left[c^{3}-\frac{\Delta_{3}}{2}c^{6}\right]+9c^{4}\left[1-\Delta_{3}c^{3}\right] & \text{ if }c\leq c_{0}
\end{cases}\\
 & \geq\begin{cases}
\frac{3p-1}{c^{2}}+\frac{3c}{\Delta_{3}} & \text{ if }c\geq c_{0}=\left(\Delta_{3}\right)^{-1/3}\\
\frac{3p-1}{c^{2}}+6c^{4}-3\Delta_{3}c^{7}+9c^{4}-9\Delta_{3}c^{7} & \text{ if }c\leq c_{0}
\end{cases}\\
 & \geq\begin{cases}
\frac{3p-1}{c^{2}}+\frac{3c}{\Delta_{3}} & \text{ if }c\geq c_{0}=\left(\Delta_{3}\right)^{-1/3}\\
\frac{3p-1}{c^{2}}+15c^{4}-12\Delta_{3}c^{7} & \text{ if }c\leq c_{0}
\end{cases}
\end{align}
which concludes the proof.
\end{proof}
\begin{lem}
Detailed lower-bound of $\psi_{f}^{''}\left(c|\boldsymbol{e}\right)$
for $c\geq c_{0}$.\label{lem: APPENDIX detailed lower bound for log-curvature c geq c_0}

\textbf{Requires: }Lemma \ref{lem: f(c|e) is SLC}.

On the $c\geq c_{0}$ region, we have:
\begin{equation}
\min_{c\geq c_{0}}\left[\psi_{f}^{''}\left(c|\boldsymbol{e}\right)\right]\geq\begin{cases}
\left(3p-1\right)\left(\Delta_{3}\right)^{2/3}+3\left(\Delta_{3}\right)^{-4/3} & \text{ if }\Delta_{3}\leq\sqrt{\frac{3}{2}\frac{1}{3p-1}}\\
\frac{9}{2}\frac{\left(\frac{2}{3}3p-1\right)^{1/3}}{\left(\Delta_{3}\right)^{2/3}} & \text{ else }
\end{cases}
\end{equation}
\end{lem}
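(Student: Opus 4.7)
The plan is to minimize the pointwise lower bound from Lemma \ref{lem: f(c|e) is SLC}. On the region $c \geq c_0$ we have $\psi_f''(c|\boldsymbol{e}) \geq h(c)$ where $h(c) = \frac{3p-1}{c^2} + \frac{3c}{\Delta_3}$, so it suffices to compute $\min_{c \geq c_0} h(c)$ exactly. The function $h$ is smooth, positive, and tends to $+\infty$ at both $0^+$ and $+\infty$, so its global minimum on $\mathbb{R}^+$ is attained at a unique interior critical point which we then compare to $c_0$.

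First, I would differentiate to obtain $h'(c) = -\frac{2(3p-1)}{c^{3}} + \frac{3}{\Delta_3}$, which vanishes exactly at $c^* = \bigl(\tfrac{2(3p-1)\Delta_3}{3}\bigr)^{1/3}$. Since $h'$ is strictly increasing, $h$ is decreasing on $(0, c^*]$ and increasing on $[c^*, \infty)$, so $c^*$ is the unique minimizer of $h$ on $\mathbb{R}^+$. The restricted minimum on $[c_0, \infty)$ is therefore $h(c^*)$ when $c^* \geq c_0$, and $h(c_0)$ otherwise. Comparing cubes, $c^* \geq c_0$ is equivalent to $\tfrac{2(3p-1)\Delta_3}{3} \geq \tfrac{1}{\Delta_3}$, i.e. to $\Delta_3 \geq \sqrt{\tfrac{3}{2(3p-1)}}$, which matches the threshold in the statement.

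In the first case $\Delta_3 \leq \sqrt{\tfrac{3}{2(3p-1)}}$, $h$ is increasing on $[c_0, \infty)$ and the minimum is
\[
h(c_0) = \frac{3p-1}{c_0^{2}} + \frac{3 c_0}{\Delta_3} = (3p-1)(\Delta_3)^{2/3} + 3(\Delta_3)^{-4/3},
\]
since $c_0 = (\Delta_3)^{-1/3}$. In the other case, I would plug $c^*$ into $h$ directly, using $(c^*)^{2} = \bigl(\tfrac{2(3p-1)\Delta_3}{3}\bigr)^{2/3}$ and $\tfrac{3 c^*}{\Delta_3} = 3^{2/3}\bigl(2(3p-1)\bigr)^{1/3}(\Delta_3)^{-2/3}$, and collect the two terms. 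They share the common factor $(3p-1)^{1/3}(\Delta_3)^{-2/3}$, and the remaining numerical factor simplifies as $\tfrac{3^{2/3}}{2^{2/3}} + 3^{2/3} 2^{1/3} = \tfrac{3^{5/3}}{2^{2/3}}$, which is precisely $\tfrac{9}{2}\bigl(\tfrac{2}{3}\bigr)^{1/3}$.

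The only obstacle is purely algebraic: carrying the fractional powers of $2$, $3$, $\Delta_3$ and $3p-1$ through the evaluation of $h(c^*)$ and recognising the collected constant as $\tfrac{9}{2}\bigl(\tfrac{2}{3}(3p-1)\bigr)^{1/3}$. There is no analytic subtlety, because convexity of $h$ on $\mathbb{R}^+$ (inherited from the convexity of $c \mapsto c^{-2}$ and linearity of $c \mapsto 3c/\Delta_3$) reduces the minimisation to a one-variable first-order condition handled by elementary calculus.
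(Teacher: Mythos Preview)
Your proposal is correct and follows essentially the same approach as the paper: both minimize the lower-bound function $h(c)=\tfrac{3p-1}{c^{2}}+\tfrac{3c}{\Delta_{3}}$ by locating its unique critical point $c^{*}=\bigl(\tfrac{2(3p-1)\Delta_{3}}{3}\bigr)^{1/3}$, comparing $c^{*}$ to $c_{0}$ to determine the threshold $\Delta_{3}=\sqrt{\tfrac{3}{2(3p-1)}}$, and then evaluating $h$ at $c_{0}$ or $c^{*}$ accordingly. Your algebraic simplification of $h(c^{*})$ via the common factor $(3p-1)^{1/3}(\Delta_{3})^{-2/3}$ and the identity $\tfrac{3^{5/3}}{2^{2/3}}=\tfrac{9}{2}\bigl(\tfrac{2}{3}\bigr)^{1/3}$ is the same computation the paper carries out, just organized slightly differently.
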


\begin{proof}
We start from the lower-bound of $\psi_{f}^{''}\left(c|\boldsymbol{e}\right)$
of Lemma \ref{lem: APPENDIX detailed lower bound for log-curvature c geq c_0}:
\[
\psi_{f}^{''}\left(c|\boldsymbol{e}\right)\geq\begin{cases}
\frac{3p-1}{c^{2}}+\frac{3c}{\Delta_{3}} & \text{ if }c\geq c_{0}=\left(\Delta_{3}\right)^{-1/3}\\
\frac{3p-1}{c^{2}}+15c^{4}-12\Delta_{3}c^{7} & \text{ if }c\leq c_{0}
\end{cases}
\]

We consider only the $c\geq c_{0}$ half:
\begin{equation}
\psi_{min}^{''}\left(c\right)=\frac{3p-1}{c^{2}}+\frac{3c}{\Delta_{3}}
\end{equation}
To find the minimum, we compute the derivative of the lower-bound:
\begin{align}
\psi_{min}^{\left(3\right)}\left(c\right) & =-2\frac{3p-1}{c^{3}}+\frac{3}{\Delta_{3}}
\end{align}
which we recognize as a strictly increasing function. Its unique root
is:
\begin{equation}
c^{\star}=\left(2\Delta_{3}\frac{3p-1}{3}\right)^{1/3}
\end{equation}
If $c^{\star}$ is smaller than $c_{0}$, then the minimal curvature
for $c\geq c_{0}$ is reached at $c_{0}$. Otherwise, the minimum
is reached at $c^{\star}$.

The condition for $c^{\star}\leq c_{0}$ corresponds to:
\begin{align}
c^{\star} & \leq c_{0}\\
\left(2\Delta_{3}\frac{3p-1}{3}\right)^{1/3} & \leq\left(\frac{1}{\Delta_{3}}\right)^{1/3}\\
\left(\Delta_{3}\right)^{2} & \leq\frac{3}{2}\frac{1}{3p-1}\\
\Delta_{3} & \leq\sqrt{\frac{3}{2}\frac{1}{3p-1}}
\end{align}

Finally, we compute the curvature at $c_{0}$ and $c^{\star}$:
\begin{align}
\psi_{min}^{''}\left(c_{0}\right) & =\frac{\left(3p-1\right)}{\left(\frac{1}{\Delta_{3}}\right)^{2/3}}+\frac{3}{\Delta_{3}}\left(\frac{1}{\Delta_{3}}\right)^{1/3}\\
 & =\left(3p-1\right)\left(\Delta_{3}\right)^{2/3}+3\left(\Delta_{3}\right)^{-4/3}\label{eq: curvature at c_0}
\end{align}
and:
\begin{align}
\psi_{min}^{''}\left(c^{\star}\right) & =\frac{3p-1}{\left(2\Delta_{3}\frac{3p-1}{3}\right)^{2/3}}+\frac{3\left(2\Delta_{3}\frac{3p-1}{3}\right)^{1/3}}{\Delta_{3}}\\
 & =\frac{\left(3p-1\right)^{1/3}}{\left(\frac{2}{3}\right)^{2/3}\left(\Delta_{3}\right)^{2/3}}+\frac{3\left(\frac{2}{3}\right)^{1/3}\left(3p-1\right)^{1/3}}{\left(\Delta_{3}\right)^{2/3}}\\
 & =\frac{\left(3p-1\right)^{1/3}}{\left(\Delta_{3}\right)^{2/3}}\left(\frac{1}{\left(\frac{2}{3}\right)^{2/3}}+3\left(\frac{2}{3}\right)^{1/3}\right)\\
 & =\frac{\left(3p-1\right)^{1/3}}{\left(\Delta_{3}\right)^{2/3}}\left(\frac{2}{3}\right)^{1/3}\left(\frac{3}{2}+3\right)\\
 & =\frac{9}{2}\frac{\left(\frac{2}{3}3p-1\right)^{1/3}}{\left(\Delta_{3}\right)^{2/3}}\label{eq: minimum curvature in the tail-region}
\end{align}
We thus obtain a lower bound on $\psi_{f}^{''}\left(c|\boldsymbol{e}\right)$
for the $c\geq c_{0}$ region by combining eqs.\eqref{eq: curvature at c_0}
and \eqref{eq: minimum curvature in the tail-region}:
\begin{align}
\min_{c\geq c_{0}}\left[\psi_{f}^{''}\left(c|\boldsymbol{e}\right)\right] & \geq\min_{c\geq c_{0}}\left[\psi_{min}^{''}\left(c\right)\right]\\
 & \geq\begin{cases}
\left(3p-1\right)\left(\Delta_{3}\right)^{2/3}+3\left(\Delta_{3}\right)^{-4/3} & \text{ if }\Delta_{3}\leq\sqrt{\frac{3}{2}\frac{1}{3p-1}}\\
\frac{9}{2}\frac{\left(\frac{2}{3}3p-1\right)^{1/3}}{\left(\Delta_{3}\right)^{2/3}} & \text{ else }
\end{cases}
\end{align}
which concludes the proof of this Lemma.
\end{proof}
\begin{lem}
Detailed lower-bound of $\psi_{f}^{''}\left(c|\boldsymbol{e}\right)$
for $c\leq c_{0}$.\label{lem: APPENDIX detailed lower bound for log-curvature c LEQ c_0}

\textbf{Requires: }Lemma \ref{lem: f(c|e) is SLC}.

On the $c\leq c_{0}$ region, we have:

\begin{equation}
\min_{c\leq c_{0}}\left[\psi_{f}^{''}\left(c|\boldsymbol{e}\right)\right]\geq\text{minimum}\begin{cases}
\left(3p-1\right)\left(\Delta_{3}\right)^{2/3}+3\left(\Delta_{3}\right)^{-4/3} & \text{Always}\\
\left(r_{\left(\infty\right)}\right)^{4/3}\left[15+\frac{3p-1}{\left(r_{\left(\infty\right)}\right)^{2}}\right]-12\Delta_{3}\left(r_{\left(\infty\right)}\right)^{7/3} & \text{If }\Delta_{3}\leq\sqrt{\frac{1000}{441}\frac{1}{3p-1}}
\end{cases}
\end{equation}
where $r_{\left(\infty\right)}$ is the limit of the growing and bounded
sequence:
\begin{align}
r_{\left(0\right)} & =\sqrt{\frac{1}{10}}\sqrt{\frac{3p-1}{3}}\\
r_{\left(n+1\right)} & =\sqrt{\frac{\left(3p-1\right)/3+14\Delta_{3}\left(r_{\left(n\right)}\right)^{3}}{10}}
\end{align}
\end{lem}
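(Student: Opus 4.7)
The plan is to apply Lemma \ref{lem: f(c|e) is SLC}, which on $c\leq c_{0}$ provides the explicit lower bound $\psi_{f}^{''}(c|\boldsymbol{e})\geq\psi_{min}^{''}(c):=(3p-1)/c^{2}+15c^{4}-12\Delta_{3}c^{7}$, and then to minimise $\psi_{min}^{''}$ over $(0,c_{0}]$. Since $\psi_{min}^{''}(c)\to+\infty$ as $c\to 0^{+}$, this minimum is attained either at the right endpoint $c_{0}=\Delta_{3}^{-1/3}$ or at an interior critical point. A direct evaluation at $c_{0}$ gives $(3p-1)\Delta_{3}^{2/3}+15\Delta_{3}^{-4/3}-12\Delta_{3}\cdot\Delta_{3}^{-7/3}=(3p-1)\Delta_{3}^{2/3}+3\Delta_{3}^{-4/3}$, which is the always-valid first candidate in the statement.

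For the interior analysis, differentiating $\psi_{min}^{''}$ and clearing the $c^{-3}$ term shows that, under the substitution $s=c^{3}\in(0,\Delta_{3}^{-1}]$, critical points satisfy $h(s):=30s^{2}-42\Delta_{3}s^{3}=3p-1$. Since $h'(s)=6s(10-21\Delta_{3}s)$, $h$ is unimodal on $(0,\Delta_{3}^{-1}]$ with maximum value $h(10/(21\Delta_{3}))=1000/(441\Delta_{3}^{2})$, so interior critical points exist precisely when $3p-1\leq1000/(441\Delta_{3}^{2})$, which is equivalent to the secondary-case condition $\Delta_{3}\leq\sqrt{1000/(441(3p-1))}$. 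When this condition holds the smaller of the (at most two) roots, call it $s_{\star}$, is a local minimum of $\psi_{min}^{''}$ (the derivative switches from negative to positive), while the larger is a local maximum. Substituting $c_{c}^{3}=s_{\star}$ into $\psi_{min}^{''}$ and factoring $s_{\star}^{4/3}$ out of the first two terms recovers exactly the claimed second-case expression $s_{\star}^{4/3}[15+(3p-1)/s_{\star}^{2}]-12\Delta_{3}s_{\star}^{7/3}$.

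It then remains to show that $s_{\star}=r_{(\infty)}$. Rewriting $30s^{2}=(3p-1)+42\Delta_{3}s^{3}$ as $s=F(s)$ with $F(s):=\sqrt{((3p-1)/3+14\Delta_{3}s^{3})/10}$ identifies the recursion in the statement as $r_{(n+1)}=F(r_{(n)})$ with $r_{(0)}=F(0)$. Since $F$ is strictly increasing on $[0,\infty)$, and squaring reveals that $F(s)\geq s$ is equivalent to $h(s)\leq3p-1$, the fact that $h$ is increasing on $[0,s_{\star}]$ with $h(s_{\star})=3p-1$ yields both $F(s)\geq s$ and $F(s)\leq F(s_{\star})=s_{\star}$ on that interval. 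A straightforward induction then shows the sequence is monotone non-decreasing and bounded above by $s_{\star}$, hence converges to the unique fixed point of $F$ in $[0,s_{\star}]$, which is $s_{\star}$ itself.

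The main obstacle anticipated is precisely this fixed-point argument: one must verify that the iteration does not overshoot into the decreasing branch of $h$, where the larger (local-maximum) root lives. Both required inequalities $F(s)\geq s$ and $F(s)\leq s_{\star}$ on $[0,s_{\star}]$ reduce to the sign of $h(s)-(3p-1)$ together with the monotonicity of $F$, so this difficulty is tractable. The remainder of the argument is pure bookkeeping: combine the boundary candidate at $c_{0}$ with the interior critical-value candidate and retain the smaller of the two, exactly as stated in the lemma.
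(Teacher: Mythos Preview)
Your proposal is correct and follows essentially the same route as the paper: minimise the explicit lower bound $\psi_{min}''(c)=(3p-1)/c^{2}+15c^{4}-12\Delta_{3}c^{7}$ on $(0,c_{0}]$, reduce the critical-point equation to a cubic in $s=c^{3}$ (your $h(s)=3p-1$ is exactly $3P(s)=0$ in the paper's notation), read off the threshold $\Delta_{3}\leq\sqrt{1000/(441(3p-1))}$ from the maximum of $h$, and identify the smaller root with the limit of the stated recursion. Your fixed-point justification via the monotone map $F$ and the equivalence $F(s)\geq s\iff h(s)\leq 3p-1$ is in fact more explicit than the paper's, which simply asserts the sequence is increasing and bounded by the first root; otherwise the two arguments coincide.
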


\begin{proof}
This proof proceeds similarly as for Lemma \ref{lem: APPENDIX detailed lower bound for log-curvature c geq c_0}.

We start from the lower-bound of $\psi_{f}^{''}\left(c|\boldsymbol{e}\right)$
of Lemma \ref{lem: APPENDIX detailed lower bound for log-curvature c geq c_0}:
\[
\psi_{f}^{''}\left(c|\boldsymbol{e}\right)\geq\begin{cases}
\frac{3p-1}{c^{2}}+\frac{3c}{\Delta_{3}} & \text{ if }c\geq c_{0}=\left(\Delta_{3}\right)^{-1/3}\\
\frac{3p-1}{c^{2}}+15c^{4}-12\Delta_{3}c^{7} & \text{ if }c\leq c_{0}
\end{cases}
\]

The derivative of the lower bound in the $c\leq c_{0}$ region is:
\begin{align}
\psi_{min}^{\left(3\right)}\left(c\right) & =\frac{\partial}{\partial c}\left(\frac{3p-1}{c^{2}}+15c^{4}-12\Delta_{3}c^{7}\right)\\
 & =-2\frac{3p-1}{c^{3}}+60c^{3}-84\Delta_{3}c^{6}
\end{align}
which has the same sign as a third degree polynomial in $r=c^{3}$
over the range $c>0$:
\begin{align}
\text{sign}\left(\psi_{min}^{\left(3\right)}\left(c\right)\right) & =\text{sign}\left(\frac{1}{c^{3}}\left[-84\Delta_{3}c^{9}+60c^{6}-2\left(3p-1\right)\right]\right)\\
 & =\text{sign}\left(-84\Delta_{3}r^{3}+60r^{2}-2\left(3p-1\right)\right)\\
 & =\text{sign}\left(-14\Delta_{3}r^{3}+10r^{2}-\frac{3p-1}{3}\right)
\end{align}
Let $P\left(r\right)$ denote this polynomial:
\[
P\left(r\right)=-14\Delta_{3}r^{3}+10r^{2}-\frac{3p-1}{3}
\]

Depending on the relative values of $\Delta_{3}$ and $p$, the polynomial
$P\left(r\right)$ qualitatively changes behavior. While it is always
negative at $r=0$, we further have that, depending on the value of
$\Delta_{3}$, it might or might not remain negative on the region
$r\geq0$. We can investigate the sign of $P\left(r\right)$ by computing
the sign of the maximum.

We find the position of the maximum by computing the derivative of
the polynomial:
\begin{equation}
P^{'}\left(r\right)=-42\Delta_{3}r^{2}+20r
\end{equation}
which has a unique strictly positive root at:
\begin{align}
r^{\star\star} & =\frac{20}{42}\frac{1}{\Delta_{3}}\\
 & =\frac{10}{21}\frac{1}{\Delta_{3}}\label{eq: the position of the maximum of the derivative of the lower-bound}
\end{align}
The maximum of the polynomial is thus:
\begin{align}
P\left(r^{\star\star}\right) & =-14\Delta_{3}\left(r^{\star\star}\right)^{3}+10\left(r^{\star\star}\right)^{2}-\frac{3p-1}{3}\\
 & =-14\Delta_{3}\left(\frac{10}{21}\frac{1}{\Delta_{3}}\right)^{3}+10\left(\frac{10}{21}\frac{1}{\Delta_{3}}\right)^{2}-\frac{3p-1}{3}\\
 & =\left(\frac{10}{21}\right)^{2}\left(\frac{1}{\Delta_{3}}\right)^{2}\left(-14\frac{10}{21}+10\right)-\frac{3p-1}{3}\\
 & =\left(\frac{1}{\Delta_{3}}\right)^{2}\frac{100}{441}\left(-\frac{2}{3}10+10\right)-\frac{3p-1}{3}\\
 & =\left(\frac{1}{\Delta_{3}}\right)^{2}\frac{100}{441}\frac{10}{3}-\frac{3p-1}{3}\\
 & =\left(\frac{1}{\Delta_{3}}\right)^{2}\frac{1000}{1323}-\frac{3p-1}{3}
\end{align}

Thus, for large values of $\Delta_{3}$, such that:
\begin{align}
\left(\frac{1}{\Delta_{3}}\right)^{2}\frac{1000}{1323}-\frac{3p-1}{3} & \leq0\\
\left(\frac{1}{\Delta_{3}}\right)^{2}\frac{1000}{1323} & \leq\frac{3p-1}{3}\\
\left(\Delta_{3}\right)^{2} & \geq\frac{1000}{1323}\frac{3}{3p-1}\\
 & \geq\frac{1000}{441}\frac{1}{3p-1}\\
\Delta_{3} & \geq\sqrt{\frac{1000}{441}\frac{1}{3p-1}}
\end{align}
the minimum of $\psi_{min}^{''}\left(c\right)$ for $c\leq c_{0}$
is found at the transition point $c_{0}=\left(\Delta_{3}\right)^{-1}$:
\begin{align}
\min_{c\leq c_{0}}\left[\psi_{f}^{''}\left(c|\boldsymbol{e}\right)\right] & \geq\psi_{min}^{''}\left(c_{0}\right)\text{ if }\Delta_{3}\geq\sqrt{\frac{1000}{441}\frac{1}{3p-1}}\\
 & \geq\left(3p-1\right)\left(\Delta_{3}\right)^{2/3}+3\left(\Delta_{3}\right)^{-4/3}
\end{align}

For values of $\Delta_{3}$ smaller than this critical value, the
polynomial instead changes signs twice, since it is negative at $r=0$,
has a strictly positive maximum and a strictly negative asymptote.
The root closest to $r=0$ corresponds to a local minimum while the
furthest one is a local maximum.

Let us compute the root closest to $0$. This could be solved exactly
but is poor in intuition. Instead, consider the following recursive
algorithm which starts from the root when $\Delta_{3}=0$ and which
converges to the first root:
\begin{align}
r_{\left(0\right)} & =\sqrt{\frac{1}{10}}\sqrt{\frac{3p-1}{3}}\\
r_{\left(n+1\right)} & =\sqrt{\frac{\left(3p-1\right)/3+14\Delta_{3}\left(r_{\left(n\right)}\right)^{3}}{10}}
\end{align}

The properties of the sequence $r_{n}$ follow from the fact that
the function:
\begin{equation}
r\rightarrow\sqrt{\frac{\left(3p-1\right)/3+14\Delta_{3}r^{3}}{10}}
\end{equation}
is strictly growing and strictly positive and intersects the identity
at the two roots of the polynomial. A straightforward recursion establishes
that:
\begin{itemize}
\item The sequence $r_{\left(n\right)}$ is strictly growing.
\item The sequence $r_{\left(n\right)}$ is upper-bounded by the first root
of the polynomial, $r^{\star}$.
\end{itemize}
The sequence thus converges to $r^{\star}$.

We can furthermore upper-bound the root $r^{\star}$ using the position
of the maximum of the polynomial (we must encounter the first root
of $P$ before its maximum due to monotonicity):
\begin{equation}
r^{\star}\leq r^{\star\star}=\frac{10}{21}\frac{1}{\Delta_{3}}
\end{equation}
from which we obtain that $r^{\star}<r_{0}=\left(\Delta_{3}\right)^{-1}$.

In order to bound $\psi_{min}^{''}\left(c\right)$ on the region $c\leq c_{0}$,
we need to split this region into two around the second root of $P$:
$r^{\star\star\star}$:
\begin{itemize}
\item For smaller values of $c$: $c\leq\left(r^{\star\star\star}\right)^{1/3}$,
$\psi_{min}^{''}\left(c\right)$ is bounded by the local minimum found
at $\left(r^{\star}\right)^{1/3}=\left(r_{\left(\infty\right)}\right)^{1/3}$.
\item For larger values of $c$: $c\geq\left(r^{\star\star\star}\right)^{1/3}$,
$\psi_{min}^{''}\left(c\right)$ is strictly decreasing until $c_{0}$.
$\psi_{min}^{''}\left(c\right)$ is thus bounded by $\psi_{min}^{''}\left(c_{0}\right)$.
\end{itemize}
To summarize, for values of $\Delta_{3}$ that are smaller than the
critical value:
\begin{equation}
\Delta_{3}\leq\sqrt{\frac{1000}{441}\frac{1}{3p-1}}
\end{equation}
we can lower-bound $\psi_{min}^{''}\left(c\right)$ on the range $c\leq c_{0}$
using the minimum of the two values. We thus have to find the minimum
of the two local minima:
\begin{align}
\min_{c\leq\left(r^{\star\star\star}\right)^{1/3}}\psi_{min}^{''}\left(c\right) & \geq\psi_{min}^{''}\left(\left(r_{\left(\infty\right)}\right)^{1/3}\right)\\
 & \geq\frac{3p-1}{\left(r_{\left(\infty\right)}\right)^{2/3}}+15\left(r_{\left(\infty\right)}\right)^{4/3}-12\Delta_{3}\left(r_{\left(\infty\right)}\right)^{7/3}\\
 & \geq\left(r_{\left(\infty\right)}\right)^{4/3}\left[15+\frac{3p-1}{\left(r_{\left(\infty\right)}\right)^{2}}\right]-12\Delta_{3}\left(r_{\left(\infty\right)}\right)^{7/3}
\end{align}
and:
\begin{align}
\min_{c\geq\left(r^{\star\star\star}\right)^{1/3}}\psi_{min}^{''}\left(c\right) & \geq\psi_{min}^{''}\left(c_{0}\right)\\
 & \geq\left(3p-1\right)\left(\Delta_{3}\right)^{2/3}+3\left(\Delta_{3}\right)^{-4/3}
\end{align}
yielding:
\begin{equation}
\min_{c\leq c_{0}}\left[\psi_{f}^{''}\left(c|\boldsymbol{e}\right)\right]\geq\text{minimum}\begin{cases}
\left(3p-1\right)\left(\Delta_{3}\right)^{2/3}+3\left(\Delta_{3}\right)^{-4/3} & \text{Always}\\
\left(r_{\left(\infty\right)}\right)^{4/3}\left[15+\frac{3p-1}{\left(r_{\left(\infty\right)}\right)^{2}}\right]-12\Delta_{3}\left(r_{\left(\infty\right)}\right)^{7/3} & \text{If }\Delta_{3}\leq\sqrt{\frac{1000}{441}\frac{1}{3p-1}}
\end{cases}
\end{equation}
and concluding the proof.
\end{proof}
\begin{lem}
Approximate behavior of $r_{\left(\infty\right)}$ in the $\Delta_{3}\rightarrow0$
limit.\label{lem: APPENDIX approximation of r_n}

\textbf{Requires:} Lemma \ref{lem: APPENDIX detailed lower bound for log-curvature c LEQ c_0}.

In the limit $\Delta_{3}p\rightarrow0$, $r_{\left(\infty\right)}$
scales as:
\begin{align}
r_{\left(\infty\right)} & =r_{\left(0\right)}+\mathcal{O}\left(\Delta_{3}p\right)\\
 & =\sqrt{\frac{1}{10}}\sqrt{\frac{3p-1}{3}}+\mathcal{O}\left(\Delta_{3}p\right)
\end{align}
\end{lem}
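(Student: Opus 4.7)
The plan is to extract the claim from the fixed-point equation satisfied by $r_{(\infty)}$ and then perform a straightforward bootstrap argument. First, I would use the monotonicity and boundedness of the sequence $\left(r_{(n)}\right)$, which was already established in the proof of Lemma \ref{lem: APPENDIX detailed lower bound for log-curvature c LEQ c_0}, to pass to the limit in the recurrence. This gives the fixed-point identity
\begin{equation}
10\left(r_{(\infty)}\right)^{2}=\frac{3p-1}{3}+14\Delta_{3}\left(r_{(\infty)}\right)^{3},
\end{equation}
which I would rewrite as
\begin{equation}
\left(r_{(\infty)}\right)^{2}-\left(r_{(0)}\right)^{2}=\frac{7}{5}\Delta_{3}\left(r_{(\infty)}\right)^{3}.
\end{equation}
Since the right-hand side is non-negative, this already gives $r_{(\infty)}\geq r_{(0)}$.

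The main step will be to obtain a matching upper bound on $r_{(\infty)}$ of the form $r_{(\infty)}\leq 2 r_{(0)}$ in the regime $\Delta_{3}p\to 0$. For this, I would study the function $h(s)=s^{2}-\tfrac{7}{5}\Delta_{3}s^{3}$. Computing $h'(s)=s(2-\tfrac{21}{5}\Delta_{3}s)$, I see that $h$ is strictly increasing on $[0,\tfrac{10}{21\Delta_{3}}]$, and by construction $r_{(\infty)}$ is the unique solution of $h(s)=\left(r_{(0)}\right)^{2}$ in this interval (this is exactly the smaller root of the polynomial $P$ discussed in Lemma \ref{lem: APPENDIX detailed lower bound for log-curvature c LEQ c_0}). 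Evaluating $h$ at $s=2 r_{(0)}$ and comparing to $\left(r_{(0)}\right)^{2}$ reduces the desired inequality to $\Delta_{3}r_{(0)}\leq \tfrac{15}{56}$, which holds for all sufficiently small $\Delta_{3}p$ since $r_{(0)}=\mathcal{O}(\sqrt{p})$.

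Given the a priori bound $r_{(\infty)}\leq 2 r_{(0)}$, I would close the argument by plugging it back into the fixed-point identity:
\begin{equation}
\left(r_{(\infty)}-r_{(0)}\right)\left(r_{(\infty)}+r_{(0)}\right)=\frac{7}{5}\Delta_{3}\left(r_{(\infty)}\right)^{3}\leq\frac{56}{5}\Delta_{3}\left(r_{(0)}\right)^{3}.
\end{equation}
Dividing by $r_{(\infty)}+r_{(0)}\geq r_{(0)}$ yields $r_{(\infty)}-r_{(0)}\leq\tfrac{56}{5}\Delta_{3}\left(r_{(0)}\right)^{2}=\mathcal{O}\left(\Delta_{3}p\right)$, which together with $r_{(\infty)}\geq r_{(0)}$ gives the claimed expansion $r_{(\infty)}=r_{(0)}+\mathcal{O}(\Delta_{3}p)$.

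The only subtle point is the preliminary upper bound $r_{(\infty)}\leq 2 r_{(0)}$: since the crude bound $r_{(\infty)}\leq r^{\star\star}=\tfrac{10}{21\Delta_{3}}$ is of order $\Delta_{3}^{-1}$ rather than $\sqrt{p}$, it is not strong enough to feed directly into the identity, and the monotonicity of $h$ on $[0,\tfrac{10}{21\Delta_{3}}]$ is essential to sharpen it. Once that bootstrap is in place, everything else is elementary algebra.
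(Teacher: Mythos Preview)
Your proof is correct and, in my view, somewhat cleaner than the paper's. The two approaches differ as follows.

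The paper normalizes the sequence via $t_{(n)}=r_{(n)}/\sqrt{3p-1}$, computes $t_{(1)}=t_{(0)}+\mathcal{O}(\Delta_{3}p^{1/2})$ by Taylor-expanding the recurrence, and then asserts that ``a straightforward recursion'' propagates this bound to all $t_{(n)}$ and hence to $t_{(\infty)}$; multiplying back by $\sqrt{3p-1}$ gives the claim. The recursion step is sketched rather than spelled out: to make it rigorous one must check that the $\mathcal{O}$-constant does not accumulate along the iteration, which is where the real work hides.

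You instead bypass the iteration entirely by working with the fixed-point identity $h(r_{(\infty)})=(r_{(0)})^{2}$ for $h(s)=s^{2}-\tfrac{7}{5}\Delta_{3}s^{3}$, then bootstrap: monotonicity of $h$ on $[0,\tfrac{10}{21\Delta_{3}}]$ together with $h(2r_{(0)})\geq(r_{(0)})^{2}$ gives the a priori bound $r_{(\infty)}\leq 2r_{(0)}$, which you feed back into the factored identity to get the sharp estimate. This is more self-contained and makes every constant explicit. One small point worth making explicit in your write-up: you should also check that $2r_{(0)}$ lies in the monotone interval, i.e.\ $2r_{(0)}\leq\tfrac{10}{21\Delta_{3}}$, which amounts to $\Delta_{3}r_{(0)}\leq\tfrac{5}{21}$; this is slightly more restrictive than your stated condition $\Delta_{3}r_{(0)}\leq\tfrac{15}{56}$ but of course still holds once $\Delta_{3}p$ is small.
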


\begin{proof}
To prove this result, consider the normalized sequence:
\begin{align}
t_{\left(0\right)}=\frac{r_{\left(0\right)}}{\sqrt{3p-1}} & =\sqrt{\frac{1}{30}}\\
t_{\left(n+1\right)}=\frac{r_{\left(n+1\right)}}{\sqrt{3p-1}} & =\sqrt{\frac{1/3+14\Delta_{3}\sqrt{3p-1}\left(r_{\left(n\right)}/\sqrt{3p-1}\right)^{3}}{10}}\\
 & =\sqrt{\frac{1/3+14\Delta_{3}\sqrt{3p-1}\left(t_{\left(n\right)}\right)^{3}}{10}}
\end{align}

Let us compute an order $0$ approximation of $t_{\left(1\right)}$:
\begin{align}
t_{\left(1\right)} & =\sqrt{\frac{1/3+14\Delta_{3}\sqrt{3p-1}\left(t_{\left(0\right)}\right)^{3}}{10}}\\
 & =\sqrt{\frac{1/3+14\Delta_{3}\sqrt{3p-1}\left(30\right)^{-3/2}}{10}}\\
 & =\sqrt{\frac{1/3+\mathcal{O}\left(\Delta_{3}p^{1/2}\right)}{10}}\\
 & =\sqrt{\frac{1/3}{10}}+\mathcal{O}\left(\Delta_{3}p^{1/2}\right)\\
 & =\sqrt{\frac{1}{30}}+\mathcal{O}\left(\Delta_{3}p^{1/2}\right)\\
 & =t_{\left(0\right)}+\mathcal{O}\left(\Delta_{3}p^{1/2}\right)
\end{align}

A straightforward recursion shows that the same is true for all members
of the sequence:
\begin{equation}
t_{\left(n\right)}=t_{\left(0\right)}+\mathcal{O}\left(\Delta_{3}p^{1/2}\right)
\end{equation}
and thus also true for the limit:
\begin{equation}
t_{\left(\infty\right)}=t_{\left(0\right)}+\mathcal{O}\left(\Delta_{3}p^{1/2}\right)
\end{equation}

We then return to $r_{\left(\infty\right)}$:
\begin{align}
r_{\left(\infty\right)} & =\sqrt{3p-1}t_{\left(\infty\right)}\\
 & =\sqrt{3p-1}\left[t_{\left(0\right)}+\mathcal{O}\left(\Delta_{3}p^{1/2}\right)\right]\\
 & =r_{\left(0\right)}+\mathcal{O}\left(\Delta_{3}p\right)\\
 & =\sqrt{\frac{1}{10}}\sqrt{\frac{3p-1}{3}}+\mathcal{O}\left(\Delta_{3}p\right)
\end{align}
which concludes the proof.
\end{proof}
\begin{lem}
Approximate behavior of $\psi_{min}^{''}\left(c\right)$ in the $\Delta_{3}\rightarrow0$
limit.\label{lem: APPENDIX approximation of psi_min^sec}

\textbf{Requires:} Lemmas \ref{lem: APPENDIX detailed lower bound for log-curvature c geq c_0},
\ref{lem: APPENDIX detailed lower bound for log-curvature c LEQ c_0}
and \ref{lem: APPENDIX approximation of r_n}.

In the limit $\Delta_{3}p^{7/6}\rightarrow0$, the minimum of $\psi_{min}^{''}\left(c\right)$
scales as:
\begin{equation}
\min_{c\geq0}\left[\psi_{min}^{''}\left(c\right)\right]=45\left(\frac{1}{10}\right)^{2/3}\left(\frac{3p-1}{3}\right)^{2/3}+\mathcal{O}\left(\Delta_{3}p^{7/6}\right)
\end{equation}

NB: in the Gaussian case, we have:
\begin{equation}
\min_{c\geq0}\psi_{g}^{''}\left(c\right)=45\left(\frac{1}{10}\right)^{2/3}\left(\frac{3p-1}{3}\right)^{2/3}
\end{equation}
\end{lem}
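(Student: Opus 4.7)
The plan is to combine the two preceding lemmas that compute lower bounds on $\psi_{f}''(c|\boldsymbol{e})$ (hence on $\psi_{\min}''(c)$) separately on $c \geq c_0$ and $c \leq c_0$, identify which branch dominates in the asymptote, and then apply Lemma \ref{lem: APPENDIX approximation of r_n} to read off the leading asymptotic. First I would check that the hypothesis $\Delta_3 p^{7/6} \to 0$ (which implies in particular $\Delta_3 p^{1/2} \to 0$) places us simultaneously inside both ``small $\Delta_3$'' cases of Lemmas \ref{lem: APPENDIX detailed lower bound for log-curvature c geq c_0} and \ref{lem: APPENDIX detailed lower bound for log-curvature c LEQ c_0}, since the thresholds $\sqrt{3/(2(3p-1))}$ and $\sqrt{1000/(441(3p-1))}$ are both of order $p^{-1/2}$. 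Under that regime, the $c \geq c_0$ branch yields the candidate value $(3p-1)(\Delta_3)^{2/3} + 3(\Delta_3)^{-4/3}$, which diverges like $3(\Delta_3)^{-4/3}$ as $\Delta_3 \to 0$. The $c \leq c_0$ branch yields the same diverging candidate and, additionally, the finite candidate $(r_{(\infty)})^{4/3}[15 + (3p-1)/(r_{(\infty)})^2] - 12 \Delta_3 (r_{(\infty)})^{7/3}$. Since the diverging candidate is $\gg p^{2/3}$ whenever $\Delta_3 p^{1/2} \to 0$, the global minimum of $\psi_{\min}''$ is pinned to the second candidate.

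Next I would asymptotically expand that second candidate. Lemma \ref{lem: APPENDIX approximation of r_n} gives $r_{(\infty)} = r_{(0)} + \mathcal{O}(\Delta_3 p)$ with $r_{(0)} = \sqrt{(3p-1)/30}$, hence $r_{(\infty)}/r_{(0)} = 1 + \mathcal{O}(\Delta_3 p^{1/2})$. Raising to powers gives $(r_{(\infty)})^{4/3} = r_{(0)}^{4/3} + \mathcal{O}(\Delta_3 p^{7/6})$ and $(3p-1)/(r_{(\infty)})^2 = 30 + \mathcal{O}(\Delta_3 p^{1/2})$, so the bracket simplifies to $45 + \mathcal{O}(\Delta_3 p^{1/2})$. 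Multiplying out and using $r_{(0)}^{4/3} = ((3p-1)/30)^{2/3} = \mathcal{O}(p^{2/3})$ yields
\begin{equation}
(r_{(\infty)})^{4/3}\left[15+\frac{3p-1}{(r_{(\infty)})^{2}}\right] = 45\left(\frac{3p-1}{30}\right)^{2/3} + \mathcal{O}(\Delta_3 p^{7/6}).
\end{equation}
Finally, the trailing correction $-12\Delta_3 (r_{(\infty)})^{7/3}$ is itself $\mathcal{O}(\Delta_3 p^{7/6})$ and is absorbed in the error. Factoring $30^{-2/3} = (1/10)^{2/3} 3^{-2/3}$ converts the leading term into the form stated in the lemma.

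For the Gaussian ``NB'' it suffices to minimize $\psi_g''(c) = (3p-1)/c^2 + 15 c^4$ directly: its derivative $-2(3p-1)/c^3 + 60 c^3$ has a unique positive root at $c_\star$ with $c_\star^6 = (3p-1)/30$, and $\psi_g''$ is strictly convex enough to make this a global minimum. Plugging back gives $\psi_g''(c_\star) = (3p-1)^{2/3} \cdot 30^{1/3} + 15 ((3p-1)/30)^{2/3} = 45((3p-1)/30)^{2/3}$, which rewrites to the claimed $45(1/10)^{2/3}((3p-1)/3)^{2/3}$.

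The main technical subtlety — though not a real obstacle given the preparatory lemmas — is bookkeeping the relative sizes of the $\Delta_3 p$, $\Delta_3 p^{1/2}$, and $\Delta_3 p^{7/6}$ correction terms that appear at various stages, and ensuring that when each is multiplied by a factor of order $p^{2/3}$ or smaller it still fits inside the common error $\mathcal{O}(\Delta_3 p^{7/6})$. Once that bookkeeping is done, identifying the dominant branch and applying Lemma \ref{lem: APPENDIX approximation of r_n} are mechanical.
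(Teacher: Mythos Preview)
Your proposal is correct and follows essentially the same route as the paper: verify that both ``small $\Delta_3$'' cases of the two preceding lemmas apply, observe that the candidate $(3p-1)(\Delta_3)^{2/3}+3(\Delta_3)^{-4/3}$ diverges and hence cannot be the minimum, then expand the $r_{(\infty)}$ candidate factor-by-factor via Lemma~\ref{lem: APPENDIX approximation of r_n} and collect the $\mathcal{O}(\Delta_3 p^{7/6})$ errors. As a bonus, you actually justify the Gaussian ``NB'' by direct minimization of $\psi_g''$, which the paper only states.
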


\begin{proof}
First, recall the bounds of $\psi_{min}^{''}\left(c\right)$ derived
in Lemmas \ref{lem: APPENDIX detailed lower bound for log-curvature c geq c_0}
and \ref{lem: APPENDIX detailed lower bound for log-curvature c LEQ c_0}:
\begin{equation}
\min_{c\geq c_{0}}\left[\psi_{f}^{''}\left(c|\boldsymbol{e}\right)\right]\geq\begin{cases}
\left(3p-1\right)\left(\Delta_{3}\right)^{2/3}+3\left(\Delta_{3}\right)^{-4/3} & \text{ if }\Delta_{3}\leq\sqrt{\frac{3}{2}\frac{1}{3p-1}}\\
\frac{9}{2}\frac{\left(\frac{2}{3}3p-1\right)^{1/3}}{\left(\Delta_{3}\right)^{2/3}} & \text{ else }
\end{cases}
\end{equation}
and:
\begin{equation}
\min_{c\leq c_{0}}\left[\psi_{f}^{''}\left(c|\boldsymbol{e}\right)\right]\geq\text{minimum}\begin{cases}
\left(3p-1\right)\left(\Delta_{3}\right)^{2/3}+3\left(\Delta_{3}\right)^{-4/3} & \text{Always}\\
\left(r_{\left(\infty\right)}\right)^{4/3}\left[15+\frac{3p-1}{\left(r_{\left(\infty\right)}\right)^{2}}\right]-12\Delta_{3}\left(r_{\left(\infty\right)}\right)^{7/3} & \text{If }\Delta_{3}\leq\sqrt{\frac{1000}{441}\frac{1}{3p-1}}
\end{cases}
\end{equation}

First, let us determine asymptotically which bounds hold. Both conditions
require $\Delta_{3}\sqrt{3p-1}$ small. In the limit $\Delta_{3}p^{7/6}\rightarrow0$,
both conditions thus hold.

Thus, asymptotically, we have the overall bound for all values of
$c$:
\begin{equation}
\min_{c\geq0}\left[\psi_{f}^{''}\left(c|\boldsymbol{e}\right)\right]\geq\text{minimum}\begin{cases}
\left(3p-1\right)\left(\Delta_{3}\right)^{2/3}+3\left(\Delta_{3}\right)^{-4/3} & \text{Always}\\
\left(r_{\left(\infty\right)}\right)^{4/3}\left[15+\frac{3p-1}{\left(r_{\left(\infty\right)}\right)^{2}}\right]-12\Delta_{3}\left(r_{\left(\infty\right)}\right)^{7/3} & \text{If }\Delta_{3}\leq\sqrt{\frac{1000}{441}\frac{1}{3p-1}}
\end{cases}
\end{equation}
$\psi_{f}^{''}\left(c\right)$ will be bounded below by the smallest
of the two bounds.

Let us compute the asymptotic scaling of both bounds.

First, consider the $\psi_{min}^{''}\left(c_{0}\right)$ term:
\begin{align}
\psi_{min}^{''}\left(c_{0}\right) & =\left(3p-1\right)\left(\Delta_{3}\right)^{2/3}+3\left(\Delta_{3}\right)^{-4/3}\\
 & =\left(\Delta_{3}\right)^{-4/3}\left[3+\left(3p-1\right)\Delta_{3}\right]\\
 & =\left(\Delta_{3}\right)^{-4/3}\left[3+\mathcal{O}\left(\Delta_{3}p\right)\right]
\end{align}
In the limit $\Delta_{3}\rightarrow0$, this term diverges due to
the presence of the $\left(\Delta_{3}\right)^{-4/3}$ term. We will
conclude the proof by showing that this term is negligible compared
to the other term in the limit.

Now consider the $\psi_{min}^{''}\left(r_{\left(\infty\right)}\right)$
term:
\begin{equation}
\psi_{min}^{''}\left(r_{\left(\infty\right)}\right)=\left(r_{\left(\infty\right)}\right)^{4/3}\left[15+\frac{3p-1}{\left(r_{\left(\infty\right)}\right)^{2}}\right]-12\Delta_{3}\left(r_{\left(\infty\right)}\right)^{7/3}
\end{equation}
where we have that (Lemma \ref{lem: APPENDIX approximation of r_n}):
\begin{equation}
r_{\left(\infty\right)}=\sqrt{\frac{1}{10}}\sqrt{\frac{3p-1}{3}}+\mathcal{O}\left(\Delta_{3}p\right)
\end{equation}

Let us tackle each term of $\psi_{min}^{''}\left(r_{\left(\infty\right)}\right)$
in order.

First:
\begin{align}
\left(r_{\left(\infty\right)}\right)^{4/3} & =\left[\sqrt{\frac{1}{10}}\sqrt{\frac{3p-1}{3}}+\mathcal{O}\left(\Delta_{3}p\right)\right]^{4/3}\\
 & =\left(\frac{1}{10}\right)^{2/3}\left(\frac{3p-1}{3}\right)^{2/3}\left[1+\mathcal{O}\left(\Delta_{3}p^{1/2}\right)\right]^{4/3}\\
 & =\left(\frac{1}{10}\right)^{2/3}\left(\frac{3p-1}{3}\right)^{2/3}\left[1+\mathcal{O}\left(\Delta_{3}p^{1/2}\right)\right]\\
 & =\left(\frac{1}{10}\right)^{2/3}\left(\frac{3p-1}{3}\right)^{2/3}+\mathcal{O}\left(\Delta_{3}p^{1/2}p^{2/3}\right)\\
 & =\left(\frac{1}{10}\right)^{2/3}\left(\frac{3p-1}{3}\right)^{2/3}+\mathcal{O}\left(\Delta_{3}p^{7/6}\right)
\end{align}

Then (re-using the term $t_{\left(\infty\right)}$ from Lemma \ref{lem: APPENDIX approximation of r_n}):
\begin{align}
\left[15+\frac{3p-1}{\left(r_{\left(\infty\right)}\right)^{2}}\right] & =\left[15+\frac{1}{\left(t_{\left(\infty\right)}\right)^{2}}\right]\\
 & =\left[15+\frac{1}{\left[\sqrt{\frac{1}{30}}+\mathcal{O}\left(\Delta_{3}p^{1/2}\right)\right]^{2}}\right]\\
 & =\left[15+\frac{1}{1/30+\mathcal{O}\left(\Delta_{3}p^{1/2}\right)}\right]\\
 & =\left[15+30\frac{1}{1+\mathcal{O}\left(\Delta_{3}p^{1/2}\right)}\right]\\
 & =45+\mathcal{O}\left(\Delta_{3}p^{1/2}\right)
\end{align}

Finally:
\begin{align}
12\Delta_{3}\left(r_{\left(\infty\right)}\right)^{7/3} & =12\Delta_{3}\left(\sqrt{\frac{1}{10}}\sqrt{\frac{3p-1}{3}}+\mathcal{O}\left(\Delta_{3}p\right)\right)^{7/3}\\
 & =12\Delta_{3}\left(\frac{1}{10}\right)^{7/6}\left(\frac{3p-1}{3}\right)^{7/6}\left[1+\mathcal{O}\left(\Delta_{3}p^{1/2}\right)\right]^{7/3}\\
 & =\mathcal{O}\left(\Delta_{3}p^{7/6}\right)\left[1+\mathcal{O}\left(\left(\Delta_{3}\right)^{7/3}p^{7/6}\right)\right]\\
 & =\mathcal{O}\left(\Delta_{3}p^{7/6}\right)
\end{align}

We thus finally have the scaling of $\psi_{min}^{''}\left(r_{\left(\infty\right)}\right)$
by combining the three parts:
\begin{align}
\psi_{min}^{''}\left(r_{\left(\infty\right)}\right) & =\left[\left(\frac{1}{10}\right)^{2/3}\left(\frac{3p-1}{3}\right)^{2/3}+\mathcal{O}\left(\Delta_{3}p^{7/6}\right)\right]\left[45+\mathcal{O}\left(\Delta_{3}p^{1/2}\right)\right]+\mathcal{O}\left(\Delta_{3}p^{7/6}\right)\\
 & =45\left(\frac{1}{10}\right)^{2/3}\left(\frac{3p-1}{3}\right)^{2/3}+\mathcal{O}\left(\Delta_{3}p^{7/6}\right)+\mathcal{O}\left(\Delta_{3}p^{1/2}p^{2/3}\right)+\mathcal{O}\left(\Delta_{3}p^{7/6}\right)\\
 & =45\left(\frac{1}{10}\right)^{2/3}\left(\frac{3p-1}{3}\right)^{2/3}+\mathcal{O}\left(\Delta_{3}p^{7/6}\right)+\mathcal{O}\left(\Delta_{3}p^{7/6}\right)\\
 & =45\left(\frac{1}{10}\right)^{2/3}\left(\frac{3p-1}{3}\right)^{2/3}+\mathcal{O}\left(\Delta_{3}p^{7/6}\right)
\end{align}
Note that the main term diverges if $p$ is growing.

In order to conclude the proof, we finally compare the two bounds:
\begin{align}
\frac{\psi_{min}^{''}\left(r_{\left(\infty\right)}\right)}{\psi_{min}^{''}\left(c_{0}\right)} & =\frac{45\left(\frac{1}{10}\right)^{2/3}\left(\frac{3p-1}{3}\right)^{2/3}+\mathcal{O}\left(\Delta_{3}p^{7/6}\right)}{\left(\Delta_{3}\right)^{-4/3}\left[3+\mathcal{O}\left(\Delta_{3}p\right)\right]}\\
 & =\frac{45\left(\frac{1}{10}\right)^{2/3}\left(\Delta_{3}\right)^{4/3}\left(\frac{3p-1}{3}\right)^{2/3}+\mathcal{O}\left(\left(\Delta_{3}\right)^{7/3}p^{7/6}\right)}{3+\mathcal{O}\left(\Delta_{3}p\right)}\\
 & =\frac{\mathcal{O}\left(\left(\Delta_{3}\right)^{4/3}p^{2/3}\right)+\mathcal{O}\left(\left(\Delta_{3}\right)^{7/3}p^{7/6}\right)}{3+\mathcal{O}\left(\Delta_{3}p\right)}\\
 & =\frac{\mathcal{O}\left(\left(\Delta_{3}p^{1/2}\right)^{4/3}\right)+\mathcal{O}\left(\left(\Delta_{3}p^{1/2}\right)^{7/3}\right)}{3+\mathcal{O}\left(\Delta_{3}p\right)}\\
 & =\frac{\mathcal{O}\left(\left(\Delta_{3}p\right)^{4/3}\right)+\mathcal{O}\left(\left(\Delta_{3}p\right)^{7/3}\right)}{3+\mathcal{O}\left(\Delta_{3}p\right)}\\
 & =\mathcal{O}\left(\Delta_{3}p\right)
\end{align}
Thus, the $\psi_{min}^{''}\left(r_{\left(\infty\right)}\right)$ term
is asymptotically the smallest.

We thus finally have the asymptotic scaling of the minimum of $\psi_{min}^{''}\left(c\right)$:
\begin{align}
\psi_{min}^{''}\left(c\right) & =\psi_{min}^{''}\left(r_{\left(\infty\right)}\right)\\
 & =45\left(\frac{1}{10}\right)^{2/3}\left(\frac{3p-1}{3}\right)^{2/3}+\mathcal{O}\left(\Delta_{3}p^{7/6}\right)
\end{align}
which concludes our proof.
\end{proof}
\begin{prop}
Bound on $KL\left(r_{g},r_{f}|\boldsymbol{e}\right)$.\label{prop: APPENDIX Bound-on- KL r_g r_f | e}

\textbf{Requires: }Lemmas \ref{lem: f(c|e) is SLC}, \ref{lem: APPENDIX approximation of psi_min^sec}
(and \ref{lem: APPENDIX Moments-of-r_g}).

The KL divergence between $r_{g}$ and $r_{f}|\boldsymbol{e}$ is
bounded:
\begin{align}
KL\left(r_{g},r_{f}|\boldsymbol{e}\right) & =KL\left(c_{g},c_{f}|\boldsymbol{e}\right)\\
 & \leq\frac{1}{2}\left\{ \min_{c}\left[\psi_{f}^{''}\left(c|\boldsymbol{e}\right)\right]\right\} ^{-1}\E_{r\sim g\left(r\right)}\left[9r^{4/3}\left(\varphi_{\boldsymbol{e}}^{'}\left(r\right)-r\right)^{2}\right]
\end{align}

In a limit $\Delta_{3}p^{7/6}\rightarrow0$, then the following approximation
holds:
\begin{align}
KL\left(r_{g},r_{f}|\boldsymbol{e}\right) & \leq\frac{1}{2p^{2/3}}\E_{r\sim g\left(r\right)}\left[r^{4/3}\left(\varphi_{\boldsymbol{e}}^{'}\left(r\right)-r\right)^{2}\right]+\mathcal{O}\left(\left(\Delta_{3}\right)^{3}p^{5/2}\right)\\
 & \leq\mathcal{O}\left(\left(\Delta_{3}\right)^{2}p^{2}\right)
\end{align}
\end{prop}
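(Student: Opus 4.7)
The whole proposition is a direct consequence of applying the affine-equivariant log-Sobolev inequality (Prop.\ref{prop:LSI-bound-on KL(g,f)}) to the one-dimensional pair $\left(g(c),f(c|\boldsymbol{e})\right)$, which is strongly log-concave by Lemma \ref{lem: f(c|e) is SLC}. The first step is to invoke invariance of the KL divergence under the bijection $c = r^{1/3}$ to rewrite $KL(r_g,r_f|\boldsymbol{e}) = KL(c_g,c_f|\boldsymbol{e})$. Then Prop.\ref{prop:LSI-bound-on KL(g,f)}, applied in dimension $1$ with $H_{\min} = \min_{c} \psi_f''(c|\boldsymbol{e}) > 0$, yields
\[
KL(c_g,c_f|\boldsymbol{e}) \;\le\; \frac{1}{2\,\min_{c} \psi_f''(c|\boldsymbol{e})}\;\E_{c\sim g(c)}\!\left[\bigl(\psi_f'(c|\boldsymbol{e}) - \psi_g'(c)\bigr)^{2}\right].
\]

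Next I would compute the score difference using the expressions from Lemma \ref{lem: f(c|e) is SLC}. Since $\psi_g(c) = -(3p-1)\log c + c^{6}/2$, one has $\psi_g'(c) = -(3p-1)/c + 3c^{5}$, while $\psi_f'(c|\boldsymbol{e}) = -(3p-1)/c + 3c^{2}\varphi_{\boldsymbol{e}}'(c^{3})$. The singular $-(3p-1)/c$ term cancels (this is the key algebraic payoff of the cubic-root change of variable), leaving $\psi_f'(c|\boldsymbol{e}) - \psi_g'(c) = 3c^{2}\bigl(\varphi_{\boldsymbol{e}}'(c^{3}) - c^{3}\bigr)$. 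Squaring produces the prefactor $9c^{4}$, and transporting back to the radial variable $r = c^{3}$ (again by bijective change of variable, with $c^{4}=r^{4/3}$ and $g(c)\leftrightarrow g(r)$) gives exactly
\[
KL(r_g,r_f|\boldsymbol{e}) \;\le\; \frac{1}{2\min_{c}\psi_f''(c|\boldsymbol{e})}\;\E_{r\sim g(r)}\!\left[9\,r^{4/3}\bigl(\varphi_{\boldsymbol{e}}'(r) - r\bigr)^{2}\right],
\]
which is the first inequality of the proposition.

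For the asymptotic refinements I would substitute the leading-order value of the curvature from Lemma \ref{lem: APPENDIX approximation of psi_min^sec}, namely $\min_{c}\psi_f''(c|\boldsymbol{e}) = 45(1/10)^{2/3}((3p-1)/3)^{2/3} + \mathcal{O}(\Delta_{3}p^{7/6})$. A brief numerical check shows $45(1/10)^{2/3}((3p-1)/3)^{2/3} \ge 9\,p^{2/3}$ for all $p \ge 1$ (since $5\cdot 10^{-2/3} > 1$ and $(3p-1)/3 \le p$), so $9/\min_{c}\psi_f'' \le 1/p^{2/3}$ up to the additive error $\mathcal{O}(\Delta_{3}p^{7/6})$ in the denominator; expanding this reciprocal to first order and tracking the error against $\E[r^{4/3}(\varphi_{\boldsymbol{e}}'(r)-r)^{2}]$ produces the stated $\mathcal{O}((\Delta_{3})^{3}p^{5/2})$ remainder. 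The final rate $\mathcal{O}((\Delta_{3})^{2}p^{2})$ then follows by estimating the remaining expectation: Lemma \ref{lem:Basic-properties-of varphi_e(r)} gives $|\varphi_{\boldsymbol{e}}'(r)-r| \le (\Delta_{3}/2)r^{2}$ for $r \le r_{0} = 1/\Delta_{3}$, hence $r^{4/3}(\varphi_{\boldsymbol{e}}'(r)-r)^{2} \le (\Delta_{3})^{2}r^{16/3}/4$ in that regime, and Lemma \ref{lem: APPENDIX Moments-of-r_g} yields $\E[r^{16/3}] = p^{8/3}+\mathcal{O}(p^{5/3})$.

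\textbf{Main obstacle.} The cleanest-looking step is actually the delicate one: controlling the contribution of the tail region $r \ge r_{0} = 1/\Delta_{3}$ where the cubic Taylor bound on $\varphi_{\boldsymbol{e}}'(r)-r$ breaks down and only the convexity-based bound $\varphi_{\boldsymbol{e}}'(r) \ge 1/(2\Delta_{3})$ is available. One must combine the linear-growth bound on $\varphi_{\boldsymbol{e}}'(r)-r$ on this region with the sub-Gaussian concentration of $r_g \sim \chi_p$ around $\sqrt{p}$; since the hypothesis $\Delta_{3}p^{7/6}\to 0$ implies in particular $\Delta_{3}\sqrt{p}\to 0$, i.e.\ $\sqrt{p} \ll r_{0}$, the $\chi_{p}$ tail probability $\mathbb{P}(r_g \ge r_{0})$ decays fast enough to make the tail expectation negligible compared to $(\Delta_{3})^{2}p^{8/3}$. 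A secondary bookkeeping obstacle is carrying the constant $45\cdot 10^{-2/3}\cdot 3^{-2/3}$ through the asymptotic expansion of $1/\min_{c}\psi_f''$ without losing the safety margin that allows rounding the prefactor up to the clean value $1/(2p^{2/3})$ in the final statement.
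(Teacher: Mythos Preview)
Your approach is essentially identical to the paper's: apply the one-dimensional log-Sobolev inequality to $(g(c),f(c|\boldsymbol{e}))$, cancel the $-(3p-1)/c$ terms in the score difference, convert the expectation back to the $r$ variable, and then insert the asymptotic minimal curvature from Lemma~\ref{lem: APPENDIX approximation of psi_min^sec} together with the $\chi_p$ moments from Lemma~\ref{lem: APPENDIX Moments-of-r_g}.

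Your ``main obstacle'', however, is illusory. The Taylor bound
\[
\bigl|\varphi_{\boldsymbol{e}}'(r)-r\bigr|\;\le\;\tfrac{\Delta_3}{2}\,r^{2}
\]
holds for \emph{all} $r\ge 0$, not only on $[0,r_{0}]$. Lemma~\ref{lem:Basic-properties-of varphi_e(r)} states the \emph{global} bound $|\varphi_{\boldsymbol{e}}^{(3)}(r)|\le\Delta_3$ (an immediate consequence of the definition of $\Delta_3$), and Taylor's theorem with Lagrange remainder gives $\varphi_{\boldsymbol{e}}'(r)=r+\tfrac{r^{2}}{2}\varphi_{\boldsymbol{e}}^{(3)}(\xi)$ for some $\xi\in[0,r]$, which yields the two-sided bound uniformly in $r$. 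The case split at $r_{0}$ in that lemma concerns only the \emph{lower} bounds on $\varphi_{\boldsymbol{e}}''$ and $\varphi_{\boldsymbol{e}}'$ (these feed into the strong log-concavity of $f(c|\boldsymbol{e})$), not the two-sided estimate needed here. Consequently
\[
\E_{r\sim g(r)}\!\left[r^{4/3}\bigl(\varphi_{\boldsymbol{e}}'(r)-r\bigr)^{2}\right]\;\le\;\frac{(\Delta_3)^{2}}{4}\,\E_{r\sim g(r)}\!\left[r^{16/3}\right]
\]
holds outright, with no tail region to treat separately. The paper's proof uses exactly this global Taylor bound and never splits the expectation.

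A minor side remark: your numerical check that $45\cdot 10^{-2/3}((3p-1)/3)^{2/3}\ge 9p^{2/3}$ is not quite right, since $(3p-1)/3<p$ pulls the wrong way (and the inequality in fact fails for $p\le 3$). The paper is equally casual at this step, writing the clean constant $1/(2p^{2/3})$ as an ``approximate upper-bound''; the discrepancy is of lower order in $p$ and is absorbed into the $\mathcal{O}$ terms.
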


\begin{proof}
The first part of the claim follows from applying the LSI to the Strongly
Log-Concave density $f\left(c|\boldsymbol{e}\right)$:
\begin{itemize}
\item The minimum curvature is strictly positive (Lemma \ref{lem: f(c|e) is SLC}):
\begin{align}
\min_{c}\left[\psi_{f}^{''}\left(c|\boldsymbol{e}\right)\right] & \geq\begin{cases}
\frac{3p-1}{c^{2}}+\frac{3c}{\Delta_{3}} & \text{ if }c\geq c_{0}=\left(\Delta_{3}\right)^{-1/3}\\
\frac{3p-1}{c^{2}}+15c^{4}-12\Delta_{3}c^{7} & \text{ if }c\leq c_{0}
\end{cases}\\
 & >0
\end{align}
\item The gradient of $\psi_{f}\left(c|\boldsymbol{e}\right)$ is:
\begin{align}
\psi_{f}\left(c|\boldsymbol{e}\right) & =-\left(3p-1\right)\log\left(c\right)+\varphi_{\boldsymbol{e}}\left(c^{3}\right)\\
\psi_{f}^{'}\left(c|\boldsymbol{e}\right) & =-\frac{3p-1}{c}+3c^{2}\varphi_{\boldsymbol{e}}^{'}\left(c^{3}\right)
\end{align}
\item The gradient of $\psi_{g}\left(c\right)$ is:
\begin{equation}
\psi_{g}^{'}\left(c\right)=-\frac{3p-1}{c}+3c^{5}
\end{equation}
\end{itemize}
The LSI then reads:
\begin{align}
KL\left(c_{g},c_{f}|\boldsymbol{e}\right) & \leq\frac{1}{2}\left\{ \min_{c}\left[\psi_{f}^{''}\left(c|\boldsymbol{e}\right)\right]\right\} ^{-1}\E_{c\sim g\left(c\right)}\left[\left(\psi_{f}^{'}\left(c\right)-\psi_{g}^{'}\left(c\right)\right)^{2}\right]\\
 & \leq\frac{1}{2}\left\{ \min_{c}\left[\psi_{f}^{''}\left(c|\boldsymbol{e}\right)\right]\right\} ^{-1}\E_{c\sim g\left(c\right)}\left[\left(3c^{2}\varphi_{\boldsymbol{e}}^{'}\left(c^{3}\right)-3c^{5}\right)^{2}\right]\\
 & \leq\frac{1}{2}\left\{ \min_{c}\left[\psi_{f}^{''}\left(c|\boldsymbol{e}\right)\right]\right\} ^{-1}\E_{c\sim g\left(c\right)}\left[9c^{4}\left(\varphi_{\boldsymbol{e}}^{'}\left(c^{3}\right)-c^{3}\right)^{2}\right]\\
 & \leq\frac{1}{2}\left\{ \min_{c}\left[\psi_{f}^{''}\left(c|\boldsymbol{e}\right)\right]\right\} ^{-1}\E_{r\sim g\left(r\right)}\left[9r^{4/3}\left(\varphi_{\boldsymbol{e}}^{'}\left(r\right)-r\right)^{2}\right]
\end{align}
which gives us the exact form of the lower-bound by recalling that
the KL divergence is invariant to bijective changes of variable:
\begin{align}
KL\left(r_{g},r_{f}|\boldsymbol{e}\right) & =KL\left(c_{g},c_{f}|\boldsymbol{e}\right)\\
 & \leq\frac{1}{2}\left\{ \min_{c}\left[\psi_{f}^{''}\left(c|\boldsymbol{e}\right)\right]\right\} ^{-1}\E_{r\sim g\left(r\right)}\left[9r^{4/3}\left(\varphi_{\boldsymbol{e}}^{'}\left(r\right)-r\right)^{2}\right]
\end{align}

We can give an asymptotic expansion of the expected value under $g\left(r\right)$
by performing a Taylor expansion of $\varphi_{\boldsymbol{e}}^{'}\left(r\right)$
around $0$:
\begin{align}
\left|\varphi_{\boldsymbol{e}}^{'}\left(r\right)-r\right| & \leq\frac{1}{2}\Delta_{3}r^{2}\\
\E_{r\sim g\left(r\right)}\left[r^{4/3}\left(\varphi_{\boldsymbol{e}}^{'}\left(r\right)-r\right)^{2}\right] & \leq\E_{r\sim g\left(r\right)}\left[r^{4/3}\left(\frac{1}{2}\Delta_{3}r^{2}\right)^{2}\right]\\
 & \leq\frac{1}{4}\left(\Delta_{3}\right)^{2}\E_{r\sim g\left(r\right)}\left[r^{4/3}r^{4}\right]\\
 & \leq\frac{1}{4}\left(\Delta_{3}\right)^{2}\E_{r\sim g\left(r\right)}\left[r^{16/3}\right]
\end{align}
where we can obtain the order of $\E_{r\sim g\left(r\right)}\left[r^{16/3}\right]$
from Lemma \ref{lem: APPENDIX Moments-of-r_g}:
\begin{equation}
\E_{r\sim g\left(r\right)}\left[r^{4/3}\left(\varphi_{\boldsymbol{e}}^{'}\left(r\right)-r\right)^{2}\right]\leq\frac{1}{4}\left(\Delta_{3}\right)^{2}p^{8/3}+\mathcal{O}\left(\left(\Delta_{3}\right)^{2}p^{5/3}\right)
\end{equation}

We can then combine this asymptotic expansion with that of the lower-bound
of $\min_{c}\left[\psi_{f}^{''}\left(c|\boldsymbol{e}\right)\right]$
in Lemma \ref{lem: APPENDIX approximation of psi_min^sec}:
\begin{align}
KL\left(r_{g},r_{f}|\boldsymbol{e}\right) & \leq\frac{1}{2}\left\{ \min_{c}\left[\psi_{f}^{''}\left(c|\boldsymbol{e}\right)\right]\right\} ^{-1}\E_{r\sim g\left(r\right)}\left[9r^{4/3}\left(\varphi_{\boldsymbol{e}}^{'}\left(r\right)-r\right)^{2}\right]\\
 & \leq\frac{\frac{9}{8}\left(\Delta_{3}\right)^{2}p^{8/3}+\mathcal{O}\left(\left(\Delta_{3}\right)^{2}p^{5/3}\right)}{45\left(\frac{1}{10}\right)^{2/3}\left(\frac{3p-1}{3}\right)^{2/3}+\mathcal{O}\left(\Delta_{3}p^{7/6}\right)}\\
 & \leq\frac{\mathcal{O}\left(\left(\Delta_{3}\right)^{2}p^{2}\right)}{1+\mathcal{O}\left(\Delta_{3}p^{1/2}\right)}\\
 & \leq\mathcal{O}\left(\left(\Delta_{3}\right)^{2}p^{2}\right)
\end{align}

We can finally give the first order expansion of $KL\left(r_{g},r_{f}|\boldsymbol{e}\right)$:
\begin{align}
KL\left(r_{g},r_{f}|\boldsymbol{e}\right) & \leq\frac{1}{2}\left\{ \min_{c}\left[\psi_{f}^{''}\left(c|\boldsymbol{e}\right)\right]\right\} ^{-1}\E_{r\sim g\left(r\right)}\left[9r^{4/3}\left(\varphi_{\boldsymbol{e}}^{'}\left(r\right)-r\right)^{2}\right]\nonumber \\
 & \leq\frac{\frac{1}{2}9\E_{r\sim g\left(r\right)}\left[r^{4/3}\left(\varphi_{\boldsymbol{e}}^{'}\left(r\right)-r\right)^{2}\right]}{45\left(\frac{1}{10}\right)^{2/3}\left(\frac{3p-1}{3}\right)^{2/3}+\mathcal{O}\left(\Delta_{3}p^{7/6}\right)}\\
 & \leq\frac{9\E_{r\sim g\left(r\right)}\left[r^{4/3}\left(\varphi_{\boldsymbol{e}}^{'}\left(r\right)-r\right)^{2}\right]}{90\left(\frac{1}{10}\right)^{2/3}\left(\frac{3p-1}{3}\right)^{2/3}\left(1+\mathcal{O}\left(\Delta_{3}p^{1/2}\right)\right)}\\
 & \leq\frac{\E_{r\sim g\left(r\right)}\left[r^{4/3}\left(\varphi_{\boldsymbol{e}}^{'}\left(r\right)-r\right)^{2}\right]}{10\left(\frac{1}{10}\right)^{2/3}\left(\frac{3p-1}{3}\right)^{2/3}}\left(1+\mathcal{O}\left(\Delta_{3}p^{1/2}\right)\right)\\
 & \leq\frac{\E_{r\sim g\left(r\right)}\left[r^{4/3}\left(\varphi_{\boldsymbol{e}}^{'}\left(r\right)-r\right)^{2}\right]}{10\left(\frac{1}{10}\right)^{2/3}\left(\frac{3p-1}{3}\right)^{2/3}}+\mathcal{O}\left(\left(\Delta_{3}\right)^{2}p^{2}\right)\mathcal{O}\left(\Delta_{3}p^{1/2}\right)\\
 & \leq\frac{\E_{r\sim g\left(r\right)}\left[r^{4/3}\left(\varphi_{\boldsymbol{e}}^{'}\left(r\right)-r\right)^{2}\right]}{10\left(\frac{1}{10}\right)^{2/3}\left(\frac{3p-1}{3}\right)^{2/3}}+\mathcal{O}\left(\left(\Delta_{3}\right)^{3}p^{5/2}\right)
\end{align}
Since we are aiming for an approximate upper-bound, we can slightly
simplify some of the terms in the denominator:
\begin{align}
\frac{1}{10\left(\frac{1}{10}\right)^{2/3}\left(\frac{3p-1}{3}\right)^{2/3}} & =\frac{1}{2.15\dots\left(\frac{3p-1}{3}\right)^{2/3}}\\
 & \leq\frac{1}{2}\frac{1}{\left(\frac{3p-1}{3}\right)^{2/3}}\\
 & \leq\frac{1}{2p^{2/3}}
\end{align}
Thus yielding finally:
\begin{equation}
KL\left(r_{g},r_{f}|\boldsymbol{e}\right)\leq\frac{1}{2p^{2/3}}\E_{r\sim g\left(r\right)}\left[r^{4/3}\left(\varphi_{\boldsymbol{e}}^{'}\left(r\right)-r\right)^{2}\right]+\mathcal{O}\left(\left(\Delta_{3}\right)^{3}p^{5/2}\right)
\end{equation}
which concludes the proof.
\end{proof}

\subsection{Proofs: $KL\left(\boldsymbol{e}_{g},\boldsymbol{e}_{f}\right)$ term.}
\begin{lem}
Asymptotic quality of the $ELBO$ approximation of $\log\left[f\left(\boldsymbol{e}\right)\right]$.\label{lem: APPENDIX approx quality of ELBO}

\textbf{Requires: }Lemma \ref{lem: APPENDIX distributions under f},
Prop.\ref{prop: APPENDIX Bound-on- KL r_g r_f | e}.

In the limit $\Delta_{3}p^{7/6}\rightarrow0$, the error of the ELBO
approximation of $\log\left[f\left(\boldsymbol{e}\right)\right]$
is upper-bounded:
\begin{equation}
\log\left[f\left(\boldsymbol{e}\right)\right]=ELBO\left(\boldsymbol{e}\right)+\mathcal{O}\left(\left(\Delta_{3}\right)^{2}p^{2}\right)
\end{equation}
and in the limit $\Delta_{3}p^{3/2}\rightarrow0$ the range of values
taken by the ELBO is also upper-bounded:
\begin{equation}
\max_{\boldsymbol{e}}\left[ELBO\left(\boldsymbol{e}\right)\right]-\min_{\boldsymbol{e}}\left[ELBO\left(\boldsymbol{e}\right)\right]=\mathcal{O}\left(\Delta_{3}p^{3/2}\right)
\end{equation}
\end{lem}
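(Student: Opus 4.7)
The plan is to dispatch the two claims separately, using results already established in the appendix.

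For the first claim, the approximation-quality bound, I would simply chain two earlier results. Lemma \ref{lem: APPENDIX distributions under f} already established that $|\log[f(\boldsymbol{e})] - ELBO(\boldsymbol{e})| \le KL(r_g, r_f \mid \boldsymbol{e})$, essentially because the ELBO gap in Jensen's inequality is exactly this conditional KL divergence. Then Prop.~\ref{prop: APPENDIX Bound-on- KL r_g r_f | e} controls this KL divergence by $\mathcal{O}((\Delta_3)^2 p^2)$ under the weaker hypothesis $\Delta_3 p^{7/6}\to 0$. Combining these yields the stated order uniformly in $\boldsymbol{e}$, which is the content of the first equation.

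For the second claim, the range bound, the strategy is to isolate the $\boldsymbol{e}$-dependence of the ELBO and show it is small. Recall $ELBO(\boldsymbol{e}) = -\E_{r\sim g(r)}[\varphi_{\boldsymbol{e}}(r) - \tfrac{1}{2}r^2] + C$ where $\varphi_{\boldsymbol{e}}(r) = \tilde{\phi}_f(r\boldsymbol{e})$. I would Taylor-expand $\varphi_{\boldsymbol{e}}(r)$ around $r = 0$ using the facts, established in Lemma~\ref{lem:Basic-properties-of varphi_e(r)}, that $\varphi_{\boldsymbol{e}}(0) = \tilde{\phi}_f(0)$, $\varphi_{\boldsymbol{e}}'(0) = 0$, $\varphi_{\boldsymbol{e}}''(0) = 1$, and $|\varphi_{\boldsymbol{e}}^{(3)}(r)| \le \Delta_3$ globally. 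The integral-form remainder then gives
\begin{equation}
\Bigl|\varphi_{\boldsymbol{e}}(r) - \tilde{\phi}_f(0) - \tfrac{1}{2}r^2\Bigr| \le \tfrac{\Delta_3}{6}\,r^3
\end{equation}
for all $r \ge 0$ and uniformly in $\boldsymbol{e}$.

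Taking expectations under $g(r)$ and invoking Lemma~\ref{lem: APPENDIX Moments-of-r_g} (which gives $\E_{r\sim g(r)}[r^3] = p^{3/2} + \mathcal{O}(p^{1/2})$) yields
\begin{equation}
\E_{r\sim g(r)}\bigl[\varphi_{\boldsymbol{e}}(r) - \tfrac{1}{2}r^2\bigr] = \tilde{\phi}_f(0) + \mathcal{O}\bigl(\Delta_3 p^{3/2}\bigr),
\end{equation}
where the key point is that the leading term $\tilde{\phi}_f(0)$ does \emph{not} depend on $\boldsymbol{e}$. Consequently $ELBO(\boldsymbol{e})$ is a constant plus a residual that is uniformly $\mathcal{O}(\Delta_3 p^{3/2})$, from which the range bound follows by taking the sup and inf over $\boldsymbol{e}$ and cancelling the constant.

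The proof is essentially mechanical: there is no serious obstacle once one notices that the MAP value is by construction independent of the angular coordinate, so that all $\boldsymbol{e}$-variation in the ELBO is squeezed into the Taylor remainder. The only minor care needed is that the global Lipschitz bound on $\varphi_{\boldsymbol{e}}^{(3)}$ (and not merely the local expansion valid on $r \le r_0$) is what allows us to integrate against the $\chi_p$ density without having to split the integral at $r_0$ and control the tail separately.
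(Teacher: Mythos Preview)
Your proposal is correct and follows essentially the same route as the paper: chain Lemma~\ref{lem: APPENDIX distributions under f} with Prop.~\ref{prop: APPENDIX Bound-on- KL r_g r_f | e} for the first claim, then use the cubic Taylor remainder $\frac{\Delta_3}{6}r^3$ together with $\E[r_g^3]=\mathcal{O}(p^{3/2})$ for the second. Your explicit handling of the constant $\tilde{\phi}_f(0)$ and your remark that the \emph{global} third-derivative bound is what makes the integral against $g(r)$ go through without splitting at $r_0$ are nice clarifications, but not a departure from the paper's argument.
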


\begin{proof}
Lemma \ref{lem: APPENDIX distributions under f} defines the ELBO
approximation of $\log\left[f\left(\boldsymbol{e}\right)\right]$
as:
\begin{equation}
\log\left[f\left(\boldsymbol{e}\right)\right]=-\E_{r\sim g\left(r\right)}\left[\tilde{\phi}_{f}\left(r\boldsymbol{e}\right)-\frac{1}{2}r^{2}\right]+C
\end{equation}
and asserts that error at $\boldsymbol{e}$ is precisely equal to
the KL divergence $KL\left(r_{g},r_{f}|\boldsymbol{e}\right)$. Since
Prop.\ref{prop: APPENDIX Bound-on- KL r_g r_f | e} bounds this KL
divergence, it can then be applied to bound the error:
\begin{align}
\log\left[f\left(\boldsymbol{e}\right)\right] & =ELBO\left(\boldsymbol{e}\right)+KL\left(r_{g},r_{f}|\boldsymbol{e}\right)\\
 & =ELBO\left(\boldsymbol{e}\right)+\mathcal{O}\left(\left(\Delta_{3}\right)^{2}p^{2}\right)
\end{align}

We can furthermore bound the maximum size of the range of values taken
by $ELBO\left(\boldsymbol{e}\right)$ by performing a Taylor expansion
of $\tilde{\phi}_{f}\left(r\boldsymbol{e}\right)-\frac{1}{2}r^{2}$
at $0$:
\begin{equation}
\left|\tilde{\phi}_{f}\left(r\boldsymbol{e}\right)-\frac{1}{2}r^{2}\right|\leq\frac{1}{6}\Delta_{3}r^{3}
\end{equation}
Thus:
\begin{align}
ELBO\left(\boldsymbol{e}\right) & =-\E_{r\sim g\left(r\right)}\left[\tilde{\phi}_{f}\left(r\boldsymbol{e}\right)-\frac{1}{2}r^{2}\right]+C\\
ELBO\left(\boldsymbol{e}\right) & \geq-\E_{r\sim g\left(r\right)}\left[\frac{1}{6}\Delta_{3}r^{3}\right]+C\\
 & \geq\mathcal{O}\left(\Delta_{3}p^{3/2}\right)+C\\
ELBO\left(\boldsymbol{e}\right) & \leq\E_{r\sim g\left(r\right)}\left[\frac{1}{6}\Delta_{3}r^{3}\right]+C\\
 & \leq\mathcal{O}\left(\Delta_{3}p^{3/2}\right)+C
\end{align}
Thus yielding:
\begin{equation}
\max_{\boldsymbol{e}}\left[ELBO\left(\boldsymbol{e}\right)\right]-\min_{\boldsymbol{e}}\left[ELBO\left(\boldsymbol{e}\right)\right]=\mathcal{O}\left(\Delta_{3}p^{3/2}\right)
\end{equation}
and concluding the proof.
\end{proof}
\begin{prop}
$ELBO$-based approximation of $KL\left(\boldsymbol{e}_{g},\boldsymbol{e}_{f}\right)$.\label{Prop: APPENDIX ELBO based approx of KL(e_g,e_f)}

\textbf{Requires: }Lemma \ref{lem: APPENDIX approx quality of ELBO},
Appendix Prop.\ref{prop: KL approx KL_var} (Main text Prop.\ref{prop: KL approx KL_var}).

In the limit $\Delta_{3}p^{3/2}\rightarrow0$, the KL divergence $KL\left(\boldsymbol{e}_{g},\boldsymbol{e}_{f}\right)$
scales as:
\begin{equation}
KL\left(\boldsymbol{e}_{g},\boldsymbol{e}_{f}\right)=\mathcal{O}\left(\left(\Delta_{3}\right)^{2}p^{3}\right)
\end{equation}
The dominating term being:
\begin{equation}
KL\left(\boldsymbol{e}_{g},\boldsymbol{e}_{f}\right)=\frac{1}{2}\text{Var}_{\boldsymbol{e}\sim g\left(\boldsymbol{e}\right)}\left\{ \E_{r\sim g\left(r\right)}\left[\tilde{\phi}_{f}\left(r\boldsymbol{e}\right)-\frac{1}{2}r^{2}\right]\right\} +\mathcal{O}\left(\left(\Delta_{3}\right)^{3}p^{9/2}\right)
\end{equation}
\end{prop}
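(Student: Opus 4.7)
The strategy is to apply the KL-variance approximation of Prop.\ref{prop: KL approx KL_var} to the pair of densities $g(\boldsymbol{e})$ and $f(\boldsymbol{e})$ on the sphere $S^{p-1}$, and then to replace the intractable $\log[f(\boldsymbol{e})]$ by the ELBO of Lemma \ref{lem: APPENDIX approx quality of ELBO}. The two crucial features that make this work are: (i) $g(\boldsymbol{e})$ is the uniform distribution on $S^{p-1}$ (Lemma \ref{lem: APPENDIX Distribution-of-r_g,e_g}), so $\phi_g(\boldsymbol{e})$ is constant and $\phi_f(\boldsymbol{e})-\phi_g(\boldsymbol{e})$ agrees with $-\log[f(\boldsymbol{e})]$ up to an additive constant; (ii) the support $S^{p-1}$ is compact, so that Lemma \ref{lem: APPENDIX approx quality of ELBO} actually gives a uniform bound on $|\log f(\boldsymbol{e})|$ after recentering.

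First I would invoke Lemma \ref{lem: APPENDIX approx quality of ELBO}: the range of $ELBO(\boldsymbol{e})$ is $\mathcal{O}(\Delta_3 p^{3/2})$ and $\log f(\boldsymbol{e}) = ELBO(\boldsymbol{e}) + KL(r_g,r_f|\boldsymbol{e})$ with $KL(r_g,r_f|\boldsymbol{e}) = \mathcal{O}((\Delta_3)^2 p^2)$ uniformly in $\boldsymbol{e}$ by Prop.\ref{prop: APPENDIX Bound-on- KL r_g r_f | e}. In the asymptote $\Delta_3 p^{3/2}\to 0$ one has $(\Delta_3)^2 p^2 = o(\Delta_3 p^{3/2})$, so after centering by the midrange, $|\phi_f(\boldsymbol{e})-\phi_g(\boldsymbol{e})|\leq M$ with $M=\mathcal{O}(\Delta_3 p^{3/2})$. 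Prop.\ref{prop: KL approx KL_var} then yields
\begin{equation*}
KL(\boldsymbol{e}_g,\boldsymbol{e}_f) = \tfrac{1}{2}KL_{var}(\boldsymbol{e}_g,\boldsymbol{e}_f) + \mathcal{O}(M^3) = \tfrac{1}{2}KL_{var}(\boldsymbol{e}_g,\boldsymbol{e}_f) + \mathcal{O}((\Delta_3)^3 p^{9/2}).
\end{equation*}

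Next I would convert $KL_{var}$ into the claimed variance of the ELBO. Using bilinearity of the variance and $\log f = ELBO + KL(r_g,r_f|\boldsymbol{e})$ (denote the second summand by $\delta(\boldsymbol{e})$),
\begin{equation*}
KL_{var}(\boldsymbol{e}_g,\boldsymbol{e}_f) = \mathrm{Var}_g[ELBO(\boldsymbol{e})] + 2\mathrm{Cov}_g[ELBO(\boldsymbol{e}),\delta(\boldsymbol{e})] + \mathrm{Var}_g[\delta(\boldsymbol{e})].
\end{equation*}
From the range bound on ELBO one gets $\mathrm{Var}_g[ELBO] \leq M^2/4 = \mathcal{O}((\Delta_3)^2 p^3)$, and from the uniform bound on $\delta$ one gets $\mathrm{Var}_g[\delta] = \mathcal{O}((\Delta_3)^4 p^4)$. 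Cauchy--Schwarz then gives $|\mathrm{Cov}_g[ELBO,\delta]| = \mathcal{O}((\Delta_3)^3 p^{7/2})$. Since $p\geq 1$, both the covariance and $\mathrm{Var}_g[\delta]$ are dominated by the $(\Delta_3)^3 p^{9/2}$ error already present from Prop.\ref{prop: KL approx KL_var}, so they are absorbed into it. Combining everything gives the stated expansion, and the overall scaling $\mathcal{O}((\Delta_3)^2 p^3)$ follows from the variance bound on the ELBO.

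The only mildly delicate step is the error bookkeeping in the paragraph above: one must verify that in the regime $\Delta_3 p^{3/2}\to 0$ the correction terms $\mathcal{O}((\Delta_3)^3 p^{7/2})$ from the covariance and $\mathcal{O}((\Delta_3)^4 p^4)$ from $\mathrm{Var}_g[\delta]$ are indeed swallowed by the cubic-cumulant error $\mathcal{O}((\Delta_3)^3 p^{9/2})$ of Prop.\ref{prop: KL approx KL_var}, and that the correct constant $M$ in the application of Prop.\ref{prop: KL approx KL_var} is the half-range (not the full range) of $\phi_f-\phi_g$ after optimal recentering. Everything else is a direct consequence of previously established Lemmas.
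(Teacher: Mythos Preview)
Your proposal is correct and follows essentially the same approach as the paper's proof: bound the range of $\log f(\boldsymbol{e})$ via Lemma~\ref{lem: APPENDIX approx quality of ELBO}, apply Prop.~\ref{prop: KL approx KL_var} to obtain the $\tfrac12 KL_{var}+\mathcal{O}((\Delta_3)^3p^{9/2})$ expansion, and then replace $\log f(\boldsymbol{e})$ by $ELBO(\boldsymbol{e})$ inside the variance using the decomposition $\mathrm{Var}(X+Y)=\mathrm{Var}(X)+2\mathrm{Cov}(X,Y)+\mathrm{Var}(Y)$ and Cauchy--Schwarz. Your error bookkeeping is slightly sharper than the paper's (you get $\mathcal{O}((\Delta_3)^3p^{7/2})$ for the covariance where the paper directly writes $\mathcal{O}((\Delta_3)^3p^{9/2})$), and your remark on recentering to use the half-range for $M$ is a nice point the paper leaves implicit, but neither affects the final statement.
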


\begin{proof}
This proof follows from using the KL variance approximation of the
KL divergence (Appendix Prop.\ref{prop: KL approx KL_var} / Main
text Prop.\ref{prop: KL approx KL_var}). Lemma \ref{lem: APPENDIX approx quality of ELBO}
establishes that $\log\left[f\left(\boldsymbol{e}\right)\right]$
has bounded range since:
\begin{equation}
\log\left[f\left(\boldsymbol{e}\right)\right]=ELBO\left(\boldsymbol{e}\right)+\mathcal{O}\left(\left(\Delta_{3}\right)^{2}p^{2}\right)
\end{equation}
and $ELBO\left(\boldsymbol{e}\right)$ has bounded range. Thus:
\begin{align}
\max_{\boldsymbol{e}}\left[\log\left[f\left(\boldsymbol{e}\right)\right]\right]-\min_{\boldsymbol{e}}\left[\log\left[f\left(\boldsymbol{e}\right)\right]\right] & =\max_{\boldsymbol{e}}\left[ELBO\left(\boldsymbol{e}\right)\right]-\min_{\boldsymbol{e}}\left[ELBO\left(\boldsymbol{e}\right)\right]+\mathcal{O}\left(\left(\Delta_{3}\right)^{2}p^{2}\right)\\
 & \leq\mathcal{O}\left(\Delta_{3}p^{3/2}\right)+\mathcal{O}\left(\left(\Delta_{3}\right)^{2}p^{2}\right)\\
 & \leq\mathcal{O}\left(\Delta_{3}p^{3/2}\right)
\end{align}

Furthermore, $\log\left[g\left(\boldsymbol{e}\right)\right]$ is constant.
Thus $\log\left[\frac{g\left(\boldsymbol{e}\right)}{f\left(\boldsymbol{e}\right)}\right]$
also has bounded range:
\begin{align}
\max_{\boldsymbol{e}}\log\left[\frac{g\left(\boldsymbol{e}\right)}{f\left(\boldsymbol{e}\right)}\right]-\min_{\boldsymbol{e}}\log\left[\frac{g\left(\boldsymbol{e}\right)}{f\left(\boldsymbol{e}\right)}\right] & =\max_{\boldsymbol{e}}\left[\log\left[f\left(\boldsymbol{e}\right)\right]\right]-\min_{\boldsymbol{e}}\left[\log\left[f\left(\boldsymbol{e}\right)\right]\right]\\
 & \leq\mathcal{O}\left(\Delta_{3}p^{3/2}\right)
\end{align}
We can thus apply Main text Prop.\ref{prop: KL approx KL_var}, yielding:
\begin{align}
KL\left(\boldsymbol{e}_{g},\boldsymbol{e}_{f}\right) & =\frac{1}{2}KL_{var}\left(\boldsymbol{e}_{g},\boldsymbol{e}_{f}\right)+\left[\frac{1}{6}\mathcal{O}\left(\Delta_{3}p^{3/2}\right)\right]^{3}\\
 & =\frac{1}{2}KL_{var}\left(\boldsymbol{e}_{g},\boldsymbol{e}_{f}\right)+\mathcal{O}\left(\left(\Delta_{3}\right)^{3}p^{9/2}\right)\label{eq: intermediate eq for KL(e_g, e_f)}
\end{align}

Now, we expand the KL variance:
\begin{align}
KL_{var}\left(\boldsymbol{e}_{g},\boldsymbol{e}_{f}\right) & =\text{Var}_{\boldsymbol{e}\sim g\left(\boldsymbol{e}\right)}\left\{ \log\left[\frac{g\left(\boldsymbol{e}\right)}{f\left(\boldsymbol{e}\right)}\right]\right\} \\
 & =\text{Var}_{\boldsymbol{e}\sim g\left(\boldsymbol{e}\right)}\left\{ \log\left[f\left(\boldsymbol{e}\right)\right]\right\} \\
 & =\text{Var}_{\boldsymbol{e}\sim g\left(\boldsymbol{e}\right)}\left\{ ELBO\left(\boldsymbol{e}\right)+\mathcal{O}\left(\left(\Delta_{3}\right)^{2}p^{2}\right)\right\} \\
 & =\text{Var}_{\boldsymbol{e}\sim g\left(\boldsymbol{e}\right)}\left\{ -\E_{r\sim g\left(r\right)}\left[\tilde{\phi}_{f}\left(r\boldsymbol{e}\right)-\frac{1}{2}r^{2}\right]+\mathcal{O}\left(\left(\Delta_{3}\right)^{2}p^{2}\right)\right\} 
\end{align}
which is the variance of the sum of two terms that are both small.
We can decompose this variance into:
\begin{align}
\text{Var}\left(X+Y\right) & =\text{Var}\left(X\right)+\text{Cov}\left(X,Y\right)+\text{Var}\left(Y\right)
\end{align}
where:
\begin{itemize}
\item The first variance is the term we seek to recover:
\begin{align}
\text{Var}_{\boldsymbol{e}\sim g\left(\boldsymbol{e}\right)}\left\{ -\E_{r\sim g\left(r\right)}\left[\tilde{\phi}_{f}\left(r\boldsymbol{e}\right)-\frac{1}{2}r^{2}\right]\right\}  & =\text{Var}_{\boldsymbol{e}\sim g\left(\boldsymbol{e}\right)}\left\{ \E_{r\sim g\left(r\right)}\left[\tilde{\phi}_{f}\left(r\boldsymbol{e}\right)-\frac{1}{2}r^{2}\right]\right\} \\
 & \leq\text{Var}_{\boldsymbol{e}\sim g\left(\boldsymbol{e}\right)}\left\{ \frac{1}{6}\Delta_{3}\E\left(\left(r_{g}\right)^{3}\right)\right\} \\
 & \leq\text{Var}_{\boldsymbol{e}\sim g\left(\boldsymbol{e}\right)}\left\{ \mathcal{O}\left(\Delta_{3}p^{3/2}\right)\right\} \\
 & \leq\mathcal{O}\left(\left(\Delta_{3}\right)^{2}p^{3}\right)
\end{align}
\item The other variance term is small:
\begin{align}
\text{Var}_{\boldsymbol{e}\sim g\left(\boldsymbol{e}\right)}\left\{ \mathcal{O}\left(\left(\Delta_{3}\right)^{2}p^{2}\right)\right\}  & \leq\E_{\boldsymbol{e}\sim g\left(\boldsymbol{e}\right)}\left\{ \mathcal{O}\left(\left(\Delta_{3}\right)^{2}p^{2}\right)\right\} ^{2}\\
 & \leq\mathcal{O}\left(\left(\Delta_{3}\right)^{4}p^{4}\right)
\end{align}
\item And the covariance term is also small because of the Cauchy-Schwarz
inequality:
\begin{align}
\text{Cov}_{\boldsymbol{e}\sim g\left(\boldsymbol{e}\right)}\left\{ -\E_{r\sim g\left(r\right)}\left[\tilde{\phi}_{f}\left(r\boldsymbol{e}\right)-\frac{1}{2}r^{2}\right],\mathcal{O}\left(\left(\Delta_{3}\right)^{2}p^{2}\right)\right\}  & \leq\sqrt{\mathcal{O}\left(\left(\Delta_{3}\right)^{2}p^{3}\right)\mathcal{O}\left(\left(\Delta_{3}\right)^{4}p^{4}\right)}\\
 & \leq\mathcal{O}\left(\left(\Delta_{3}\right)^{3}p^{9/2}\right)
\end{align}
\end{itemize}
We thus have finally:
\begin{align}
KL_{var}\left(\boldsymbol{e}_{g},\boldsymbol{e}_{f}\right) & =\mathcal{O}\left(\left(\Delta_{3}\right)^{2}p^{3}\right)\\
 & =\text{Var}_{\boldsymbol{e}\sim g\left(\boldsymbol{e}\right)}\left\{ \E_{r\sim g\left(r\right)}\left[\tilde{\phi}_{f}\left(r\boldsymbol{e}\right)-\frac{1}{2}r^{2}\right]\right\} +\mathcal{O}\left(\left(\Delta_{3}\right)^{3}p^{9/2}\right)
\end{align}
which we can insert into eq.\eqref{eq: intermediate eq for KL(e_g, e_f)}
yielding:
\begin{align}
KL\left(\boldsymbol{e}_{g},\boldsymbol{e}_{f}\right) & =\frac{1}{2}KL_{var}\left(\boldsymbol{e}_{g},\boldsymbol{e}_{f}\right)+\mathcal{O}\left(\left(\Delta_{3}\right)^{3}p^{9/2}\right)\\
 & =\frac{1}{2}\text{Var}_{\boldsymbol{e}\sim g\left(\boldsymbol{e}\right)}\left\{ \E_{r\sim g\left(r\right)}\left[\tilde{\phi}_{f}\left(r\boldsymbol{e}\right)-\frac{1}{2}r^{2}\right]\right\} +\mathcal{O}\left(\left(\Delta_{3}\right)^{3}p^{9/2}\right)\\
 & =\mathcal{O}\left(\left(\Delta_{3}\right)^{2}p^{3}\right)
\end{align}
and concluding the proof.
\end{proof}

\subsection{Proofs: relationship with the KL variance}
\begin{lem}
Relationship between $\text{Var}_{\boldsymbol{e}\sim g\left(\boldsymbol{e}\right)}\left\{ ELBO\left(\boldsymbol{e}\right)\right\} $
and $KL_{var}\left(g,f\right)$.\label{lem:Relationship-between-Var(ELBO) and KL-var}

\textbf{Requires: }Lemma \ref{lem: APPENDIX Decomposition-of-KL(g,f)},
Appendix Prop.\ref{prop: KL approx KL_var} (Main text Prop.\ref{prop: KL approx KL_var}).

The KL variance upper-bounds the Variance of the ELBO approximation
of $\log\left[f\left(\boldsymbol{e}\right)\right]$:

\begin{equation}
\text{Var}_{\boldsymbol{e}\sim g\left(\boldsymbol{e}\right)}\left\{ \E_{r\sim g\left(r\right)}\left[\tilde{\phi}_{f}\left(r\boldsymbol{e}\right)-\frac{1}{2}r^{2}\right]\right\} \leq KL_{var}\left(g,f\right)
\end{equation}

The difference between the two expressions is equal to a KL-variance-based
approximation of $KL\left(r_{g},r_{f}\right)$.
\end{lem}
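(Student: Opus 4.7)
The plan is to apply the law of total variance to the random variable $X = \tilde{\phi}_f(r\boldsymbol{e}) - \frac{1}{2}r^2$ sampled under $g$, after first identifying this quantity with $\phi_f(\boldsymbol{\theta}) - \phi_g(\boldsymbol{\theta})$ via the change of variable.

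First, I would observe that under the affine change of variable $\boldsymbol{\theta} = \boldsymbol{\mu} + \Sigma^{1/2}\tilde{\boldsymbol{\theta}}$ followed by spherical coordinates $\tilde{\boldsymbol{\theta}} = r\boldsymbol{e}$, we have $\phi_f(\boldsymbol{\theta}) = \tilde{\phi}_f(r\boldsymbol{e})$ and $\phi_g(\boldsymbol{\theta}) = \frac{1}{2}\|\tilde{\boldsymbol{\theta}}\|_2^2 = \frac{1}{2}r^2$ (up to a constant that does not affect variances). Consequently,
\begin{equation}
KL_{var}(g,f) = \text{Var}_{\boldsymbol{\theta} \sim g(\boldsymbol{\theta})}\left[\phi_f(\boldsymbol{\theta}) - \phi_g(\boldsymbol{\theta})\right] = \text{Var}_{(r,\boldsymbol{e}) \sim g}\left[\tilde{\phi}_f(r\boldsymbol{e}) - \tfrac{1}{2}r^2\right].
\end{equation}

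Next, I would invoke Lemma \ref{lem: APPENDIX Distribution-of-r_g,e_g} to use that $r_g$ and $\boldsymbol{e}_g$ are independent under $g$. The law of total variance then yields
\begin{equation}
KL_{var}(g,f) = \text{Var}_{\boldsymbol{e} \sim g(\boldsymbol{e})}\left\{\E_{r \sim g(r)}\left[\tilde{\phi}_f(r\boldsymbol{e}) - \tfrac{1}{2}r^2\right]\right\} + \E_{\boldsymbol{e} \sim g(\boldsymbol{e})}\left\{\text{Var}_{r \sim g(r)}\left[\tilde{\phi}_f(r\boldsymbol{e}) - \tfrac{1}{2}r^2\right]\right\}.
\end{equation}
The first term is exactly $\text{Var}_{\boldsymbol{e} \sim g(\boldsymbol{e})}\{ELBO(\boldsymbol{e})\}$ (the additive constant $C$ in the definition of ELBO disappears), and the second term is non-negative, which gives the claimed inequality.

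It remains to interpret the second term. Using the expressions from Lemmas \ref{lem: APPENDIX Distribution-of-r_g,e_g} and \ref{lem: APPENDIX distributions under f}, the negative log-densities of $r_g$ and $r_f|\boldsymbol{e}$ are $\psi_g(r) = -(p-1)\log(r) + \frac{1}{2}r^2$ and $\psi_f(r|\boldsymbol{e}) = -(p-1)\log(r) + \tilde{\phi}_f(r\boldsymbol{e})$ respectively, so the $\log(r)$ Jacobian terms cancel in the difference and
\begin{equation}
\text{Var}_{r \sim g(r)}\left[\tilde{\phi}_f(r\boldsymbol{e}) - \tfrac{1}{2}r^2\right] = \text{Var}_{r \sim g(r)}\left[\psi_f(r|\boldsymbol{e}) - \psi_g(r)\right] = KL_{var}(r_g, r_f|\boldsymbol{e}).
\end{equation}
Applying Prop.~\ref{prop: KL approx KL_var} to this one-dimensional conditional problem shows that $\frac{1}{2}KL_{var}(r_g, r_f|\boldsymbol{e})$ is itself a KL-variance-based approximation of $KL(r_g, r_f|\boldsymbol{e})$, which establishes the second claim and concludes the proof. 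The only subtlety is the cancellation of the $\log(r)$ Jacobian, which relies on the fact that the conditional densities of $r$ under $g$ and $f|\boldsymbol{e}$ share the same $r^{p-1}$ volume factor; otherwise the argument is a direct application of the law of total variance.
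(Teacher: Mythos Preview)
Your proof is correct and follows essentially the same approach as the paper: both arguments identify $KL_{var}(g,f)$ with the variance of $\tilde{\phi}_f(r\boldsymbol{e})-\tfrac{1}{2}r^2$ under $g$, apply the law of total variance with respect to the pair $(r,\boldsymbol{e})$, and then interpret the residual term as $KL_{var}(r_g,r_f|\boldsymbol{e})$. Your version is slightly more explicit than the paper's in invoking the independence of $r_g$ and $\boldsymbol{e}_g$ and in spelling out the cancellation of the $r^{p-1}$ Jacobian factor, but the underlying structure is identical.
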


\begin{proof}
The KL variance is defined as:
\begin{align}
KL_{var}\left(g,f\right) & =\text{Var}_{\boldsymbol{\theta}\sim g\left(\boldsymbol{\theta}\right)}\left[\log\frac{g\left(\boldsymbol{\theta}\right)}{f\left(\boldsymbol{\theta}\right)}\right]\\
 & =\text{Var}_{\tilde{\boldsymbol{\theta}}\sim g\left(\tilde{\boldsymbol{\theta}}\right)}\left[\log\frac{\tilde{g}\left(\tilde{\boldsymbol{\theta}}\right)}{\tilde{f}\left(\tilde{\boldsymbol{\theta}}\right)}\right]
\end{align}

Now perform a change of variable to work instead with the pair $r,\boldsymbol{e}$.
We can then decompose the variance using the conditional variance
under $\boldsymbol{e}$ and the conditional expected value under $\boldsymbol{e}$:
\begin{equation}
KL_{var}\left(g,f\right)=\text{Var}_{\boldsymbol{e}\sim g\left(\boldsymbol{e}\right)}\left[\E_{r\sim g\left(r\right)}\left[\log\frac{g\left(r,\boldsymbol{e}\right)}{f\left(r,\boldsymbol{e}\right)}\right]\right]+\E_{\boldsymbol{e}\sim g\left(\boldsymbol{e}\right)}\left[\text{Var}_{r\sim g\left(r\right)}\left[\log\frac{g\left(r,\boldsymbol{e}\right)}{f\left(r,\boldsymbol{e}\right)}\right]\right]
\end{equation}
Since we are computing variances, we can drop out the constant terms,
yielding:
\begin{align}
KL_{var}\left(g,f\right) & =\text{Var}_{\boldsymbol{e}\sim g\left(\boldsymbol{e}\right)}\left[\E_{r\sim g\left(r\right)}\left[\tilde{\phi}_{f}\left(r\boldsymbol{e}\right)-\tilde{\phi}_{g}\left(r\boldsymbol{e}\right)\right]\right]+\E_{\boldsymbol{e}\sim g\left(\boldsymbol{e}\right)}\left[\text{Var}_{r\sim g\left(r\right)}\left[\tilde{\phi}_{f}\left(r\boldsymbol{e}\right)-\tilde{\phi}_{g}\left(r\boldsymbol{e}\right)\right]\right]\\
 & =\text{Var}_{\boldsymbol{e}\sim g\left(\boldsymbol{e}\right)}\left[\E_{r\sim g\left(r\right)}\left[\tilde{\phi}_{f}\left(r\boldsymbol{e}\right)-\frac{1}{2}r^{2}\right]\right]+\E_{\boldsymbol{e}\sim g\left(\boldsymbol{e}\right)}\left[\text{Var}_{r\sim g\left(r\right)}\left[\tilde{\phi}_{f}\left(r\boldsymbol{e}\right)-\frac{1}{2}r^{2}\right]\right]
\end{align}
where we recognize the expression for the ELBO approximation of $\log\left[f\left(\boldsymbol{e}\right)\right]$:
$\E_{r\sim g\left(r\right)}\left[\tilde{\phi}_{f}\left(r\boldsymbol{e}\right)-\frac{1}{2}r^{2}\right]$.
Since the second term, an expected value of variances, is necessarily
positive, we thus finally have:
\begin{equation}
\text{Var}_{\boldsymbol{e}\sim g\left(\boldsymbol{e}\right)}\left\{ \E_{r\sim g\left(r\right)}\left[\tilde{\phi}_{f}\left(r\boldsymbol{e}\right)-\frac{1}{2}r^{2}\right]\right\} \leq KL_{var}\left(g,f\right)
\end{equation}
\end{proof}
Now, let us consider the other term in $KL_{var}\left(g,f\right)$.
It corresponds to a KL-variance approximation of the second term in
the decomposition of $KL\left(g,f\right)$ given in Lemma \ref{lem: APPENDIX Decomposition-of-KL(g,f)}:
\begin{align}
\E_{\boldsymbol{e}\sim g\left(\boldsymbol{e}\right)}\left[KL\left(r_{g},r_{f}|\boldsymbol{e}\right)\right] & \approx\E_{\boldsymbol{e}\sim g\left(\boldsymbol{e}\right)}\left[\frac{1}{2}KL_{var}\left(r_{g},r_{f}|\boldsymbol{e}\right)\right]\\
KL_{var}\left(r_{g},r_{f}|\boldsymbol{e}\right) & =\text{Var}_{r\sim g\left(r\right)}\left[\log\frac{g\left(r\right)}{f\left(r|\boldsymbol{e}\right)}\right]\\
 & =\text{Var}_{r\sim g\left(r\right)}\left[\tilde{\phi}_{f}\left(r\boldsymbol{e}\right)-\frac{1}{2}r^{2}\right]\\
\E_{\boldsymbol{e}\sim g\left(\boldsymbol{e}\right)}\left[KL\left(r_{g},r_{f}|\boldsymbol{e}\right)\right] & \approx\frac{1}{2}\E_{\boldsymbol{e}\sim g\left(\boldsymbol{e}\right)}\left[\text{Var}_{r\sim g\left(r\right)}\left[\tilde{\phi}_{f}\left(r\boldsymbol{e}\right)-\frac{1}{2}r^{2}\right]\right]
\end{align}
This observation concludes the proof.

\subsection{Proof of Theorem \ref{thm:A-deterministic-BvM theorem}}

We can finally now prove Theorem \ref{thm:A-deterministic-BvM theorem}.
\begin{proof}
Theorem \ref{thm:A-deterministic-BvM theorem} simply corresponds
to an amalgamation of the results of the various Lemmas and propositions
presented in this section:
\end{proof}
\begin{itemize}
\item The decomposition of $KL\left(g,f\right)$ was derived in Lemma \ref{lem: APPENDIX Decomposition-of-KL(g,f)}.
\item The term $KL\left(r_{g},r_{f}|\boldsymbol{e}\right)$ was bounded
and approximated in Proposition \ref{prop: APPENDIX Bound-on- KL r_g r_f | e}.
\item The term KL$\left(\boldsymbol{e}_{g},\boldsymbol{e}_{f}\right)$ was
bounded and approximated in Proposition \ref{Prop: APPENDIX ELBO based approx of KL(e_g,e_f)}.
\item The term $KL\left(\boldsymbol{e}_{g},\boldsymbol{e}_{f}\right)$ was
related to the KL variance in Lemma \ref{lem:Relationship-between-Var(ELBO) and KL-var}.
\end{itemize}
The final claim of the Theorem: giving the asymptotic order of $KL\left(g,f\right)$,
simply follows from combining the asymptotic order of $KL\left(r_{g},r_{f}|\boldsymbol{e}\right)$
and that of $KL\left(\boldsymbol{e}_{g},\boldsymbol{e}_{f}\right)$:
\begin{align}
KL\left(g,f\right) & =KL\left(\boldsymbol{e}_{g},\boldsymbol{e}_{f}\right)+\E_{\boldsymbol{e}\sim g\left(\boldsymbol{e}\right)}\left[KL\left(r_{g},r_{f}|\boldsymbol{e}\right)\right]\\
 & =\mathcal{O}\left(\left(\Delta_{3}\right)^{2}p^{3}\right)+\E_{\boldsymbol{e}\sim g\left(\boldsymbol{e}\right)}\left[\mathcal{O}\left(\left(\Delta_{3}\right)^{2}p^{2}\right)\right]\\
 & =\mathcal{O}\left(\left(\Delta_{3}\right)^{2}p^{3}\right)+\mathcal{O}\left(\left(\Delta_{3}\right)^{2}p^{2}\right)\\
 & =\mathcal{O}\left(\left(\Delta_{3}\right)^{2}p^{3}\right)
\end{align}

\section{Recovering the classical IID result}

\label{sec: APPENDIX Recovering-the-classical IID result}

This section deals with the proof of Cor.\ref{cor:Bernstein-von-Mises:-IID case},
detailing how Theorem \ref{thm:A-deterministic-BvM theorem} can be
used to recover the classical BvM result in the IID setting.

Throughout this section, we will assume that $f_{n}\left(\boldsymbol{\theta}\right)$
is a sequence of random posteriors, each one constructed from $n$
IID negative log-likelihood functions:
\begin{align}
f_{n}\left(\boldsymbol{\theta}\right) & \propto f_{0}\left(\boldsymbol{\theta}\right)\exp\left(-\sum_{i=1}^{n}NLL_{i}\left(\boldsymbol{\theta}\right)\right)\\
 & \propto\exp\left(-\phi_{0}\left(\boldsymbol{\theta}\right)-\sum_{i=1}^{n}NLL_{i}\left(\boldsymbol{\theta}\right)\right)\\
 & \propto\exp\left(-\phi_{n}\left(\boldsymbol{\theta}\right)\right)
\end{align}
We will further denote by $\hat{\boldsymbol{\theta}}_{n}$ the sequence
of MAP estimators:
\begin{equation}
\hat{\boldsymbol{\theta}}_{n}=\text{argmin}_{\boldsymbol{\theta}}\left\{ \phi_{0}\left(\boldsymbol{\theta}\right)+\sum_{i=1}^{n}NLL_{i}\left(n\right)\right\} 
\end{equation}
and we will denote by $g_{n}\left(\boldsymbol{\theta}\right)$ the
sequence of the Laplace approximations of the $f_{n}$ and $\Sigma_{n}$
their variances.

We further assume that the $NLL_{i}$ are all convex, that 
\begin{equation}
\boldsymbol{\theta}_{0}=\text{argmin}_{\boldsymbol{\theta}}\left\{ \E_{NLL}\left(NLL\left(\boldsymbol{\theta}\right)\right)\right\} 
\end{equation}
exists and is unique, that the Hessian-based Fisher matrix matrix:
\begin{equation}
J=\E\left[H\!NLL\left(\boldsymbol{\theta}_{0}\right)\right]
\end{equation}
is strictly positive, that the gradient-based Fisher matrix:
\begin{equation}
I=\E\left[\nabla\!NLL\left(\boldsymbol{\theta}_{0}\right)\!\nabla^{T}\!NLL\left(\boldsymbol{\theta}_{0}\right)\right]
\end{equation}
is strictly positive and finally that the scalar random variable:
\begin{equation}
\Delta^{\left(i\right)}=\max_{\boldsymbol{\theta},v_{1},v_{2},v_{3}}\frac{\left|NLL_{i}^{\left(3\right)}\left(\boldsymbol{\theta}\right)\left[J^{1/2}v_{1},J^{1/2}v_{2},J^{1/2}v_{3}\right]\right|}{\left\Vert v_{1}\right\Vert _{2}\left\Vert v_{2}\right\Vert _{2}\left\Vert v_{3}\right\Vert _{2}}
\end{equation}
has bounded expectation.

We finally assume that the prior is such that $\phi_{0}$ is convex
and has controlled third derivative:
\begin{equation}
\Delta^{\left(0\right)}=\max_{\boldsymbol{\theta},v_{1},v_{2},v_{3}}\frac{\left|\phi_{0}^{\left(3\right)}\left(\boldsymbol{\theta}\right)\left[J^{1/2}v_{1},J^{1/2}v_{2},J^{1/2}v_{3}\right]\right|}{\left\Vert v_{1}\right\Vert _{2}\left\Vert v_{2}\right\Vert _{2}\left\Vert v_{3}\right\Vert _{2}}
\end{equation}
Throughout this section, we will denote that a sequence of random
variables $X_{n}$ converges in distribution to a limit $X_{\infty}$
as: $X_{n}\rightarrow X_{\infty}$.

\subsection{Proof structure}

The proof of this result is straightforward but a little bit involved.
\begin{enumerate}
\item I first establish the asymptotic properties of the MAP estimator:
consistency and asymptotic normality. These are established after
the following sequence of intermediate results:
\begin{enumerate}
\item Asymptotics of $\nabla\phi_{n}\left(\boldsymbol{\theta}_{0}\right)$
and $H\phi_{n}\left(\boldsymbol{\theta}_{0}\right)$.
\item Asymptotics of the third derivative of $\phi_{n}\left(\boldsymbol{\theta}_{0}\right)$.
\end{enumerate}
The consistency and asymptotic normality then follow from standard
asymptotic arguments.
\item Then, I establish key properties of $\phi_{n}$ and the standardized
$\tilde{\phi}_{n}$:
\begin{enumerate}
\item Rate of growth of $\left[\Sigma_{n}\right]^{-1}$.
\item Finiteness and asymptotic evolution of $\Delta_{3}$.
\end{enumerate}
These establish that Theorem \ref{thm:A-deterministic-BvM theorem}
is applicable.
\item Finally, I apply Theorem \ref{thm:A-deterministic-BvM theorem} to
establish Cor.\ref{cor:Bernstein-von-Mises:-IID case}.
\end{enumerate}

\subsection{Proofs: asymptotic analysis of $\hat{\boldsymbol{\theta}}_{n}$}
\begin{lem}
Asymptotics of $\phi_{n}\left(\boldsymbol{\theta}_{0}\right)$.\label{lem:Asymptotics-of-phi_n (theta_0)}

\textbf{Requires:} NA.

At $\boldsymbol{\theta}_{0}$, we have the following two asymptotic
results:
\begin{align}
\frac{1}{\sqrt{n}}\nabla\phi_{n}\left(\boldsymbol{\theta}_{0}\right) & \rightarrow\mathcal{N}\left(0,I\right)\\
\frac{1}{n}H\phi_{n}\left(\boldsymbol{\theta}_{0}\right) & \rightarrow J
\end{align}
\end{lem}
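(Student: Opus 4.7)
\subsection*{Proof proposal}

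The plan is to exploit the additive decomposition
\begin{align*}
\nabla\phi_{n}(\boldsymbol{\theta}_{0}) & =\nabla\phi_{0}(\boldsymbol{\theta}_{0})+\sum_{i=1}^{n}\nabla NLL_{i}(\boldsymbol{\theta}_{0}),\\
H\phi_{n}(\boldsymbol{\theta}_{0}) & =H\phi_{0}(\boldsymbol{\theta}_{0})+\sum_{i=1}^{n}HNLL_{i}(\boldsymbol{\theta}_{0}),
\end{align*}
and handle the prior term and the sum over the IID likelihoods separately. The prior contributions $\nabla\phi_{0}(\boldsymbol{\theta}_{0})$ and $H\phi_{0}(\boldsymbol{\theta}_{0})$ are deterministic finite vectors/matrices (finite because $\Delta^{(0)}<\infty$ and $\phi_{0}$ is $\mathcal{C}^{3}$ at $\boldsymbol{\theta}_{0}$), so after dividing by $\sqrt{n}$ respectively $n$ they vanish. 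Everything then reduces to understanding the two IID partial sums.

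For the gradient, I would first check that $\boldsymbol{\theta}_{0}=\arg\min\mathbb{E}_{NLL}[NLL(\boldsymbol{\theta})]$ being an interior minimizer, combined with enough regularity to swap $\nabla$ and $\mathbb{E}$ (provided, e.g., by the uniform bound on third derivatives via $\mathbb{E}\Delta^{(i)}<\infty$, which gives a locally integrable Lipschitz constant for $\nabla NLL$), yields $\mathbb{E}[\nabla NLL(\boldsymbol{\theta}_{0})]=0$. The covariance of $\nabla NLL(\boldsymbol{\theta}_{0})$ is $I$ by definition, which is assumed finite and strictly positive. The multivariate CLT then gives
\[
\frac{1}{\sqrt{n}}\sum_{i=1}^{n}\nabla NLL_{i}(\boldsymbol{\theta}_{0})\ \longrightarrow\ \mathcal{N}(0,I),
\]
and Slutsky's lemma absorbs the $\frac{1}{\sqrt{n}}\nabla\phi_{0}(\boldsymbol{\theta}_{0})=o(1)$ term.

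For the Hessian, the $HNLL_{i}(\boldsymbol{\theta}_{0})$ are IID matrix-valued random variables with common mean $J$ (finite by assumption), so the (strong) Law of Large Numbers applied componentwise yields
\[
\frac{1}{n}\sum_{i=1}^{n}HNLL_{i}(\boldsymbol{\theta}_{0})\ \longrightarrow\ J,
\]
and again the prior term contributes $\mathcal{O}(1/n)$. The only subtle step is verifying the two regularity facts above: (i) that differentiation under the expectation is legitimate at $\boldsymbol{\theta}_{0}$, and (ii) that the LLN applies to $HNLL_{i}(\boldsymbol{\theta}_{0})$. I expect (i) to be the main obstacle: it requires an integrable local envelope on $\nabla NLL$, which I would obtain from a first-order Taylor expansion around $\boldsymbol{\theta}_{0}$ controlled by the uniform third-derivative bound $\Delta^{(i)}$ (whose expectation is finite), combined with finiteness of $I$ which gives integrability at $\boldsymbol{\theta}_{0}$ itself. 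Once dominated convergence justifies exchanging $\nabla$ and $\mathbb{E}_{NLL}$, the stated centering and the two convergences follow routinely.
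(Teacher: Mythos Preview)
Your proposal is correct and follows essentially the same route as the paper: decompose into the deterministic prior contribution plus the IID sum, kill the prior after scaling, then apply the CLT to the gradient sum and the LLN to the Hessian sum, combining via Slutsky. If anything you are more careful than the paper, which simply asserts $\mathbb{E}[\nabla NLL(\boldsymbol{\theta}_0)]=0$ without discussing differentiation under the integral, whereas you correctly flag this as the one nontrivial regularity step.
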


\begin{proof}
This result is a straightforward application of the Central Limit
Theorem (CLT) and the strong law of large numbers (SLLN).

First, consider the gradient term:
\begin{equation}
\frac{1}{\sqrt{n}}\nabla\phi_{n}\left(\boldsymbol{\theta}_{0}\right)=\frac{1}{\sqrt{n}}\nabla\phi_{0}\left(\boldsymbol{\theta}_{0}\right)+\frac{1}{\sqrt{n}}\sum_{i=1}^{n}\nabla NLL_{i}\left(\boldsymbol{\theta}_{0}\right)
\end{equation}
The term $\frac{1}{\sqrt{n}}\nabla\phi_{0}\left(\boldsymbol{\theta}_{0}\right)$
is a deterministic term that converges to $0$ and plays no further
role. The CLT (\citet{van2000asymptotic}) asserts that, because the
$\nabla NLL_{i}\left(\boldsymbol{\theta}_{0}\right)$ are IID random
variables with mean $0$ and finite variance $I$:
\begin{equation}
\frac{1}{\sqrt{n}}\sum_{i=1}^{n}\nabla NLL_{i}\left(\boldsymbol{\theta}_{0}\right)\rightarrow\mathcal{N}\left(0,I\right)
\end{equation}
Combining the two yields (by Slutsky's theorem):
\begin{equation}
\frac{1}{\sqrt{n}}\nabla\phi_{n}\left(\boldsymbol{\theta}_{0}\right)\rightarrow\mathcal{N}\left(0,I\right)
\end{equation}

Similarly, for the Hessian term:
\begin{equation}
\frac{1}{n}H\phi_{n}\left(\boldsymbol{\theta}_{0}\right)=\frac{1}{n}H\phi_{0}\left(\boldsymbol{\theta}_{0}\right)+\frac{1}{n}\sum_{i=1}^{n}H\!NLL_{i}\left(\boldsymbol{\theta}_{0}\right)
\end{equation}
where $\frac{1}{n}H\phi_{0}\left(\boldsymbol{\theta}_{0}\right)\rightarrow0$
and $\frac{1}{n}\sum_{i=1}^{n}H\!NLL_{i}\left(\boldsymbol{\theta}_{0}\right)\rightarrow J$
by the SLLN. Slutsky's then yields:
\begin{equation}
\frac{1}{n}H\phi_{n}\left(\boldsymbol{\theta}_{0}\right)\rightarrow J
\end{equation}
and concludes the proof.
\end{proof}
\begin{lem}
Asymptotics of the third derivative of $\phi_{n}$.\label{lem: APPENDIX Asymptotics-of-the third derivative}

\textbf{Requires:} NA.

The third derivatives of $\phi_{n}$ are controlled since:
\begin{align}
\frac{1}{n}\Delta_{3}\left(\phi_{n}\right) & =\max_{\boldsymbol{\theta},v_{1},v_{2},v_{3}}\frac{\left|\phi_{n}^{\left(3\right)}\left(\boldsymbol{\theta}\right)\left[J^{1/2}v_{1},J^{1/2}v_{2},J^{1/2}v_{3}\right]\right|}{\left\Vert v_{1}\right\Vert _{2}\left\Vert v_{2}\right\Vert _{2}\left\Vert v_{3}\right\Vert _{2}}\\
 & \leq\E\left(\Delta^{\left(i\right)}\right)+o_{P}\left(1\right)
\end{align}
\end{lem}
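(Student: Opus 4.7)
The plan is to reduce everything to the sub-additivity of the maximum operator and then apply the strong law of large numbers. I would start by decomposing $\phi_n^{(3)} = \phi_0^{(3)} + \sum_{i=1}^n NLL_i^{(3)}$, which holds pointwise in $\boldsymbol{\theta}$ and as trilinear forms. The triangle inequality applied inside the absolute value yields, for every $\boldsymbol{\theta}$ and every triple $v_1,v_2,v_3$,
\begin{equation}
\frac{\bigl|\phi_n^{(3)}(\boldsymbol{\theta})[J^{1/2}v_1,J^{1/2}v_2,J^{1/2}v_3]\bigr|}{\|v_1\|_2\|v_2\|_2\|v_3\|_2} \le \Delta^{(0)} + \sum_{i=1}^{n}\Delta^{(i)},
\end{equation}
by the very definition of $\Delta^{(0)}$ and $\Delta^{(i)}$ as suprema of the same normalized trilinear form. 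Since the right-hand side is independent of $\boldsymbol{\theta},v_1,v_2,v_3$, taking the max on the left preserves the inequality.

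Next I would divide through by $n$. The prior contribution $n^{-1}\Delta^{(0)}$ is a deterministic sequence tending to zero because $\Delta^{(0)}<\infty$ is assumed finite (so in particular it is $o_P(1)$). For the likelihood contribution, the hypotheses of the corollary state that the $NLL_i$ are IID function-valued random variables with $\mathbb{E}(\Delta^{(i)})<\infty$, so the scalars $\Delta^{(i)}$ form an IID sequence of non-negative random variables with finite mean. Kolmogorov's strong law of large numbers then gives
\begin{equation}
\frac{1}{n}\sum_{i=1}^{n}\Delta^{(i)} \;\xrightarrow{\text{a.s.}}\; \mathbb{E}\bigl(\Delta^{(i)}\bigr),
\end{equation}
which of course implies the weaker statement $\tfrac{1}{n}\sum_{i=1}^n \Delta^{(i)} = \mathbb{E}(\Delta^{(i)}) + o_P(1)$.

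Combining both pieces by Slutsky's theorem yields the claimed upper bound
\begin{equation}
\frac{1}{n}\Delta_3(\phi_n) \;\le\; \frac{\Delta^{(0)}}{n} + \frac{1}{n}\sum_{i=1}^{n}\Delta^{(i)} \;=\; \mathbb{E}\bigl(\Delta^{(i)}\bigr) + o_P(1),
\end{equation}
which is exactly the statement of the lemma. I do not anticipate any serious obstacle here: the only genuinely non-trivial input is the sub-additivity of the max-norm of a sum of tensors, and once that is granted the rest is a routine invocation of the SLLN. The only subtlety worth flagging is that the supremum is over an infinite set of $(\boldsymbol{\theta},v_1,v_2,v_3)$, but this causes no issue because the bound $\Delta^{(0)}+\sum_i\Delta^{(i)}$ is a uniform upper bound valid before taking the supremum, so no uniform convergence argument is needed.
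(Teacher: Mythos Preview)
Your proof is correct and follows essentially the same approach as the paper: sub-additivity of the tensor max-norm to split off the prior term and the IID likelihood terms, the prior contribution vanishing as $\Delta^{(0)}/n\to 0$, and the strong law of large numbers applied to $\tfrac{1}{n}\sum_{i=1}^n\Delta^{(i)}$, combined via Slutsky. Your explicit remark that the uniform bound obviates any uniform-convergence argument is a nice clarification the paper leaves implicit.
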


\begin{proof}
Once more, this is a straightforward consequence of the SLLN.

Due to the sub-additivity of the maximum:
\begin{align}
\max_{\boldsymbol{\theta},v_{1},v_{2},v_{3}}\frac{\left|\phi_{n}^{\left(3\right)}\left(\boldsymbol{\theta}\right)\left[J^{1/2}v_{1},J^{1/2}v_{2},J^{1/2}v_{3}\right]\right|}{\left\Vert v_{1}\right\Vert _{2}\left\Vert v_{2}\right\Vert _{2}\left\Vert v_{3}\right\Vert _{2}} & \leq\frac{1}{n}\max_{\boldsymbol{\theta},v_{1},v_{2},v_{3}}\frac{\left|\phi_{0}^{\left(3\right)}\left(\boldsymbol{\theta}\right)\left[J^{1/2}v_{1},J^{1/2}v_{2},J^{1/2}v_{3}\right]\right|}{\left\Vert v_{1}\right\Vert _{2}\left\Vert v_{2}\right\Vert _{2}\left\Vert v_{3}\right\Vert _{2}}\nonumber \\
 & \ \ \ +\frac{1}{n}\sum_{i=1}^{n}\max_{\boldsymbol{\theta},v_{1},v_{2},v_{3}}\frac{\left|NLL_{i}^{\left(3\right)}\left(\boldsymbol{\theta}\right)\left[J^{1/2}v_{1},J^{1/2}v_{2},J^{1/2}v_{3}\right]\right|}{\left\Vert v_{1}\right\Vert _{2}\left\Vert v_{2}\right\Vert _{2}\left\Vert v_{3}\right\Vert _{2}}
\end{align}
where, once again, the $\phi_{0}$ term vanishes and the other term
is asymptotically $\E\left(\Delta^{\left(i\right)}\right)+o_{P}\left(1\right)$
due to the SLLN. Slutsky's then yields the claimed result, concluding
the proof.
\end{proof}
\begin{lem}
\textbf{$\hat{\boldsymbol{\theta}}_{n}$} is asymptotically consistent.\label{lem: APPENDIX MAP is consistent}

\textbf{Requires:} Lemmas \ref{lem:Asymptotics-of-phi_n (theta_0)}
and \ref{lem: APPENDIX Asymptotics-of-the third derivative}.

In the limit $n\rightarrow\infty$, $\hat{\boldsymbol{\theta}}_{n}$
converges to $\boldsymbol{\theta}_{0}$.
\end{lem}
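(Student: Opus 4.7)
The plan is to show consistency by combining the asymptotic behavior of the first two derivatives of $\phi_{n}$ at $\boldsymbol{\theta}_{0}$ (established in Lemmas \ref{lem:Asymptotics-of-phi_n (theta_0)} and \ref{lem: APPENDIX Asymptotics-of-the third derivative}) with the convexity of $\phi_{n}$ and the third-derivative control $\Delta_{3,n}$. The core idea is that $\phi_{n}$ is uniformly strongly convex in a fixed-radius ball around $\boldsymbol{\theta}_{0}$, while the gradient at $\boldsymbol{\theta}_{0}$ is much smaller than the local curvature; the unique minimum is then forced to lie close to $\boldsymbol{\theta}_{0}$.

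First, I would use the already-established asymptotics to fix scales: Lemma \ref{lem:Asymptotics-of-phi_n (theta_0)} gives $\nabla\phi_{n}\left(\boldsymbol{\theta}_{0}\right)=O_{P}\left(\sqrt{n}\right)$ and $\frac{1}{n}H\phi_{n}\left(\boldsymbol{\theta}_{0}\right)\to J$ with $J>0$. Lemma \ref{lem: APPENDIX Asymptotics-of-the third derivative} gives $\Delta_{3}\left(\phi_{n}\right)\leq n\left[\E\left(\Delta^{\left(i\right)}\right)+o_{P}\left(1\right)\right]$. Combined with a Taylor expansion of the Hessian in $\boldsymbol{\theta}$, this yields, for any $\boldsymbol{\theta}$ with $\left\Vert \boldsymbol{\theta}-\boldsymbol{\theta}_{0}\right\Vert _{J^{1/2}}\leq\delta$,
\begin{equation}
H\phi_{n}\left(\boldsymbol{\theta}\right)\succeq H\phi_{n}\left(\boldsymbol{\theta}_{0}\right)-\delta\,\Delta_{3}\left(\phi_{n}\right)\cdot J\succeq\left(1-C\delta\right)n\,J
\end{equation}
in the $J$-quadratic sense, with high probability for some constant $C=\E\left(\Delta^{\left(i\right)}\right)+o_{P}\left(1\right)$. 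Fixing $\delta$ small enough (e.g. $\delta=1/\left(2C\right)$) then gives that $\phi_{n}$ is uniformly $\alpha n$-strongly convex on the $J$-ball $B_{\delta}\left(\boldsymbol{\theta}_{0}\right)$ with $\alpha>0$, eventually almost surely.

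Second, I would use this strong convexity to localize the minimizer. On the boundary sphere $\left\Vert \boldsymbol{\theta}-\boldsymbol{\theta}_{0}\right\Vert _{J^{1/2}}=\delta$, strong convexity and a first-order Taylor expansion give
\begin{equation}
\left(\boldsymbol{\theta}-\boldsymbol{\theta}_{0}\right)^{T}\nabla\phi_{n}\left(\boldsymbol{\theta}\right)\geq\left(\boldsymbol{\theta}-\boldsymbol{\theta}_{0}\right)^{T}\nabla\phi_{n}\left(\boldsymbol{\theta}_{0}\right)+\alpha n\,\delta^{2}\geq-\delta\cdot O_{P}\left(\sqrt{n}\right)+\alpha n\,\delta^{2},
\end{equation}
which is strictly positive with probability tending to $1$. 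By convexity of $\phi_{n}$, this implies that $\hat{\boldsymbol{\theta}}_{n}$ cannot lie on or outside this sphere, so $\hat{\boldsymbol{\theta}}_{n}\in B_{\delta}\left(\boldsymbol{\theta}_{0}\right)$ with probability tending to $1$. Once $\hat{\boldsymbol{\theta}}_{n}$ is known to lie in this ball, the $\alpha n$-strong convexity together with $\nabla\phi_{n}\left(\hat{\boldsymbol{\theta}}_{n}\right)=0$ and $\nabla\phi_{n}\left(\boldsymbol{\theta}_{0}\right)=O_{P}\left(\sqrt{n}\right)$ yields $\left\Vert \hat{\boldsymbol{\theta}}_{n}-\boldsymbol{\theta}_{0}\right\Vert =O_{P}\left(n^{-1/2}\right)$, which is strictly stronger than the claimed consistency.

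The main obstacle is the intermediate step: showing that $\hat{\boldsymbol{\theta}}_{n}$ does not escape to a region where the third-derivative bound no longer controls the Hessian. The argument above turns this into a single convex-analysis check on the boundary sphere, but it requires choosing $\delta$ independently of $n$, which in turn requires the scale normalization by $J$ built into the definition of $\Delta^{\left(i\right)}$ (eq.\ref{eq: control on the third derivatives of the prior}). This is exactly what makes the assumption $\E\left(\Delta^{\left(i\right)}\right)<\infty$ natural: it is precisely the quantity needed to guarantee that a single fixed-radius ball around $\boldsymbol{\theta}_{0}$ supports the local quadratic approximation uniformly in $n$.
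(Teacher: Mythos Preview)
Your argument is correct and takes a genuinely different route from the paper's. The paper follows the classical M-estimator template (van der Vaart, Th.~5.42): it introduces the deterministic limit $\mathcal{E}(\boldsymbol{\theta})=\E\left[NLL(\boldsymbol{\theta})\right]$, Taylor-expands both $\mathcal{E}$ and the difference $\tfrac{1}{n}\phi_{n}-\mathcal{E}$ on a ball of radius $\delta$, and then carefully chooses a shrinking sequence $\delta_{n}\to 0$ so that $\min_{\|\boldsymbol{\theta}-\boldsymbol{\theta}_{0}\|=\delta_{n}}\phi_{n}(\boldsymbol{\theta})>\phi_{n}(\boldsymbol{\theta}_{0})$ with probability tending to $1$; convexity then traps the global minimizer inside the shrinking ball. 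You instead exploit the global third-derivative bound more aggressively: Lemma~\ref{lem: APPENDIX Asymptotics-of-the third derivative} lets you transfer the Hessian lower bound from $\boldsymbol{\theta}_{0}$ to an entire \emph{fixed}-radius ball, giving uniform $\alpha n$-strong convexity there, and then a single gradient comparison on the boundary sphere suffices. Your route is shorter, avoids the auxiliary function $\mathcal{E}$ and the delicate $\delta_{n}$ construction, and delivers the $O_{P}(n^{-1/2})$ rate in the same breath---something the paper only obtains afterward in Prop.~\ref{prop: APPENDIX MAP is gaussian}. The paper's approach, on the other hand, is closer to the standard literature and would generalize more easily if the uniform third-derivative control were weakened to a merely local Lipschitz condition. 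Two minor cleanups: your $C=\E(\Delta^{(i)})+o_{P}(1)$ is random, so fix a deterministic $C>\E(\Delta^{(i)})$ before choosing $\delta$; and ``eventually almost surely'' should be ``with probability tending to $1$'' since Lemma~\ref{lem: APPENDIX Asymptotics-of-the third derivative} is stated in probability.
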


\begin{proof}
This claim would be completely standard if not for the presence of
the prior term $\phi_{0}$. The following (involved) proof is almost
identical to that of Th. 5.42 of \citet{van2000asymptotic}.

We will prove that a shrinking neighborhood of $\boldsymbol{\theta}_{0}$
must necessarily contain a local minimum of $\phi_{n}$. Since $\phi_{n}$
is convex, this local minimum is the unique global minimum of $\phi_{n}$:
$\hat{\boldsymbol{\theta}}_{n}$. Thus, $\hat{\boldsymbol{\theta}}_{n}\rightarrow\boldsymbol{\theta}_{0}$.

First, consider the expected value of $NLL_{i}$:
\begin{equation}
\mathcal{E}\left(\boldsymbol{\theta}\right)=\E\left(NLL_{i}\left(\boldsymbol{\theta}\right)\right)
\end{equation}
Note that $\mathcal{E}\left(\boldsymbol{\theta}\right)$ inherits
regularity from the assumptions we made on $NLL_{i}$. $\mathcal{E}$
is strictly convex and its third derivative is controlled:
\begin{equation}
\max_{\boldsymbol{\theta},v_{1},v_{2},v_{3}}\frac{\left|\mathcal{E}_{n}^{\left(3\right)}\left(\boldsymbol{\theta}\right)\left[J^{1/2}v_{1},J^{1/2}v_{2},J^{1/2}v_{3}\right]\right|}{\left\Vert v_{1}\right\Vert _{2}\left\Vert v_{2}\right\Vert _{2}\left\Vert v_{3}\right\Vert _{2}}\leq\E\left(\Delta^{\left(i\right)}\right)
\end{equation}

We can thus perform a Taylor expansion of $\mathcal{E}\left(\boldsymbol{\theta}\right)$
around $\boldsymbol{\theta}_{0}$, yielding:
\begin{equation}
\mathcal{E}\left(\boldsymbol{\theta}\right)-\mathcal{E}\left(\boldsymbol{\theta}_{0}\right)=\frac{1}{2}\left(\boldsymbol{\theta}-\boldsymbol{\theta}_{0}\right)^{T}J\left(\boldsymbol{\theta}-\boldsymbol{\theta}_{0}\right)+\mathcal{O}\left(\left\Vert \boldsymbol{\theta}-\boldsymbol{\theta}_{0}\right\Vert ^{3}\right)
\end{equation}
Thus, if we consider the points $\boldsymbol{\theta}$ on the surface
of a sphere of radius $\delta$, we have that the difference $\mathcal{E}\left(\boldsymbol{\theta}\right)-\mathcal{E}\left(\boldsymbol{\theta}_{0}\right)$
is exactly of order $\delta^{2}$ (where we have used the strict positivity
of $J$):
\begin{equation}
\left\Vert \boldsymbol{\theta}-\boldsymbol{\theta}_{0}\right\Vert _{2}=\delta\ \ \implies\ \ \mathcal{E}\left(\boldsymbol{\theta}\right)-\mathcal{E}\left(\boldsymbol{\theta}_{0}\right)=\Theta\left(\delta^{2}\right)
\end{equation}
The minimum is thus also of order $\delta^{2}$:
\[
\min_{\left\Vert \boldsymbol{\theta}-\boldsymbol{\theta}_{0}\right\Vert _{2}=\delta}\mathcal{E}\left(\boldsymbol{\theta}\right)-\mathcal{E}\left(\boldsymbol{\theta}_{0}\right)=\Theta\left(\delta^{2}\right)
\]

Now, consider the difference between $\frac{1}{n}\phi_{n}\left(\boldsymbol{\theta}\right)$
and its limit $\mathcal{E}\left(\boldsymbol{\theta}\right)$. Performing
a Taylor expansion of the difference leads to:
\begin{align}
\frac{1}{n}\phi_{n}\left(\boldsymbol{\theta}\right)-\mathcal{E}\left(\boldsymbol{\theta}\right) & =\frac{1}{n}\nabla^{T}\phi_{n}\left(\boldsymbol{\theta}_{0}\right)\left(\boldsymbol{\theta}-\boldsymbol{\theta}_{0}\right)+\frac{1}{2}\left(\boldsymbol{\theta}-\boldsymbol{\theta}_{0}\right)^{T}\left[\frac{1}{n}H\phi_{n}\left(\boldsymbol{\theta}_{0}\right)-J\right]\left(\boldsymbol{\theta}-\boldsymbol{\theta}_{0}\right)\nonumber \\
 & \ \ \ \ +\mathcal{O}_{P}\left(\left\Vert \boldsymbol{\theta}-\boldsymbol{\theta}_{0}\right\Vert ^{3}\right)
\end{align}
where the $\mathcal{O}_{P}$ term contains both the term derived in
Lemma \ref{lem: APPENDIX Asymptotics-of-the third derivative} and
the deterministic term due to $\mathcal{E}\left(\boldsymbol{\theta}\right)$.

The SLLN yields that:
\begin{align}
\frac{1}{n}\nabla^{T}\phi_{n}\left(\boldsymbol{\theta}_{0}\right) & =o_{P}\left(1\right)\\
\frac{1}{n}H\phi_{n}\left(\boldsymbol{\theta}_{0}\right)-J & =o_{P}\left(1\right)
\end{align}
Thus, for all $\boldsymbol{\theta}$ in a ball of radius $\delta$
centered on $\boldsymbol{\theta}_{0}$, we have:
\begin{equation}
\max_{\left\Vert \boldsymbol{\theta}-\boldsymbol{\theta}_{0}\right\Vert _{2}\leq\delta}\frac{1}{n}\phi_{n}\left(\boldsymbol{\theta}\right)-\mathcal{E}\left(\boldsymbol{\theta}\right)=\delta o_{P}\left(1\right)+\delta^{2}o_{P}\left(1\right)+\delta^{3}\mathcal{O}_{p}\left(1\right)
\end{equation}
where the error term is uniform in $\delta$.

Consider finally the ratio of the distance between $\frac{1}{n}\phi_{n}\left(\boldsymbol{\theta}\right)$
and $\mathcal{E}\left(\boldsymbol{\theta}\right)$ inside the ball
and the minimum distance between $\mathcal{E}\left(\boldsymbol{\theta}\right)$
on the sphere and $\mathcal{E}\left(\boldsymbol{\theta}_{0}\right)$:
\begin{align}
R_{\delta} & =\frac{\max_{\left\Vert \boldsymbol{\theta}-\boldsymbol{\theta}_{0}\right\Vert _{2}\leq\delta}\left[\frac{1}{n}\phi_{n}\left(\boldsymbol{\theta}\right)-\mathcal{E}\left(\boldsymbol{\theta}\right)\right]}{\min_{\left\Vert \boldsymbol{\theta}-\boldsymbol{\theta}_{0}\right\Vert _{2}=\delta}\left[\mathcal{E}\left(\boldsymbol{\theta}\right)-\mathcal{E}\left(\boldsymbol{\theta}_{0}\right)\right]}\\
 & =\frac{\delta o_{P}\left(1\right)+\delta^{2}o_{P}\left(1\right)+\delta^{3}\mathcal{O}_{P}\left(1\right)}{\Theta\left(\delta^{2}\right)}\\
 & =\delta^{-1}o_{P}\left(1\right)+o_{P}\left(1\right)+\delta\mathcal{O}_{P}\left(1\right)
\end{align}

Now, for every $\delta$ and every $t>0$, we have:
\[
\mathbb{P}\left(\delta^{-1}o_{P}\left(1\right)+o_{P}\left(1\right)\geq t\delta\right)\rightarrow0
\]
We can thus find a sequence a sequence of $\delta_{n}$ tending to
$0$ sufficiently slowly so that:
\[
\delta_{n}^{-1}o_{P}\left(1\right)+o_{P}\left(1\right)=\delta_{n}o_{P}\left(1\right)
\]
This implies convergence to $0$ of the ratio $R_{\delta_{n}}$:
\begin{align}
R_{\delta_{n}} & =\delta_{n}^{-1}o_{P}\left(1\right)+o_{P}\left(1\right)+\delta_{n}\mathcal{O}_{P}\left(1\right)\\
 & =\delta_{n}o_{P}\left(1\right)+\delta_{n}\mathcal{O}_{P}\left(1\right)\\
 & =o_{P}\left(1\right)
\end{align}

Finally, consider the difference between the minimum value of $\frac{1}{n}\phi_{n}\left(\boldsymbol{\theta}\right)$
on the sphere of radius $\delta_{n}$ and $\frac{1}{n}\phi_{n}\left(\boldsymbol{\theta}_{0}\right)$:
\begin{align}
\min_{\left\Vert \boldsymbol{\theta}-\boldsymbol{\theta}_{0}\right\Vert _{2}=\delta_{n}}\frac{1}{n}\phi_{n}\left(\boldsymbol{\theta}\right)-\frac{1}{n}\phi_{n}\left(\boldsymbol{\theta}_{0}\right) & =\min_{\left\Vert \boldsymbol{\theta}-\boldsymbol{\theta}_{0}\right\Vert _{2}=\delta_{n}}\frac{1}{n}\phi_{n}\left(\boldsymbol{\theta}\right)-\mathcal{E}\left(\boldsymbol{\theta}\right)+\mathcal{E}\left(\boldsymbol{\theta}\right)-\mathcal{E}\left(\boldsymbol{\theta}_{0}\right)+\mathcal{E}\left(\boldsymbol{\theta}_{0}\right)-\frac{1}{n}\phi_{n}\left(\boldsymbol{\theta}_{0}\right)\\
 & =\min_{\left\Vert \boldsymbol{\theta}-\boldsymbol{\theta}_{0}\right\Vert _{2}=\delta_{n}}\frac{1}{n}\phi_{n}\left(\boldsymbol{\theta}\right)-\mathcal{E}\left(\boldsymbol{\theta}\right)+\mathcal{E}\left(\boldsymbol{\theta}\right)-\mathcal{E}\left(\boldsymbol{\theta}_{0}\right)+o_{P}\left(1\right)\\
 & =\min_{\left\Vert \boldsymbol{\theta}-\boldsymbol{\theta}_{0}\right\Vert _{2}=\delta_{n}}\mathcal{E}\left(\boldsymbol{\theta}\right)-\mathcal{E}\left(\boldsymbol{\theta}_{0}\right)\left[1+\frac{\frac{1}{n}\phi_{n}\left(\boldsymbol{\theta}\right)-\mathcal{E}\left(\boldsymbol{\theta}\right)}{\mathcal{E}\left(\boldsymbol{\theta}\right)-\mathcal{E}\left(\boldsymbol{\theta}_{0}\right)}\right]+o_{P}\left(1\right)
\end{align}
where the ratio is dominated by $R_{\delta_{n}}$ and thus converges
to $0$:
\begin{equation}
\left|\frac{\frac{1}{n}\phi_{n}\left(\boldsymbol{\theta}\right)-\mathcal{E}\left(\boldsymbol{\theta}\right)}{\mathcal{E}\left(\boldsymbol{\theta}\right)-\mathcal{E}\left(\boldsymbol{\theta}_{0}\right)}\right|\leq R_{\delta_{n}}=o_{P}\left(1\right)
\end{equation}

We thus have:
\begin{equation}
\min_{\left\Vert \boldsymbol{\theta}-\boldsymbol{\theta}_{0}\right\Vert _{2}=\delta_{n}}\frac{1}{n}\phi_{n}\left(\boldsymbol{\theta}\right)-\frac{1}{n}\phi_{n}\left(\boldsymbol{\theta}_{0}\right)=\min_{\left\Vert \boldsymbol{\theta}-\boldsymbol{\theta}_{0}\right\Vert _{2}=\delta_{n}}\mathcal{E}\left(\boldsymbol{\theta}\right)-\mathcal{E}\left(\boldsymbol{\theta}_{0}\right)+o_{P}\left(1\right)
\end{equation}
where the right hand side is strictly positive.

Thus, in the limit $n\rightarrow\infty$, with probability tending
to $1$, the following event occurs:
\begin{equation}
\frac{1}{n}\phi_{n}\left(\boldsymbol{\theta}_{0}\right)<\min_{\left\Vert \boldsymbol{\theta}-\boldsymbol{\theta}_{0}\right\Vert _{2}=\delta_{n}}\frac{1}{n}\phi_{n}\left(\boldsymbol{\theta}\right)
\end{equation}
The function $\phi_{n}\left(\boldsymbol{\theta}\right)$ thus has
a local minimum in the sphere of radius $\delta_{n}$ with probability
tending to $1$. Since $\phi_{n}$ is furthermore convex, this local
minimum is the global minimum $\hat{\boldsymbol{\theta}}_{n}$.

We have thus established:
\begin{align}
\left\Vert \hat{\boldsymbol{\theta}}_{n}-\boldsymbol{\theta}_{0}\right\Vert  & \leq\delta_{n}\ \text{with probability tending to 1}\\
 & =o_{P}\left(1\right)
\end{align}
which concludes the proof.
\end{proof}
\begin{prop}
\textbf{$\hat{\boldsymbol{\theta}}_{n}$} is asymptotically Gaussian.\label{prop: APPENDIX MAP is gaussian}

\textbf{Requires:} Lemmas \ref{lem:Asymptotics-of-phi_n (theta_0)},
\ref{lem: APPENDIX Asymptotics-of-the third derivative} and \ref{lem: APPENDIX MAP is consistent}.

In the limit $n\rightarrow\infty$, if $\hat{\boldsymbol{\theta}}_{n}$
is consistent then:
\begin{equation}
\sqrt{n}\left(\hat{\boldsymbol{\theta}}_{n}-\boldsymbol{\theta}_{0}\right)\rightarrow\mathcal{N}\left(0,J^{-1}IJ^{-1}\right)
\end{equation}
\end{prop}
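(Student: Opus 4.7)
The plan is to follow the classical M-estimator argument: use the first-order optimality condition $\nabla \phi_n(\hat{\boldsymbol{\theta}}_n)=0$, Taylor-expand around $\boldsymbol{\theta}_0$, solve for $\sqrt{n}(\hat{\boldsymbol{\theta}}_n-\boldsymbol{\theta}_0)$ and then apply the asymptotic results of Lemmas \ref{lem:Asymptotics-of-phi_n (theta_0)} and \ref{lem: APPENDIX Asymptotics-of-the third derivative} together with consistency (Lemma \ref{lem: APPENDIX MAP is consistent}) and Slutsky's theorem. Since $\phi_n$ is strictly convex, $\hat{\boldsymbol{\theta}}_n$ is an interior minimum and the gradient vanishes there.

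First I would use the integral form of the mean value theorem for the gradient, which avoids manipulating a scalar remainder:
\begin{equation}
0 = \nabla\phi_n(\hat{\boldsymbol{\theta}}_n) = \nabla\phi_n(\boldsymbol{\theta}_0) + \left[\int_0^1 H\phi_n\!\left(\boldsymbol{\theta}_0 + t(\hat{\boldsymbol{\theta}}_n-\boldsymbol{\theta}_0)\right)dt\right](\hat{\boldsymbol{\theta}}_n-\boldsymbol{\theta}_0).
\end{equation}
Denote the bracketed matrix by $\bar H_n$. Dividing by $\sqrt{n}$ and rearranging,
\begin{equation}
\sqrt{n}(\hat{\boldsymbol{\theta}}_n-\boldsymbol{\theta}_0) = -\left[\tfrac{1}{n}\bar H_n\right]^{-1}\tfrac{1}{\sqrt n}\nabla\phi_n(\boldsymbol{\theta}_0),
\end{equation}
provided $\frac{1}{n}\bar H_n$ is invertible with high probability. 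Lemma \ref{lem:Asymptotics-of-phi_n (theta_0)} directly gives $\frac{1}{\sqrt n}\nabla\phi_n(\boldsymbol{\theta}_0)\to\mathcal N(0,I)$, and the plan is then to show $\frac{1}{n}\bar H_n \to J$ in probability; Slutsky's theorem yields the conclusion $\sqrt{n}(\hat{\boldsymbol{\theta}}_n-\boldsymbol{\theta}_0)\to -J^{-1}\mathcal N(0,I)=\mathcal N(0,J^{-1}IJ^{-1})$.

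To control $\frac{1}{n}\bar H_n$, I would compare it to $\frac{1}{n}H\phi_n(\boldsymbol{\theta}_0)$: the third-derivative bound of Lemma \ref{lem: APPENDIX Asymptotics-of-the third derivative} gives
\begin{equation}
\left\|\tfrac{1}{n}\bar H_n - \tfrac{1}{n}H\phi_n(\boldsymbol{\theta}_0)\right\| \;\leq\; \tfrac{1}{n}\cdot\tfrac{1}{2}\cdot\Delta_3(\phi_n)\cdot \|\hat{\boldsymbol{\theta}}_n-\boldsymbol{\theta}_0\| \;=\; \mathcal O_P(1)\cdot o_P(1) \;=\; o_P(1),
\end{equation}
where the consistency of $\hat{\boldsymbol{\theta}}_n$ supplies the $o_P(1)$ factor. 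Combined with $\frac{1}{n}H\phi_n(\boldsymbol{\theta}_0)\to J$ from Lemma \ref{lem:Asymptotics-of-phi_n (theta_0)}, this gives $\frac{1}{n}\bar H_n = J + o_P(1)$, which is invertible with probability tending to $1$ since $J>0$.

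The main subtlety, though mild, is ensuring the cubic remainder does not inflate once rescaled by $\sqrt n$: the third derivative is $\mathcal O_P(n)$, so a careless accounting would leave a term of order $\sqrt n\,\|\hat{\boldsymbol{\theta}}_n-\boldsymbol{\theta}_0\|^2$, which is not automatically negligible from consistency alone. The integral-form Taylor expansion circumvents this by folding the remainder into $\bar H_n$ itself, so that what needs to be small is the operator-norm deviation of $\frac{1}{n}\bar H_n$ from $J$ rather than a term scaled by $\sqrt n$; this only requires the product $\frac{1}{n}\Delta_3(\phi_n)\cdot\|\hat{\boldsymbol{\theta}}_n-\boldsymbol{\theta}_0\|$ to vanish, which is immediate from the two preceding lemmas. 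After this observation the conclusion follows mechanically from Slutsky.
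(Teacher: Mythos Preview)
Your proof is correct and follows essentially the same classical M-estimator argument as the paper: exploit $\nabla\phi_n(\hat{\boldsymbol{\theta}}_n)=0$, Taylor-expand around $\boldsymbol{\theta}_0$, use consistency to kill the higher-order term, and invoke Slutsky together with Lemmas \ref{lem:Asymptotics-of-phi_n (theta_0)} and \ref{lem: APPENDIX Asymptotics-of-the third derivative}. The only difference is cosmetic: the paper writes out the Lagrange-form remainder with $\phi_n^{(3)}(\tilde{\boldsymbol{\theta}}_n)$ and then absorbs it into the Hessian factor, whereas you use the integral mean-value form $\bar H_n=\int_0^1 H\phi_n(\boldsymbol{\theta}_0+t(\hat{\boldsymbol{\theta}}_n-\boldsymbol{\theta}_0))\,dt$, which packages the remainder into a perturbed Hessian from the outset and makes the $o_P(1)$ accounting a touch cleaner. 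One minor point: the quantity $\Delta_3(\phi_n)$ in Lemma \ref{lem: APPENDIX Asymptotics-of-the third derivative} is $J^{1/2}$-normalized rather than the raw max norm you implicitly use when bounding $\lVert\tfrac{1}{n}\bar H_n-\tfrac{1}{n}H\phi_n(\boldsymbol{\theta}_0)\rVert$, but since $J$ is a fixed positive-definite matrix the two differ only by constant factors of the eigenvalues of $J^{\pm1/2}$, so the $\mathcal O_P(1)\cdot o_P(1)=o_P(1)$ conclusion is unaffected.
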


\begin{proof}
Establishing the asymptotic Gaussianity is straightforward once consistency
is established. The following proof structure is almost identical
to Th. 5.41 of \citet{van2000asymptotic}.

We start from a Taylor expansion of $\nabla\phi_{n}\left(\hat{\boldsymbol{\theta}}_{n}\right)=0$:
\begin{equation}
0=\nabla\phi_{n}\left(\boldsymbol{\theta}_{0}\right)+H\phi_{n}\left(\boldsymbol{\theta}_{0}\right)\left(\hat{\boldsymbol{\theta}}_{n}-\boldsymbol{\theta}_{0}\right)+\frac{1}{2}\sum_{i=1}^{p}\phi_{n}^{\left(3\right)}\left(\tilde{\boldsymbol{\theta}}_{n}\right)\left[\boldsymbol{e}_{i},\hat{\boldsymbol{\theta}}_{n}-\boldsymbol{\theta}_{0},\hat{\boldsymbol{\theta}}_{n}-\boldsymbol{\theta}_{0}\right]\boldsymbol{e}_{i}
\end{equation}
where $\boldsymbol{e}_{i}$ are the canonical basis of $\R^{p}$ and
$\tilde{\boldsymbol{\theta}}_{n}$ stands somewhere on the line from
$\hat{\boldsymbol{\theta}}_{n}$ to $\boldsymbol{\theta}_{0}$. By
dividing the expression by $n$, we obtain:
\begin{equation}
\frac{1}{n}\nabla\phi_{n}\left(\boldsymbol{\theta}_{0}\right)+\frac{1}{n}H\phi_{n}\left(\boldsymbol{\theta}_{0}\right)\left(\hat{\boldsymbol{\theta}}_{n}-\boldsymbol{\theta}_{0}\right)+\frac{1}{2}\sum_{i=1}^{p}\frac{1}{n}\phi_{n}^{\left(3\right)}\left(\tilde{\boldsymbol{\theta}}_{n}\right)\left[\boldsymbol{e}_{i},\hat{\boldsymbol{\theta}}_{n}-\boldsymbol{\theta}_{0},\hat{\boldsymbol{\theta}}_{n}-\boldsymbol{\theta}_{0}\right]\boldsymbol{e}_{i}=0
\end{equation}
where we recognize that: $\frac{1}{n}\nabla\phi_{n}\left(\boldsymbol{\theta}_{0}\right)$
is of order $n^{-1/2}$ (Lemma \ref{lem:Asymptotics-of-phi_n (theta_0)}),
$\frac{1}{n}H\phi_{n}\left(\boldsymbol{\theta}_{0}\right)=J+o_{P}\left(1\right)$
(Lemma \ref{lem:Asymptotics-of-phi_n (theta_0)}) and $\frac{1}{n}\phi_{n}^{\left(3\right)}\left(\tilde{\boldsymbol{\theta}}_{n}\right)$
is of order $\mathcal{O}_{P}\left(1\right)$ (Lemma \ref{lem: APPENDIX Asymptotics-of-the third derivative}).
We can thus rewrite the equation as:
\begin{equation}
-\frac{1}{n}\nabla\phi_{n}\left(\boldsymbol{\theta}_{0}\right)=\left(J+o_{P}\left(1\right)+\mathcal{O}_{P}\left(1\right)\left(\hat{\boldsymbol{\theta}}_{n}-\boldsymbol{\theta}_{0}\right)\right)\left(\hat{\boldsymbol{\theta}}_{n}-\boldsymbol{\theta}_{0}\right)
\end{equation}
which we can further simplify using Slutsky's Theorem: because of
the consistency of $\hat{\boldsymbol{\theta}}_{n}$, we have:
\begin{equation}
\mathcal{O}_{P}\left(1\right)\left(\hat{\boldsymbol{\theta}}_{n}-\boldsymbol{\theta}_{0}\right)=o_{P}\left(1\right)
\end{equation}

Thus, we have the further simplification:
\begin{equation}
-\frac{1}{n}\nabla\phi_{n}\left(\boldsymbol{\theta}_{0}\right)=\left(J+o_{P}\left(1\right)\right)\left(\hat{\boldsymbol{\theta}}_{n}-\boldsymbol{\theta}_{0}\right)
\end{equation}
\end{proof}
In the limit, the matrix $J+o_{P}\left(1\right)$ will be invertible
with probability $1$. We then finally have:
\begin{align}
\hat{\boldsymbol{\theta}}_{n}-\boldsymbol{\theta}_{0} & =-\frac{1}{n}\left[J+o_{P}\left(1\right)\right]^{-1}\nabla\phi_{n}\left(\boldsymbol{\theta}_{0}\right)\\
 & =-\frac{1}{n}J^{-1}\nabla\phi_{n}\left(\boldsymbol{\theta}_{0}\right)+\frac{1}{n}o_{P}\left(1\right)\nabla\phi_{n}\left(\boldsymbol{\theta}_{0}\right)\\
 & =-\frac{1}{\sqrt{n}}J^{-1}\left[\frac{1}{\sqrt{n}}\nabla\phi_{n}\left(\boldsymbol{\theta}_{0}\right)\right]+o_{P}\left(n^{-1/2}\right)
\end{align}
which is indeed asymptotically Gaussian with mean 0 and covariance
$\frac{1}{n}J^{-1}IJ^{-1}$ (Lemma \ref{lem:Asymptotics-of-phi_n (theta_0)}).

\subsection{Proofs: properties of $\phi_{n}$ and $\tilde{\phi}_{n}$.}
\begin{lem}
Properties of $\phi_{n}$.\label{lem:Properties-of-phi_n}

\textbf{Requires:} Prop.\ref{prop: APPENDIX MAP is gaussian}.

The log-curvature at the MAP estimate grows linearly:
\begin{equation}
H\phi_{n}\left(\hat{\boldsymbol{\theta}}_{n}\right)=nJ+o_{P}\left(n\right)
\end{equation}
Thus, the covariance of the Laplace approximation goes to 0 as:
\begin{equation}
\Sigma_{n}=\left[nJ\right]^{-1}+o_{P}\left(n^{-1}\right)
\end{equation}
\end{lem}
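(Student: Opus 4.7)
The plan is a direct Taylor expansion of the Hessian around the pseudo-true parameter $\boldsymbol{\theta}_0$, combined with the asymptotic control already established for $\hat{\boldsymbol{\theta}}_n$, the Hessian at $\boldsymbol{\theta}_0$, and the third-derivative tensor. Specifically, I would decompose
\begin{equation}
H\phi_n(\hat{\boldsymbol{\theta}}_n) = H\phi_n(\boldsymbol{\theta}_0) + \bigl[H\phi_n(\hat{\boldsymbol{\theta}}_n) - H\phi_n(\boldsymbol{\theta}_0)\bigr]
\end{equation}
and treat each piece separately. For the first piece, Lemma \ref{lem:Asymptotics-of-phi_n (theta_0)} immediately yields $H\phi_n(\boldsymbol{\theta}_0) = nJ + o_P(n)$.

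For the correction term I would apply Taylor's theorem with integral remainder to each bilinear form: for any test directions $u_1, u_2$,
\begin{equation}
H\phi_n(\hat{\boldsymbol{\theta}}_n)[u_1,u_2] - H\phi_n(\boldsymbol{\theta}_0)[u_1,u_2] = \int_0^1 \phi_n^{(3)}\bigl(\boldsymbol{\theta}_0 + t(\hat{\boldsymbol{\theta}}_n - \boldsymbol{\theta}_0)\bigr)\bigl[u_1, u_2, \hat{\boldsymbol{\theta}}_n - \boldsymbol{\theta}_0\bigr]\,dt.
\end{equation}
Writing each input as $u_i = J^{1/2}(J^{-1/2}u_i)$ to match the normalization used in the definition of $\Delta_3(\phi_n)$, the integrand is bounded by $\Delta_3(\phi_n)\,\|J^{-1/2}u_1\|_2\,\|J^{-1/2}u_2\|_2\,\|J^{-1/2}(\hat{\boldsymbol{\theta}}_n - \boldsymbol{\theta}_0)\|_2$. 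Lemma \ref{lem: APPENDIX Asymptotics-of-the third derivative} gives $\Delta_3(\phi_n) = \mathcal{O}_P(n)$ and Proposition \ref{prop: APPENDIX MAP is gaussian} gives $\|\hat{\boldsymbol{\theta}}_n - \boldsymbol{\theta}_0\|_2 = \mathcal{O}_P(n^{-1/2})$, so the correction is entrywise $\mathcal{O}_P(n^{1/2}) = o_P(n)$. Since $p$ is fixed, the entrywise bound transfers to the operator norm and gives $H\phi_n(\hat{\boldsymbol{\theta}}_n) = nJ + o_P(n)$.

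The covariance statement then follows by matrix inversion. Factoring out $n$, one has $H\phi_n(\hat{\boldsymbol{\theta}}_n) = n\bigl(J + o_P(1)\bigr)$, and since $J$ is strictly positive definite by assumption, matrix inversion is continuously differentiable at $J$, which gives
\begin{equation}
\Sigma_n = \bigl[H\phi_n(\hat{\boldsymbol{\theta}}_n)\bigr]^{-1} = n^{-1}\bigl(J + o_P(1)\bigr)^{-1} = n^{-1}J^{-1} + o_P(n^{-1}) = [nJ]^{-1} + o_P(n^{-1}).
\end{equation}

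There is no real obstacle here: this is essentially a standard perturbation argument combining consistency of the MAP, a CLT-level rate for its fluctuations, and uniform control of the third derivative. The only delicate point is bookkeeping the $J^{1/2}$-normalization built into the definition of $\Delta_3(\phi_n)$, which makes the bound on the correction term cleanest when test vectors are expressed in $J^{1/2}$-coordinates; once that is done, the rest is routine.
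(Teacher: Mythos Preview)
Your proposal is correct and follows essentially the same route as the paper: Taylor-expand the Hessian around $\boldsymbol{\theta}_0$, control the leading term via Lemma~\ref{lem:Asymptotics-of-phi_n (theta_0)}, bound the remainder as $\mathcal{O}_P(n)\cdot\mathcal{O}_P(n^{-1/2})=o_P(n)$ using the third-derivative control and the $n^{-1/2}$ rate for $\hat{\boldsymbol{\theta}}_n-\boldsymbol{\theta}_0$, then invert. The only cosmetic differences are that the paper uses the mean-value (Lagrange) form of the remainder rather than the integral form, and does not spell out the $J^{1/2}$-normalization bookkeeping as explicitly as you do.
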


\begin{proof}
This result is straightforward.

A Taylor expansion yields:
\begin{equation}
\frac{1}{n}H\phi_{n}\left(\hat{\boldsymbol{\theta}}_{n}\right)=\frac{1}{n}H\phi_{n}\left(\boldsymbol{\theta}_{0}\right)+\frac{1}{n}\sum_{i,j}^{p}\phi_{n}^{\left(3\right)}\left(\tilde{\boldsymbol{\theta}}_{n}\right)\left[\boldsymbol{e}_{i},\boldsymbol{e}_{j},\hat{\boldsymbol{\theta}}_{n}-\boldsymbol{\theta}_{0}\right]\boldsymbol{e}_{i}\boldsymbol{e}_{j}
\end{equation}

The second term is a product of a $\mathcal{O}_{P}\left(1\right)$
term due to the third derivative and a $\mathcal{O}_{P}\left(n^{-1/2}\right)$
term due to $\hat{\boldsymbol{\theta}}_{n}-\boldsymbol{\theta}_{0}$.
Thus:
\begin{align}
\frac{1}{n}H\phi_{n}\left(\hat{\boldsymbol{\theta}}_{n}\right) & =\frac{1}{n}H\phi_{n}\left(\boldsymbol{\theta}_{0}\right)+\mathcal{O}_{P}\left(n^{-1/2}\right)\\
 & =J+o_{P}\left(1\right)\\
H\phi_{n}\left(\hat{\boldsymbol{\theta}}_{n}\right) & =nJ+o_{P}\left(n\right)
\end{align}

The covariance of the Laplace approximation is the inverse of the
log-curvature:
\begin{align}
\Sigma_{n} & =\left[H\phi_{n}\left(\hat{\boldsymbol{\theta}}_{n}\right)\right]^{-1}\\
 & =\left[nJ+o_{P}\left(1\right)\right]^{-1}\\
 & =\left[nJ\right]^{-1}+o_{P}\left(n^{-1}\right)
\end{align}
which concludes the proof.
\end{proof}
\begin{lem}
Properties of $\tilde{\phi}_{n}$.\label{lem:Properties-of- standardized phi_n}

\textbf{Requires:} Prop.\ref{prop: APPENDIX MAP is gaussian} and
Lemmas \ref{lem: APPENDIX Asymptotics-of-the third derivative} and
\ref{lem:Properties-of-phi_n}.

The third-derivative of the standardized log-density $\tilde{\phi}_{n}$
decays at rate $n^{-1/2}$:
\begin{align}
\max_{\boldsymbol{\theta},v_{1},v_{2},v_{3}}\frac{\left|\tilde{\phi}_{n}^{\left(3\right)}\left(\boldsymbol{\theta}\right)\left[v_{1},v_{2},v_{3}\right]\right|}{\left\Vert v_{1}\right\Vert _{2}\left\Vert v_{2}\right\Vert _{2}\left\Vert v_{3}\right\Vert _{2}} & =\max_{\boldsymbol{\theta},v_{1},v_{2},v_{3}}\frac{\left|\phi_{n}^{\left(3\right)}\left(\boldsymbol{\theta}\right)\left[\Sigma_{n}^{-1/2}v_{1},\Sigma_{n}^{-1/2}v_{2},\Sigma_{n}^{-1/2}v_{3}\right]\right|}{\left\Vert v_{1}\right\Vert _{2}\left\Vert v_{2}\right\Vert _{2}\left\Vert v_{3}\right\Vert _{2}}\\
 & =n^{-1/2}\E\left(\Delta^{\left(i\right)}\right)+o_{P}\left(n^{-1/2}\right)
\end{align}
\end{lem}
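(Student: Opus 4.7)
My plan is to combine the algebraic chain rule for the standardization $\tilde{\boldsymbol{\theta}} \mapsto \boldsymbol{\theta} = \hat{\boldsymbol{\theta}}_n + \Sigma_n^{1/2}\tilde{\boldsymbol{\theta}}$ with the two asymptotic facts already available. Differentiating $\tilde{\phi}_n(\tilde{\boldsymbol{\theta}}) = \phi_n(\hat{\boldsymbol{\theta}}_n + \Sigma_n^{1/2}\tilde{\boldsymbol{\theta}})$ three times and reading off the trilinear action on test vectors immediately yields the equality stated in the lemma between the max-norm of $\tilde{\phi}_n^{(3)}$ and that of $\phi_n^{(3)}$ after pre-multiplying each of its three arguments by the standardization square-root. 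This step is purely algebraic and reduces the problem to controlling the latter quantity.

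Next I would plug in the approximation $\Sigma_n = (nJ)^{-1} + o_P(n^{-1})$ from Lemma~\ref{lem:Properties-of-phi_n}, which gives the corresponding square root as $n^{-1/2}J^{-1/2}(1+o_P(1))$ in operator norm. Trilinearity of the third derivative pulls the $n^{-3/2}$ prefactor cleanly outside the tensor, leaving the max-norm of $\phi_n^{(3)}$ evaluated at arguments proportional to $J^{-1/2}v_i$. A linear substitution on the optimizing direction vectors converts these into $J^{1/2}w_i$ up to fixed multiplicative constants depending only on the spectrum of $J$, so that the bound of Lemma~\ref{lem: APPENDIX Asymptotics-of-the third derivative} applies directly and controls the resulting max-norm by $n\E\left(\Delta^{(i)}\right) + o_P(n)$. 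Multiplying this by the outer $n^{-3/2}$ factor produces the claimed $n^{-1/2}\E\left(\Delta^{(i)}\right) + o_P(n^{-1/2})$ rate.

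The one bookkeeping point requiring care will be ensuring that the $o_P(1)$ discrepancy between $\Sigma_n^{1/2}$ and its deterministic limit extends uniformly to a multiplicative perturbation of the trilinear form across all base points $\boldsymbol{\theta}$ and all direction vectors $v_i$. Because the perturbation enters only as an operator-norm error on the rescaling matrix, a Slutsky-type argument applied at the level of operator norms shows that it contributes a scalar multiplier of the form $1+o_P(1)$ that commutes with the supremum. The remainder is then $o_P(1)$ times the $\mathcal{O}_P(n^{-1/2})$ leading order, hence itself $o_P(n^{-1/2})$, and the proof concludes. No deeper analytic obstruction appears, since the uniformity required in $\boldsymbol{\theta}$ is already built into the bound of Lemma~\ref{lem: APPENDIX Asymptotics-of-the third derivative}.
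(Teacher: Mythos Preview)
Your proposal is correct and follows essentially the same route as the paper: both arguments use the chain rule to rewrite the max-norm of $\tilde{\phi}_n^{(3)}$ in terms of $\phi_n^{(3)}$ with $\Sigma_n^{1/2}$-rescaled inputs, then invoke $\Sigma_n^{1/2}=n^{-1/2}J^{-1/2}(1+o_P(1))$ from Lemma~\ref{lem:Properties-of-phi_n}, substitute to reach the $J^{1/2}w_i$ form, and apply Lemma~\ref{lem: APPENDIX Asymptotics-of-the third derivative}. Your remark that the substitution produces the bound only ``up to fixed multiplicative constants depending on the spectrum of $J$'' is in fact more accurate than the paper's displayed equality with the bare constant $\E(\Delta^{(i)})$; either way the conclusion $\Delta_3=\mathcal{O}_P(n^{-1/2})$, which is all that is used downstream, goes through.
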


\begin{proof}
Lemma \ref{lem: APPENDIX Asymptotics-of-the third derivative} yields
that:
\begin{equation}
\max_{\boldsymbol{\theta},v_{1},v_{2},v_{3}}\frac{\left|\phi_{n}^{\left(3\right)}\left(\boldsymbol{\theta}\right)\left[J^{1/2}v_{1},J^{1/2}v_{2},J^{1/2}v_{3}\right]\right|}{\left\Vert v_{1}\right\Vert _{2}\left\Vert v_{2}\right\Vert _{2}\left\Vert v_{3}\right\Vert _{2}}\leq n\E\left(\Delta^{\left(i\right)}\right)+o_{P}\left(1\right)
\end{equation}

We thus just need to replace the $\Sigma_{n}^{-1/2}$ terms by $J^{1/2}$:
\begin{align}
 & \max_{\boldsymbol{\theta},v_{1},v_{2},v_{3}}\frac{\left|\phi_{n}^{\left(3\right)}\left(\boldsymbol{\theta}\right)\left[\Sigma_{n}^{-1/2}v_{1},\Sigma_{n}^{-1/2}v_{2},\Sigma_{n}^{-1/2}v_{3}\right]\right|}{\left\Vert v_{1}\right\Vert _{2}\left\Vert v_{2}\right\Vert _{2}\left\Vert v_{3}\right\Vert _{2}}\nonumber \\
 & =\max_{\boldsymbol{\theta},w_{1},w_{2},w_{3}}\frac{\left|\phi_{n}^{\left(3\right)}\left(\boldsymbol{\theta}\right)\left[J^{1/2}w_{1},J^{1/2}w_{2},J^{1/2}w_{3}\right]\right|}{\left\Vert J^{1/2}\Sigma_{n}^{1/2}w_{1}\right\Vert _{2}\left\Vert J^{1/2}\Sigma_{n}^{1/2}w_{2}\right\Vert _{2}\left\Vert J^{1/2}\Sigma_{n}^{1/2}w_{3}\right\Vert _{2}}
\end{align}
where $w_{i}=J^{-1/2}\Sigma_{n}^{-1/2}v_{1}$.

In order to restore the $\left\Vert w_{i}\right\Vert $ terms in the
numerator, we need to bound the ratio of the norms:
\begin{equation}
\frac{\left\Vert w_{1}\right\Vert }{\left\Vert J^{1/2}\Sigma_{n}^{1/2}w_{1}\right\Vert _{2}}\leq\text{eigmin}\left(J^{1/2}\Sigma_{n}^{1/2}\right)
\end{equation}
which, since $\Sigma_{n}=\left[nJ\right]^{-1}+o_{P}\left(n^{-1}\right)$,
we have:
\begin{align}
\Sigma_{n}^{1/2} & =\left[nJ\right]^{-1/2}+o_{P}\left(n^{-1/2}\right)\\
\text{eigmin}\left(J^{1/2}\Sigma_{n}^{1/2}\right) & =n^{-1/2}+o_{P}\left(n^{-1/2}\right)
\end{align}
and thus:
\begin{align}
 & \max_{\boldsymbol{\theta},v_{1},v_{2},v_{3}}\frac{\left|\phi_{n}^{\left(3\right)}\left(\boldsymbol{\theta}\right)\left[\Sigma_{n}^{-1/2}v_{1},\Sigma_{n}^{-1/2}v_{2},\Sigma_{n}^{-1/2}v_{3}\right]\right|}{\left\Vert v_{1}\right\Vert _{2}\left\Vert v_{2}\right\Vert _{2}\left\Vert v_{3}\right\Vert _{2}}\nonumber \\
 & \leq\left[n^{-3/2}+o_{P}\left(n^{-3/2}\right)\right]\max_{\boldsymbol{\theta},w_{1},w_{2},w_{3}}\frac{\left|\phi_{n}^{\left(3\right)}\left(\boldsymbol{\theta}\right)\left[J^{1/2}w_{1},J^{1/2}w_{2},J^{1/2}w_{3}\right]\right|}{\left\Vert w_{1}\right\Vert _{2}\left\Vert w_{2}\right\Vert _{2}\left\Vert w_{3}\right\Vert _{2}}\\
 & \leq\left[n^{-1/2}+o_{P}\left(n^{-1/2}\right)\right]\E\left(\Delta^{\left(i\right)}\right)\\
 & \leq n^{-1/2}\E\left(\Delta^{\left(i\right)}\right)+o_{P}\left(n^{-1/2}\right)
\end{align}
which concludes the proof.
\end{proof}

\subsection{Proof of Cor.\ref{cor:Bernstein-von-Mises:-IID case}}
\begin{proof}
We can now finally prove the Main text corollary \ref{cor:Bernstein-von-Mises:-IID case}.

First, let us verify that we can indeed apply Theorem \ref{thm:A-deterministic-BvM theorem}.
The dimensionality is fixed and Lemma \ref{lem:Properties-of- standardized phi_n}
shows that $\Delta_{3}=\mathcal{O}_{P}\left(n^{-1/2}\right)$. We
thus indeed have that $\Delta_{3}p^{3/2}\rightarrow0$.

Thus, Theorem \ref{thm:A-deterministic-BvM theorem} yields that:
\begin{align}
KL\left(g_{n},f_{n}\right) & =\mathcal{O}\left(\left(\Delta_{3}\right)^{2}p^{3}\right)\\
 & =\mathcal{O}_{P}\left(n^{-1}\right)
\end{align}
concluding the proof.
\end{proof}

\section{Approximations $KL\left(g_{LAP},f\right)$}

\label{sec: APPENDIX Approximations-of-the KL divergence}

This section deals with the proof of Cor.\ref{cor:Computable-approximations-of KL(g,f)}
which derives computable approximations of $KL\left(g_{LAP},f\right)$.

\subsection{Proof structure}

We establish the relevance of each approximation detailed in Cor.\ref{cor:Computable-approximations-of KL(g,f)}
in its own separate Lemma. In order to improve the exposition, we
change the order compared to the order in which the approximations
are presented in Cor.\ref{cor:Computable-approximations-of KL(g,f)}.
\begin{enumerate}
\item First, the three approximations that require sampling from $g$:
\begin{enumerate}
\item LSI approximation of $\E_{\boldsymbol{e}\sim g\left(\boldsymbol{e}\right)}\left[KL\left(r_{g},r_{f}|\boldsymbol{e}\right)\right]$.
\item varELBO approximation of $KL\left(\boldsymbol{e}_{g},\boldsymbol{e}_{f}\right)$.
\item KL variance approximation of $KL\left(g_{LAP},f\right)$.
\end{enumerate}
\item Then, the approximations of those three approximations removing the
expected values through a Taylor expansion of $\tilde{\phi}_{f}\left(\boldsymbol{\theta}\right)$
at $\boldsymbol{\mu}$.
\begin{enumerate}
\item LSI approximation of $\E_{\boldsymbol{e}\sim g\left(\boldsymbol{e}\right)}\left[KL\left(r_{g},r_{f}|\boldsymbol{e}\right)\right]$.
\item varELBO approximation of $KL\left(\boldsymbol{e}_{g},\boldsymbol{e}_{f}\right)$.
\item KL variance approximation of $KL\left(g_{LAP},f\right)$.
\end{enumerate}
\end{enumerate}

\subsection{Proof: sampling-based approximations}
\begin{lem}
Computable approximation of $\E_{\boldsymbol{e}\sim g\left(\boldsymbol{e}\right)}\left[KL\left(r_{g},r_{f}|\boldsymbol{e}\right)\right]$.\label{lem: APPENDIX approx of KL due to r}

\textbf{Requires:} Main text Thm.\ref{thm:A-deterministic-BvM theorem}.

From Theorem \ref{thm:A-deterministic-BvM theorem}, we can approximate
the KL divergence due to $r$ as:
\begin{equation}
\E_{\boldsymbol{e}\sim g\left(\boldsymbol{e}\right)}\left[KL\left(r_{g},r_{f}|\boldsymbol{e}\right)\right]\approx LSI=\frac{1}{2}\frac{1}{p^{2/3}}\E_{r,\boldsymbol{e}\sim g\left(r,\boldsymbol{e}\right)}\left[r^{4/3}\left(\boldsymbol{e}^{T}\nabla\tilde{\phi}_{f}\left(r\boldsymbol{e}\right)-r\right)^{2}\right]
\end{equation}

This approximation can be computed by sampling from $r_{g},\boldsymbol{e}_{g}$.
\end{lem}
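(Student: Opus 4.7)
The plan is to derive this approximation directly by taking the expected value over $\boldsymbol{e}$ of the bound on $KL(r_g, r_f | \boldsymbol{e})$ already established in Theorem \ref{thm:A-deterministic-BvM theorem}, and then to justify computability by recalling the joint distribution of $(r_g, \boldsymbol{e}_g)$ under the Laplace approximation.

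First, I would simply invoke the second inequality in the statement of Theorem \ref{thm:A-deterministic-BvM theorem}, namely
\begin{equation}
KL\left(r_{g},r_{f}|\boldsymbol{e}\right) \leq \frac{1}{2p^{2/3}}\E_{r\sim g\left(r\right)}\left[r^{4/3}\left(\boldsymbol{e}^{T}\nabla\tilde{\phi}_{f}\left(r\boldsymbol{e}\right)-r\right)^{2}\right]+\mathcal{O}\left(\left(\Delta_{3}\right)^{3}p^{5/2}\right).
\end{equation}
Taking the expectation of both sides with respect to $\boldsymbol{e}\sim g(\boldsymbol{e})$ and using linearity yields
\begin{equation}
\E_{\boldsymbol{e}\sim g\left(\boldsymbol{e}\right)}\left[KL\left(r_{g},r_{f}|\boldsymbol{e}\right)\right] \leq \frac{1}{2p^{2/3}}\E_{\boldsymbol{e}\sim g\left(\boldsymbol{e}\right)}\E_{r\sim g\left(r\right)}\left[r^{4/3}\left(\boldsymbol{e}^{T}\nabla\tilde{\phi}_{f}\left(r\boldsymbol{e}\right)-r\right)^{2}\right]+\mathcal{O}\left(\left(\Delta_{3}\right)^{3}p^{5/2}\right).
\end{equation}

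Next, I would appeal to Appendix Lemma \ref{lem: APPENDIX Distribution-of-r_g,e_g}, which establishes that under $g$ the variables $r_g$ and $\boldsymbol{e}_g$ are independent with $r_g \sim \chi_p$ and $\boldsymbol{e}_g$ uniform on $S^{p-1}$. Because of this independence, the iterated expectation collapses into a single joint expectation under $g(r,\boldsymbol{e})$, giving exactly the claimed $LSI$ expression. The dropped $\mathcal{O}((\Delta_3)^3 p^{5/2})$ remainder is justified by the running assumption $\Delta_3 p^{3/2}\to 0$ of Theorem \ref{thm:A-deterministic-BvM theorem}, under which it is asymptotically negligible against the leading $\mathcal{O}((\Delta_3)^2 p^2)$ scaling of the KL term itself.

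Finally, for the computability claim, I would observe that the $LSI$ expression is now an expected value of an explicit function of $(r,\boldsymbol{e})$ under the tractable distribution $g(r,\boldsymbol{e})$: one can either draw $\tilde{\boldsymbol{\theta}}\sim\mathcal{N}(0,I_p)$ and set $r=\|\tilde{\boldsymbol{\theta}}\|_2$, $\boldsymbol{e}=\tilde{\boldsymbol{\theta}}/r$, or sample $r$ and $\boldsymbol{e}$ independently from their known marginals. The integrand requires only evaluating the directional derivative $\boldsymbol{e}^{T}\nabla\tilde{\phi}_{f}(r\boldsymbol{e})$, which is accessible whenever $\phi_f$ and $\Sigma$ are available (since $\nabla\tilde{\phi}_f(\tilde{\boldsymbol{\theta}}) = \Sigma^{1/2}\nabla\phi_f(\boldsymbol{\mu}+\Sigma^{1/2}\tilde{\boldsymbol{\theta}})$). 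A standard Monte Carlo empirical mean then converges to $LSI$ at rate $s^{-1/2}$ in the number of samples $s$. There is no serious obstacle here; the only point requiring mild care is confirming that the independence of $r_g$ and $\boldsymbol{e}_g$ under $g$ is what allows the iterated expectation to be written as a single sampling estimator, which is precisely the content of Appendix Lemma \ref{lem: APPENDIX Distribution-of-r_g,e_g}.
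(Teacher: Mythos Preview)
Your proposal is correct and follows essentially the same approach as the paper: the paper's proof consists of two lines stating that one simply integrates the bound from Theorem \ref{thm:A-deterministic-BvM theorem} over $\boldsymbol{e}_g$ and substitutes $\varphi_{\boldsymbol{e}}^{'}(r)=\boldsymbol{e}^{T}\nabla\tilde{\phi}_{f}(r\boldsymbol{e})$. Your version is more explicit about invoking the independence of $r_g$ and $\boldsymbol{e}_g$ (Appendix Lemma \ref{lem: APPENDIX Distribution-of-r_g,e_g}) and about the computability claim, but the underlying argument is identical.
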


\begin{proof}
This simply requires integrating over $\boldsymbol{e}_{g}$ in Theorem
\ref{thm:A-deterministic-BvM theorem} and replacing $\varphi_{\boldsymbol{e}}^{'}\left(r\right)$:
\begin{equation}
\varphi_{\boldsymbol{e}}^{'}\left(r\right)=\boldsymbol{e}^{T}\nabla\tilde{\phi}_{f}\left(r\boldsymbol{e}\right)
\end{equation}
\end{proof}
\begin{lem}
Computable approximation of $KL\left(\boldsymbol{e}_{g},\boldsymbol{e}_{f}\right)$.\label{lem: APPENDIX approx of KL due to e VARELBO}

\textbf{Requires:} Main text Thm.\ref{thm:A-deterministic-BvM theorem}.

From Theorem \ref{thm:A-deterministic-BvM theorem}, we can approximate
the KL divergence due to $\boldsymbol{e}$ as:
\begin{equation}
KL\left(\boldsymbol{e}_{g},\boldsymbol{e}_{f}\right)\approx\frac{1}{2}\text{Var}_{\boldsymbol{e}\sim g_{LAP}\left(\boldsymbol{e}\right)}\left[\E_{r\sim g_{LAP}\left(r\right)}\left[\tilde{\phi}_{f}\left(r\boldsymbol{e}\right)-\frac{1}{2}r^{2}\right]\right]
\end{equation}

This approximation can be computed by sampling from $r_{g},\boldsymbol{e}_{g}$.
\end{lem}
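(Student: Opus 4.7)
The statement is essentially an immediate consequence of Theorem \ref{thm:A-deterministic-BvM theorem}, so the plan is very short. Theorem \ref{thm:A-deterministic-BvM theorem} already establishes the identity
\begin{equation}
KL\left(\boldsymbol{e}_{g},\boldsymbol{e}_{f}\right)=\frac{1}{2}\text{Var}_{\boldsymbol{e}\sim g_{LAP}\left(\boldsymbol{e}\right)}\left[\E_{r\sim g_{LAP}\left(r\right)}\left[\tilde{\phi}_{f}\left(r\boldsymbol{e}\right)-\frac{1}{2}r^{2}\right]\right]+\mathcal{O}\left(\left(\Delta_{3}\right)^{3}p^{9/2}\right),
\end{equation}
so the first step of the proof is simply to invoke that line and observe that, in the asymptotic regime $\Delta_{3}p^{3/2}\to 0$, the remainder term is of smaller order than the leading variance term (which is of order $\left(\Delta_3\right)^2 p^3$), justifying the use of $\approx$.

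The second step is to argue computability. By Lemma \ref{lem: APPENDIX Distribution-of-r_g,e_g}, under $g_{LAP}$ the pair $(r_g,\boldsymbol{e}_g)$ is independent with $r_g\sim\chi_p$ and $\boldsymbol{e}_g$ uniform on $S^{p-1}$; both are straightforward to sample from (indeed, one can equivalently sample $\tilde{\boldsymbol{\theta}}\sim\mathcal{N}(0,I_p)$ and set $r=\Vert\tilde{\boldsymbol{\theta}}\Vert_2$, $\boldsymbol{e}=\tilde{\boldsymbol{\theta}}/r$). I would then describe the natural nested Monte Carlo estimator: draw $s_1$ independent directions $\boldsymbol{e}^{(j)}$, and for each, draw $s_2$ independent radii $r^{(j,k)}$, compute the inner empirical mean
\begin{equation}
\widehat{\text{ELBO}}(\boldsymbol{e}^{(j)})=-\frac{1}{s_2}\sum_{k=1}^{s_2}\left[\tilde{\phi}_{f}\left(r^{(j,k)}\boldsymbol{e}^{(j)}\right)-\frac{1}{2}\left(r^{(j,k)}\right)^{2}\right],
\end{equation}
and finally take one-half of the empirical variance of $\widehat{\text{ELBO}}(\boldsymbol{e}^{(j)})$ across $j$. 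Each evaluation requires only the ability to evaluate $\tilde{\phi}_f$, which is accessible whenever the unnormalized target $f$ is.

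The only subtle point, and the closest thing to an obstacle, is that the nested expectation structure means that naively applying Monte Carlo produces a biased estimator (the empirical variance of estimated ELBOs is not unbiased for the variance of the true ELBOs). This is not a problem for the claim as stated, since we only need a computable surrogate, and as $s_2\to\infty$ the inner bias vanishes; but it motivates the alternative $KL_{var}$-based formulation discussed in Lemma \ref{lem:Relationship-between-Var(ELBO) and KL-var} and Lemma \ref{lem: APPENDIX approx of KL due to e VARELBO}'s companion result for $KL_{var}$, which requires only a single layer of sampling. No further work is needed to close the proof.
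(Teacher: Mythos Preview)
Your proposal is correct and follows the same approach as the paper: invoke the relevant line of Theorem~\ref{thm:A-deterministic-BvM theorem} directly. The paper's own proof is a single sentence (``No work is required in deriving this since this approximation is given as is in Theorem~\ref{thm:A-deterministic-BvM theorem}''), so your additional discussion of the nested Monte Carlo estimator and its bias is extra detail rather than a different argument.
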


\begin{proof}
No work is required in deriving this since this approximation is given
as is in Theorem \ref{thm:A-deterministic-BvM theorem}.
\end{proof}
\begin{lem}
Second computable approximation of $KL\left(\boldsymbol{e}_{g},\boldsymbol{e}_{f}\right)$.\label{lem: APPENDIX approx of KL due to e KLVAR}

\textbf{Requires:} Main text Thm.\ref{thm:A-deterministic-BvM theorem}.

From Theorem \ref{thm:A-deterministic-BvM theorem}, we can approximate
the KL divergence due to $\boldsymbol{e}$ as:
\begin{equation}
KL\left(\boldsymbol{e}_{g},\boldsymbol{e}_{f}\right)\approx\frac{1}{2}KL_{var}\left(g_{LAP},f\right)
\end{equation}
This approximation is an upper-bound.
\end{lem}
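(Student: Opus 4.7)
The plan is straightforward: this lemma is essentially a one-line combination of Theorem \ref{thm:A-deterministic-BvM theorem} and Lemma \ref{lem:Relationship-between-Var(ELBO) and KL-var}. Specifically, Theorem \ref{thm:A-deterministic-BvM theorem} already provides the $\text{VarELBO}$-based approximation
\[
KL\left(\boldsymbol{e}_{g},\boldsymbol{e}_{f}\right)=\frac{1}{2}\text{Var}_{\boldsymbol{e}\sim g\left(\boldsymbol{e}\right)}\left[\E_{r\sim g\left(r\right)}\left[\tilde{\phi}_{f}\left(r\boldsymbol{e}\right)-\tfrac{1}{2}r^{2}\right]\right]+\mathcal{O}\left(\left(\Delta_{3}\right)^{3}p^{9/2}\right),
\]
and Lemma \ref{lem:Relationship-between-Var(ELBO) and KL-var} asserts that this variance of the ELBO is upper-bounded by $KL_{var}\left(g_{LAP},f\right)$. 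Substituting the second statement into the first immediately yields the claimed approximate upper-bound
\[
KL\left(\boldsymbol{e}_{g},\boldsymbol{e}_{f}\right)\leq\tfrac{1}{2}KL_{var}\left(g_{LAP},f\right)+\mathcal{O}\left(\left(\Delta_{3}\right)^{3}p^{9/2}\right),
\]
which is the content of the lemma.

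For completeness I would briefly recall the one-step derivation behind Lemma \ref{lem:Relationship-between-Var(ELBO) and KL-var}: switching to the $\left(r,\boldsymbol{e}\right)$ parametrization and applying the law of total variance to the statistic $\tilde{\phi}_{f}-\tilde{\phi}_{g}$ under $g$ gives
\[
KL_{var}\left(g_{LAP},f\right)=\text{Var}_{\boldsymbol{e}\sim g\left(\boldsymbol{e}\right)}\left[\E_{r\sim g\left(r\right)}\left[\tilde{\phi}_{f}\left(r\boldsymbol{e}\right)-\tfrac{1}{2}r^{2}\right]\right]+\E_{\boldsymbol{e}\sim g\left(\boldsymbol{e}\right)}\left[\text{Var}_{r\sim g\left(r\right)}\left[\tilde{\phi}_{f}\left(r\boldsymbol{e}\right)-\tfrac{1}{2}r^{2}\right]\right],
\]
where the constant log-normalizers drop out of both variances. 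The nonnegativity of the second (expected conditional variance) term is precisely what forces an inequality rather than an equality.

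There is no real obstacle here, since all the analytic work has been done in the two parent results; the present lemma is essentially bookkeeping. The only conceptual point worth flagging in the write-up is interpretive: the $\tfrac{1}{2}KL_{var}$ bound systematically overcounts compared to the $\text{VarELBO}$ approximation of Lemma \ref{lem: APPENDIX approx of KL due to e VARELBO} by exactly the radial term $\E_{\boldsymbol{e}}\left[\text{Var}_{r}\left(\tilde{\phi}_{f}\left(r\boldsymbol{e}\right)-\tfrac{1}{2}r^{2}\right)\right]$, which is itself a $KL_{var}$-style proxy for the already-bounded $\E_{\boldsymbol{e}}\left[KL\left(r_{g},r_{f}|\boldsymbol{e}\right)\right]$. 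This is the reason the lemma phrases the approximation as an upper-bound rather than an estimate.
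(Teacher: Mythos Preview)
Your proposal is correct and follows essentially the same approach as the paper. The paper's own proof is even terser: it simply observes that the inequality $KL\left(\boldsymbol{e}_{g},\boldsymbol{e}_{f}\right)\leq\tfrac{1}{2}KL_{var}\left(g_{LAP},f\right)+\mathcal{O}\left(\left(\Delta_{3}\right)^{3}p^{9/2}\right)$ is already stated verbatim in Theorem~\ref{thm:A-deterministic-BvM theorem}, and then remarks (as you do) that the upper-bound character comes from $KL_{var}\left(g_{LAP},f\right)$ containing the extra radial term corresponding to the $r$-contribution.
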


\begin{proof}
Once again, this approximation is given as is in Theorem \ref{thm:A-deterministic-BvM theorem}.

We recall that this approximation is an upper-bound due to the fact
that $KL_{var}\left(g_{LAP},f\right)$ also includes a term that corresponds
to the KL divergence due to $r$.
\end{proof}
\begin{lem}
Heuristic computable approximation of $KL\left(g_{LAP},f\right)$.\label{lem: APPENDIX approx of KL(g,f) by KLvar}

\textbf{Requires:} Main text Thm.\ref{thm:A-deterministic-BvM theorem},
Lemma \ref{lem:Relationship-between-Var(ELBO) and KL-var}.

Theorem \ref{thm:A-deterministic-BvM theorem} further gives \textbf{heuristic
support} for using the KL variance as an approximation of $KL\left(g_{LAP},f\right)$.
\begin{equation}
KL\left(g_{LAP},f\right)\approx\frac{1}{2}KL_{var}\left(g_{LAP},f\right)
\end{equation}
\end{lem}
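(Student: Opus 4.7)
The plan is to combine the two approximations derived in Theorem~\ref{thm:A-deterministic-BvM theorem} with the identity established in Lemma~\ref{lem:Relationship-between-Var(ELBO) and KL-var} to informally justify replacing the full KL divergence by one half of the KL variance. First I would recall the decomposition
\begin{equation}
KL\left(g_{LAP},f\right)=KL\left(\boldsymbol{e}_{g},\boldsymbol{e}_{f}\right)+\E_{\boldsymbol{e}\sim g\left(\boldsymbol{e}\right)}\left[KL\left(r_{g},r_{f}|\boldsymbol{e}\right)\right]
\end{equation}
and approximate each term separately, reusing results already in hand rather than revisiting the change of variables machinery.

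For the angular term, Theorem~\ref{thm:A-deterministic-BvM theorem} gives $KL\left(\boldsymbol{e}_{g},\boldsymbol{e}_{f}\right)\approx \tfrac{1}{2}\text{Var}_{\boldsymbol{e}\sim g\left(\boldsymbol{e}\right)}\left[\E_{r\sim g\left(r\right)}\left[\tilde{\phi}_{f}\left(r\boldsymbol{e}\right)-\tfrac{1}{2}r^{2}\right]\right]$. For the radial term, I would invoke Proposition~\ref{prop: KL approx KL_var} heuristically at the level of the conditional distributions $g\left(r\right),f\left(r|\boldsymbol{e}\right)$, writing $KL\left(r_{g},r_{f}|\boldsymbol{e}\right)\approx\tfrac{1}{2}KL_{var}\left(r_{g},r_{f}|\boldsymbol{e}\right)=\tfrac{1}{2}\text{Var}_{r\sim g\left(r\right)}\left[\tilde{\phi}_{f}\left(r\boldsymbol{e}\right)-\tfrac{1}{2}r^{2}\right]$, where the constant normalizer drops out of the variance. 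Averaging over $\boldsymbol{e}\sim g\left(\boldsymbol{e}\right)$ and invoking Lemma~\ref{lem:Relationship-between-Var(ELBO) and KL-var}, which furnishes the identity
\begin{equation}
KL_{var}\left(g_{LAP},f\right)=\text{Var}_{\boldsymbol{e}\sim g\left(\boldsymbol{e}\right)}\left[\E_{r\sim g\left(r\right)}\left[\tilde{\phi}_{f}\left(r\boldsymbol{e}\right)-\tfrac{1}{2}r^{2}\right]\right]+\E_{\boldsymbol{e}\sim g\left(\boldsymbol{e}\right)}\left[\text{Var}_{r\sim g\left(r\right)}\left[\tilde{\phi}_{f}\left(r\boldsymbol{e}\right)-\tfrac{1}{2}r^{2}\right]\right],
\end{equation}
the two heuristic approximations assemble exactly into $\tfrac{1}{2}KL_{var}\left(g_{LAP},f\right)$, which is the claimed expression.

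The hard part, and the reason the statement is only claimed as heuristic rather than as a proved bound, is the second step above: Proposition~\ref{prop: KL approx KL_var} provides a controlled bound on the difference between $KL$ and $\tfrac{1}{2}KL_{var}$ only when the log-density-ratio is uniformly bounded, and the quantity $\tilde{\phi}_{f}\left(r\boldsymbol{e}\right)-\tfrac{1}{2}r^{2}$ is unbounded as $r\to\infty$. The sharp analysis of $KL\left(r_{g},r_{f}|\boldsymbol{e}\right)$ in Proposition~\ref{prop: APPENDIX Bound-on- KL r_g r_f | e} routes through the LSI on the $c$-coordinate precisely because the KL-variance route is unavailable here. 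I would therefore be explicit that this Lemma does not attempt a rigorous bound, but simply records that the $\tfrac{1}{2}KL_{var}$ heuristic is consistent with Theorem~\ref{thm:A-deterministic-BvM theorem} once both contributions in the radius/direction decomposition are approximated by their respective KL-variances, in contrast with the ``LSI+KLvar'' combination which double-counts the radial contribution.
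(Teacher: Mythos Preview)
Your proposal is correct and follows essentially the same reasoning as the paper: the paper's proof is a one-line remark that since $KL_{var}\left(g_{LAP},f\right)$ already contains (via the decomposition of Lemma~\ref{lem:Relationship-between-Var(ELBO) and KL-var}) a KL-variance-based proxy for the radial contribution $\E_{\boldsymbol{e}}\left[KL\left(r_{g},r_{f}|\boldsymbol{e}\right)\right]$, adding the LSI term would double-count it, so $\tfrac{1}{2}KL_{var}$ alone is the natural heuristic. Your write-up gives the constructive version of exactly this argument and correctly flags why the radial step via Proposition~\ref{prop: KL approx KL_var} is non-rigorous, which is why the claim is labeled heuristic.
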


\begin{proof}
Since the KL variance $KL_{var}\left(g_{LAP},f\right)$ already counts
the contribution of $r$ to the KL divergence (see the proof of Lemma
\ref{lem:Relationship-between-Var(ELBO) and KL-var}), it seems unwise
to double-count this contribution by further adding the LSI approximation
of the contribution of $r$.
\end{proof}

\subsection{Proof: Taylor-based approximations}
\begin{lem}
Further approximation of the LSI approximation.\label{lem: APPENDIX Taylor approx of LSI}

\textbf{Requires: }Lemma \ref{lem: APPENDIX Moments-of-r_g}.

Performing a Taylor approximation, and an approximation of the expected
value under $r$ yields:
\begin{align}
LSI & \approx\frac{1}{p}\Bigg[\frac{3}{4}\sum_{i,j,k}\left\{ \left[\tilde{\phi}_{f}^{\left(3\right)}\left(0\right)\right]_{i,j,k}\right\} ^{2}+\frac{9}{8}\sum_{i,j,k}\left[\tilde{\phi}_{f}^{\left(3\right)}\left(0\right)\right]_{i,j,j}\left[\tilde{\phi}_{f}^{\left(3\right)}\left(0\right)\right]_{i,k,k}\nonumber \\
 & \ \ \ \ +\frac{1}{3}\sum_{i,j,k,l}\left\{ \left[\tilde{\phi}_{f}^{\left(4\right)}\left(0\right)\right]_{i,j,k,l}\right\} ^{2}+\sum_{i,j,k,l}\left\{ \left[\tilde{\phi}_{f}^{\left(4\right)}\left(0\right)\right]_{i,j,k,k}\right\} \left\{ \left[\tilde{\phi}_{f}^{\left(4\right)}\left(0\right)\right]_{i,j,l,l}\right\} \nonumber \\
 & \ \ \ \ +\frac{1}{8}\sum_{i,j,k,l}\left\{ \left[\tilde{\phi}_{f}^{\left(4\right)}\left(0\right)\right]_{i,i,j,j}\right\} \left\{ \left[\tilde{\phi}_{f}^{\left(4\right)}\left(0\right)\right]_{k,k,l,l}\right\} \Bigg]
\end{align}
\end{lem}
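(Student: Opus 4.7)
The plan is to substitute a Taylor expansion of $\tilde{\phi}_{f}$ around the standardized mode $\tilde{\boldsymbol{\theta}}=0$ into the LSI formula, and then evaluate the two independent expectations (over $r\sim\chi_{p}$ and $\boldsymbol{e}\sim\text{Unif}(S^{p-1})$) separately, using the rough approximation $\E(r_{g}^{k})\approx p^{k/2}$ for the former and the standard spherical Wick-type formula for the latter. Denote $T_{ijk}=\tilde{\phi}_{f}^{(3)}(0)_{ijk}$ and $Q_{ijkl}=\tilde{\phi}_{f}^{(4)}(0)_{ijkl}$. Since $\nabla\tilde{\phi}_{f}(0)=0$ and $H\tilde{\phi}_{f}(0)=I_{p}$, the Taylor expansion of the gradient gives
\begin{equation}
\boldsymbol{e}^{T}\nabla\tilde{\phi}_{f}(r\boldsymbol{e})-r=\frac{r^{2}}{2}T_{3}(\boldsymbol{e})+\frac{r^{3}}{6}T_{4}(\boldsymbol{e})+\mathcal{O}(r^{4}),
\end{equation}
where $T_{3}(\boldsymbol{e})=\sum_{ijk}T_{ijk}e_{i}e_{j}e_{k}$ and $T_{4}(\boldsymbol{e})=\sum_{ijkl}Q_{ijkl}e_{i}e_{j}e_{k}e_{l}$.

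Squaring, multiplying by $r^{4/3}$ and by the prefactor $1/(2p^{2/3})$, and then applying $\E(r_{g}^{k})\approx p^{k/2}$ term by term, I would obtain the compact form
\begin{equation}
LSI\approx\frac{p^{2}}{8}\E_{\boldsymbol{e}}\!\left[T_{3}(\boldsymbol{e})^{2}\right]+\frac{p^{5/2}}{12}\E_{\boldsymbol{e}}\!\left[T_{3}(\boldsymbol{e})T_{4}(\boldsymbol{e})\right]+\frac{p^{3}}{72}\E_{\boldsymbol{e}}\!\left[T_{4}(\boldsymbol{e})^{2}\right].
\end{equation}
The cross term vanishes by parity: $\boldsymbol{e}\mapsto -\boldsymbol{e}$ is a symmetry of the uniform distribution on $S^{p-1}$, and $T_{3}T_{4}$ is odd under this map. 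Only the pure $T_{3}^{2}$ and $T_{4}^{2}$ contributions survive, so I would only have to evaluate two sphere integrals.

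For the spherical moments I would use the standard formula
\begin{equation}
\E\!\left[e_{i_{1}}\cdots e_{i_{2k}}\right]=\frac{1}{p(p+2)\cdots(p+2k-2)}\sum_{\pi}\prod_{(a,b)\in\pi}\delta_{i_{a}i_{b}},
\end{equation}
summed over perfect matchings $\pi$ of $\{1,\dots,2k\}$. For $\E_{\boldsymbol{e}}[T_{3}^{2}]$ this involves $2k=6$ and $15$ matchings, which, because a set of three cannot be paired internally on its own, split into exactly two topological classes: $6$ matchings with all three pairs crossing between the two factors of $T_{3}$ (collapsing by symmetry of $T$ to $\sum_{ijk}T_{ijk}^{2}$) and $9$ matchings with one cross pair plus one internal pair on each side (collapsing to $\sum_{ijk}T_{ijj}T_{ikk}$). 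This yields $\E_{\boldsymbol{e}}[T_{3}^{2}]=[6\sum T_{ijk}^{2}+9\sum T_{ijj}T_{ikk}]/[p(p+2)(p+4)]$, and multiplying by $p^{2}/8$ and keeping only the leading $1/p$ terms reproduces the coefficients $3/4$ and $9/8$ in the claim.

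The analogous computation for $\E_{\boldsymbol{e}}[T_{4}^{2}]$ is the main obstacle, since it involves $2k=8$ and $105$ matchings. Here the matchings split into three topological classes according to the number of cross pairs ($4$, $2$, or $0$): $24$ fully cross, $72$ with two cross pairs and one internal pair on each side, and $9$ purely internal. Using the symmetry of $Q$, each class collapses to a single contraction ($\sum Q_{ijkl}^{2}$, $\sum Q_{ijkk}Q_{ijll}$, and $\sum Q_{iijj}Q_{kkll}$ respectively), and dividing by $p(p+2)(p+4)(p+6)\approx p^{4}$ then multiplying by $p^{3}/72$ yields the coefficients $1/3$, $1$ and $1/8$ in the claimed formula. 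The bulk of the work is thus the careful enumeration and grouping of these $105$ matchings, together with verifying that all subleading $p^{k/2-1}$ corrections from $\E(r_{g}^{k})$ and from the $p(p+2)\cdots$ denominators are absorbed into the ``$\approx$'' of the statement.
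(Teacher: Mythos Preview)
Your proposal is correct and the combinatorics (the $6{+}9$ and $24{+}72{+}9$ pair-matching counts) is exactly what the paper uses. The only difference is one of packaging: the paper first absorbs the awkward $r^{4/3}$ factor by multiplying and dividing by $r^{2/3}\approx p^{1/3}$, which turns the integrand into $(\tilde{\boldsymbol{\theta}}^{T}\nabla\tilde{\phi}_{f}(\tilde{\boldsymbol{\theta}})-\|\tilde{\boldsymbol{\theta}}\|_{2}^{2})^{2}$ and lets it apply Isserlis' theorem for the Gaussian $\tilde{\boldsymbol{\theta}}$ directly, rather than computing fractional $r$-moments and spherical moments separately as you do. Since $\E[r_{g}^{2k}]=p(p+2)\cdots(p+2k-2)$ exactly cancels your spherical denominator, the two routes are equivalent to leading order; the paper's version just avoids the fractional powers and the $p(p+2)\cdots$ bookkeeping.
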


\begin{proof}
The approximation of Lemma \ref{lem: APPENDIX Moments-of-r_g} implies
that $r_{g}$ is equivalent to $p^{1/2}$ asymptotically. We can then
simplify the expected value of interest by introducing an additional
$r^{2/3}$:
\begin{align}
\E_{r,\boldsymbol{e}\sim g\left(r,\boldsymbol{e}\right)}\left[r^{4/3}\left(\boldsymbol{e}^{T}\nabla\tilde{\phi}_{f}\left(r\boldsymbol{e}\right)-r\right)^{2}\right] & \approx\frac{1}{p^{1/3}}\E_{r,\boldsymbol{e}\sim g\left(r,\boldsymbol{e}\right)}\left[r^{2}\left(\boldsymbol{e}^{T}\nabla\tilde{\phi}_{f}\left(r\boldsymbol{e}\right)-r\right)^{2}\right]\\
 & \approx\frac{1}{p^{1/3}}\E_{r,\boldsymbol{e}\sim g\left(r,\boldsymbol{e}\right)}\left[\left(r\boldsymbol{e}^{T}\nabla\tilde{\phi}_{f}\left(r\boldsymbol{e}\right)-r^{2}\right)^{2}\right]\\
 & \approx\frac{1}{p^{1/3}}\E_{\tilde{\boldsymbol{\theta}}\sim g\left(\tilde{\boldsymbol{\theta}}\right)}\left[\left(\tilde{\boldsymbol{\theta}}^{T}\nabla\tilde{\phi}_{f}\left(\tilde{\boldsymbol{\theta}}\right)-\left\Vert \tilde{\boldsymbol{\theta}}\right\Vert _{2}^{2}\right)^{2}\right]\\
 & \approx\frac{1}{p^{1/3}}\E_{\tilde{\boldsymbol{\theta}}\sim g\left(\tilde{\boldsymbol{\theta}}\right)}\left[\left(\tilde{\boldsymbol{\theta}}^{T}\nabla\tilde{\phi}_{f}\left(\tilde{\boldsymbol{\theta}}\right)-\tilde{\boldsymbol{\theta}}^{T}\nabla\tilde{\phi}_{g}\left(\tilde{\boldsymbol{\theta}}\right)\right)^{2}\right]
\end{align}
A Taylor expansion then yields:
\begin{align}
\tilde{\boldsymbol{\theta}}^{T}\nabla\tilde{\phi}_{f}\left(\tilde{\boldsymbol{\theta}}\right)-\tilde{\boldsymbol{\theta}}^{T}\nabla\tilde{\phi}_{g}\left(\tilde{\boldsymbol{\theta}}\right) & =\frac{1}{2}\tilde{\phi}_{f}^{\left(3\right)}\left(0\right)\left[\tilde{\boldsymbol{\theta}},\tilde{\boldsymbol{\theta}},\tilde{\boldsymbol{\theta}}\right]+\frac{1}{6}\tilde{\phi}_{f}^{\left(4\right)}\left(0\right)\left[\tilde{\boldsymbol{\theta}},\tilde{\boldsymbol{\theta}},\tilde{\boldsymbol{\theta}},\tilde{\boldsymbol{\theta}}\right]\\
\left(\tilde{\boldsymbol{\theta}}^{T}\nabla\tilde{\phi}_{f}\left(\tilde{\boldsymbol{\theta}}\right)-\tilde{\boldsymbol{\theta}}^{T}\nabla\tilde{\phi}_{g}\left(\tilde{\boldsymbol{\theta}}\right)\right)^{2} & =\frac{1}{4}\left\{ \tilde{\phi}_{f}^{\left(3\right)}\left(0\right)\left[\tilde{\boldsymbol{\theta}},\tilde{\boldsymbol{\theta}},\tilde{\boldsymbol{\theta}}\right]\right\} ^{2}\nonumber \\
 & \ \ \ \ +2\frac{1}{12}\left\{ \tilde{\phi}_{f}^{\left(3\right)}\left(0\right)\left[\tilde{\boldsymbol{\theta}},\tilde{\boldsymbol{\theta}},\tilde{\boldsymbol{\theta}}\right]\right\} \left\{ \tilde{\phi}_{f}^{\left(4\right)}\left(0\right)\left[\tilde{\boldsymbol{\theta}},\tilde{\boldsymbol{\theta}},\tilde{\boldsymbol{\theta}},\tilde{\boldsymbol{\theta}}\right]\right\} \nonumber \\
 & \ \ \ \ +\frac{1}{36}\left\{ \tilde{\phi}_{f}^{\left(4\right)}\left(0\right)\left[\tilde{\boldsymbol{\theta}},\tilde{\boldsymbol{\theta}},\tilde{\boldsymbol{\theta}},\tilde{\boldsymbol{\theta}}\right]\right\} ^{2}
\end{align}

We can now easily compute the expected value under $\tilde{\boldsymbol{\theta}}$.
The cross-term vanishes because odd moments of the Gaussian are 0:
\begin{align}
\E_{\tilde{\boldsymbol{\theta}}\sim g\left(\tilde{\boldsymbol{\theta}}\right)}\left[\left(\tilde{\boldsymbol{\theta}}^{T}\nabla\tilde{\phi}_{f}\left(\tilde{\boldsymbol{\theta}}\right)-\tilde{\boldsymbol{\theta}}^{T}\nabla\tilde{\phi}_{g}\left(\tilde{\boldsymbol{\theta}}\right)\right)^{2}\right] & \approx\frac{1}{4}\E_{\tilde{\boldsymbol{\theta}}\sim g\left(\tilde{\boldsymbol{\theta}}\right)}\left[\left\{ \tilde{\phi}_{f}^{\left(3\right)}\left(0\right)\left[\tilde{\boldsymbol{\theta}},\tilde{\boldsymbol{\theta}},\tilde{\boldsymbol{\theta}}\right]\right\} ^{2}\right]\nonumber \\
 & \ \ \ \ +\frac{1}{36}\E_{\tilde{\boldsymbol{\theta}}\sim g\left(\tilde{\boldsymbol{\theta}}\right)}\left[\left\{ \tilde{\phi}_{f}^{\left(4\right)}\left(0\right)\left[\tilde{\boldsymbol{\theta}},\tilde{\boldsymbol{\theta}},\tilde{\boldsymbol{\theta}},\tilde{\boldsymbol{\theta}}\right]\right\} ^{2}\right]
\end{align}

Now, let us tackle each term in order:
\begin{equation}
\E_{\tilde{\boldsymbol{\theta}}\sim g\left(\tilde{\boldsymbol{\theta}}\right)}\left[\left\{ \tilde{\phi}_{f}^{\left(3\right)}\left(0\right)\left[\tilde{\boldsymbol{\theta}},\tilde{\boldsymbol{\theta}},\tilde{\boldsymbol{\theta}}\right]\right\} ^{2}\right]=\E_{\tilde{\boldsymbol{\theta}}\sim g\left(\tilde{\boldsymbol{\theta}}\right)}\left[\sum_{\stackrel{a,b,c}{i,j,k}}\left[\tilde{\phi}_{f}^{\left(3\right)}\left(0\right)\right]_{a,b,c}\left[\tilde{\phi}_{f}^{\left(3\right)}\left(0\right)\right]_{i,j,k}\tilde{\theta}_{a}\tilde{\theta}_{b}\tilde{\theta}_{c}\tilde{\theta}_{i}\tilde{\theta}_{j}\tilde{\theta}_{k}\right]
\end{equation}
The sixth moment of the Gaussian is a sum of all the 15 partitions
into pairs of $\left\{ a,b,c,i,j,k\right\} $ (Isserlis' Theorem):
\begin{align}
\E\left[\tilde{\theta}_{a}\tilde{\theta}_{b}\tilde{\theta}_{c}\tilde{\theta}_{i}\tilde{\theta}_{j}\tilde{\theta}_{k}\right] & =\E\left[\tilde{\theta}_{a}\tilde{\theta}_{b}\right]\E\left[\tilde{\theta}_{c}\tilde{\theta}_{i}\right]\E\left[\tilde{\theta}_{j}\tilde{\theta}_{k}\right]+\E\left[\tilde{\theta}_{a}\tilde{\theta}_{c}\right]\E\left[\tilde{\theta}_{b}\tilde{\theta}_{i}\right]\E\left[\tilde{\theta}_{j}\tilde{\theta}_{k}\right]+\dots\\
 & =I_{a,b}I_{c,i}I_{j,k}+I_{a,c}I_{b,i}I_{j,k}+\dots
\end{align}

We can simplify the expected value by using the symmetry of the coordinates
$\left[\tilde{\phi}_{f}^{\left(3\right)}\right]_{a,b,c}$ to permutations.
Among the 15 partitions into pairs, 6 corresponds to pairs that all
cross from $abc$ into $ijk$ (e.g the pairs $ai$, $bj$, $ck$ or
$aj$, $bk$, $ci$) while there are 9 with at least one pair that
is internal to one group. The 6 pairs that cross can all be brought
back by symmetry to $ai$, $bj$, $ck$. The crossing pairs can be
similarly be brought back to $ai$, $bc$, $jk$.

We then have:
\begin{equation}
\E_{\tilde{\boldsymbol{\theta}}\sim g\left(\tilde{\boldsymbol{\theta}}\right)}\left[\left\{ \tilde{\phi}_{f}^{\left(3\right)}\left(0\right)\left[\tilde{\boldsymbol{\theta}},\tilde{\boldsymbol{\theta}},\tilde{\boldsymbol{\theta}}\right]\right\} ^{2}\right]=6\sum_{i,j,k}\left\{ \left[\tilde{\phi}_{f}^{\left(3\right)}\left(0\right)\right]_{i,j,k}\right\} ^{2}+9\sum_{i,j,k}\left[\tilde{\phi}_{f}^{\left(3\right)}\left(0\right)\right]_{i,j,j}\left[\tilde{\phi}_{f}^{\left(3\right)}\left(0\right)\right]_{i,k,k}
\end{equation}

Similarly, for the fourth derivative we have 105 total pairs. There
are 24 pairs that are fully crossing ($24=4!$, 4 choices for $a$,
then $3$ for $b$ etc), 9 pairs that are fully internal ($9=3*3$,
$3$ choices for $a$, the next pair in $abcd$ is forced, then $3$
choices for $i$ and the final pair in $ijkl$ is forced) and finally
72 pairs that are composed of two internal pairs and two crossing
pairs ($72=\binom{4}{2}\binom{4}{2}2$, selecting each internal pair
on each side, then 2 possibilities for the crossing pair). Symmetrizing
then leads to:
\begin{align}
\E_{\tilde{\boldsymbol{\theta}}\sim g\left(\tilde{\boldsymbol{\theta}}\right)}\left[\left\{ \tilde{\phi}_{f}^{\left(4\right)}\left[\tilde{\boldsymbol{\theta}},\tilde{\boldsymbol{\theta}},\tilde{\boldsymbol{\theta}},\tilde{\boldsymbol{\theta}}\right]\right\} ^{2}\right] & =24\sum_{i,j,k,l}\left\{ \left[\tilde{\phi}_{f}^{\left(4\right)}\left(0\right)\right]_{i,j,k,l}\right\} ^{2}\nonumber \\
 & \ \ \ \ +72\sum_{i,j,k,l}\left\{ \left[\tilde{\phi}_{f}^{\left(4\right)}\left(0\right)\right]_{i,j,k,k}\right\} \left\{ \left[\tilde{\phi}_{f}^{\left(4\right)}\left(0\right)\right]_{i,j,l,l}\right\} \nonumber \\
 & \ \ \ \ +9\sum_{i,j,k,l}\left\{ \left[\tilde{\phi}_{f}^{\left(4\right)}\left(0\right)\right]_{i,i,j,j}\right\} \left\{ \left[\tilde{\phi}_{f}^{\left(4\right)}\left(0\right)\right]_{k,k,l,l}\right\} 
\end{align}

Thus, we finally have:
\begin{align}
\E_{\tilde{\boldsymbol{\theta}}\sim g\left(\tilde{\boldsymbol{\theta}}\right)}\left[\left(\tilde{\boldsymbol{\theta}}^{T}\nabla\tilde{\phi}_{f}\left(\tilde{\boldsymbol{\theta}}\right)-\tilde{\boldsymbol{\theta}}^{T}\nabla\tilde{\phi}_{g}\left(\tilde{\boldsymbol{\theta}}\right)\right)^{2}\right] & \approx\frac{3}{2}\sum_{i,j,k}\left\{ \left[\tilde{\phi}_{f}^{\left(3\right)}\left(0\right)\right]_{i,j,k}\right\} ^{2}\nonumber \\
 & \ \ \ \ +\frac{9}{4}\sum_{i,j,k}\left[\tilde{\phi}_{f}^{\left(3\right)}\left(0\right)\right]_{i,j,j}\left[\tilde{\phi}_{f}^{\left(3\right)}\left(0\right)\right]_{i,k,k}\nonumber \\
 & \ \ \ \ +\frac{2}{3}\sum_{i,j,k,l}\left\{ \left[\tilde{\phi}_{f}^{\left(4\right)}\left(0\right)\right]_{i,j,k,l}\right\} ^{2}\nonumber \\
 & \ \ \ \ +2\sum_{i,j,k,l}\left\{ \left[\tilde{\phi}_{f}^{\left(4\right)}\left(0\right)\right]_{i,j,k,k}\right\} \left\{ \left[\tilde{\phi}_{f}^{\left(4\right)}\left(0\right)\right]_{i,j,l,l}\right\} \nonumber \\
 & \ \ \ \ +\frac{1}{4}\sum_{i,j,k,l}\left\{ \left[\tilde{\phi}_{f}^{\left(4\right)}\left(0\right)\right]_{i,i,j,j}\right\} \left\{ \left[\tilde{\phi}_{f}^{\left(4\right)}\left(0\right)\right]_{k,k,l,l}\right\} 
\end{align}

The approximation of the LSI term follows from:
\begin{align}
\frac{1}{2}\frac{1}{p^{2/3}}\E_{r,\boldsymbol{e}\sim g\left(r,\boldsymbol{e}\right)}\left[r^{4/3}\left(\boldsymbol{e}^{T}\nabla\tilde{\phi}_{f}\left(r\boldsymbol{e}\right)-r\right)^{2}\right] & \approx\frac{1}{2}\frac{1}{p}\E_{\tilde{\boldsymbol{\theta}}\sim g\left(\tilde{\boldsymbol{\theta}}\right)}\left[\left(\tilde{\boldsymbol{\theta}}^{T}\nabla\tilde{\phi}_{f}\left(\tilde{\boldsymbol{\theta}}\right)-\tilde{\boldsymbol{\theta}}^{T}\nabla\tilde{\phi}_{g}\left(\tilde{\boldsymbol{\theta}}\right)\right)^{2}\right]\\
 & \approx\frac{3}{4}\sum_{i,j,k}\left\{ \left[\tilde{\phi}_{f}^{\left(3\right)}\left(0\right)\right]_{i,j,k}\right\} ^{2}\nonumber \\
 & \ \ \ \ +\frac{9}{8}\sum_{i,j,k}\left[\tilde{\phi}_{f}^{\left(3\right)}\left(0\right)\right]_{i,j,j}\left[\tilde{\phi}_{f}^{\left(3\right)}\left(0\right)\right]_{i,k,k}\nonumber \\
 & \ \ \ \ +\frac{1}{3}\sum_{i,j,k,l}\left\{ \left[\tilde{\phi}_{f}^{\left(4\right)}\left(0\right)\right]_{i,j,k,l}\right\} ^{2}\nonumber \\
 & \ \ \ \ +\sum_{i,j,k,l}\left\{ \left[\tilde{\phi}_{f}^{\left(4\right)}\left(0\right)\right]_{i,j,k,k}\right\} \left\{ \left[\tilde{\phi}_{f}^{\left(4\right)}\left(0\right)\right]_{i,j,l,l}\right\} \nonumber \\
 & \ \ \ \ +\frac{1}{8}\sum_{i,j,k,l}\left\{ \left[\tilde{\phi}_{f}^{\left(4\right)}\left(0\right)\right]_{i,i,j,j}\right\} \left\{ \left[\tilde{\phi}_{f}^{\left(4\right)}\left(0\right)\right]_{k,k,l,l}\right\} 
\end{align}
\end{proof}
\begin{lem}
Further approximation of the varELBO approximation.\label{lem: APPENDIX Taylor approx of varELBO}

\textbf{Requires: }Lemma \ref{lem: APPENDIX Moments-of-r_g}.

Performing a Taylor approximation, and an approximation of the expected
value under $r$ yields:
\begin{align}
varELBO & \approx\frac{1}{6}\sum_{i,j,k}\left\{ \left[\tilde{\phi}_{f}^{\left(3\right)}\left(0\right)\right]_{i,j,k}\right\} ^{2}+\frac{1}{4}\sum_{i,j,k}\left[\tilde{\phi}_{f}^{\left(3\right)}\left(0\right)\right]_{i,j,j}\left[\tilde{\phi}_{f}^{\left(3\right)}\left(0\right)\right]_{i,k,k}\nonumber \\
 & \ \ \ \ +\frac{1}{24}\sum_{i,j,k,l}\left\{ \left[\tilde{\phi}_{f}^{\left(4\right)}\left(0\right)\right]_{i,j,k,l}\right\} ^{2}+\frac{1}{8}\sum_{i,j,k,l}\left\{ \left[\tilde{\phi}_{f}^{\left(4\right)}\left(0\right)\right]_{i,j,k,k}\right\} \left\{ \left[\tilde{\phi}_{f}^{\left(4\right)}\left(0\right)\right]_{i,j,l,l}\right\} 
\end{align}
\end{lem}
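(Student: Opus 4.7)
The proof plan mirrors that of Lemma \ref{lem: APPENDIX Taylor approx of LSI}: Taylor-expand $\tilde{\phi}_{f}$ around $0$, exploit independence of $(r,\boldsymbol{e})$ and parity on the sphere to reduce the variance to Gaussian moments, then invoke Isserlis' theorem. First, since $\boldsymbol{\mu}$ is the MAP and $\Sigma^{-1}=H\phi_{f}(\boldsymbol{\mu})$, the standardized log-density satisfies $\nabla\tilde{\phi}_{f}(0)=0$ and $H\tilde{\phi}_{f}(0)=I_{p}$, so to fourth order
\[
\tilde{\phi}_{f}(\tilde{\boldsymbol{\theta}})-\tfrac{1}{2}\|\tilde{\boldsymbol{\theta}}\|_{2}^{2}\;\approx\;\tilde{\phi}_{f}(0)+\tfrac{1}{6}T_{3}(\tilde{\boldsymbol{\theta}})+\tfrac{1}{24}T_{4}(\tilde{\boldsymbol{\theta}}),
\]
where $T_{k}(v):=\tilde{\phi}_{f}^{(k)}(0)[v,\dots,v]$ (the $k$-linear form evaluated with $k$ copies of $v$) and the omitted remainder is absorbed into the approximation error. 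By homogeneity, $T_{k}(r\boldsymbol{e})=r^{k}T_{k}(\boldsymbol{e})$, so integrating against $g(r)$ and using the rough estimate $\E_{r}(r^{k})\approx p^{k/2}$ from Lemma \ref{lem: APPENDIX Moments-of-r_g} yields
\[
\E_{r\sim g(r)}\bigl[\tilde{\phi}_{f}(r\boldsymbol{e})-\tfrac{1}{2}r^{2}\bigr]\;\approx\;\tilde{\phi}_{f}(0)+\tfrac{p^{3/2}}{6}T_{3}(\boldsymbol{e})+\tfrac{p^{2}}{24}T_{4}(\boldsymbol{e}).
\]

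Next, I would take the variance in $\boldsymbol{e}$. The constant $\tilde{\phi}_{f}(0)$ drops out. Under the reflection $\boldsymbol{e}\mapsto-\boldsymbol{e}$, which preserves the uniform distribution on $S^{p-1}$, $T_{3}$ is odd and $T_{4}$ is even, so $\E_{\boldsymbol{e}}[T_{3}]=0$ and $\E_{\boldsymbol{e}}[T_{3}T_{4}]=0$; the cross-covariance therefore vanishes, leaving
\[
varELBO\;\approx\;\frac{p^{3}}{36}\,\E_{\boldsymbol{e}}[T_{3}(\boldsymbol{e})^{2}]+\frac{p^{4}}{576}\,\text{Var}_{\boldsymbol{e}}[T_{4}(\boldsymbol{e})].
\]

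The final step converts these spherical moments to Gaussian moments. Because $r$ and $\boldsymbol{e}$ are independent under $g$ (Lemma \ref{lem: APPENDIX Distribution-of-r_g,e_g}) and $T_{k}^{m}$ is homogeneous of degree $km$, one has $\E_{\tilde{\boldsymbol{\theta}}}[T_{k}^{m}]=\E_{r}(r^{km})\,\E_{\boldsymbol{e}}[T_{k}^{m}]$. Thus $\E_{\boldsymbol{e}}[T_{3}^{2}]\approx\E_{\tilde{\boldsymbol{\theta}}}[T_{3}^{2}]/p^{3}$ and $\text{Var}_{\boldsymbol{e}}(T_{4})\approx\text{Var}_{\tilde{\boldsymbol{\theta}}}(T_{4})/p^{4}$ at leading order. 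The Gaussian moment $\E_{\tilde{\boldsymbol{\theta}}}[T_{3}^{2}]=6\sum[\tilde{\phi}_{f}^{(3)}(0)]_{i,j,k}^{2}+9\sum[\tilde{\phi}_{f}^{(3)}(0)]_{i,j,j}[\tilde{\phi}_{f}^{(3)}(0)]_{i,k,k}$ was already computed via Isserlis in Lemma \ref{lem: APPENDIX Taylor approx of LSI}, and analogous single-line Isserlis calculations give $\E_{\tilde{\boldsymbol{\theta}}}[T_{4}]=3\sum_{i,j}[\tilde{\phi}_{f}^{(4)}(0)]_{i,i,j,j}$ together with the expression for $\E_{\tilde{\boldsymbol{\theta}}}[T_{4}^{2}]$ obtained there. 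Substituting these and simplifying the numerical coefficients ($\tfrac{6}{36}=\tfrac{1}{6}$, $\tfrac{9}{36}=\tfrac{1}{4}$, $\tfrac{24}{576}=\tfrac{1}{24}$, $\tfrac{72}{576}=\tfrac{1}{8}$) produces exactly the claimed formula.

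The only subtlety — and thus the main obstacle — is an asymptotic cancellation in $\text{Var}_{\boldsymbol{e}}(T_{4})=\E_{\tilde{\boldsymbol{\theta}}}[T_{4}^{2}]/\E(r^{8})-\E_{\tilde{\boldsymbol{\theta}}}[T_{4}]^{2}/\E(r^{4})^{2}$: both terms contain the diagonal double-trace piece $9\sum[\tilde{\phi}_{f}^{(4)}(0)]_{i,i,j,j}[\tilde{\phi}_{f}^{(4)}(0)]_{k,k,l,l}$, and these must cancel at leading order so that only the two contributions appearing in the Lemma survive. This cancellation relies on the fact that $\E(r^{4})^{2}/\E(r^{8})=1+\mathcal{O}(1/p)$, which follows directly from Lemma \ref{lem: APPENDIX Moments-of-r_g}; the discrepancy generates a subleading correction consistent with the stated $\approx$ symbol. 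Everything else is routine combinatorial bookkeeping of Isserlis pairings, entirely parallel to the computation carried out in Lemma \ref{lem: APPENDIX Taylor approx of LSI}.
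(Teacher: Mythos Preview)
Your proposal is correct and follows essentially the same route as the paper: Taylor-expand, use $\E(r^{k})\approx p^{k/2}$ for the inner expectation, kill the cross-term by parity, then convert the spherical moments back to Gaussian ones and apply Isserlis. Your final paragraph on the cancellation of the $9\sum[\tilde{\phi}_{f}^{(4)}(0)]_{i,i,j,j}[\tilde{\phi}_{f}^{(4)}(0)]_{k,k,l,l}$ piece is in fact more careful than the paper, which simply invokes ``Lemma \ref{lem: APPENDIX Moments-of-r_g} in the reverse direction'' to identify $\text{Var}_{\boldsymbol{e}}[p^{2}T_{4}(\boldsymbol{e})]$ with $\text{Var}_{\tilde{\boldsymbol{\theta}}}[T_{4}(\tilde{\boldsymbol{\theta}})]$ without flagging that step.
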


\begin{proof}
Recall the expression of the varELBO term:
\begin{equation}
varELBO=\text{Var}_{\boldsymbol{e}\sim g_{LAP}\left(\boldsymbol{e}\right)}\left[\E_{r\sim g_{LAP}\left(r\right)}\left[\tilde{\phi}_{f}\left(r\boldsymbol{e}\right)-\frac{1}{2}r^{2}\right]\right]
\end{equation}
Start from a Taylor expansion of $\tilde{\phi}_{f}\left(r\boldsymbol{e}\right)-\frac{1}{2}r^{2}$:
\begin{equation}
\tilde{\phi}_{f}\left(r\boldsymbol{e}\right)-\frac{1}{2}r^{2}\approx\frac{1}{6}\tilde{\phi}_{f}^{\left(3\right)}\left(0\right)\left[r\boldsymbol{e},r\boldsymbol{e},r\boldsymbol{e}\right]+\frac{1}{24}\tilde{\phi}_{f}^{\left(4\right)}\left(0\right)\left[r\boldsymbol{e},r\boldsymbol{e},r\boldsymbol{e},r\boldsymbol{e}\right]
\end{equation}

We can now approximate the expected value of this expression under
density $g\left(r\right)$ using Lemma \ref{lem: APPENDIX Moments-of-r_g}:
\begin{equation}
\E_{r\sim g_{LAP}\left(r\right)}\left[\tilde{\phi}_{f}\left(r\boldsymbol{e}\right)-\frac{1}{2}r^{2}\right]\approx\frac{p^{3/2}}{6}\tilde{\phi}_{f}^{\left(3\right)}\left(0\right)\left[\boldsymbol{e},\boldsymbol{e},\boldsymbol{e}\right]+\frac{p^{2}}{24}\tilde{\phi}_{f}^{\left(4\right)}\left(0\right)\left[\boldsymbol{e},\boldsymbol{e},\boldsymbol{e},\boldsymbol{e}\right]
\end{equation}
which we finally need to compute the variance of, under density $g\left(\boldsymbol{e}\right)$.

First, observe that, due to the symmetry of $\boldsymbol{e}$, the
covariance between the two terms is $0$:
\begin{align}
\E_{\boldsymbol{e}\sim g_{LAP}\left(\boldsymbol{e}\right)}\left[\tilde{\phi}_{f}^{\left(3\right)}\left(0\right)\left[\boldsymbol{e},\boldsymbol{e},\boldsymbol{e}\right]\right] & =0\\
\E_{\boldsymbol{e}\sim g_{LAP}\left(\boldsymbol{e}\right)}\left[\tilde{\phi}_{f}^{\left(3\right)}\left(0\right)\left[\boldsymbol{e},\boldsymbol{e},\boldsymbol{e}\right]\tilde{\phi}_{f}^{\left(4\right)}\left(0\right)\left[\boldsymbol{e},\boldsymbol{e},\boldsymbol{e},\boldsymbol{e}\right]\right] & =0\\
\text{Cov}_{\boldsymbol{e}\sim g_{LAP}\left(\boldsymbol{e}\right)}\left[\tilde{\phi}_{f}^{\left(3\right)}\left(0\right)\left[\boldsymbol{e},\boldsymbol{e},\boldsymbol{e}\right],\tilde{\phi}_{f}^{\left(4\right)}\left(0\right)\left[\boldsymbol{e},\boldsymbol{e},\boldsymbol{e},\boldsymbol{e}\right]\right] & =0
\end{align}
Thus we only need to compute the variance of both terms.

At this point, use once again Lemma \ref{lem: APPENDIX Moments-of-r_g}
in the reverse direction, i.e. as a heuristic to replace $p^{1/2}$
with $r_{g}$.
\begin{align}
\text{Var}_{\boldsymbol{e}\sim g_{LAP}\left(\boldsymbol{e}\right)}\left[p^{3/2}\tilde{\phi}_{f}^{\left(3\right)}\left(0\right)\left[\boldsymbol{e},\boldsymbol{e},\boldsymbol{e}\right]\right] & \approx\text{Var}_{r,\boldsymbol{e}\sim g_{LAP}\left(r,\boldsymbol{e}\right)}\left[\tilde{\phi}_{f}^{\left(3\right)}\left(0\right)\left[r\boldsymbol{e},r\boldsymbol{e},r\boldsymbol{e}\right]\right]\\
 & \approx\text{Var}_{\tilde{\boldsymbol{\theta}}\sim g_{LAP}\left(\tilde{\boldsymbol{\theta}}\right)}\left[\tilde{\phi}_{f}^{\left(3\right)}\left(0\right)\left[\tilde{\boldsymbol{\theta}},\tilde{\boldsymbol{\theta}},\tilde{\boldsymbol{\theta}}\right]\right]\\
\text{Var}_{\boldsymbol{e}\sim g_{LAP}\left(\boldsymbol{e}\right)}\left[p^{2}\tilde{\phi}_{f}^{\left(4\right)}\left(0\right)\left[\boldsymbol{e},\boldsymbol{e},\boldsymbol{e},\boldsymbol{e}\right]\right] & \approx\text{Var}_{\tilde{\boldsymbol{\theta}}\sim g_{LAP}\left(\tilde{\boldsymbol{\theta}}\right)}\left[\tilde{\phi}_{f}^{\left(4\right)}\left(0\right)\left[\tilde{\boldsymbol{\theta}},\tilde{\boldsymbol{\theta}},\tilde{\boldsymbol{\theta}},\tilde{\boldsymbol{\theta}}\right]\right]
\end{align}

Once again, we are dealing with moments of the form:
\[
\E_{\tilde{\boldsymbol{\theta}}\sim g\left(\tilde{\boldsymbol{\theta}}\right)}\left[\tilde{\phi}_{f}^{\left(4\right)}\left(0\right)\left[\tilde{\boldsymbol{\theta}},\tilde{\boldsymbol{\theta}},\tilde{\boldsymbol{\theta}},\tilde{\boldsymbol{\theta}}\right]\right]=\E_{\tilde{\boldsymbol{\theta}}\sim g\left(\tilde{\boldsymbol{\theta}}\right)}\left[\sum_{i,j,k,l}\left[\tilde{\phi}_{f}^{\left(4\right)}\left(0\right)\right]_{i,j,k,l}\tilde{\theta}_{i}\tilde{\theta}_{j}\tilde{\theta}_{k}\tilde{\theta}_{l}\right]
\]
which we can express using Isserlis' Theorem and then simplify by
using the symmetry of the derivatives.

Starting with the third derivative, the first expected value simplifies,
thus yielding the variance immediately (see the proof of Lemma \ref{lem: APPENDIX Taylor approx of LSI}
for details of the computation of the second moment of $\tilde{\phi}_{f}^{\left(3\right)}\left(0\right)\left[\tilde{\boldsymbol{\theta}},\tilde{\boldsymbol{\theta}},\tilde{\boldsymbol{\theta}}\right]$):
\begin{align}
\E_{\tilde{\boldsymbol{\theta}}\sim g\left(\tilde{\boldsymbol{\theta}}\right)}\left[\tilde{\phi}_{f}^{\left(3\right)}\left(0\right)\left[\tilde{\boldsymbol{\theta}},\tilde{\boldsymbol{\theta}},\tilde{\boldsymbol{\theta}}\right]\right] & =0\\
\E_{\tilde{\boldsymbol{\theta}}\sim g\left(\tilde{\boldsymbol{\theta}}\right)}\left[\left\{ \tilde{\phi}_{f}^{\left(3\right)}\left(0\right)\left[\tilde{\boldsymbol{\theta}},\tilde{\boldsymbol{\theta}},\tilde{\boldsymbol{\theta}}\right]\right\} ^{2}\right] & =6\sum_{i,j,k}\left\{ \left[\tilde{\phi}_{f}^{\left(3\right)}\left(0\right)\right]_{i,j,k}\right\} ^{2}+9\sum_{i,j,k}\left[\tilde{\phi}_{f}^{\left(3\right)}\left(0\right)\right]_{i,j,j}\left[\tilde{\phi}_{f}^{\left(3\right)}\left(0\right)\right]_{i,k,k}\\
\text{Var}_{\tilde{\boldsymbol{\theta}}\sim g_{LAP}\left(\tilde{\boldsymbol{\theta}}\right)}\left[\tilde{\phi}_{f}^{\left(3\right)}\left(0\right)\left[\tilde{\boldsymbol{\theta}},\tilde{\boldsymbol{\theta}},\tilde{\boldsymbol{\theta}}\right]\right] & =6\sum_{i,j,k}\left\{ \left[\tilde{\phi}_{f}^{\left(3\right)}\left(0\right)\right]_{i,j,k}\right\} ^{2}+9\sum_{i,j,k}\left[\tilde{\phi}_{f}^{\left(3\right)}\left(0\right)\right]_{i,j,j}\left[\tilde{\phi}_{f}^{\left(3\right)}\left(0\right)\right]_{i,k,k}
\end{align}

For the fourth derivative, we need to compute the expected value of
\begin{equation}
\tilde{\phi}_{f}^{\left(4\right)}\left(0\right)\left[\tilde{\boldsymbol{\theta}},\tilde{\boldsymbol{\theta}},\tilde{\boldsymbol{\theta}},\tilde{\boldsymbol{\theta}}\right]=\sum_{i,j,k,l}\left[\tilde{\phi}_{f}^{\left(4\right)}\left(0\right)\right]_{i,j,k,l}\tilde{\theta}_{i}\tilde{\theta}_{j}\tilde{\theta}_{k}\tilde{\theta}_{l}
\end{equation}
The fourth moment is composed of only 3 pairs which are of course
all ``internal'' (since there is only one group of terms):
\begin{equation}
\E_{\tilde{\boldsymbol{\theta}}\sim g\left(\tilde{\boldsymbol{\theta}}\right)}\left[\tilde{\phi}_{f}^{\left(4\right)}\left(0\right)\left[\tilde{\boldsymbol{\theta}},\tilde{\boldsymbol{\theta}},\tilde{\boldsymbol{\theta}},\tilde{\boldsymbol{\theta}}\right]\right]=3\sum_{i,j}\left[\tilde{\phi}_{f}^{\left(4\right)}\left(0\right)\right]_{i,i,j,j}
\end{equation}
We recall the second moment from the proof of Lemma \ref{lem: APPENDIX Taylor approx of LSI}:
\begin{align}
\E_{\tilde{\boldsymbol{\theta}}\sim g\left(\tilde{\boldsymbol{\theta}}\right)}\left[\left\{ \tilde{\phi}_{f}^{\left(4\right)}\left[\tilde{\boldsymbol{\theta}},\tilde{\boldsymbol{\theta}},\tilde{\boldsymbol{\theta}},\tilde{\boldsymbol{\theta}}\right]\right\} ^{2}\right] & =24\sum_{i,j,k,l}\left\{ \left[\tilde{\phi}_{f}^{\left(4\right)}\left(0\right)\right]_{i,j,k,l}\right\} ^{2}\nonumber \\
 & \ \ \ \ +72\sum_{i,j,k,l}\left\{ \left[\tilde{\phi}_{f}^{\left(4\right)}\left(0\right)\right]_{i,j,k,k}\right\} \left\{ \left[\tilde{\phi}_{f}^{\left(4\right)}\left(0\right)\right]_{i,j,l,l}\right\} \nonumber \\
 & \ \ \ \ +9\sum_{i,j,k,l}\left\{ \left[\tilde{\phi}_{f}^{\left(4\right)}\left(0\right)\right]_{i,i,j,j}\right\} \left\{ \left[\tilde{\phi}_{f}^{\left(4\right)}\left(0\right)\right]_{k,k,l,l}\right\} 
\end{align}

We then obtain the variance:
\begin{align}
\text{Var}_{\tilde{\boldsymbol{\theta}}\sim g\left(\tilde{\boldsymbol{\theta}}\right)}\left[\tilde{\phi}_{f}^{\left(4\right)}\left(0\right)\left[\tilde{\boldsymbol{\theta}},\tilde{\boldsymbol{\theta}},\tilde{\boldsymbol{\theta}},\tilde{\boldsymbol{\theta}}\right]\right] & =24\sum_{i,j,k,l}\left\{ \left[\tilde{\phi}_{f}^{\left(4\right)}\left(0\right)\right]_{i,j,k,l}\right\} ^{2}\nonumber \\
 & \ \ \ \ +72\sum_{i,j,k,l}\left\{ \left[\tilde{\phi}_{f}^{\left(4\right)}\left(0\right)\right]_{i,j,k,k}\right\} \left\{ \left[\tilde{\phi}_{f}^{\left(4\right)}\left(0\right)\right]_{i,j,l,l}\right\} 
\end{align}

We then finally get the rough approximation of the varELBO term:
\begin{align}
varELBO & =\text{Var}_{\boldsymbol{e}\sim g_{LAP}\left(\boldsymbol{e}\right)}\left[\E_{r\sim g_{LAP}\left(r\right)}\left[\tilde{\phi}_{f}\left(r\boldsymbol{e}\right)-\frac{1}{2}r^{2}\right]\right]\\
 & \approx\text{Var}_{\tilde{\boldsymbol{\theta}}\sim g_{LAP}\left(\tilde{\boldsymbol{\theta}}\right)}\left[\frac{1}{6}\tilde{\phi}_{f}^{\left(3\right)}\left(0\right)\left[\tilde{\boldsymbol{\theta}},\tilde{\boldsymbol{\theta}},\tilde{\boldsymbol{\theta}}\right]+\frac{1}{24}\tilde{\phi}_{f}^{\left(4\right)}\left(0\right)\left[\tilde{\boldsymbol{\theta}},\tilde{\boldsymbol{\theta}},\tilde{\boldsymbol{\theta}},\tilde{\boldsymbol{\theta}}\right]\right]\label{eq: APPENDIX yet another important intermediate}\\
 & \approx\frac{1}{36}\text{Var}_{\tilde{\boldsymbol{\theta}}\sim g_{LAP}\left(\tilde{\boldsymbol{\theta}}\right)}\left[\tilde{\phi}_{f}^{\left(3\right)}\left(0\right)\left[\tilde{\boldsymbol{\theta}},\tilde{\boldsymbol{\theta}},\tilde{\boldsymbol{\theta}}\right]\right]+\frac{1}{24^{2}}\text{Var}_{\tilde{\boldsymbol{\theta}}\sim g_{LAP}\left(\tilde{\boldsymbol{\theta}}\right)}\left[\tilde{\phi}_{f}^{\left(4\right)}\left(0\right)\left[\tilde{\boldsymbol{\theta}},\tilde{\boldsymbol{\theta}},\tilde{\boldsymbol{\theta}},\tilde{\boldsymbol{\theta}}\right]\right]\\
 & \approx\frac{6}{36}\sum_{i,j,k}\left\{ \left[\tilde{\phi}_{f}^{\left(3\right)}\left(0\right)\right]_{i,j,k}\right\} ^{2}+\frac{9}{36}\sum_{i,j,k}\left[\tilde{\phi}_{f}^{\left(3\right)}\left(0\right)\right]_{i,j,j}\left[\tilde{\phi}_{f}^{\left(3\right)}\left(0\right)\right]_{i,k,k}\nonumber \\
 & \ \ \ \ +\frac{24}{24^{2}}\sum_{i,j,k,l}\left\{ \left[\tilde{\phi}_{f}^{\left(4\right)}\left(0\right)\right]_{i,j,k,l}\right\} ^{2}+\frac{72}{24^{2}}\sum_{i,j,k,l}\left\{ \left[\tilde{\phi}_{f}^{\left(4\right)}\left(0\right)\right]_{i,j,k,k}\right\} \left\{ \left[\tilde{\phi}_{f}^{\left(4\right)}\left(0\right)\right]_{i,j,l,l}\right\} \\
 & \approx\frac{1}{6}\sum_{i,j,k}\left\{ \left[\tilde{\phi}_{f}^{\left(3\right)}\left(0\right)\right]_{i,j,k}\right\} ^{2}+\frac{1}{4}\sum_{i,j,k}\left[\tilde{\phi}_{f}^{\left(3\right)}\left(0\right)\right]_{i,j,j}\left[\tilde{\phi}_{f}^{\left(3\right)}\left(0\right)\right]_{i,k,k}\nonumber \\
 & \ \ \ \ +\frac{1}{24}\sum_{i,j,k,l}\left\{ \left[\tilde{\phi}_{f}^{\left(4\right)}\left(0\right)\right]_{i,j,k,l}\right\} ^{2}+\frac{1}{8}\sum_{i,j,k,l}\left\{ \left[\tilde{\phi}_{f}^{\left(4\right)}\left(0\right)\right]_{i,j,k,k}\right\} \left\{ \left[\tilde{\phi}_{f}^{\left(4\right)}\left(0\right)\right]_{i,j,l,l}\right\} 
\end{align}
\end{proof}
\begin{lem}
Further approximation of the KL variance approximation.\label{lem: APPENDIX Taylor approx of KLvar}

\textbf{Requires: }NA.

Performing a Taylor approximation yields:
\begin{align}
KL_{var}\left(g_{LAP},f\right) & \approx\frac{1}{6}\sum_{i,j,k}\left\{ \left[\tilde{\phi}_{f}^{\left(3\right)}\left(0\right)\right]_{i,j,k}\right\} ^{2}+\frac{1}{4}\sum_{i,j,k}\left[\tilde{\phi}_{f}^{\left(3\right)}\left(0\right)\right]_{i,j,j}\left[\tilde{\phi}_{f}^{\left(3\right)}\left(0\right)\right]_{i,k,k}\nonumber \\
 & \ \ \ \ +\frac{1}{24}\sum_{i,j,k,l}\left\{ \left[\tilde{\phi}_{f}^{\left(4\right)}\left(0\right)\right]_{i,j,k,l}\right\} ^{2}+\frac{1}{8}\sum_{i,j,k,l}\left\{ \left[\tilde{\phi}_{f}^{\left(4\right)}\left(0\right)\right]_{i,j,k,k}\right\} \left\{ \left[\tilde{\phi}_{f}^{\left(4\right)}\left(0\right)\right]_{i,j,l,l}\right\} 
\end{align}
\end{lem}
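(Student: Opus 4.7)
My plan is to reduce this lemma almost entirely to the computation already carried out for the varELBO approximation in Lemma \ref{lem: APPENDIX Taylor approx of varELBO}. The starting point is the change of variables to the standardized coordinate $\tilde{\boldsymbol{\theta}} = \Sigma^{-1/2}(\boldsymbol{\theta}-\boldsymbol{\mu})$, under which the variance is invariant and the normalization constants drop out, giving
\begin{equation}
KL_{var}(g_{LAP},f) = \text{Var}_{\tilde{\boldsymbol{\theta}}\sim\tilde{g}(\tilde{\boldsymbol{\theta}})}\left[\tilde{\phi}_f(\tilde{\boldsymbol{\theta}}) - \tfrac{1}{2}\|\tilde{\boldsymbol{\theta}}\|_2^2\right]
\end{equation}
with $\tilde{g}$ the standard Gaussian. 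By construction of the Laplace approximation, $\nabla\tilde{\phi}_f(0)=0$ and $H\tilde{\phi}_f(0)=I_p$, so the zeroth, first, and second order terms of the Taylor expansion of $\tilde{\phi}_f(\tilde{\boldsymbol{\theta}}) - \tfrac{1}{2}\|\tilde{\boldsymbol{\theta}}\|_2^2$ around $0$ all cancel. This produces exactly the same integrand that appears inside the variance in eq.\eqref{eq: APPENDIX yet another important intermediate} of the varELBO proof, namely
\begin{equation}
\tilde{\phi}_f(\tilde{\boldsymbol{\theta}}) - \tfrac{1}{2}\|\tilde{\boldsymbol{\theta}}\|_2^2 \approx \tfrac{1}{6}\tilde{\phi}_f^{(3)}(0)[\tilde{\boldsymbol{\theta}},\tilde{\boldsymbol{\theta}},\tilde{\boldsymbol{\theta}}] + \tfrac{1}{24}\tilde{\phi}_f^{(4)}(0)[\tilde{\boldsymbol{\theta}},\tilde{\boldsymbol{\theta}},\tilde{\boldsymbol{\theta}},\tilde{\boldsymbol{\theta}}].
\end{equation}

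The next step is to compute the variance of this quadratic-plus-quartic polynomial in $\tilde{\boldsymbol{\theta}}$ under the standard Gaussian. The cross-term between the cubic and quartic contributions vanishes: its covariance under $\tilde{g}$ is a seventh central moment of a symmetric distribution, which is zero. Hence the variance decomposes cleanly into $\frac{1}{36}\text{Var}[\tilde{\phi}_f^{(3)}(0)[\tilde{\boldsymbol{\theta}}^{\otimes 3}]] + \frac{1}{576}\text{Var}[\tilde{\phi}_f^{(4)}(0)[\tilde{\boldsymbol{\theta}}^{\otimes 4}]]$, which are precisely the two variance expressions already evaluated in the varELBO proof using Isserlis' theorem.

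From that earlier calculation I can directly quote
\begin{align}
\text{Var}_{\tilde{\boldsymbol{\theta}}\sim\tilde{g}}\left[\tilde{\phi}_f^{(3)}(0)[\tilde{\boldsymbol{\theta}},\tilde{\boldsymbol{\theta}},\tilde{\boldsymbol{\theta}}]\right] &= 6\sum_{i,j,k}\left\{[\tilde{\phi}_f^{(3)}(0)]_{i,j,k}\right\}^2 + 9\sum_{i,j,k}[\tilde{\phi}_f^{(3)}(0)]_{i,j,j}[\tilde{\phi}_f^{(3)}(0)]_{i,k,k},\\
\text{Var}_{\tilde{\boldsymbol{\theta}}\sim\tilde{g}}\left[\tilde{\phi}_f^{(4)}(0)[\tilde{\boldsymbol{\theta}}^{\otimes 4}]\right] &= 24\sum_{i,j,k,l}\left\{[\tilde{\phi}_f^{(4)}(0)]_{i,j,k,l}\right\}^2 + 72\sum_{i,j,k,l}[\tilde{\phi}_f^{(4)}(0)]_{i,j,k,k}[\tilde{\phi}_f^{(4)}(0)]_{i,j,l,l}.
\end{align}
Dividing by $36$ and $576$ respectively and summing produces the coefficients $\tfrac{1}{6}$, $\tfrac{1}{4}$, $\tfrac{1}{24}$, $\tfrac{1}{8}$ claimed in the statement.

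There is no real obstacle here since all the combinatorial bookkeeping for Isserlis' contractions has already been handled in Lemma \ref{lem: APPENDIX Taylor approx of varELBO}; the only conceptual point worth emphasizing in the write-up is \emph{why} the Taylor-based formula for $KL_{var}$ coincides with the one for varELBO. This is simply because both approximate a variance of a function of the form $\tilde{\phi}_f(\tilde{\boldsymbol{\theta}}) - \tfrac{1}{2}\|\tilde{\boldsymbol{\theta}}\|_2^2$ under $\tilde{g}$: the varELBO first averages out the radial coordinate and then uses the heuristic $r_g \approx \sqrt{p}$, which (to the order retained) is inverse to undoing the radial integration. Consequently the two truncated expansions agree term-by-term. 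I will note this identification explicitly so the reader sees that Lemma \ref{lem: APPENDIX Taylor approx of varELBO} and the present lemma are essentially the same calculation viewed from two different angles.
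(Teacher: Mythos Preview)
Your proposal is correct and follows essentially the same approach as the paper: Taylor-expand $\tilde{\phi}_f(\tilde{\boldsymbol{\theta}}) - \tfrac{1}{2}\|\tilde{\boldsymbol{\theta}}\|_2^2$ to fourth order, recognize the resulting variance as exactly the intermediate expression (eq.\eqref{eq: APPENDIX yet another important intermediate}) computed in Lemma \ref{lem: APPENDIX Taylor approx of varELBO}, and read off the answer. Your additional remarks on the vanishing cross-term and on why the $KL_{var}$ and varELBO Taylor approximations coincide are correct and add useful context, but the core argument is the same as the paper's.
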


\begin{proof}
Once again, perform a Taylor expansion of $\tilde{\phi}_{f}\left(\tilde{\boldsymbol{\theta}}\right)-\phi_{g}\left(\tilde{\boldsymbol{\theta}}\right)$:
\begin{equation}
\tilde{\phi}_{f}\left(\tilde{\boldsymbol{\theta}}\right)-\phi_{g}\left(\tilde{\boldsymbol{\theta}}\right)\approx\frac{1}{6}\tilde{\phi}_{f}^{\left(3\right)}\left(0\right)\left[\tilde{\boldsymbol{\theta}},\tilde{\boldsymbol{\theta}},\tilde{\boldsymbol{\theta}}\right]+\frac{1}{24}\tilde{\phi}_{f}^{\left(4\right)}\left(0\right)\left[\tilde{\boldsymbol{\theta}},\tilde{\boldsymbol{\theta}},\tilde{\boldsymbol{\theta}},\tilde{\boldsymbol{\theta}}\right]
\end{equation}
thus yielding:
\begin{equation}
KL_{var}\left(g_{LAP},f\right)\approx\text{Var}_{\tilde{\boldsymbol{\theta}}\sim g_{LAP}\left(\tilde{\boldsymbol{\theta}}\right)}\left[\frac{1}{6}\tilde{\phi}_{f}^{\left(3\right)}\left(0\right)\left[\tilde{\boldsymbol{\theta}},\tilde{\boldsymbol{\theta}},\tilde{\boldsymbol{\theta}}\right]+\frac{1}{24}\tilde{\phi}_{f}^{\left(4\right)}\left(0\right)\left[\tilde{\boldsymbol{\theta}},\tilde{\boldsymbol{\theta}},\tilde{\boldsymbol{\theta}},\tilde{\boldsymbol{\theta}}\right]\right]
\end{equation}
where we recognize eq.\eqref{eq: APPENDIX yet another important intermediate}.
Thus, we read from the end of the proof of Lemma \eqref{lem: APPENDIX Taylor approx of varELBO}
that:
\begin{align}
KL_{var}\left(g_{LAP},f\right) & \approx\frac{1}{6}\sum_{i,j,k}\left\{ \left[\tilde{\phi}_{f}^{\left(3\right)}\left(0\right)\right]_{i,j,k}\right\} ^{2}+\frac{1}{4}\sum_{i,j,k}\left[\tilde{\phi}_{f}^{\left(3\right)}\left(0\right)\right]_{i,j,j}\left[\tilde{\phi}_{f}^{\left(3\right)}\left(0\right)\right]_{i,k,k}\nonumber \\
 & \ \ \ \ +\frac{1}{24}\sum_{i,j,k,l}\left\{ \left[\tilde{\phi}_{f}^{\left(4\right)}\left(0\right)\right]_{i,j,k,l}\right\} ^{2}+\frac{1}{8}\sum_{i,j,k,l}\left\{ \left[\tilde{\phi}_{f}^{\left(4\right)}\left(0\right)\right]_{i,j,k,k}\right\} \left\{ \left[\tilde{\phi}_{f}^{\left(4\right)}\left(0\right)\right]_{i,j,l,l}\right\} 
\end{align}
\end{proof}

\section{Details of the simulations}

\label{sec:APPENDIX Details-of-the simulations}

This section provides additional details and figures on the experimental
results that I obtained on the linear logistic classification model.

\subsection{Satisfying the assumptions of Cor.\ref{cor:Bernstein-von-Mises:-IID case}}

First, let us show that the model respects the assumptions of Cor.\ref{cor:Bernstein-von-Mises:-IID case}.

To do so, consider the log-posterior $\phi_{f}$ which is:
\begin{equation}
\phi_{f}\left(\boldsymbol{\theta}\right)=\frac{1}{2}\frac{1}{\left(\sigma_{prior}\right)^{2}}\left\Vert \boldsymbol{\theta}\right\Vert _{2}^{2}+\sum_{i=1}^{n}\log\left[1+\exp\left(-Y_{i}\boldsymbol{\theta}^{T}\boldsymbol{X}_{i}\right)\right]
\end{equation}
where $\left(\sigma_{prior}\right)^{2}$ is the prior variance.

The Hessian matrix of $\phi_{f}$ is:
\begin{equation}
H\phi_{f}\left(\boldsymbol{\theta}\right)=\frac{1}{\left(\sigma_{prior}\right)^{2}}I_{p}+\sum_{i=1}^{n}\frac{1}{\left[1+\exp\left(-\boldsymbol{\theta}^{T}\boldsymbol{X}_{i}\right)\right]\left[1+\exp\left(\boldsymbol{\theta}^{T}\boldsymbol{X}_{i}\right)\right]}\boldsymbol{X}_{i}\boldsymbol{X}_{i}^{T}
\end{equation}
which is clearly strictly positive, since it is a sum of positive
matrices.

Furthermore, consider the third derivative of one negative log-likelihood:
\begin{align}
NLL_{i}\left(\boldsymbol{\theta}\right) & =\log\left[1+\exp\left(-Y_{i}\boldsymbol{\theta}^{T}\boldsymbol{X}_{i}\right)\right]\\
\left[NLL_{i}^{\left(3\right)}\left(\boldsymbol{\theta}\right)\right]_{abc} & =\left[\boldsymbol{X}_{i}\right]_{a}\left[\boldsymbol{X}_{i}\right]_{b}\left[\boldsymbol{X}_{i}\right]_{c}l^{\left(3\right)}\left(Y_{i}\boldsymbol{\theta}^{T}\boldsymbol{X}_{i}\right)
\end{align}
where $l^{\left(3\right)}\left(a\right)$ is the third derivative
of negative logistic function:
\begin{align}
l^{\left(3\right)}\left(a\right) & =\frac{\left(-a\exp\left(-a\right)\left[1+\exp\left(a\right)\right]+a\exp\left(a\right)\left[1+\exp\left(-a\right)\right]\right)}{\left[1+\exp\left(-a\right)\right]^{2}\left[1+\exp\left(a\right)\right]^{2}}\\
 & =\frac{-a\exp\left(-a\right)}{\left[1+\exp\left(-a\right)\right]^{2}\left[1+\exp\left(a\right)\right]}+\frac{a\exp\left(a\right)}{\left[1+\exp\left(-a\right)\right]\left[1+\exp\left(a\right)\right]^{2}}\\
\left|l^{\left(3\right)}\left(a\right)\right| & \leq2\frac{\left|a\right|}{\left[1+\exp\left(-a\right)\right]\left[1+\exp\left(a\right)\right]}\\
 & <\infty
\end{align}
The third derivative of $NLL_{i}$ is thus bounded by the random quantity:
\begin{align}
\left[NLL_{i}^{\left(3\right)}\left(\boldsymbol{\theta}\right)\right]_{abc} & =\left[\boldsymbol{X}_{i}\right]_{a}\left[\boldsymbol{X}_{i}\right]_{b}\left[\boldsymbol{X}_{i}\right]_{c}l^{\left(3\right)}\left(Y_{i}\boldsymbol{\theta}^{T}\boldsymbol{X}_{i}\right)\\
\left|NLL_{i}^{\left(3\right)}\left(\boldsymbol{\theta}\right)\left[v_{1},v_{2},v_{3}\right]\right| & \leq\max_{a}\left|l^{\left(3\right)}\left(a\right)\right|\prod_{i=1}^{3}\boldsymbol{X}_{i}^{T}v_{i}\\
 & \leq\max_{a}\left|l^{\left(3\right)}\left(a\right)\right|\left\Vert X_{i}\right\Vert _{2}^{3}
\end{align}
where the random variable $\left\Vert X_{i}\right\Vert _{2}^{3}$
has bounded expected value.

We can thus apply Cor.\ref{cor:Bernstein-von-Mises:-IID case} to
this model.

\subsection{Details on the sampling approximations}

In order to approximate normalizing constants, I investigated two
methods:
\begin{enumerate}
\item I first considered a direct evaluation of the modified expression
for the KL divergence derived in the proof of Prop.\ref{prop: KL approx KL_var}
(more precisely in Lemma \ref{lem:Another-formula-for KL(g,f)}):
\begin{align}
KL\left(g_{LAP},f\right) & =\E_{\boldsymbol{\theta}\sim g_{LAP}\left(\boldsymbol{\theta}\right)}\left[\phi_{f}\left(\boldsymbol{\theta}\right)-\phi_{g}\left(\boldsymbol{\theta}\right)\right]+\log\left\{ \E_{\boldsymbol{\theta}\sim g_{LAP}\left(\boldsymbol{\theta}\right)}\left[\exp\left(\phi_{g}\left(\boldsymbol{\theta}\right)-\phi_{f}\left(\boldsymbol{\theta}\right)\right)\right]\right\} \\
 & \approx\frac{1}{s}\sum_{i=1}^{s}\left[\phi_{f}\left(\boldsymbol{\theta}_{i}\right)-\phi_{g}\left(\boldsymbol{\theta}_{i}\right)\right]+\log\left\{ \frac{1}{s}\sum_{i=1}^{s}\left[\exp\left(\phi_{g}\left(\boldsymbol{\theta}_{i}\right)-\phi_{f}\left(\boldsymbol{\theta}_{i}\right)\right)\right]\right\} 
\end{align}
where the $\boldsymbol{\theta}_{i}$ are $s$ IID samples from $g\left(\boldsymbol{\theta}\right)$.\\
This yields an asymptotically correct estimator of $KL\left(g_{LAP},f\right)$
as $s\rightarrow\infty$ but could have possibly bad properties for
finite $s$, depending on the properties of the random variable: $\exp\left(\phi_{g}\left(\boldsymbol{\theta}_{g}\right)-\phi_{f}\left(\boldsymbol{\theta}_{g}\right)\right)$,
namely whether it has a finite variance that is furthermore small
compared to $s$.
\item I also considered an indirect evaluation of the KL divergence based
on first approximating the normalization constant:
\begin{equation}
Z_{f}=\int\exp\left(-\phi_{f}\left(\boldsymbol{\theta}\right)\right)
\end{equation}
I evaluated this quantity by first generating samples from $f\left(\boldsymbol{\theta}\right)$
using the NUTS MCMC algorithm (\citet{hoffman2014no}) using an implementation
from \href{https://github.com/mfouesneau/NUTS}{M. Fouesnau on github}.
Then, I used the following identity:
\begin{align}
\frac{1}{Z_{f}} & =\frac{\int g_{LAP}\left(\boldsymbol{\theta}\right)d\boldsymbol{\theta}}{\int\exp\left(-\phi_{f}\left(\boldsymbol{\theta}\right)\right)d\boldsymbol{\theta}}\\
 & =\int\frac{g_{LAP}\left(\boldsymbol{\theta}\right)\exp\left(\phi_{f}\left(\boldsymbol{\theta}\right)-\phi_{f}\left(\boldsymbol{\theta}\right)\right)d\boldsymbol{\theta}}{\int\exp\left(-\phi_{f}\left(\boldsymbol{\theta}\right)\right)d\boldsymbol{\theta}}\\
 & =\int f\left(\boldsymbol{\theta}\right)\exp\left(\phi_{f}\left(\boldsymbol{\theta}\right)-\phi_{g}\left(\boldsymbol{\theta}\right)\right)d\boldsymbol{\theta}\\
 & =\E_{\boldsymbol{\theta}\sim f\left(\boldsymbol{\theta}\right)}\left[\exp\left(\phi_{f}\left(\boldsymbol{\theta}\right)-\phi_{g}\left(\boldsymbol{\theta}\right)\right)\right]
\end{align}
and the corresponding empirical approximation based on the samples
to approximate $Z_{f}$.\\
The KL approximation was then computed using the straightforward sampling
approximation of the expression:
\begin{equation}
KL\left(g_{LAP},f\right)=\E_{\boldsymbol{\theta}\sim g_{LAP}\left(\boldsymbol{\theta}\right)}\left[\phi_{f}\left(\boldsymbol{\theta}\right)-\phi_{g}\left(\boldsymbol{\theta}\right)\right]+\log\left\{ Z_{f}\right\} 
\end{equation}
This approximation thus combines IID samples from $g_{LAP}\left(\boldsymbol{\theta}\right)$
and MCMC samples from $f\left(\boldsymbol{\theta}\right)$.
\end{enumerate}
In the experiments, the two approximations of the KL divergence ended
up giving similar values (Appendix Fig.\ref{fig: APPENDIX the two kl approx are similar}).
The analysis in the main text was thus carried using only the NUTS-based
approximation of the KL divergence (considering instead the other
approximation does not modify the results; see Appendix Fig.\ref{fig: APPENDIX KL to n for the other approx}).

In the sampling approximations, 50.000 samples from $g_{LAP}\left(\boldsymbol{\theta}\right)$
were used. Similarly, 60.000 samples from the NUTS Markov Chain were
constructed if $p\in\left\{ 10,30,100\right\} $. For $p\in\left\{ 300,1000\right\} $,
260.000 samples were used instead, to account for increased correlation.
The first 10.000 samples were discarded. To minimize storage into
RAM for $p\in\left\{ 300,1000\right\} $, only one out of five of
the remaining 250.000 samples were retained for final analysis, yielding
50.000 samples.

Due to large number of experiments, the 50.000 MCMC samples were inspected
automatically. Samples from the chain were deemed appropriate if the
max of the empirical auto-correlation of $\phi_{f}\left(\boldsymbol{\theta}_{t}\right)$
for offset $\delta\in\left[30,100\right]$ was smaller than $0.05$.
This condition was fulfilled throughout our experiments and never
resulted in a dataset being rejected.

\begin{figure}
\begin{centering}
\includegraphics[width=13cm]{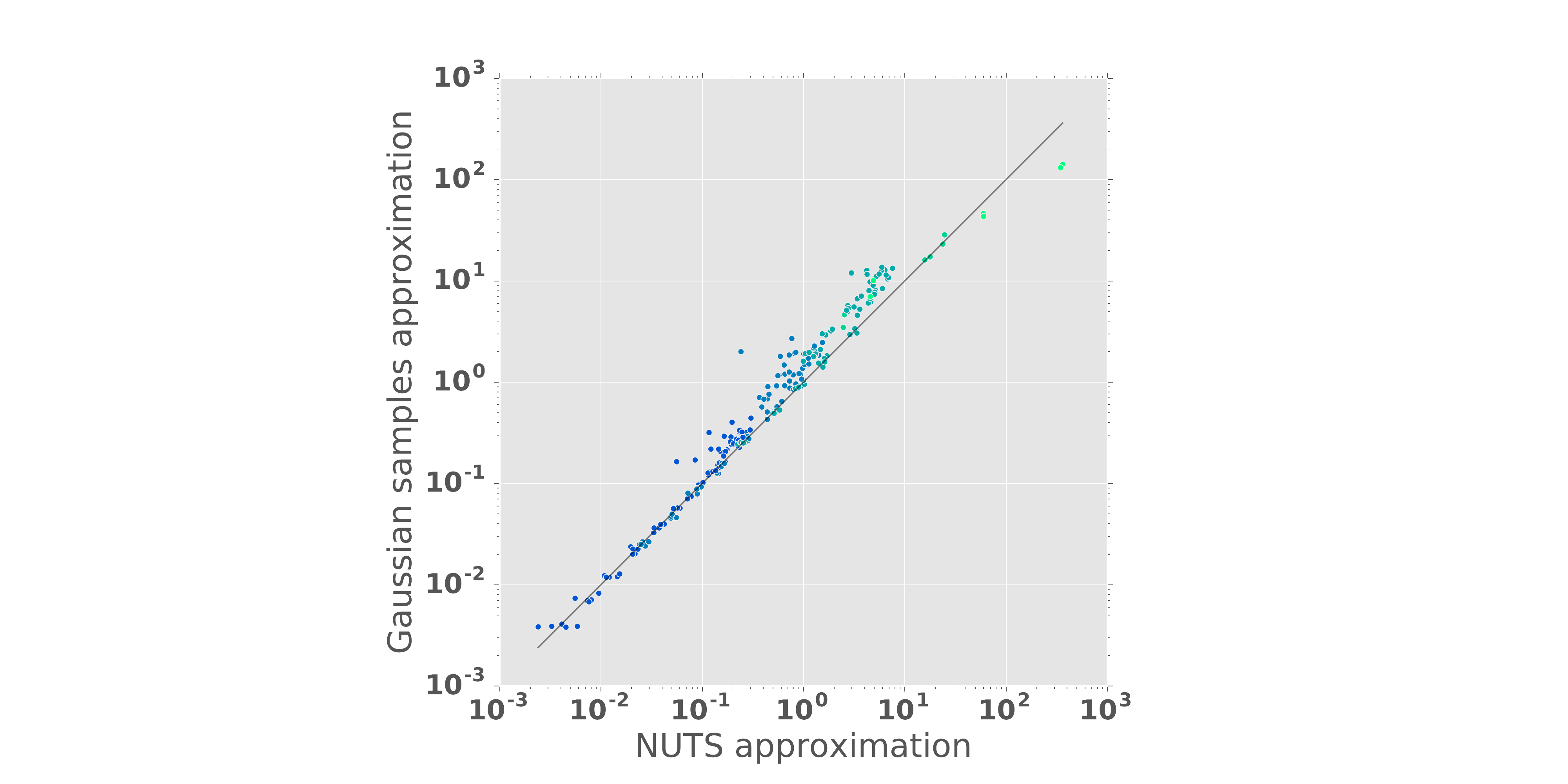}
\par\end{centering}
\caption{\label{fig: APPENDIX the two kl approx are similar}The two approximation
of $KL(g_{LAP},f)$ are similar. As explained in the text, two numerical
approximations of the KL divergence were considered. The difference
was the computation of the normalization constant $\log(Z_{f})$.
This figure corresponds to a simple scatter plot of both approximations,
superposed with the identity line. Colors are the same as in Fig.\ref{fig: evolution of KL with n}
and Fig.\ref{fig: quality of KL var and KL var + LSI}. The results
presented in the main text relied on the NUTS approximation.}
\end{figure}
\begin{figure}
\begin{centering}
\includegraphics[width=13cm]{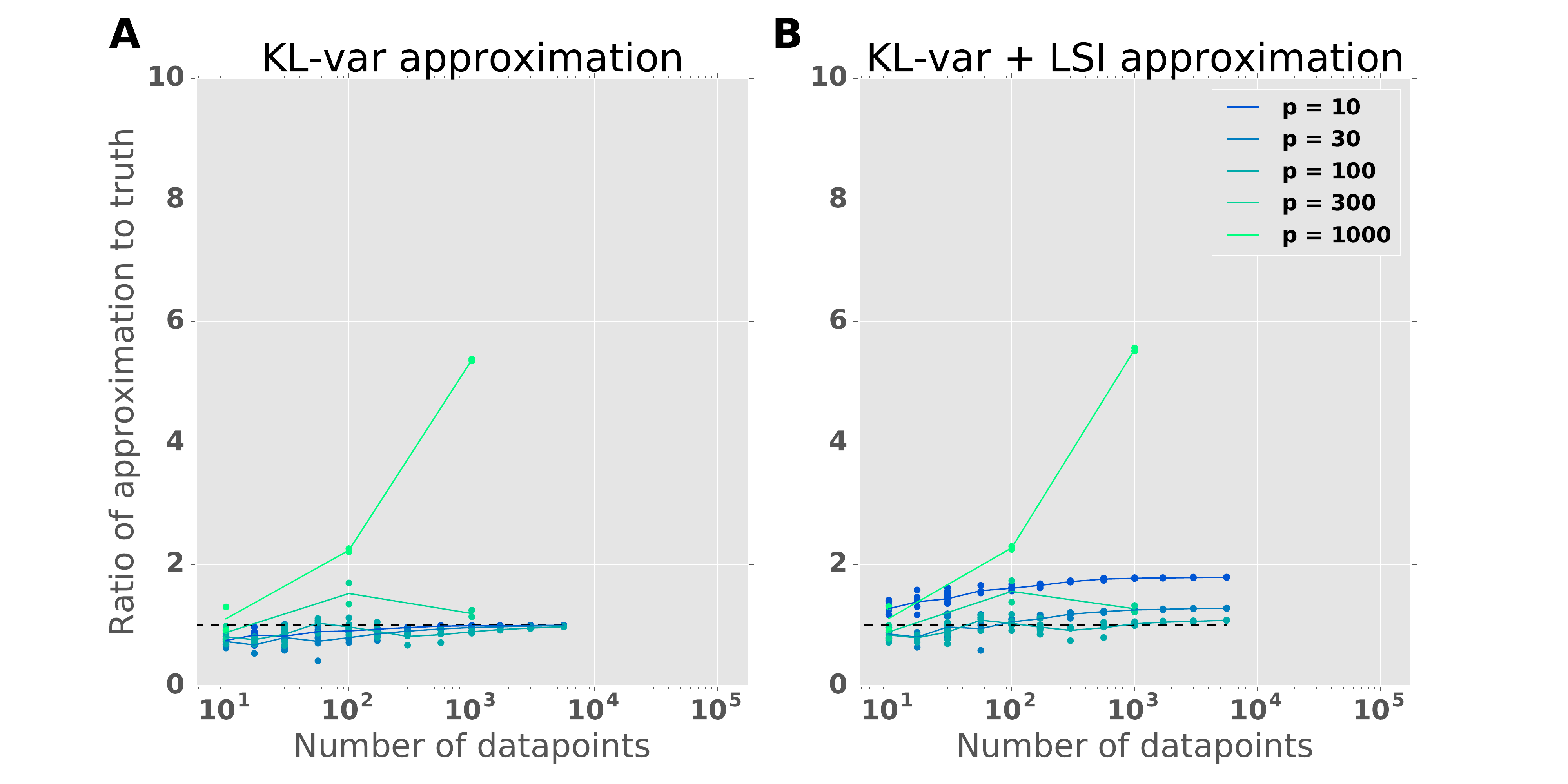}
\par\end{centering}
\caption{\label{fig: APPENDIX KL to n for the other approx}Evolution of the
KL-var and \textquotedbl KL-var + LSI\textquotedbl{} approximations
with $n$, when measured with respect to the other numerical approximation
of $KL(g_{LAP},f)$. Panels A and B as in Fig.\ref{fig: quality of KL var and KL var + LSI}.
The behavior is qualitatively similar as in Fig.\ref{fig: quality of KL var and KL var + LSI}.}
\end{figure}
\newpage{}

\subsection{Additional figures}

Due to space constraints, we present here the evolution of the other
approximations suggested by Cor.\ref{cor:Computable-approximations-of KL(g,f)}.

We also present the evolution of the quality of the approximations
as a function of the true KL divergence $KL\left(g_{LAP},f\right)$.

\begin{figure}
\centering{}\includegraphics[height=20cm]{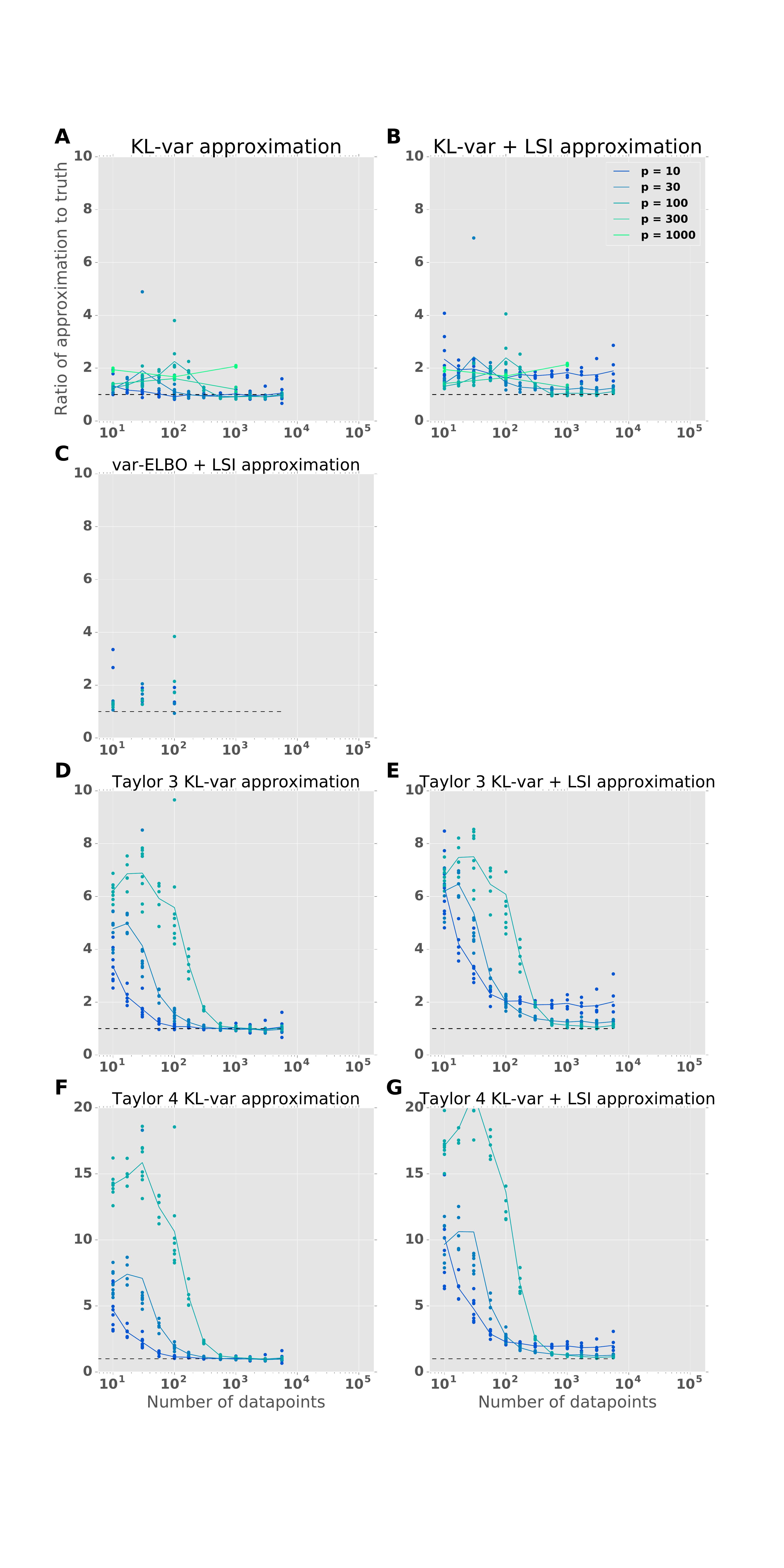}
\end{figure}
\begin{figure}
\contcaption{

\label{fig: APPENDIX evolution of all approximations}Evolution of
the ratio \textquotedbl approximation of KL / KL\textquotedbl{} for
all approximations proposed in Cor.\ref{cor:Computable-approximations-of KL(g,f)}.
A: KL-var approximation (repeated from Fig.\ref{fig: quality of KL var and KL var + LSI}).
B: \textquotedbl KL-var + LSI\textquotedbl{} approximation (repeated
from Fig.\ref{fig: quality of KL var and KL var + LSI}). C: \textquotedbl var
ELBO + LSI\textquotedbl{} approximation. Note that due to the computational
costs associated with this approximation, it was not computed for
larger values of $p$ and $n$. D,E: Taylor approximations to third
order of the KL-var and \textquotedbl KL-var + LSI\textquotedbl{}
approximations. F,G: Taylor approximations to fourth order of the
KL-var and \textquotedbl KL-var + LSI\textquotedbl{} approximations.
Please note the change of scale in panels F and G. The Taylor approximations
severely overestimate the true KL divergence for small values of $n$
and only become correct as $n$ is high enough. Note that due to computational
constraints associated with computing large tensors, the Taylor approximations
were not computed for the largest values of $p$.

 }
\end{figure}
\begin{figure}
\centering{}\includegraphics[height=20cm]{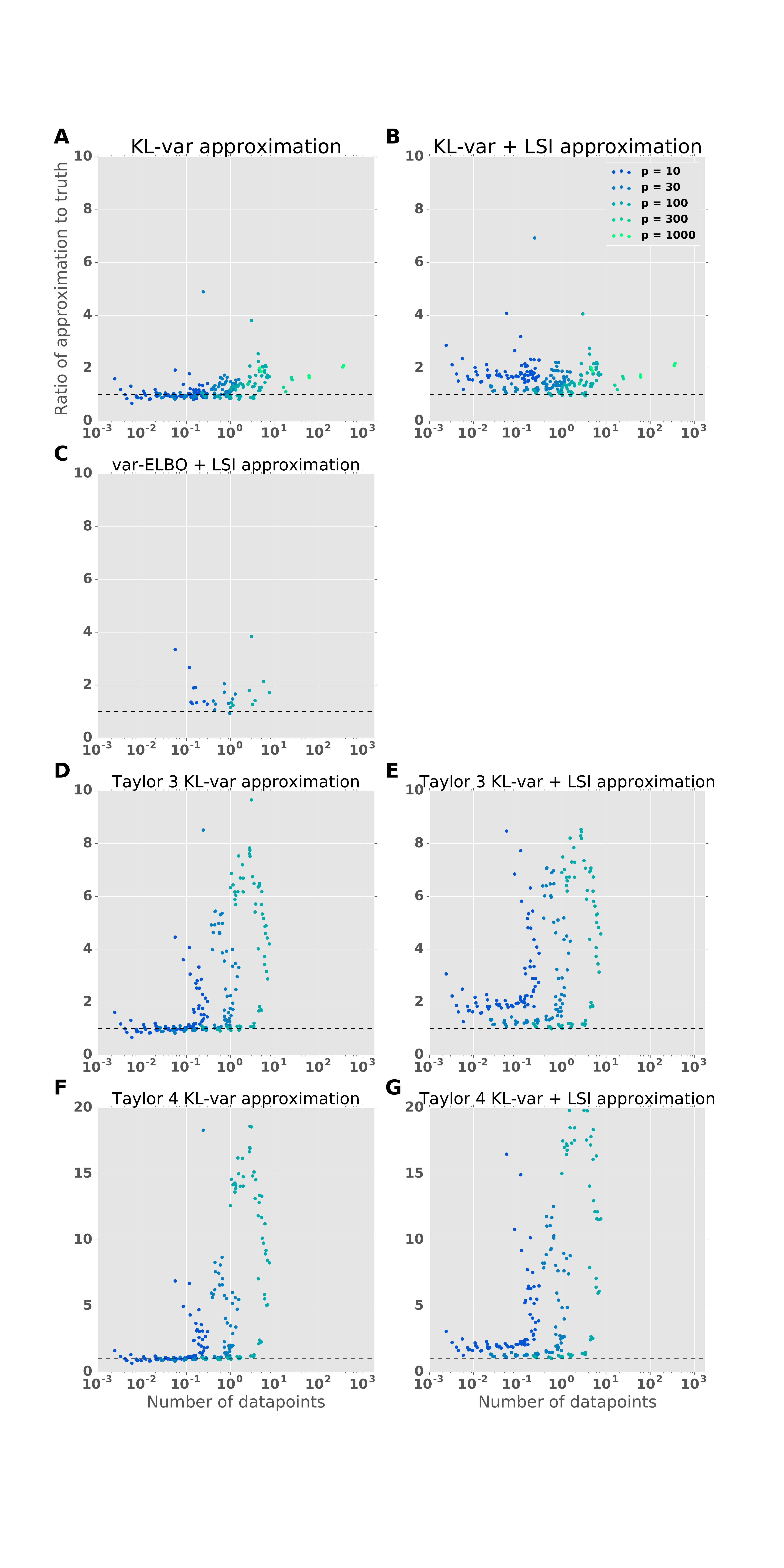}
\end{figure}
\begin{figure}
\contcaption{

\label{fig: APPENDIX evolution against true KL-1}Evolution of the
ratio \textquotedbl approximation of KL / KL\textquotedbl{} for all
approximations proposed in Cor.\ref{cor:Computable-approximations-of KL(g,f)}
but as a function of the true KL divergence. All panels as in Supplementary
Fig.\ref{fig: APPENDIX evolution of all approximations}. Once again,
note the change of scale in panels F and G. This representation shows
that the KL-var approximation, and the other approximations proposed
in Cor.\ref{cor:Computable-approximations-of KL(g,f)} are consistently
tight whenever the true KL divergence is low and that they are otherwise
upper-bounds. Further investigations are required in order to determine
whether it is possible to prove that this relationship holds in a
wider variety of situations.

 }
\end{figure}

\end{document}